\newtheorem{cor}[subsubsection]{Corollary}
\newtheorem{lem}[subsubsection]{Lemma}
\newtheorem{prop}[subsubsection]{Proposition}
\newtheorem{conj}[subsubsection]{Conjecture}
\newtheorem{thm}[subsubsection]{Theorem}
\theoremstyle{remark}
\newtheorem{rem}[subsubsection]{Remark}
\newtheorem{example}[subsubsection]{Example}
\newtheorem{defin}[subsubsection]{Definition}
\theoremstyle{definition}
\theoremstyle{remark}
\newcommand{\thmref}[1]{Theorem~\ref{#1}}
\newcommand{\secref}[1]{Sect.~\ref{#1}}
\newcommand{\lemref}[1]{Lemma~\ref{#1}}
\newcommand{\propref}[1]{Proposition~\ref{#1}}
\newcommand{\corref}[1]{Corollary~\ref{#1}}
\newcommand{\conjref}[1]{Conjecture~\ref{#1}}
\newcommand{\remref}[1]{Remark~\ref{#1}}
\numberwithin{equation}{section}
\newcommand{\nc}{\newcommand}
\nc{\renc}{\renewcommand}
\nc{\ssec}{\subsection}
\nc{\sssec}{\subsubsection}
\nc{\on}{\operatorname}
\nc{\ips}{{\iota_P^{(S)}}}
\nc{\ipms}{{\iota_{P^-}^{(S)}}}
\nc{\sfpps}{{\sfp_P^{(S)}}}
\nc{\sfppms}{{\sfp_{P^-}^{(S)}}}
\nc\ol{\overline}
\nc\ul{\underline}
\nc\wt{\widetilde}
\nc\tboxtimes{\wt{\boxtimes}}
\nc\tstar{\wt{\star}}
\nc{\alp}{\alpha}
\nc{\ZZ}{{\mathbb Z}}
\nc{\NN}{{\mathbb N}}
\nc{\OO}{{\mathbb O}}
\renc{\SS}{{\mathbb S}}
\nc{\DD}{{\mathbb D}}
\nc{\GG}{{\mathbb G}}
\nc{\Fq}{{\mathbb F}_q}
\nc{\Fqb}{\ol{\mathbb F}_q}
\nc{\Ql}{{\mathbb Q}_\ell}
\nc{\Qlb}{{\ol{\mathbb Q}_\ell}}
\nc{\id}{\text{id}}
\nc\X{\mathcal X}
\nc{\red}{\on{red}}
\nc{\Ho}{\on{Ho}}
\nc{\Hom}{\on{Hom}}
\nc{\coHom}{\ul{\on{coHom}}}
\nc{\coMaps}{{\bf{coMaps}}}
\nc{\coef}{\on{coef}}
\nc{\Lie}{\on{Lie}}
\nc{\Loc}{\on{Loc}}
\nc{\coLoc}{\on{coLoc}}
\nc{\Pic}{\on{Pic}}
\nc{\Bun}{\on{Bun}}
\nc{\IC}{\on{IC}}
\nc{\Aut}{\on{Aut}}
\nc{\rk}{\on{rk}}
\nc{\Sh}{\on{Sh}}
\nc{\Perv}{\on{Perv}}
\nc{\pos}{{\on{pos}}}
\nc{\Conv}{\on{Conv}}
\nc{\Sph}{\on{Sph}}
\nc{\Sym}{\on{Sym}}
\nc{\BunBb}{\overline{\Bun}_B}
\nc{\BunNb}{\overline{\Bun}_N}
\nc{\BunTb}{\overline{\Bun}_T}
\nc{\BunBbm}{\overline{\Bun}_{B^-}}
\nc{\BunBbel}{\overline{\Bun}_{B,el}}
\nc{\BunBbmel}{\overline{\Bun}_{B^-,el}}
\nc{\Buno}{\overset{o}{\Bun}}
\nc{\BunPb}{{\overline{\Bun}_P}}
\nc{\BunBM}{\Bun_{B(M)}}
\nc{\BunBMb}{\overline{\Bun}_{B(M)}}
\nc{\BunPbw}{{\widetilde{\Bun}_P}}
\nc{\BunBP}{\widetilde{\Bun}_{B,P}}
\nc{\GUb}{\overline{G/U}}
\nc{\GUPb}{\overline{G/U(P)}}
\nc{\Hhom}{\underline{\on{Hom}}}
\nc\syminfty{\on{Sym}^{\infty}}
\nc\lal{\ol{\lambda}}
\nc\xl{\ol{x}}
\nc\thl{\ol{\theta}}
\nc\nul{\ol{\nu}}
\nc\mul{\ol{\mu}}
\nc{\oX}{\overset{o}{X}{}}
\nc{\hl}{\overset{\leftarrow}h{}}
\nc{\hr}{\overset{\rightarrow}h{}}
\nc{\M}{{\mathcal M}}
\nc{\N}{{\mathcal N}}
\nc{\F}{{\mathcal F}}
\nc{\D}{{\mathcal D}}
\nc{\Q}{{\mathcal Q}}
\nc{\Y}{{\mathcal Y}}
\nc{\G}{{\mathcal G}}
\nc{\E}{{\mathcal E}}
\nc{\CalC}{{\mathcal C}}
\nc\Dh{\widehat{\D}}
\nc{\C}{{\mathcal C}}
\nc{\K}{{\mathcal K}}
\renewcommand{\H}{{\mathcal H}}
\nc{\T}{{\mathcal T}}
\nc{\V}{{\mathcal V}}
\renc{\P}{{\mathcal P}}
\nc{\A}{{\mathcal A}}
\nc{\B}{{\mathcal B}}
\nc{\U}{{\mathcal U}}
\nc{\Gr}{{\on{Gr}}}
\nc{\frn}{{\check{\mathfrak u}(P)}}
\nc{\fC}{\mathfrak C}
\nc{\fT}{\mathfrak T}
\nc{\p}{\mathfrak p}
\nc{\q}{\mathfrak q}
\nc\f{{\mathfrak f}}
\nc{\qo}{{\mathfrak q}}
\nc{\po}{{\mathfrak p}}
\nc{\s}{{\mathfrak s}}
\nc\w{\text{w}}
\renewcommand{\mod}{{\on{-mod}}}
\nc\Spec{\on{Spec}}
\nc\Proj{\on{Proj}}
\nc\Mod{\on{Mod}}
\nc{\tw}{\widetilde{\mathfrak t}}
\nc{\pw}{\widetilde{\mathfrak p}}
\nc{\qw}{\widetilde{\mathfrak q}}
\nc{\jw}{\widetilde j}
\nc{\grb}{\overline{\Gr}}
\nc{\I}{\mathcal I}
\nc{\lambdach}{{\check\lambda}}
\nc{\Lambdach}{{\check\Lambda}{}}
\nc{\much}{{\check\mu}}
\nc{\omegach}{{\check\omega}}
\nc{\nuch}{{\check\nu}}
\nc{\etach}{{\check\eta}}
\nc{\alphach}{{\check\alpha}}
\nc{\oblvtach}{{\check\oblvta}}
\nc{\rhoch}{{\check\rho}}
\nc{\ch}{{\check h}}
\nc{\Hb}{\overline{\H}}
\nc{\BA}{{\mathbb{A}}}
\nc{\BC}{{\mathbb{C}}}
\nc{\BE}{{\mathbb{E}}}
\nc{\BF}{{\mathbb{F}}}
\nc{\BG}{{\mathbb{G}}}
\nc{\BL}{{\mathbb{L}}}
\nc{\BM}{{\mathbb{M}}}
\nc{\BO}{{\mathbb{O}}}
\nc{\BD}{{\mathbb{D}}}
\nc{\BN}{{\mathbb{N}}}
\nc{\BP}{{\mathbb{P}}}
\nc{\BQ}{{\mathbb{Q}}}
\nc{\BR}{{\mathbb{R}}}
\nc{\BV}{{\mathbb{V}}}
\nc{\BZ}{{\mathbb{Z}}}
\nc{\BS}{{\mathbb{S}}}
\nc{\Deep}{{\bf{deep}}}
\nc{\deep}{deep}
\nc{\CA}{{\mathcal{A}}}
\nc{\CB}{{\mathcal{B}}}
\nc{\CE}{{\mathcal{E}}}
\nc{\CF}{{\mathcal{F}}}
\nc{\CH}{{\mathcal{H}}}
\nc{\CL}{{\mathcal{L}}}
\nc{\CC}{{\mathcal{C}}}
\nc{\CG}{{\mathcal{G}}}
\nc{\CalD}{{\mathcal{D}}}
\nc{\CM}{{\mathcal{M}}}
\nc{\CN}{{\mathcal{N}}}
\nc{\CK}{{\mathcal{K}}}
\nc{\CO}{{\mathcal{O}}}
\nc{\CP}{{\mathcal{P}}}
\nc{\CQ}{{\mathcal{Q}}}
\nc{\CR}{{\mathcal{R}}}
\nc{\CS}{{\mathcal{S}}}
\nc{\CT}{{\mathcal{T}}}
\nc{\CU}{{\mathcal{U}}}
\nc{\CV}{{\mathcal{V}}}
\nc{\CW}{{\mathcal{W}}}
\nc{\CX}{{\mathcal{X}}}
\nc{\CY}{{\mathcal{Y}}}
\nc{\CZ}{{\mathcal{Z}}}
\nc{\CI}{{\mathcal{I}}}
\nc{\csM}{{\check{\mathcal A}}{}}
\nc{\oM}{{\overset{\circ}{\mathcal M}}{}}
\nc{\obM}{{\overset{\circ}{\mathbf M}}{}}
\nc{\oCA}{{\overset{\circ}{\mathcal A}}{}}
\nc{\obA}{{\overset{\circ}{\mathbf A}}{}}
\nc{\ooM}{{\overset{\circ}{M}}{}}
\nc{\osM}{{\overset{\circ}{\mathsf M}}{}}
\nc{\vM}{{\overset{\bullet}{\mathcal M}}{}}
\nc{\nM}{{\underset{\bullet}{\mathcal M}}{}}
\nc{\oD}{{\overset{\circ}{\mathcal D}}{}}
\nc{\obD}{{\overset{\circ}{\mathbf D}}{}}
\nc{\oA}{{\overset{\circ}{A}}{}}
\nc{\op}{{\overset{\bullet}{\mathbf p}}{}}
\nc{\cp}{{\overset{\circ}{\mathbf p}}{}}
\nc{\oU}{{\overset{\bullet}{\mathcal U}}{}}
\nc{\oZ}{{\overset{\circ}{\mathcal Z}}{}}
\nc{\ofZ}{{\overset{\circ}{\mathfrak Z}}{}}
\nc{\oF}{{\overset{\circ}{\fF}}}
\nc{\fa}{{\mathfrak{a}}}
\nc{\ofa}{\overset{\circ}{\mathfrak{a}}}
\nc{\fb}{{\mathfrak{b}}}
\nc{\fd}{{\mathfrak{d}}}
\nc{\ff}{{\mathfrak{f}}}
\nc{\fg}{{\mathfrak{g}}}
\nc{\fgl}{{\mathfrak{gl}}}
\nc{\fh}{{\mathfrak{h}}}
\nc{\fj}{{\mathfrak{j}}}
\nc{\fl}{{\mathfrak{l}}}
\nc{\fm}{{\mathfrak{m}}}
\nc{\ofm}{\overset{\circ}{\mathfrak{m}}}
\nc{\fn}{{\mathfrak{n}}}
\nc{\fu}{{\mathfrak{u}}}
\nc{\fp}{{\mathfrak{p}}}
\nc{\fr}{{\mathfrak{r}}}
\nc{\fs}{{\mathfrak{s}}}
\nc{\ft}{{\mathfrak{t}}}
\nc{\oft}{\overset{\circ}{\mathfrak{t}}}
\nc{\fz}{{\mathfrak{z}}}
\nc{\fsl}{{\mathfrak{sl}}}
\nc{\hsl}{{\widehat{\mathfrak{sl}}}}
\nc{\hgl}{{\widehat{\mathfrak{gl}}}}
\nc{\hg}{{\widehat{\mathfrak{g}}}}
\nc{\hm}{{\widehat{\mathfrak{m}}}}
\nc{\chg}{{\widehat{\mathfrak{g}}}{}^\vee}
\nc{\hn}{{\widehat{\mathfrak{n}}}}
\nc{\chn}{{\widehat{\mathfrak{n}}}{}^\vee}
\nc{\fA}{{\mathfrak{A}}}
\nc{\fB}{{\mathfrak{B}}}
\nc{\fD}{{\mathfrak{D}}}
\nc{\fE}{{\mathfrak{E}}}
\nc{\fF}{{\mathfrak{F}}}
\nc{\fG}{{\mathfrak{G}}}
\nc{\fK}{{\mathfrak{K}}}
\nc{\fL}{{\mathfrak{L}}}
\nc{\fM}{{\mathfrak{M}}}
\nc{\fN}{{\mathfrak{N}}}
\nc{\fP}{{\mathfrak{P}}}
\nc{\fU}{{\mathfrak{U}}}
\nc{\fV}{{\mathfrak{V}}}
\nc{\fZ}{{\mathfrak{Z}}}
\nc{\ba}{{\mathbf{a}}}
\nc{\bb}{{\mathbf{b}}}
\nc{\bc}{{\mathbf{c}}}
\nc{\bd}{{\mathbf{d}}}
\nc{\bbf}{{\mathbf{f}}}
\nc{\be}{{\mathbf{e}}}
\nc{\bi}{{\mathbf{i}}}
\nc{\bj}{{\mathbf{j}}}
\nc{\bh}{{\mathbf{h}}}
\nc{\bm}{{\mathbf{m}}}
\nc{\bn}{{\mathbf{n}}}
\nc{\bo}{{\mathbf{o}}}
\nc{\bp}{{\mathbf{p}}}
\nc{\bq}{{\mathbf{q}}}
\nc{\bu}{{\mathbf{u}}}
\nc{\bv}{{\mathbf{v}}}
\nc{\bx}{{\mathbf{x}}}
\nc{\bs}{{\mathbf{s}}}
\nc{\by}{{\mathbf{y}}}
\nc{\bw}{{\mathbf{w}}}
\nc{\bA}{{\mathbf{A}}}
\nc{\bK}{{\mathbf{K}}}
\nc{\bB}{{\mathbf{B}}}
\nc{\bC}{{\mathbf{C}}}
\nc{\bG}{{\mathbf{G}}}
\nc{\bD}{{\mathbf{D}}}
\nc{\bE}{{\mathbf{E}}}
\nc{\bH}{{{\mathbf{H}}}}
\nc{\bL}{{\mathbf{L}}}
\nc{\bM}{{\mathbf{M}}}
\nc{\bN}{{\mathbf{N}}}
\nc{\bO}{{\mathbf{O}}}
\nc{\bQ}{{\mathbf{Q}}}
\nc{\bV}{{\mathbf{V}}}
\nc{\bW}{{\mathbf{W}}}
\nc{\bX}{{\mathbf{X}}}
\nc{\bZ}{{\mathbf{Z}}}
\nc{\bS}{{\mathbf{S}}}
\nc{\sA}{{\mathsf{A}}}
\nc{\sB}{{\mathsf{B}}}
\nc{\sC}{{\mathsf{C}}}
\nc{\sD}{{\mathsf{D}}}
\nc{\sF}{{\mathsf{F}}}
\nc{\sG}{{\mathsf{G}}}
\nc{\sH}{{\mathsf{H}}}
\nc{\sK}{{\mathsf{K}}}
\nc{\sM}{{\mathsf{M}}}
\nc{\sN}{{\mathsf{N}}}
\nc{\sO}{{\mathsf{O}}}
\nc{\sW}{{\mathsf{W}}}
\nc{\sQ}{{\mathsf{Q}}}
\nc{\sP}{{\mathsf{P}}}
\nc{\sR}{{\mathsf{R}}}
\nc{\sT}{{\mathsf{T}}}
\nc{\sZ}{{\mathsf{Z}}}
\nc{\sfi}{{\mathsf{i}}}
\nc{\sfj}{{\mathsf{j}}}
\nc{\sfp}{{\mathsf{p}}}
\nc{\sfq}{{\mathsf{q}}}
\nc{\sfs}{{\mathsf{s}}}
\nc{\sft}{{\mathsf{t}}}
\nc{\sr}{{\mathsf{r}}}
\nc{\bk}{{\mathsf{k}}}
\nc{\sa}{{\mathsf{s}}}
\nc{\sg}{{\mathsf{g}}}
\nc{\sn}{{\mathsf{n}}}
\nc{\sh}{{\mathsf{h}}}
\nc{\sff}{{\mathsf{f}}}
\nc{\sk}{{\mathsf{k}}}
\nc{\sfb}{{\mathsf{b}}}
\nc{\sfc}{{\mathsf{c}}}
\nc{\sfe}{{\mathsf{e}}}
\nc{\sd}{{\mathsf{d}}}
\nc{\BK}{{\bar{K}}}
\nc{\tA}{{\widetilde{\mathbf{A}}}}
\nc{\tB}{{\widetilde{\mathcal{B}}}}
\nc{\tg}{{\widetilde{\mathfrak{g}}}}
\nc{\tG}{{\widetilde{G}}}
\nc{\TM}{{\widetilde{\mathbb{M}}}{}}
\nc{\tO}{{\widetilde{\mathsf{O}}}{}}
\nc{\tU}{{\widetilde{\mathfrak{U}}}{}}
\nc{\TZ}{{\tilde{Z}}}
\nc{\tx}{{\tilde{x}}}
\nc{\tbv}{{\tilde{\bv}}}
\nc{\tfP}{{\widetilde{\mathfrak{P}}}{}}
\nc{\tz}{{\tilde{\zeta}}}
\nc{\tmu}{{\tilde{\mu}}}
\nc{\urho}{\underline{\rho}}
\nc{\uB}{\underline{B}}
\nc{\uC}{{\underline{\mathbb{C}}}}
\nc{\ui}{\underline{i}}
\nc{\uj}{\underline{j}}
\nc{\ofP}{{\overline{\mathfrak{P}}}}
\nc{\oB}{{\overline{\mathcal{B}}}}
\nc{\og}{{\overline{\mathfrak{g}}}}
\nc{\oI}{{\overline{I}}}
\nc{\eps}{\varepsilon}
\nc{\hrho}{{\hat{\rho}}}
\nc{\one}{{\mathbf{1}}}
\nc{\two}{{\mathbf{t}}}
\nc{\Rep}{{\mathop{\operatorname{\rm Rep}}}}
\nc{\Tot}{{\mathop{\operatorname{\rm Tot}}}}
\nc{\Ker}{{\mathop{\operatorname{\rm Ker}}}}
\nc{\im}{{\mathop{\operatorname{\rm Im}}}}
\nc{\Hilb}{{\mathop{\operatorname{\rm Hilb}}}}
\nc{\End}{{\mathop{\operatorname{\rm End}}}}
\nc{\Ext}{{\mathop{\operatorname{\rm Ext}}}}
\nc{\CHom}{{\mathop{\operatorname{{\mathcal{H}}\it om}}}}
\nc{\CEnd}{{\mathop{\operatorname{{\mathcal{E}}\it nd}}}}
\nc{\GL}{{\mathop{\operatorname{\rm GL}}}}
\nc{\gr}{{\mathop{\operatorname{\rm gr}}}}
\nc{\HN}{{\mathop{\operatorname{\rm HN}}}}
\nc{\Id}{{\mathop{\operatorname{\rm Id}}}}
\nc{\de}{{\mathop{\operatorname{\rm def}}}}
\nc{\length}{{\mathop{\operatorname{\rm length}}}}
\nc{\supp}{{\mathop{\operatorname{\rm supp}}}}
\nc{\Cliff}{{\mathsf{Cliff}}}
\nc{\Fl}{\on{Fl}}
\nc{\Fib}{{\mathsf{Fib}}}
\nc{\Coh}{{\on{Coh}}}
\nc{\QCoh}{{\on{QCoh}}}
\nc{\IndCoh}{{\on{IndCoh}}}
\nc{\FCoh}{{\mathsf{FCoh}}}
\nc{\reg}{{\text{\rm reg}}}
\nc{\cplus}{{\mathbf{C}_+}}
\nc{\cminus}{{\mathbf{C}_-}}
\nc{\cthree}{{\mathbf{C}_\bullet}}
\nc{\Qbar}{{\bar{Q}}}
\nc\Eis{\on{Eis}}
\nc\Eisb{\ol\Eis{}}
\nc\Eisr{\on{Eis}^{rat}{}}
\nc\wh{\widehat}
\nc{\Def}{\on{Def_{\check{\fb}}(E)}}
\nc{\barZ}{\overline{Z}{}}
\nc{\barbarZ}{\overline{\barZ}{}}
\nc{\barpi}{\overline\pi}
\nc{\barbarpi}{\overline\barpi}
\nc{\barpip}{\overline\pi{}^+}
\nc{\barpim}{\overline\pi{}^-}
\nc{\fq}{\mathfrak q}
\nc{\fqb}{\ol{\sfq}{}}
\nc{\fpb}{\ol{\sfp}{}}
\nc{\fpr}{{\mathsf{pair}^{rat}}{}}
\nc{\fqr}{{\sfq^{rat}}{}}
\nc{\hattimes}{\wh\otimes}
\nc{\bOmega}{{\overline{\Omega(\check \fn)}}}
\nc{\seq}[1]{\stackrel{#1}{\sim}}
\nc{\cT}{{\check{T}}}
\nc{\cG}{{\check{G}}}
\nc{\cM}{{\check{M}}}
\nc{\cB}{{\check{B}}}
\nc{\cP}{{\check{P}}}
\nc{\ct}{{\check{\mathfrak t}}}
\nc{\cg}{{\check{\fg}}}
\nc{\cb}{{\check{\fb}}}
\nc{\cn}{{\check{\fn}}}
\nc{\cLambda}{{\check\Lambda}}
\nc{\cla}{{\check\lambda}}
\nc{\cmu}{{\check\mu}}
\nc{\cnu}{{\check\nu}}
\nc{\ceta}{{\check\eta}}
\nc{\DefbE}{{\on{Def}_{\cB}(E_\cT)}}
\nc{\imathb}{{\ol{\imath}}}
\nc{\rlr}{\overset{\longrightarrow}{\underset{\longrightarrow}\longleftarrow}}
\nc{\oBun}{\overset{\circ}\Bun}
\nc{\LS}{\on{LS}}
\nc{\BunBbb}{\ol{\ol{Bun}}_B}
\nc{\BunBr}{\Bun_B^{rat}}
\nc{\BunBrsg}{\Bun_B^{rat,\on{s.g.}}}
\nc{\BunBrp}{\Bun_B^{rat,polar}}
\nc{\BunBrpbg}{\Bun_B^{rat,polar,\on{b.g.}}}
\nc{\BunBrpsg}{\Bun_B^{rat,polar,\on{s.g.}}}
\nc{\BunTrp}{\Bun_T^{rat,polar}}
\nc{\BunTrpbg}{\Bun_T^{rat,polar,\on{b.g.}}}
\nc{\BunTrpsg}{\Bun_T^{rat,polar,\on{s.g.}}}
\nc{\BunNr}{\Bun_N^{rat}}
\nc{\BunNre}{\Bun_N^{enh,rat}}
\nc{\BunTr}{\Bun_T^{rat}}
\nc{\Vect}{\on{Vect}}
\nc{\Whit}{\on{Whit}}
\nc{\CTb}{\ol{\on{CT}}}
\nc{\Ran}{{\on{Ran}}}
\nc{\Ranu}{{\on{Ran}^{\on{untl}}}}
\nc{\Ranustr}{{\on{Ran}^{\on{untl}}_{\on{str}}}}
\nc{\Ranusubset}{{\Ran^{\on{untl}}_{\subseteq}}}
\nc{\Ranusubsetx}{{\Ran^{\on{untl}}_{x\subseteq}}}
\nc{\CTr}{\on{CT}^{rat}{}}
\nc\jmathr{\jmath^{rat}{}}
\nc{\ux}{\underline{x}}
\nc{\clambda}{{\check\lambda}}
\nc{\calpha}{{\check\alpha}}
\nc{\ind}{{\mathbf{ind}}}
\nc{\coinv}{{\mathbf{coinv}}}
\nc{\oblv}{{\mathbf{oblv}}}
\nc{\free}{{\mathbf{free}}}
\nc{\ox}{{\overline{x}}}
\nc{\cLa}{\check{\Lambda}}
\nc{\StinftyCat}{\on{DGCat}}
\nc{\inftyCat}{\infty\on{-Cat}}
\nc{\inftygroup}{\infty\on{-Grpd}}
\nc{\Dmod}{\on{D-mod}}
\nc{\CMaps}{{\mathcal Maps}}
\nc{\Maps}{\on{Maps}}
\nc{\affSch}{\on{Sch}^{\on{aff}}}
\nc{\dr}{{\on{dR}}}
\nc{\oCF}{\overset{\circ}\CF}
\nc{\oCY}{\overset{\circ}\CY}
\nc{\oCZ}{\overset{\circ}\CZ}
\nc{\opi}{\overset{\circ}\pi}
\nc{\leqG}{\underset{G}\leq}
\nc{\leqM}{\underset{M}\leq}
\nc{\leqGad}{\underset{G_{ad}}\leq}
\nc{\leqMad}{\underset{M_{ad}}\leq}
\nc{\Tr}{\on{Tr}}
\nc{\Frob}{{\on{Frob}}}
\nc{\DGCat}{\on{DGCat}}
\nc{\tDGCat}{2\on{-DGCat}_{\on{u.g.}}}
\nc{\ev}{\on{ev}}
\nc{\mmod}{{\on{-}\!\mathbf{mod}}}
\nc{\sotimes}{\overset{!}\otimes}
\nc{\Shv}{\on{Shv}}
\nc{\Spc}{\on{Spc}}
\nc{\Res}{\on{Res}}
\nc{\bDelta}{{\mathbf{\Delta}}}
\nc{\bMaps}{{\mathbf{Maps}}}
\nc{\cD}{\mathcal D}
\nc{\ocD}{\cD^\times}
\nc{\ppart}{(\!(t)\!)}
\nc{\qqart}{[\![t]\!]}
\nc{\oCU}{\overset{\circ}{\CU}}
\nc{\Exc}{{\mathcal{E}xc}}
\nc{\Sht}{\on{Sht}}
\nc{\Nilp}{{\on{Nilp}}}
\nc{\Drinf}{\on{Drinf}}
\nc{\Sing}{\on{Sing}}
\nc{\IndLisse}{\Lisse}
\nc{\Shvl}{\on{Shv}_{\on{lisse}}} 
\nc{\Lisse}{\on{Lisse}}
\nc{\Mir}{\on{Mir}}
\nc{\fSet}{\on{fSet}}
\nc{\qLisse}{\on{QLisse}}
\nc{\Ev}{\on{Ev}}
\nc{\Sat}{\on{Sat}}
\nc{\Se}{\on{Se}}
\nc{\coSht}{\on{co-Sht}}
\nc{\coCK}{\on{co-}\!\CK}
\nc{\FLE}{\on{FLE}}
\nc{\BRST}{\on{BRST}}
\nc{\KL}{\on{KL}}
\nc{\crit}{{\on{crit}}}
\nc{\Op}{{\on{Op}}}
\nc{\MOp}{\on{MOp}}
\nc{\Wak}{\on{Wak}}
\nc{\Av}{\on{Av}}
\nc{\semiinf}{{\frac{\infty}{2}}}
\nc{\DS}{\on{DS}}
\nc{\dR}{{\on{dR}}}
\nc{\Poinc}{{\on{Poinc}}}
\renc{\det}{\on{det}}
\nc{\oG}{\overset{\circ}{G}}
\nc{\Sectna}{\on{Sect}_\nabla}
\nc{\bGamma}{\mathbf{\Gamma}}
\nc{\ShvCat}{\on{ShvCat}}
\nc{\zero}{\mathbf{0}}
\begin{document}

%

\vskip1cm


\dedicatory{To V.~Drinfeld}

\title[Proof of the geometric Langlands conjecture V]{Proof of the geometric Langlands conjecture V: \\ the multiplicity one theorem}

\author{Dennis Gaitsgory and Sam Raskin}

\begin{quote}
``Fear will keep the local systems in line." \newline (Grand Moff Tarkin, \emph{Star Wars: Episode IV \textendash{} A New Hope}) \vskip0.5cm
\end{quote}

\date{\today}

\maketitle

\bigskip

\bigskip

\tableofcontents

\section*{Introduction}

This paper is a 
conclusion of the series of papers \cite{GLC1,GLC2,GLC3,GLC4}, written jointly with D.~Arinkin, D.~Beraldo, J.~Campbell, 
L.~Chen, J.~F\ae{}rgeman, K.~Lin, and N.~Rozenblyum.

\medskip

In the preceding papers, we have constructed the Langlands functor 
\begin{equation} \label{e:LG Intro}
\BL_G:\Dmod_{\frac{1}{2}}(\Bun_G) \to \IndCoh_\Nilp(\LS_{\cG}),
\end{equation}
and established many of its properties.

\medskip

In this paper we will prove the 
geometric Langlands conjecture (GLC) by showing 
that $\BL_G$ is an equivalence. 

\medskip 

This theorem confirms the original vision of 
Beilinson-Drinfeld, which
was circulated by them in discussions of their work \cite{BD}. 
A precise version of the conjecture was originally
given in \cite[Conjecture 11.2.3]{AG}. The functor $\BL_G$
was constructed in \cite{GLC1}, and a version of GLC with
this precise functor included 
appears as \cite[Conjecture 1.6.7]{GLC1}. As discussed
in \cite{GLC1}, the functor
$\BL_G$ is the unique possible equivalence that is
compatible with the \emph{Whittaker} and \emph{Eisenstein}
compatibilities of \cite{Ga4}.

%
%
%

\ssec{What was done in previous papers?} \label{ss:lan party}

We begin with a short summary of the key points of \cite{GLC1,GLC2,GLC3,GLC4}.
This material is discussed in greater detail in \secref{s:review}.

\sssec{}

Recall from \cite[Sect. 1.1]{GLC1} that $\Dmod_{\frac{1}{2}}(\Bun_G)$ denotes the category
of \emph{half-twisted} D-modules on $\Bun_G$. This is the category that appears on the
automorphic side of the (de Rham) version of the geometric Langlands conjecture (GLC). 

\medskip 

In \cite{GLC1}, we used the functor of Whittaker coefficient to construct 
the \emph{geometric Langlands functor}
\begin{equation} \label{e:LG Intro again}
\BL_G:\Dmod_{\frac{1}{2}}(\Bun_G) \to \IndCoh_\Nilp(\LS_{\cG}),
\end{equation}
where the right hand side was defined in \cite{AG}. 
This functor is linear with respect to the \emph{spectral action} of
$\QCoh(\LS_{\cG})$ on $\Dmod_{\frac{1}{2}}(\Bun_G)$ (see \cite[Sect. 1.2]{GLC1}).
The geometric Langlands conjecture states that $\BL_G$ is an equivalence of categories.

\sssec{}

We proved in \cite[Theorem 1.6.2]{GLC4} (building on \cite{FR} and \cite{GLC3}) that the functor 
$\BL_G$ is conservative. We also proved that $\BL_G$ admits a left adjoint, to be denoted $\BL^L_G$. 
Moreover, we proved that the composition of the functor $\BL_G$ and its left adjoint $\BL^L_G$,
which is an endofunctor of $\IndCoh_\Nilp(\LS_{\cG})$, is given by tensoring by an object
$$\CA_G\in \QCoh(\LS_\cG).$$
This object $\CA_G$ is naturally an associative algebra object in $\QCoh(\LS_\cG)$. 

\medskip 

By Barr-Beck, we obtain an equivalence
\[
\begin{tikzcd}
\Dmod_{\frac{1}{2}}(\Bun_G) 
\arrow[r,dotted,"\simeq"] 
\arrow[dr,swap,"\BL_G"]
&
\CA_G\mod(\IndCoh_\Nilp(\LS_{\cG}))
\arrow[d]
\\
&
\IndCoh_\Nilp(\LS_{\cG})
\end{tikzcd}
\]

\noindent between $\Dmod_{\frac{1}{2}}(\Bun_G)$ and the category
of $\CA_G$-modules in $\IndCoh_\Nilp(\LS_{\cG})$. 

\medskip 

Therefore, the geometric Langlands conjecture 
amounts to the assertion that the
unit map
\begin{equation} \label{e:AG initial Intro}
\CO_{\LS_\cG}\to \CA_G,
\end{equation}
is an isomorphism in $\QCoh(\LS_\cG)$.

\sssec{}

The main theorem of \cite{GLC3} asserts
that the restriction of the map \eqref{e:AG initial Intro} 
to the locus of \emph{reducible} local systems is an isomorphism. 

\medskip

Therefore, it remains to show that the map
\begin{equation} \label{e:AG irred Intro}
\CO_{\LS_\cG^{\on{irred}}}\to \CA_G|_{\LS_\cG^{\on{irred}}}, 
\end{equation}
induced by \eqref{e:AG initial Intro}, is an isomorphism. 

\sssec{} \label{sss:A red Intro}

The paper \cite{GLC4} developed some structural features of $\CA_{G,\on{irred}}$.

\medskip 

Namely, that work proves that  
$$\CA_{G,\on{irred}}:=\CA_G|_{\LS_\cG^{\on{irred}}}$$
is a \emph{classical} vector bundle\footnote{Here we are for simplicity assuming that $G$ is semi-simple, i.e.,
that $Z^0_G=\{1\}$.} that carries a flat connection. Moreover, this connection 
has finite monodromy. 

\sssec{}

The above observations mark the starting point of the present paper.

\ssec{What is done in this paper?}\label{ss:proof outline}

\sssec{}\label{sss:what is to be done}
First, let us observe what needs to be done given the preliminaries above.

\medskip

Let $\sigma$ be an \emph{irreducible} $\cG$-local system. We need to 
show that the fiber $\CA_{G,\sigma}$ of $\CA_G$ at $\sigma$ is the unit
associative algebra. 

\medskip 

As $\CA_{G,\sigma}\mod$ is the category $\on{Hecke}_{\sigma}(\Dmod(\Bun_G))$ 
of Hecke eigensheaves with 
eigenvalue $\sigma$, the above amounts to showing that there
is a \emph{unique} (up to tensoring with a vector space) 
Hecke eigensheaf for each such $\sigma$.

\medskip 

By the construction of the Langlands functor $\BL_G$, this amounts to 
two assertions, (\emph{i}) the existence of a Hecke eigensheaf for an
irreducible spectral parameter $\sigma$, and (\emph{ii}) 
a multiplicity one theorem: we need to show
that cuspidal objects of $\Dmod(\Bun_G)$ 
can be (uniquely) reconstructed from their Whittaker coefficients.

\sssec{}\label{sss:features}

We now outline the argument presented in the paper.

\medskip 

For simplicity, we assume that $G$ is simple (in particular, of adjoint type) and 
that the genus $g$ is at least $2$; 
we also exclude the case when $g=2$ and $G=PGL_2$. 
The proof is based on the three
observations concerning the topology of $\LS_{\cG}^{\on{irred}}$, 
the algebraic geometry of $\LS_{\cG}$, and the sheaf theory of
$\Bun_G$: 

\medskip

\begin{itemize} 

\item $\LS_\cG^{\on{irred}}$ is simply-connected (\thmref{t:pi 1 LS});

\smallskip

\item $\LS_\cG$ is Cohen-Macaulay 
and the complement of $\LS_\cG^{\on{irred}}$ has codimension $\geq 2$
(\corref{c:LS CM}, \propref{p:compl 2});

\smallskip

\item Endomorphisms of the vacuum Poincar\'e sheaf
$\on{Poinc}^{\on{Vac,glob}}_{G,!}\in \Dmod_{\frac{1}{2}}(\Bun_G)$
are just scalars (\thmref{t:end Poinc vac a}).

\end{itemize}

\medskip

We remind here that the vacuum Poincar\'e sheaf appears in characterizing
the Langlands functor $\BL_G$, cf. \cite[Sect. 1.4]{GLC1}.  

\medskip 

The last of these three results is the simplest to prove. 
But as we will see below, it is ultimately the 
point most responsible for addressing the \emph{multiplicity one} problem
mentioned in \secref{sss:what is to be done}. 
 
\sssec{}

The above three observations combine as follows.

\medskip

The first observation combined with the features of
$\CA_{G,\on{irred}}$ mentioned in \secref{sss:A red Intro} implies that 
$\CA_{G,\on{irred}}$ is isomorphic to the direct sum of several copies of the structure sheaf, i.e.,
$$\CA_{G,\on{irred}} \simeq \CO^{\oplus n}_{\LS_\cG^{\on{irred}}}.$$

As \eqref{e:AG irred Intro} is a morphism of algebras, it 
suffices to show that $n=1$. 

\sssec{} \label{sss:dim count}

The second observation, combined with the fact that \eqref{e:AG initial Intro} is an isomorphism on the 
\emph{reducible} locus, implies that
$$H^0\left(\Gamma(\LS_\cG,\CA_G)\right)\to H^0\left(\Gamma(\LS^{\on{irred}}_\cG,\CA_{G,\on{irred}})\right)$$
is an isomorphism (see \secref{sss:step 1} for more details).

\medskip

So, we obtain that 
\begin{multline} \label{e:dim estim}
\dim\left(H^0\left(\Gamma(\LS_\cG,\CA_G)\right)\right)=\dim\left(H^0\left(\Gamma(\LS^{\on{irred}}_\cG,\CA_{G,\on{irred}})\right)\right)=\\
=\dim\left(H^0\left(\Gamma(\LS^{\on{irred}}_\cG,\CO^{\oplus n}_{\LS^{\on{irred}}})\right)\right)\geq n.
\end{multline}

\sssec{} \label{sss:main step}

Finally, by the construction of $\BL_G$, we have
$$\Gamma(\LS_\cG,\CA_G)\simeq \CEnd_{\Dmod_{\frac{1}{2}}(\Bun_G)}(\on{Poinc}^{\on{Vac}}_{G,!}).$$

The third observation implies that $H^0$ of the right-hand side is one-dimensional. Combined with \eqref{e:dim estim}, this implies that $n=1$,
as desired. 

\begin{rem}

The outline above uses a special 
feature of the de Rham setting. By contrast, Betti or
\'etale moduli stacks of local systems have infinite dimensional algebras
of global functions, while the de Rham moduli stack has very
few global functions.
For this reason, our strategy does not adapt to either Betti or \'etale settings.
In particular, the dimension count above carries no meaning in those settings, and
our overall strategy for controlling multiplicities does not adapt.

\medskip

With that said, we remind that it was shown in \cite[Theorem 3.5.2]{GLC1} that the de Rham version of GLC
proved in this paper implies the Betti version.

\end{rem}

\sssec{Deficiencies} \label{sss:imperfect1}

An awkward aspect of this paper is that the argument outlined above does not apply in low genus. Namely, the three features 
mentioned in \secref{sss:features} break down when $g\leq 1$ (and the first two also for $g=2$ when $G$ is of type $A_1$).  

\medskip

Thus, the proof of GLC we give is \emph{not} uniform across all reductive groups and genera. 

\sssec{}

We treat the outlying cases as follows (still assuming that $G$ is simple): 

\medskip

First, when $g=0$, there is nothing to prove, as $\LS^{\on{irred}}_\cG$ is empty in this case.

\medskip

Second, we recall from \cite[Theorem 1.8.2]{GLC4} 
that multiplicity one problems can be settled for arbitrary
genus and groups of type $A_n$ using features of the 
geometry of opers, which is much easier to understand 
in this case.

\medskip

Thus, it remains to treat the case of $g=1$ and a simple group of type different than
$A_n$. However, in this case a theorem of \cite{KS,BFM} asserts that $\LS^{\on{irred}}_\cG$
is again empty. 

\ssec{Complications stemming from a non-trivial center}

This subsection can be skipped on the first pass. That said, here we will point to a 
fun part of this paper: the 2-categorical Fourier-Mukai transform. 
 
\sssec{}

In the outline in \secref{ss:proof outline}, we have assumed that $G$ is of adjoint type. However, this case is \emph{insufficient}
in order to deduce GLC for any reductive group $G$. 

\medskip

In fact, we prove (see \corref{c:sc}) that GLC reduces
to \emph{almost simple simply-connected} groups.

\medskip

In this subsection we indicate how the outline in \secref{ss:proof outline} 
needs to me modified in order to treat this case.

\sssec{}

Assume that $G$ is semi-simple. Then if $\cG$ is not simply-connected, 
$\LS^{\on{irred}}_\cG$ is not connected, nor are 
its connected components simply-connected.
In fact
$$\pi_0(\LS^{\on{irred}}_\cG)\simeq \pi_0(\Bun_\cG)\simeq (Z_G)^\vee.$$

We denote this bijection by
$$\alpha\in (Z_G)^\vee\,\, \rightsquigarrow \,\,\LS^{\on{irred}}_{\cG,\alpha},$$
where $\LS^{\on{irred}}_{\cG,\alpha}$ is a connected component of $\LS^{\on{irred}}_\cG$.

\medskip

Moreover, each 
connected component of $\LS^{\on{irred}}_\cG$ has abelian fundamental group, 
and its characters are in
bijection with $\Bun_{Z_G}$. We denote this bijection by
$$(\CP_{Z_G}\in \Bun_{Z_G})\rightsquigarrow \CL_{\CP_{Z_G}},$$
where $\CL_{\CP_{Z_G}}$ is a 1-dimensional local system.

\sssec{} \label{sss:center Intro}

With this understood, it is not difficult to adapt the proof explained in \secref{ss:proof outline}, modulo the following issue:

\medskip

We need to know that:

\begin{itemize}

\item The functor $\BL_G$ sends the direct summand of $\Dmod_{\frac{1}{2}}(\Bun_G)$ on which the action of $Z_G$
by automorphisms of the identity functor is given by $\alpha$ to the direct summand of $\IndCoh_\Nilp(\LS_{\cG})$ consisting of sheaves
supported on $\LS^{\on{irred}}_{\cG,\alpha}$.

\smallskip

\item The functor $\BL_G$ intertwines the automorphism of $\Dmod_{\frac{1}{2}}(\Bun_G)$ given by translation
by $\CP_{Z_G}$ with the automorphism of $\IndCoh_\Nilp(\LS_{\cG})$ given by 
tensor product with $\CL_{\CP_{Z_G}}$.

\end{itemize} 

\sssec{}

The above two properties can be formulated purely on the geometric side, in terms of the spectral
action of $\QCoh(\LS_\cG)$ on $\Dmod_{\frac{1}{2}}(\Bun_G)$, see Theorems \ref{t:Hecke Z 0} and
\ref{t:Hecke Z 1}, respectively.

\medskip

The proofs of these two theorems involve a fun manipulation (that appears to be new) with the \emph{2-categorical
Fourier-Mukai transform}. 

\sssec{}

The 2-categorical Fourier-Mukai transform in the case at hand is an equivalence of 2-categories 
between \emph{sheaves of categories}
over the 2-stacks $\on{Ge}_{Z_G}(X)$ and $\on{Ge}_{\pi_1(\cG)}(X)$, which classify gerbes on $X$ with respect to
$Z_G$ and $\pi_1(\cG)$, respectively. 

\medskip

In turns out that one can upgrade $\Dmod_{\frac{1}{2}}(\Bun_G)$ to a sheaf of categories over $\on{Ge}_{Z_G}(X)$
and over $\on{Ge}_{\pi_1(\cG)}(X)$, and the assertion is that the resulting two sheaves of categories map to one
another under the 2-categorical Fourier-Mukai transform. 
Per \remref{r:not 1 aff}, the prestacks 
$\on{Ge}_{Z_G}(X)$ and $\on{Ge}_{\pi_1(\cG)}(X)$ are
\emph{not} 1-affine, so it is essential to work
with sheaves of categories here. 

\medskip

The above assertion implies the two properties mentioned in \secref{sss:center Intro}. But in fact it carries much
more information. For example, it contains the answer to the following question: 

\medskip

What the spectral side of GLC,
when on the automorphic side instead of the usual $\Bun_G$, we consider the stack of bundles with respect to
a non-pure inner form of $G$ (i.e., a twist by a $G_{\on{ad}}$-torsor, which does not come from a $G$-torsor)?

\medskip

It turns out that the answer is the twist of (the usual) $\IndCoh_\Nilp(\LS_\cG)$ by a gerbe on $\LS_\cG$ that is
attached to the $G_{\on{ad}}$-torsor in question.

\ssec{What is not done in this paper?}\label{ss:not done}

\sssec{}\label{sss:neighbors}

First, there are a number of nearby problems that we do not consider
and do not know how to solve. Here are some:

\begin{itemize}

\item Geometric Langlands with Iwahori ramification.

\item Quantum geometric Langlands.

\item Local geometric Langlands with wild ramification. 

\item Global geometric Langlands with wild ramification.

\item Restricted geometric Langlands for $\ell$-adic sheaves 
(for curves in positive characteristic).

\item Geometric Langlands for the Fargues-Fontaine curve.

\end{itemize}

\sssec{} Moreover, \emph{even in the (global, untwisted, unramified, de Rham) setting
of the present paper}, we feel there is more to understand.
The tricks indicated in \secref{ss:proof outline} analyze 
automorphic sheaves via their shadows (specifically, the core part of the argument uses a dimension count).
In the remainder of this subsection, 
we indicate questions and projects 
that might let us observe them more directly.

\sssec{} \label{sss:Whit ff}

As in \secref{sss:what is to be done}, we need to be able to recover
cuspidal D-modules -- or equivalently: eigensheaves with irreducible
spectral parameters -- from their Whittaker coefficients. This can be stated
more formally either as showing that the composition
\begin{equation}\label{eq:coeff-cusp}
\Dmod_{\frac{1}{2}}(\Bun_G)_{\on{cusp}} \subset 
\Dmod_{\frac{1}{2}}(\Bun_G) \xrightarrow{\on{coeff}^{\on{Vac,glob}}_G} 
\Whit(\Gr_{G,\Ran})
\end{equation}

\noindent is fully faithful, or showing that the functor of vacuum 
Whittaker coefficient
\[
\on{Hecke}_{\sigma}\big(\Dmod_{\frac{1}{2}}(\Bun_G)\big) \to 
\Dmod_{\frac{1}{2}}(\Bun_G) \xrightarrow{\on{coeff}_G^{\on{Vac},\on{glob}}} \Vect
\]

\noindent is an equivalence for any irreducible local system $\sigma$.

\medskip 

For $G = GL_n$ (or a quotient thereof), the \emph{mirabolic trick}
suffices to prove this result; see \cite{Be1} for a strong version of this
assertion. We remind that a similar assertion holds at the level of automorphic
functions, although in the arithmetic setup, such assertions are \emph{not}
valid for more general reductive groups.

\medskip 

A posteriori, the same statement for general $G$ follows from the results
of the present paper. But one can imagine tackling this statement 
more directly. 

\medskip 

Below we record some possible strategies that we know have been discussed previously
in the geometric Langlands community. We allow ourselves more informality in this discussion than elsewhere in this
text. 

\sssec{Proof via contractibility of opers?}\label{sss:opers?}

Recall that $\CA_G \in \QCoh(\LS_\cG)$ was defined using sheaves on $\Bun_G$,
i.e., it crosses the barrier between $G$ and $\cG$ -- as should be no surprise.

\medskip

However, one key point of \cite{GLC4} is that there is an alternative construction 
of $\CA_{G,\on{irred}}$ purely
in $\cG$ (i.e., spectral) terms: this sheaf is the relative homology of the space
of rational opers (with irreducible underlying local system). Equivalently, for irreducible $\sigma$, the fiber 
$\CA_{G,\sigma}$ can be identified with homology of the space of generic oper structures on 
$\sigma$. 

\medskip

Therefore, as recorded in 
\cite[Conjecture 4.5.7]{GLC4}, it suffices to show that this space of generic oper structures is 
\emph{contractible}. Note that one piece of this assertion -- 
namely, that this space is non-empty -- is a theorem of D.~Arinkin,
\cite{Ari}. 

\medskip

The contractibility assertion is easy for 
$G=GL_n$, and has recently been established in \cite{BKS} for all classical groups. So 
this provides a uniform proof of GLC for classical groups, see \cite[Theorem 4.5.11]{GLC4}. 

\medskip

The contractibility assertion for general $G$ follows a posteriori from the results of this paper. 
But conceivably, one could find an a priori proof of this assertion for a general $G$ that would 
yield a more satisfying conclusion to the geometric Langlands conjecture. 

\medskip

We remark that in \cite[Corollary 4.5.5]{GLC4}, we showed that the space of generic oper structures on 
$\sigma$ has vanishing homology in positive degrees, so it only remains to show that it  
is \emph{connected}. In fact, as $\CA_{G,\on{irred}}$ is a vector bundle, it suffices
to check this for a single irreducible local system $\sigma$ (per connected component of
$\LS_{\cG}^{\on{irred}}$).

\sssec{Proof via microlocal sheaf theory?}

To date, the most successful geometric approach to studying 
Whittaker coefficients of automorphic sheaves for general reductive groups is \cite{FR},
where it was shown that the functor \eqref{eq:coeff-cusp} is \emph{conservative}. 
One could try extending the techniques of \cite{FR} to obtain a geometric proof of
fully faithfulness. Here we briefly fantasize about one
possible form this idea might take.

\medskip 

First, \cite{FR} works primarily in the setting of sheaves \emph{with nilpotent singular support} 
as in \cite{AGKRRV}. One key point of \cite{FR} is the picture that the vacuum Whittaker
coefficient is the microstalk at the basepoint of the global nilpotent cone $\Nilp$, a picture
that was then realized in a better and more precise form in \cite{NT}. 

\medskip 

On the one hand, by \cite{Wa}, there is a category $\mu\Perv(T^*(\Bun_G))$
of \emph{microsheaves 
supported on $T^*(\Bun_G)$} and 
a subcategory $\mu\Perv_\Nilp(T^*(\Bun_G))$ of microsheaves
\emph{supported on the nilpotent cone}. Moreover, there is a 
fully faithful \emph{microlocalization} functor 
$\Perv_{\Nilp}(\Bun_G) \to \mu\Perv_{\Nilp}(T^*(\Bun_G))$. 

\medskip 

For $\overset{\circ}{\Nilp}\subset \Nilp$ the open of \emph{generically regular} 
nilpotent Higgs fields, \cite[Theorem G]{FR} morally says that
the composition 
$$\Perv_{\Nilp}(\Bun_G)_{\on{temp}} \to \Perv_{\Nilp}(\Bun_G) \to \mu\Perv_{\Nilp}(T^*(\Bun_G))
\to \mu\Perv_{\overset{\circ}{\Nilp}}(T^*(\Bun_G))$$
remains fully faithful. 

\medskip 

On the other hand, one can dream of a strengthening of \cite{NT} that expresses
\emph{all} Whittaker coefficients from the microlocalization to 
$\mu\Perv_{\overset{\circ}{\Nilp}}(T^*(\Bun_G))$, with this microlocal functor 
being fully faithful for some natural geometric reasons. 

\medskip

This would also yield another resolution to the geometric 
Langlands conjecture, and one that might teach us more than
the present paper does. 

\sssec{Proof via Verlinde gluing?}

By \cite[Theorem 3.5.6]{GLC1}, we can deduce the de Rham geometric Langlands conjecture from 
its Betti counterpart \cite{BZN}.

\medskip 

As in \cite{BZN} Sect. 4.6, a better understanding of the automorphic side of 
Betti Langlands could allow one to glue $\Shv_{\frac{1}{2},\Nilp}^{\on{Betti}}(\Bun_G)$
via degeneration to a nodal curve, using Bezrukavnikov-style equivalences as local input. 
As we understand, there is active and on-going work of D.~Nadler and Z.~Yun in this direction.

\sssec{Proof via arithmetic?}

Suppose for simplicity that $G$ is adjoint and the genus is at least $2$,
so that $\LS_{\cG}^{\on{irred}}$ is connected. 

\medskip 

Recall that $\CA_G|_{\LS_\cG^{\on{irred}}}$ is a vector bundle. 
As mentioned in \secref{sss:opers?}, it follows 
that $\CA_G|_{\LS_\cG^{\on{irred}}}$ has one-dimensional fibers if it has
a one-dimensional fiber at \emph{any} point.
This in turn should reduce by specialization to a statement in characteristic $p$. Using
the intimate relationship between geometric and arithmetic Langlands established
in \cite{AGKRRV3}, one should be able to reduce de Rham geometric Langlands to 
a suitable multiplicity one statement for 
essentially \emph{any} class of unramified cusp forms
(equipped with the action of V.~Lafforgue's excursion operators).

\medskip 

With that said, we are not aware of any such arithmetic results for general reductive groups.

\sssec{Proof via... something else?}

We do not intend to represent the above as 
an exhaustive summary of discussions that have occurred. More seriously, we anticipate
future innovations whose form we do not yet know. 

\ssec{Contents}

\sssec{}

Let us briefly review the actual contents of this paper.

\medskip

In \secref{s:review} we summarize the results of \cite{GLC1,GLC2,GLC3,GLC4} that will be used in this paper.

\medskip

In \secref{s:sc} we show that it is sufficient to prove GLC in the special case when the group $G$ is 
almost simple and simply-connected.

\medskip

In \secref{s:low genus} we prove GLC when the genus $g$ of our curve is either $0$ or $1$.

\medskip

In \secref{s:pi 1} we express the fundamental groupoid of $\LS_\cG^{\on{irred}}$ in terms of $Z_G$.
(In particular, we show that if $G$ is simple, then $\LS_\cG^{\on{irred}}$ is connected and simply-connected,
under the assumption that $g\geq 2$, and excluding the case $g=2$ and $G=PGL_2$.) 

\medskip

In \secref{s:proof} we prove GLC for curves of genus $\geq 2$, essentially elaborating on the outline in \secref{ss:proof outline}. Here we also state some further results whose
proofs are given in later sections.

\medskip

In \secref{s:Poinc} we calculate (in the case $g\geq 2$) endomorphisms of the vacuum Poincar\'e object,
and show that they consist essentially only of scalars. 

\medskip

In \secref{s:LS} we prove the properties concerning algebraic geometry and topology of the stacks $\Bun_\cG$ and $\LS_\cG$.

\medskip

In \secref{s:gerbes} we introduce the 2-categorical Fourier-Mukai transform and use it 
establish the compatibility between the action of $\Bun_{Z_G}$ on $\Dmod_{\frac{1}{2}}(\Bun_G)$ and
the spectral action. This material is used to adapt the discussion of 
\secref{ss:proof outline} to the case when $G$ is not of adjoint type. As a byproduct, we obtain
a version of geometric Langlands for non-pure inner twists of $G$. 

\sssec{Conventions and notation}

This paper follows the conventions and notation of the \cite{GLC1,GLC2,GLC3,GLC4} series. 

\ssec{Acknowledgements}

We would like to thank our collaborators on this project: D. ~Arinkin, J.~Campbell, D.~Beraldo, L.~Chen, J.~Faergeman, K.~Lin and N.~Rozenblyum
for sharing their insight and many fruitful discussions. 

\medskip

The first author wishes to thank Peter Scholze for a crucial discussion during
which which the idea to use the translation functors 
by points of $\Bun_{Z_G}$ was conceived (otherwise, this paper would only treat the case of $G$ with a connected center). 

\medskip

We thank Z.~Yun for informing us of the paper \cite{BFM}.

\medskip

We reiterate our gratitude to the mathematicians acknowledged in \cite{GLC1} for 
sharing their ideas on the geometric Langlands correspondence:
D.~Ben-Zvi, J.~Bernstein, R.~Bezrukavnikov, A.~Braverman, J.~Campbell, P.~Deligne, 
G.~Dhillon, R.~Donagi, L.~Fargues, B.~Feigin, M.~Finkelberg, E.~Frenkel, D.~Gaiotto, A.~Genestier, V.~Ginzburg, 
S.~Gukov, J.~Heinloth, A.~Kapustin, D.~Kazhdan, 
V.~Lafforgue, G.~Laumon, G.~Lusztig, S.~Lysenko, I.~Mirkovi\'c, D.~Nadler, T.~Pantev, P.~Scholze, 
C.~Teleman, Y.~Varshavsky, K.~Vilonen, E.~Witten, C.~Xue, Z.~Yun and X.~Zhu.

\medskip

We particularly recognize A.~Beilinson and V.~Drinfeld for conceiving the whole subject.

\medskip 

The work of D.G. was supported by NSF grant DMS-2005475. 
The work of S.R. was supported by NSF grant DMS-2101984 and a Sloan Research Fellowship 
while this work was in preparation.

\section{Summary of the preceding results} \label{s:review}

In this section we summarize the results from \cite{GLC1,GLC2,GLC3,GLC4} that will be used in 
the present paper for the proof of GLC. 

\ssec{The Langlands functor and its categorical properties}

\sssec{}

In \cite[Sect. 1]{GLC1}, we constructed a functor
$$\BL_G:\Dmod_{\frac{1}{2}}(\Bun_G) \to \IndCoh_\Nilp(\LS_{\cG}).$$
It satisfies the following properties:

\begin{itemize}

\item The functor $\BL_G$ admits a left adjoint $\BL^L_G$, 
see \cite[Theorem 16.1.2]{GLC3}; 

\item The functor $\BL_G$ is conservative, see \cite[Theorem 1.6.2]{GLC4}. 

\end{itemize}

\sssec{}

The geometric Langlands conjecture (GLC) says:

\begin{conj} \label{c:GLC}
The functor $\BL_G$ is an equivalence. 
\end{conj}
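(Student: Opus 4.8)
The plan is to reformulate Conjecture~\ref{c:GLC} via Barr--Beck and settle it by a dimension count. By the Barr--Beck equivalence recalled above, $\BL_G$ is an equivalence if and only if the unit map $\CO_{\LS_\cG}\to\CA_G$ of \eqref{e:AG initial Intro} is an isomorphism in $\QCoh(\LS_\cG)$, and this is what I would prove. First I would reduce, via \corref{c:sc}, to the case in which $G$ is almost simple and simply-connected, and then dispose of the low-genus cases: for $g=0$ there is nothing to prove since $\LS_\cG^{\on{irred}}=\emptyset$; for $g=1$ with $G$ not of type $A_n$ the same emptiness holds by \cite{KS,BFM}; and groups of type $A_n$ in any genus (which in particular covers $g=2$, $G=PGL_2$) are treated by the oper methods of \cite[Theorem 1.8.2]{GLC4}. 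This leaves the main case: $G$ almost simple simply-connected, $g\geq 2$, and $(g,G)\neq(2,PGL_2)$.

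\emph{The main case.} By the main theorem of \cite{GLC3}, the unit map is already an isomorphism over the locus of reducible local systems, so it suffices to show that its restriction \eqref{e:AG irred Intro} to $\LS_\cG^{\on{irred}}$ is an isomorphism. Here I would invoke the structural input of \secref{sss:A red Intro}: $\CA_{G,\on{irred}}:=\CA_G|_{\LS_\cG^{\on{irred}}}$ is a classical vector bundle carrying a flat connection with finite monodromy. Combined with the simple-connectedness of $\LS_\cG^{\on{irred}}$ (\thmref{t:pi 1 LS}), this forces the connection to be trivial, so $\CA_{G,\on{irred}}\simeq\CO_{\LS_\cG^{\on{irred}}}^{\oplus n}$ for some $n\geq 1$; since the unit map is a map of algebras, it remains only to prove $n=1$.

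\emph{The dimension count.} To pin down $n$ I would argue as in \secref{sss:dim count}. Because $\LS_\cG$ is Cohen--Macaulay and the complement of $\LS_\cG^{\on{irred}}$ has codimension $\geq 2$ (\corref{c:LS CM}, \propref{p:compl 2}), and because the unit map is an isomorphism over the reducible locus, restriction induces an isomorphism $H^0\big(\Gamma(\LS_\cG,\CA_G)\big)\to H^0\big(\Gamma(\LS_\cG^{\on{irred}},\CA_{G,\on{irred}})\big)$; hence $\dim H^0\big(\Gamma(\LS_\cG,\CA_G)\big)\geq n$. On the other hand, the construction of $\BL_G$ identifies $\Gamma(\LS_\cG,\CA_G)$ with $\CEnd_{\Dmod_{\frac{1}{2}}(\Bun_G)}(\on{Poinc}^{\on{Vac}}_{G,!})$, whose $H^0$ is one-dimensional by \thmref{t:end Poinc vac a}. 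Therefore $n\leq 1$, so $n=1$, which completes GLC in the main case.

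\emph{Non-trivial center, and the main obstacle.} To handle almost simple simply-connected $G$ with non-trivial center --- where the outline above, phrased for adjoint-type $G$, does not apply verbatim --- I would additionally establish the compatibility of $\BL_G$ with the $Z_G$-action by automorphisms of the identity functor and with translation by points of $\Bun_{Z_G}$ (Theorems~\ref{t:Hecke Z 0} and~\ref{t:Hecke Z 1}), via a 2-categorical Fourier--Mukai equivalence between sheaves of categories over $\on{Ge}_{Z_G}(X)$ and $\on{Ge}_{\pi_1(\cG)}(X)$; this lets one run the dimension count componentwise over $\pi_0(\LS_\cG^{\on{irred}})\simeq(Z_G)^\vee$. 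The hard part of the program is establishing the three structural inputs of \secref{ss:proof outline}: the simple-connectedness of $\LS_\cG^{\on{irred}}$, the Cohen--Macaulay and codimension statements for $\LS_\cG$, and the computation that endomorphisms of the vacuum Poincar\'e sheaf are scalars --- the last being technically the most accessible but the genuine source of multiplicity one. A further structural nuisance is that none of these inputs is uniform: all three degenerate for $g\leq 1$, and the first two also for $g=2$, $G=PGL_2$, which is exactly why the outlying cases require the separate treatment above.
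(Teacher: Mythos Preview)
Your outline is the paper's own, and the adjoint-type argument is correct as stated. One small slip: after reducing to simply-connected $G$, the excluded genus-$2$ case is $SL_2$, not $PGL_2$ (which is not simply-connected).

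There is one genuine gap in your treatment of the non-trivial center. For simply-connected $G$ with $Z_G\neq 1$, the dual $\cG$ is adjoint with $\pi_1(\cG)\simeq (Z_G)^\vee(1)\neq 1$, so \thmref{t:pi 1 LS} does \emph{not} say $\LS_\cG^{\on{irred}}$ is simply-connected: each connected component has nontrivial abelian $\pi_1$, whose characters are indexed by $\Bun_{Z_G}$ (\corref{c:pi 1 LS}). Hence $\CA_{G,\on{irred}}$ on the component $\LS_{\cG,\alpha}^{\on{irred}}$ decomposes not as $\CO^{\oplus n_\alpha}$ but as a direct sum of the line bundles $\CL_{\CP_{Z_G}}$ with multiplicities $n_{\CP_{Z_G},\alpha}$ (\propref{p:A irred}). ``Running the dimension count componentwise'' is therefore not enough: you must first show $n_{\CP_{Z_G},\alpha}=0$ for every \emph{non-trivial} $\CP_{Z_G}$. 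This is Step~2 of the paper's proof (\secref{sss:step 2}), and it requires an input you do not list, namely \thmref{t:end Poinc vac b} --- the vanishing of $H^0\left(\CHom(\on{Poinc}^{\on{Vac,glob}}_{G,!},\,\CP_{Z_G}\cdot\on{Poinc}^{\on{Vac,glob}}_{G,!})\right)$ for non-trivial $\CP_{Z_G}$ --- combined with \thmref{t:Hecke Z 1}. Only after the non-trivial line bundles are killed do Theorems~\ref{t:Hecke Z 0} and~\ref{t:end Poinc vac a} drive the componentwise count you describe (Steps~3--4, yielding $\sum_\alpha n_\alpha\leq |Z_G|$ and each $n_\alpha\geq 1$).
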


\sssec{}

By the above, \conjref{c:GLC} is equivalent to the fact that the unit of the adjunction
\begin{equation} \label{e:unit initial}
\on{Id}_{\IndCoh_\Nilp(\LS_{\cG})}\to \BL_G\circ \BL^L_G
\end{equation}
is an isomorphism.

\ssec{Spectral properties}

\sssec{}

Recall that the Hecke action gives rise to an action of the monoidal category $\QCoh(\LS_\cG)$
on $\Dmod_{\frac{1}{2}}(\Bun_G)$, see \cite[Sect. 1.2]{GLC1}.

\medskip

The functor $\BL_G$ has the following features with respect to this action:

\begin{itemize}

\item The functor $\BL_G$ is $\QCoh(\LS_\cG)$-linear, i.e., it intertwines the actions of $\QCoh(\LS_\cG)$ on the two sides, see
\cite[Sect. 1.7]{GLC1}. As the monoidal category $\QCoh(\LS_\cG)$ is rigid,
this implies that the functor $\BL_G^L$ is also $\QCoh(\LS_\cG)$-linear; 

\medskip

\item The $\QCoh(\LS_\cG)$-linear monad $\BL_G\circ \BL^L_G$ on $\IndCoh_\Nilp(\LS_{\cG})$
is given by a (uniquely defined) associative algebra object 
$$\CA_G\in \QCoh(\LS_\cG),$$
see \cite[Theorem 16.4.2]{GLC3}.

\end{itemize}

\sssec{}

The unit map
\begin{equation} \label{e:AG initial}
\CO_{\LS_\cG}\to \CA_G
\end{equation}
for the associative algebra $\CA_G$ in $\QCoh(\LS_\cG)$ 
gives rise to the map \eqref{e:unit initial}. 
Therefore, to see that \eqref{e:unit initial} is an isomorphism of functors, it suffices to show that 
\eqref{e:AG initial} is an isomorphism of quasi-coherent sheaves.

\ssec{Restriction to the irreducible locus}

\sssec{}

Let
$$\LS^{\on{red}}_\cG \subset \LS_\cG$$
be the closed substack consisting of \emph{reducible} local systems, i.e., the union of the images of the
(proper) maps
$$\LS_\cP\to \LS_\cG,$$
where $\cP \subsetneq \cG$ are proper parabolics of $\cG$.

\medskip

Let
$$\LS_\cG^{\on{irred}}\overset{\jmath}\hookrightarrow \LS_\cG$$
denote the embedding of the complement to 
$\LS^{\on{red}}_\cG$, i.e., $\LS_\cG^{\on{irred}}$ is 
the open substack of \emph{irreducible} local systems. 

\sssec{}

Let
\begin{equation} \label{e:embed red}
\QCoh(\LS_\cG)_{\on{red}}\subset \QCoh(\LS_\cG)
\end{equation}
be the full subcategory consisting of objects set-theoretically supported on $\LS^{\on{red}}_\cG$, i.e.,
$$\QCoh(\LS_\cG)_{\on{red}}=\on{ker}\left(\jmath^*:\QCoh(\LS_\cG)\to \QCoh(\LS^{\on{irred}}_\cG)\right).$$

\medskip

The above faithful embedding \eqref{e:embed red} admits a right adjoint, denoted $\wh\iota^!$. Explicitly,
$$\wh\iota^!\simeq \on{Fib}(\on{Id}\to \jmath_*\circ \jmath^*(-)).$$

\sssec{}  \label{sss:on Eis}

The following is the main result of \cite{GLC3} (it is equivalent to Theorem 17.1.2 in {\it loc. cit.}, see Sect. 17.3.3):

\medskip

\begin{itemize}

\item The map $\wh\iota^!(\CO_{\LS_\cG})\to \wh\iota^!(\CA_G)$ induced by \eqref{e:AG initial} is an isomorphism.

\end{itemize}

\sssec{} \label{sss:red to irred}

Set
$$\CA_{G,\on{irred}}:=\jmath^*(\CA_G).$$

Given the isomorphism in \secref{sss:on Eis}, we obtain that the fact that \eqref{e:AG initial} is an isomorphism 
(and hence, the statement of GLC) is equivalent to the fact that 
the map 
\begin{equation} \label{e:AG irred}
\CO_{\LS^{\on{irred}}_\cG}\to \CA_{G,\on{irred}}, 
\end{equation}
induced by \eqref{e:AG initial} is an isomorphism.

\ssec{Properties of \texorpdfstring{$\CA_{G,\on{irred}}$}{AG}} \label{ss:AG nice}

\sssec{}

In this subsection we will assume that $G$ is semi-simple. In this case $\LS_\cG^{\on{irred}}$ is a classical smooth algebraic stack.

\sssec{} \label{sss:AG nice}

The following property of $\CA_{G,\on{irred}}$ is one of the two main results of
the paper \cite{GLC4}, see Theorem 3.1.8 in {\it loc. cit.}:

\medskip

\begin{itemize}

\item The object $\CA_{G,\on{irred}}\in \QCoh(\LS_\cG^{\on{irred}})$ is a vector bundle (in particular, it is concentrated in cohomological degree $0$);

\end{itemize} 

\sssec{}  \label{sss:AG conn}

In addition, we have: 

\medskip

\begin{itemize}

\item The object $\CA_{G,\on{irred}}$ is canonically of the form
$\oblv^l(\CF)$, where $\CF\in \Dmod(\LS_\cG^{\on{irred}})$ and 
$$\oblv^l:\Dmod(\LS_\cG^{\on{irred}})\to \QCoh(\LS_\cG^{\on{irred}})$$
denotes the ``left" forgetful functor. This is \cite[Corollary 4.2.5]{GLC4}. 

\medskip

\item The above object $\CF$ is a local system
with a finite monodromy (in particular, it is concentrated in cohomological degree $0$). This is \cite[Proposition 4.2.8]{GLC4}. 

\end{itemize}

\section{Reduction to the case when \texorpdfstring{$G$}{G} is almost simple and simply-connected} \label{s:sc}

For a number of technical reasons\footnote{One reason is not very serious: if $G$ has a center of positive dimension, the stack 
$\LS^{\on{irred}}_\cG$ is not smooth, which is a silly annoyance. The more serious reason is that when $g=2$ we will need to assume
that $G$ has no factors of $A_1$ in its Dynkin diagram, see the preamble to \secref{ss:proof of GLC}.}, in the main argument in the proof of GLC,
we will need to assume that the group $G$ is almost simple (by which we mean simple modulo a finite center)
and simply-connected. In this section we will perform a reduction to this case. 

\ssec{Compatibility between Langlands functors}

\sssec{}

Let $G_1$ and $G_2$ be a pair of reductive groups (over our ground field $k$), and let 
$$\phi:G_1\to G_2$$
be an \emph{almost isogeny}, i.e., a map that induces an isogeny of their derived groups.

\medskip

By a slight abuse of notation we will denote by the same symbol $\phi$ the induced map
$$\Bun_{G_1}\to \Bun_{G_2}.$$

\sssec{}

The map $\phi$ induces a map
$$\phi^\vee:\cG_2\to \cG_1.$$

By a slight abuse of notation, we will denote by the same symbol $\phi^\vee$ the induced map
$$\LS_{\cG_2}\to \LS_{\cG_1}.$$

\sssec{} \label{sss:isog compat}

Note that the functor
\begin{equation} \label{e:phi!}
\phi^!:\Dmod_{\frac{1}{2}}(\Bun_{G_2}) \to \Dmod_{\frac{1}{2}}(\Bun_{G_1})
\end{equation} 
is linear with respect to the action of $\Rep(\cG_1)_\Ran$, where the action on 
$\Dmod_{\frac{1}{2}}(\Bun_{G_2})$ is via the functor
$$\Rep(\cG_1)_\Ran\to \Rep(\cG_2)_\Ran,$$
given by restriction along $\phi^\vee$, see \secref{sss:Sat restr}. 

\medskip

Hence, \eqref{e:phi!} is $\QCoh(\LS_{\cG_1})$-linear, where the action on 
$\Dmod_{\frac{1}{2}}(\Bun_{G_2})$ is via 
$$(\phi^\vee)^*:\QCoh(\LS_{\cG_1})\to \QCoh(\LS_{\cG_2}).$$

\sssec{}

Note also that the functor
$$(\phi^\vee)^\IndCoh_*:\IndCoh(\LS_{\cG_2})\to \IndCoh(\LS_{\cG_1})$$
sends
$$\IndCoh_\Nilp(\LS_{\cG_2})\to \IndCoh_\Nilp(\LS_{\cG_1}).$$

\medskip

This follows, e.g., from \cite[Proposition 7.1.3(b)]{AG}. 

\sssec{}

We will prove:

\begin{prop} \label{p:alm isogeny} 
The following diagram of functors commutes
\begin{equation} \label{e:alm isogeny}
\CD
\Dmod_{\frac{1}{2}}(\Bun_{G_1}) @>{\BL_{G_1}}>> \IndCoh_\Nilp(\LS_{\cG_1}) \\
@A{\phi^!}AA  @A{(\phi^\vee)^\IndCoh_*}AA  \\
\Dmod_{\frac{1}{2}}(\Bun_{G_2}) @>{\BL_{G_2}}>> \IndCoh_\Nilp(\LS_{\cG_2}).
\endCD
\end{equation} 
Moreover, this datum of commutativity is compatible with the action of $\QCoh(\LS_{\cG_1})$. 
\end{prop}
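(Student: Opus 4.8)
The plan is to verify the commutativity of the square \eqref{e:alm isogeny} by reducing it to the analogous statement for the Whittaker coefficient functors, which is where the definition of $\BL_G$ actually lives. Recall from \cite{GLC1} that $\BL_G$ is characterized (and constructed) via its compatibility with the vacuum Poincar\'e/Whittaker functor together with $\QCoh(\LS_\cG)$-linearity; more precisely, $\BL_G$ is the unique $\QCoh(\LS_\cG)$-linear functor fitting into the relevant Whittaker-compatibility diagram. So the strategy is: first establish the corresponding compatibility on the Whittaker side, then deduce the statement for $\BL_{G_1},\BL_{G_2}$ by the uniqueness characterization, keeping track of $\QCoh(\LS_{\cG_1})$-linearity throughout.

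\medskip

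First I would set up the spectral bookkeeping. The map $\phi\colon G_1\to G_2$ is an almost isogeny, so $\phi^\vee\colon \cG_2\to \cG_1$ is as well, and it induces compatible maps on $\Bun$, on $\LS$, and a restriction functor $\Rep(\cG_1)_\Ran\to\Rep(\cG_2)_\Ran$. As noted in the excerpt, $\phi^!$ is $\Rep(\cG_1)_\Ran$-linear, hence $\QCoh(\LS_{\cG_1})$-linear via $(\phi^\vee)^*$, and $(\phi^\vee)^\IndCoh_*$ preserves the $\Nilp$-singular-support condition by \cite[Proposition 7.1.3]{AG}; one also checks $(\phi^\vee)^\IndCoh_*$ is lax $\QCoh(\LS_{\cG_1})$-linear via the projection formula (here using that $(\phi^\vee)^*$ is monoidal and $(\phi^\vee)_*^\IndCoh$ its lax-linear right adjoint-type functor). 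The upshot is that both legs of the square are $\QCoh(\LS_{\cG_1})$-linear, so to compare the two composite functors it suffices to compare them after precomposing with the Poincar\'e functor and postcomposing appropriately — i.e., to compare the two associated Whittaker-coefficient functors, which are then automatically determined by a single morphism of $\QCoh(\LS_{\cG_1})$-modules.

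\medskip

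The key geometric input is that the Whittaker coefficient construction is itself compatible with $\phi$. Concretely, $\phi\colon\Bun_{G_1}\to\Bun_{G_2}$ restricts to an \'etale (or at worst finite) map on the relevant $\Bun_N$-type strata, the enhanced Whittaker categories $\Whit(\Gr_{G_i,\Ran})$ are related by a pullback functor along $\phi^\vee$ on the dual side, and the vacuum Poincar\'e objects $\on{Poinc}^{\on{Vac,glob}}_{G_i,!}$ correspond under $\phi^!$ up to the expected twist. This reduces to a local computation at the level of the affine Grassmannian, where it is essentially the statement that geometric Satake is compatible with isogenies — a standard fact. Assembling this over $\Ran$ and feeding it into the Langlands-functor characterization of \cite{GLC1} yields the commutativity datum for \eqref{e:alm isogeny}, together with its $\QCoh(\LS_{\cG_1})$-compatibility, since every functor in sight has been tracked as a map of $\QCoh(\LS_{\cG_1})$-module categories.

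\medskip

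The main obstacle, I expect, is not any single hard theorem but the coherence bookkeeping: one must produce the commutativity as a \emph{datum} (a specified $2$-morphism), not merely an abstract isomorphism of functors, and check it is compatible with the module structures at the level of higher coherences. The cleanest route is to phrase everything through the universal property: $\BL_{G}$ is uniquely pinned down as a $\QCoh(\LS_\cG)$-linear functor by its value on the Poincar\'e object, so both composites $(\phi^\vee)^\IndCoh_*\circ\BL_{G_2}$ and $\BL_{G_1}\circ\phi^!$ are $\QCoh(\LS_{\cG_1})$-linear and agree on the generator, forcing them to agree compatibly. The remaining work is then the genuinely geometric (but routine) verification that the Poincar\'e objects match under $\phi^!$, which I would isolate as a lemma and reduce to the isogeny-equivariance of Satake.
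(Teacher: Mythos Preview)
Your overall strategy---reduce to compatibility of the Whittaker/Poincar\'e functors with $\phi$, then invoke the way $\BL_G$ is built from them---is exactly what the paper does. The geometric input you isolate (that $\phi_!$ carries $\on{Poinc}^{\on{Vac,glob}}_{G_1,!}$ to $\on{Poinc}^{\on{Vac,glob}}_{G_2,!}$, hence by adjunction $\phi^!$ intertwines the $\on{coeff}$ functors) is correct and is where the paper also ends up; there it is declared ``tautological'' after passing to left adjoints.

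The gap is in how you get from ``the two composites agree on the Poincar\'e object'' to ``the two composites agree as functors.'' You invoke a uniqueness principle: $\BL_G$ is the unique $\QCoh(\LS_\cG)$-linear functor with a prescribed value on $\on{Poinc}^{\on{Vac,glob}}_{G,!}$. But that would require $\on{Poinc}^{\on{Vac,glob}}_{G,!}$ to generate $\Dmod_{\frac{1}{2}}(\Bun_G)$ over $\QCoh(\LS_\cG)$, and this is not known a priori---in fact it is essentially equivalent to the conservativity/fully-faithfulness input one is ultimately trying to establish in the series. The paper avoids this trap by working on the \emph{target} side rather than the source: it first reduces to the coarse functors $\BL_{G_i,\on{coarse}}$ landing in $\QCoh(\LS_{\cG_i})$, then uses that $\Gamma^{\on{spec}}_{\cG_1}:\QCoh(\LS_{\cG_1})\to \Rep(\cG_1)_\Ran$ is fully faithful, so it suffices to check commutativity after composing into $\Rep(\cG_1)_\Ran$, where it becomes exactly the Whittaker diagram.

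There is a second point you have not addressed: the passage from the coarse diagram (valued in $\QCoh$) back up to the diagram valued in $\IndCoh_\Nilp$ is not automatic. The paper handles this by a cohomological-amplitude argument on compact objects, using that $\IndCoh_\Nilp(\LS_{\cG_1})\to\QCoh(\LS_{\cG_1})$ is fully faithful on bounded-below objects. For this step one needs $\phi^!$ to preserve compacts, which is why the paper first factors $\phi$ and reduces to the case of an actual isogeny (so that $\phi:\Bun_{G_1}\to\Bun_{G_2}$ is \'etale). Your proposal treats all almost-isogenies uniformly and does not explain why the upgrade goes through.
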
 

\ssec{Proof of \propref{p:alm isogeny}}

%
%
%
%
%

\sssec{}

First, we claim that it is sufficient show that the outer square in the following diagram commutes:
\begin{equation} \label{e:alm isogeny'}
\CD
\Dmod_{\frac{1}{2}}(\Bun_{G_1}) @>{\BL_{G_1}}>> \IndCoh_\Nilp(\LS_{\cG_1}) @>>> \QCoh(\LS_{\cG_1}) \\
@A{\phi^!}AA  @A{(\phi^\vee)_*}AA  @A{(\phi^\vee)_*}AA \\
\Dmod_{\frac{1}{2}}(\Bun_{G_2}) @>{\BL_{G_2}}>> \IndCoh_\Nilp(\LS_{\cG_2}) @>>> \QCoh(\LS_{\cG_2}),
\endCD
\end{equation} 
where the composite horizontal functors are 
$$\BL_{G_1,\on{coarse}} \text{ and } \BL_{G_2,\on{coarse}},$$
respectively (see \cite[Sect. 1.4]{GLC1}). 

\medskip

Indeed, suppose that \eqref{e:alm isogeny'} commutes, and let us show that this uniquely upgrades to the commutation 
of \eqref{e:alm isogeny}. 

\sssec{} \label{sss:phi coconn}

It is enough to show that the two circuits in \eqref{e:alm isogeny} are isomorphic when restricted to the subcategory 
of compact objects in $\Dmod_{\frac{1}{2}}(\Bun_{G_2})$. Since the functor
$$\IndCoh_\Nilp(\LS_{\cG_1}) \to \QCoh(\LS_{\cG_1})$$
is fully faithful on 
$$\IndCoh_\Nilp(\LS_{\cG_1})^{>-\infty}\subset \IndCoh_\Nilp(\LS_{\cG_1}),$$
it suffices to show that both circuits in \eqref{e:alm isogeny}, when restricted to
$$\Dmod_{\frac{1}{2}}(\Bun_{G_2})^c\subset \Dmod_{\frac{1}{2}}(\Bun_{G_2})$$
map to $\IndCoh_\Nilp(\LS_{\cG_1})^{>-\infty}$.

\sssec{}

By the construction in \cite[Corollary 1.6.5]{GLC1}, the functors $\BL_{G_i}$ send
$$\Dmod_{\frac{1}{2}}(\Bun_{G_i})^c\to \IndCoh_\Nilp(\LS_{\cG_i})^{>-\infty},$$
$i=1,2$.

\medskip

The statement about the counterclockwise circuit follows now because the functor 
$(\phi^\vee)_*$ has finite cohomological amplitude.

\medskip

We now prove the statement for the clockwise circuit. Note that the map  
$$\phi:\Bun_{G_1}\to \Bun_{G_2}$$
is not schematic, so the functor 
\begin{equation} \label{e:phi! Bun}
\phi^!:\Dmod_{\frac{1}{2}}(\Bun_{G_2})\to \Dmod_{\frac{1}{2}}(\Bun_{G_2}).
\end{equation}
does \emph{not} preserve compactness. To circumvent this, we argue as follows. 

\medskip

First, we note that \eqref{e:phi! Bun}
does preserve compactness if the initial homomorphism $\phi$ has a finite kernel. 
So in this case, the clockwise circuit does send $\Dmod_{\frac{1}{2}}(\Bun_{G_2})^c$ to
$\IndCoh_\Nilp(\LS_{\cG_1})^{>-\infty}$.

\medskip

Thus, let us temporarily assume having proved \propref{p:alm isogeny} for such homomorphisms. 

\sssec{}

In general, we decompose $\phi$ as 
$$G_1\overset{\phi'}\to G'_2\overset{\psi}\to G_2,$$
where 
$$G'_2:=G_2\times G_{1,\on{ab}},$$
and $\psi$ is the projection. The assertion of \propref{p:alm isogeny} trivially holds for $\psi$. 

\medskip

Note now that the homomorphism $\phi'$ does have a finite kernel. Hence, by assumption,
the functor $\BL_{G_1}\circ (\phi')^!$ is isomorphic to $((\phi')^\vee)_*\circ \BL_{G'_2}$, and hence
$$\BL_{G_1}\circ \phi^!\simeq \BL_{G_1}\circ (\phi')^!\circ \psi^!\simeq
((\phi')^\vee)_*\circ \BL_{G'_2} \circ \psi^! \simeq ((\phi')^\vee)_*\circ (\psi^\vee)_*\circ \BL_{G_2}\simeq
(\phi^\vee)_*\circ \BL_{G_2}.$$

In particular, the clockwise circuit for $\phi$ also sends 
$$\Dmod_{\frac{1}{2}}(\Bun_{G_2})^c\to \IndCoh_\Nilp(\LS_{\cG_1})^{>-\infty}.$$

\sssec{}

The same argument implies the following:

\medskip

Once we prove that the outer circuit in 
\eqref{e:alm isogeny'} commutes as a diagram of
$\QCoh(\LS_{\cG_1})$-module categories, it would follow that its
restriction to compact objects commutes as a diagram
of $\on{Perf}(\LS_{\cG_1})$-module categories, so the
resulting left commuting square in \eqref{e:alm isogeny'}
is one of $\on{Perf}(\LS_{\cG_1})$-module categories,
and hence of 
$\on{Ind}(\on{Perf}(\LS_{\cG_1}))\simeq\QCoh(\LS_{\cG_1})$-module categories\footnote{
The latter equivalence follows from the fact that $\LS_\cG$ admits a (global) embedding 
into a smooth stack, see \cite[Sect. 10.6.2 and Proposition 10.6.6(a)]{AG}.}.

\sssec{}

Let us now prove the commutativity of 
\begin{equation} \label{e:alm isogeny prime}
\CD
\Dmod_{\frac{1}{2}}(\Bun_{G_1}) @>{\BL_{G_1,\on{coarse}}}>> \QCoh(\LS_{\cG_1}) \\
@A{\phi^!}AA  @A{(\phi^\vee)_*}AA  \\
\Dmod_{\frac{1}{2}}(\Bun_{G_2}) @>{\BL_{G_2,\on{coarse}}}>> \QCoh(\LS_{\cG_2}).
\endCD
\end{equation} 

It will follow from the construction that the data of commutativity is compatible with the Hecke action of $\Rep(\cG_1)_\Ran$,
and hence also of $\QCoh(\LS_{\cG_1})$. 

\sssec{}

Since the functor
$$\Gamma^{\on{spec}}_{\cG_1}:\QCoh(\LS_{\cG_1}) \to \Rep(\cG_1)_{\Ran}$$
is fully faithful (see \cite[Proposition 1.1.4]{GLC4}), 
it suffices to show that the outer square in the concatenation of \eqref{e:alm isogeny prime} with the commutative diagram
\begin{equation} \label{e:alm isogeny''}
\CD
\QCoh(\LS_{\cG_1}) @>>> \Rep(\cG_1)_{\Ran} \\
@A{(\phi^\vee)_*}AA @AA{\on{coInd}^{\phi^\vee}}A \\
\QCoh(\LS_{\cG_2}) @>>> \Rep(\cG_2)_{\Ran}
\endCD
\end{equation} 
commutes, where 
$$\on{coInd}^{\phi^\vee}:\Rep(\cG_2)\to \Rep(\cG_1)$$
is the functor of co-induction\footnote{It may be more common to call this operation just ``induction".}, i.e., the right adjoint to the functor
$$\Res^{\phi^\vee}:\Rep(\cG_1)\to \Rep(\cG_2).$$

I.e., we need to establish the commutativity of 
\begin{equation} \label{e:alm isogeny prime bis}
\CD
\Dmod_{\frac{1}{2}}(\Bun_{G_1}) @>{\BL_{G_1,\on{coarse}}}>> \QCoh(\LS_{\cG_1})  @>>> \Rep(\cG_1)_{\Ran}  \\
@A{\phi^!}AA  & & @AA{\on{coInd}^{\phi^\vee}}A  \\
\Dmod_{\frac{1}{2}}(\Bun_{G_2}) @>{\BL_{G_2,\on{coarse}}}>> \QCoh(\LS_{\cG_2}) @>>> \Rep(\cG_2)_{\Ran}  .
\endCD
\end{equation} 

\sssec{}

The map (of factorization ind-schemes) 
$$\phi:\Gr_{G_1}\to \Gr_{G_2}$$
induces a pair of adjoint functors
\begin{equation} \label{e:pullpush Gr}
\phi_!:\Dmod_{\frac{1}{2}}(\Gr_{G_1}) \rightleftarrows \Dmod_{\frac{1}{2}}(\Gr_{G_2}):\phi^!
\end{equation}
(as factorization categories), which in turn induce adjoint functors
$$\phi_!:\Whit(G_1) \rightleftarrows \Whit(G_2):\phi^!$$
(also as factorization categories). 

\sssec{}

In addition, we claim that the functors \eqref{e:pullpush Gr} also induce (factorization) functors
$$\Sph_{G_1} \simeq \Dmod_{\frac{1}{2}}(\Gr_{G_1})^{\fL^+(G_1)}   \overset{'\!\phi^!}\leftarrow \Dmod_{\frac{1}{2}}(\Gr_{G_2})^{\fL^+(G_2)}\simeq \Sph_{G_2}$$
and 
$$\Sph_{G_1} \simeq \Dmod_{\frac{1}{2}}(\Gr_{G_1})^{\fL^+(G_1)}   \overset{'\!\phi_!}\to \Dmod_{\frac{1}{2}}(\Gr_{G_2})^{\fL^+(G_2)}\simeq \Sph_{G_2}$$
compatible with the forgetful functors $\Sph_{G_i}\to \Dmod_{\frac{1}{2}}(\Gr_{G_i})$. 

\medskip

Namely, the functor $'\!\phi^!$ is
$$\Dmod_{\frac{1}{2}}(\Gr_{G_2})^{\fL^+(G_2)} \to \Dmod_{\frac{1}{2}}(\Gr_{G_2})^{\fL^+(G_1)} 
\overset{\phi^!}\to \Dmod_{\frac{1}{2}}(\Gr_{G_1})^{\fL^+(G_1)},$$
where the first arrow is the functor of forgetting $\fL^+(G_2)$-equivariance to $\fL^+(G_1)$-equivariance.

\medskip

The functor $'\!\phi_!$ is
$$\Dmod_{\frac{1}{2}}(\Gr_{G_1})^{\fL^+(G_1)} \overset{\phi_!}\to \Dmod_{\frac{1}{2}}(\Gr_{G_2})^{\fL^+(G_1)} \to 
\Dmod_{\frac{1}{2}}(\Gr_{G_2})^{\fL^+(G_2)},$$ 
where the second arrow is pullback with respect to the projection 
$$\fL^+(G_2)\backslash \Gr_{G_2} \to \fL^+(G_1)\backslash \Gr_{G_2}$$
equal to
\begin{multline*} 
\fL^+(G_2)\backslash \Gr_{G_2}=\fL^+(G_2)\backslash \fL(G_2)/\fL^+(G_2)\simeq 
\on{pt}/\fL^+(Z_{G_2})\overset{\on{pt}/\fL^+(Z_{G_1})}\times (\fL^+(G_1)\backslash \fL(G_2)/\fL^+(G_2)) \simeq \\
\simeq  
\on{pt}/\fL^+(Z_{G_2})\overset{\on{pt}/\fL^+(Z_{G_1})}\times \on{pt}/\fL^+(Z_{G_2})
\overset{\on{pt}/\fL^+(Z_{G_2})}\times (\fL^+(G_1)\backslash \fL(G_2)/\fL^+(G_2)) \to  \\
\to \on{pt}/\fL^+(Z_{G_2})\overset{\on{pt}/\fL^+(Z_{G_2})}\times (\fL^+(G_1)\backslash \fL(G_2)/\fL^+(G_2)) \simeq
\fL^+(G_1)\backslash \fL(G_2)/\fL^+(G_2)=\fL^+(G_1)\backslash \Gr_{G_2},
\end{multline*}
where:

\begin{itemize}

\item The 2nd isomorphism is obtained from the identification $\fL^+(G_2)\simeq \fL^+(Z_{G_2})\overset{\fL^+(Z_{G_1})}\times \fL^+(G_1)$;

\medskip

\item The 3rd isomorphism is obtained from noticing that the action of $\on{pt}/\fL^+(Z_{G_1})$ on the double quotient $\fL^+(G_1)\backslash \fL(G_2)/\fL^+(G_2)$
on the left equals the action obtained from the projection $\on{pt}/\fL^+(Z_{G_1})\to \on{pt}/\fL^+(Z_{G_2})$, and the action of $\on{pt}/\fL^+(Z_{G_2})$
on $\fL^+(G_1)\backslash \fL(G_2)/\fL^+(G_2)$ on the right;

\medskip

\item The 4th arrow is induced by the multiplication map
$$\on{pt}/\fL^+(Z_{G_2})\overset{\on{pt}/\fL^+(Z_{G_1})}\times \on{pt}/\fL^+(Z_{G_2})\to \on{pt}/\fL^+(Z_{G_2}).$$

\end{itemize} 

\begin{rem}
Note that the functor $'\!\phi_!$ is \emph{not} the left adjoint of $'\!\phi^!$: the latter would involve an additional
step of !-averaging from $\fL^+(G_1)$-equivariance to $\fL^+(G_2)$-equivariance.

\medskip

Note also that the functor $'\!\phi_!$ is \emph{not} monoidal. 

\end{rem}

\sssec{} \label{sss:Sat restr}

It follows from the construction of the \emph{naive} geometric Satake functor
$\on{Sat}_G^{\on{nv}}$ (see \cite[Sect. 6.28-6.35]{Ras}) that the diagram (of factorization functors)
\begin{equation} \label{e:Sph restr}
\CD
\Sph_{G_1} @<{\Sat^{\on{nv}}_{G_1}}<<  \Rep(\cG_1) \\
@V{'\!\phi_!}VV @VV{\Res^{\phi^\vee}}V  \\
\Sph_{G_2} @<{\Sat^{\on{nv}}_{G_2}}<<  \Rep(\cG_2)
\endCD
\end{equation} 
commutes. 

\medskip

Unwinding, we obtain that in the global situation, the functor
$$\phi^!: \Dmod_{\frac{1}{2}}(\Bun_{G_2})  \to \Dmod_{\frac{1}{2}}(\Bun_{G_1})$$
is compatible with the actions of $\Rep(\cG_1)_\Ran$, where the action on $\Dmod_{\frac{1}{2}}(\Bun_{G_2})$
is via $\Res^{\phi^\vee}$. 

\sssec{}

By the definition of the Casselman-Shalika equivalence $\on{CS}_G$ (see \cite[Sect. 1.4]{GLC2}), the commutativity of
\eqref{e:Sph restr} implies that the diagram (of factorization functors)
$$
\CD
\Whit(G_1) @>{\on{CS}_{G_1}}>>  \Rep(\cG_1) \\
@V{\phi_!}VV @VV{\Res^{\phi^\vee}}V  \\
\Whit(G_2) @>{\on{CS}_{G_2}}>>  \Rep(\cG_2)
\endCD
$$
also commutes. 

\medskip

Since the horizontal arrows in the latter diagram are equivalences, we obtain that the diagram
obtained by passing to right adjoints along the vertical arrows also commutes:

$$
\CD
\Whit(G_1) @>{\on{CS}_{G_1}}>>  \Rep(\cG_1) \\
@A{\phi^!}AA  @AA{\on{coInd}^{\phi^\vee}}A  \\
\Whit(G_2) @>{\on{CS}_{G_2}}>>  \Rep(\cG_2).
\endCD
$$

\sssec{}

Combining this with\footnote{We recall that the subscript ``Ran" in the next few formulas 
indicates that we are taking global sections of the corresponding crystal of categories over the Ran space, see \cite[Sect. B.11.1]{GLC2}.}
the commutative diagrams \cite[Equation (18.4)]{GLC2}
$$
\CD
\Whit(G_i)_\Ran @>>> \Rep(\cG_i)_{\Ran}  \\
@A{\on{coeff}_{G_i}[2\delta_{N_{\rho(\omega_X)}}]}AA @AA{\Gamma^{\on{spec}}_{\cG_i}}A \\
\Dmod_{\frac{1}{2}}(\Bun_{G_i}) @>{\BL_{G_i,\on{course}}}>> \QCoh(\LS_{\cG_i})
\endCD
$$
for $i=1,2$, we obtain that the commutativity of \eqref{e:alm isogeny prime bis} is equivalent to the
commutativity of
\begin{equation} \label{e:alm isogeny'''}
\CD
\Dmod_{\frac{1}{2}}(\Bun_{G_1})  @>{\on{coeff}_{G_1}}>> \Whit(G_1)_\Ran \\
@A{\phi^!}AA @AA{\phi^!}A \\
\Dmod_{\frac{1}{2}}(\Bun_{G_2})  @>{\on{coeff}_{G_2}}>> \Whit(G_2)_\Ran. \\
\endCD
\end{equation} 

\sssec{}

Passing to left adjoints, the commutativity of \eqref{e:alm isogeny'''} is equivalent to the 
commutativity of 
$$
\CD
\Dmod_{\frac{1}{2}}(\Bun_{G_1})  @<{\on{Poinc}_{G_1,!}}<< \Whit(G_1)_\Ran \\
@V{\phi_!}VV @VV{\phi_!}V \\
\Dmod_{\frac{1}{2}}(\Bun_{G_2})  @<{\on{Poinc}_{G_2,!}}<< \Whit(G_2)_\Ran, \\
\endCD
$$
while the latter is tautological. 

\qed[\propref{p:alm isogeny}]

\ssec{Changing the group}

\sssec{}

Passing to left adjoint functors in \eqref{e:alm isogeny}, we obtain a commutative diagram
$$
\CD
\Dmod_{\frac{1}{2}}(\Bun_{G_1}) @<{\BL^L_{G_1}}<< \IndCoh_\Nilp(\LS_{\cG_1}) \\
@V{\phi_!}VV  @VV{(\phi^\vee)^*}V \\
\Dmod_{\frac{1}{2}}(\Bun_{G_2}) @<{\BL^L_{G_2}}<< \IndCoh_\Nilp(\LS_{\cG_2}),
\endCD
$$
compatible with the action of $\QCoh(\LS_{\cG_1})$. Tensoring up, we obtain a commutative diagram
\begin{equation} \label{e:alm isogeny ten}
\CD
\QCoh(\LS_{\cG_2})\underset{\QCoh(\LS_{\cG_1})}\otimes 
\Dmod_{\frac{1}{2}}(\Bun_{G_1}) @<{\BL^L_{G_1}}<< \QCoh(\LS_{\cG_2})\underset{\QCoh(\LS_{\cG_1})}\otimes  \IndCoh_\Nilp(\LS_{\cG_1}) \\
@VVV  @VVV \\
\Dmod_{\frac{1}{2}}(\Bun_{G_2}) @<{\BL^L_{G_2}}<< \IndCoh_\Nilp(\LS_{\cG_2}).
\endCD
\end{equation}

\sssec{}

We claim: 
\begin{prop} \label{p:change group 1}
The functor
$$\QCoh(\LS_{\cG_2})\underset{\QCoh(\LS_{\cG_1})}\otimes  \IndCoh_\Nilp(\LS_{\cG_1})\to \IndCoh_\Nilp(\LS_{\cG_2})$$
is an equivalence.
\end{prop}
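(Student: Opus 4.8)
The plan is to deduce this from a single geometric fact: the morphism $\phi^\vee\colon \LS_{\cG_2}\to \LS_{\cG_1}$ is \emph{étale} (recall we have reduced to the case where $\phi$, hence also $\phi^\vee\colon\cG_2\to\cG_1$, is an isogeny). First I would establish this. Since $d\phi^\vee\colon\fg_2\to\fg_1$ is an isomorphism of Lie algebras, the relative cotangent complex $\mathbb{L}_{B\cG_2/B\cG_1}$ vanishes; passing to de Rham mapping stacks and using that the relative tangent complex of $\LS_{\cG_2}$ over $\LS_{\cG_1}$ at a point $\sigma$ is computed by the de Rham cohomology of $\sigma^*\mathbb{T}_{B\cG_2/B\cG_1}=0$, we get $\mathbb{L}_{\LS_{\cG_2}/\LS_{\cG_1}}=0$. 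As both stacks are locally of finite type, $\phi^\vee$ is étale; it is moreover proper (since $B\cG_2\to B\cG_1$ is a $B(\on{ker}\phi^\vee)$-gerbe with $\on{ker}\phi^\vee$ finite), hence a finite étale morphism up to a gerbe by a finite group — in particular flat, descendable, and surjective onto a union of connected components of $\LS_{\cG_1}$.

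Next I would record the effect on singular codirections. Both $\LS_{\cG_i}$ are quasi-smooth, and since $\phi^\vee$ is étale while the (co)adjoint action of $\cG_2$ on $\fg_2^*=\fg_1^*$ factors through $\cG_1$, the natural map identifies $\Sing(\LS_{\cG_2})$ with $\LS_{\cG_2}\times_{\LS_{\cG_1}}\Sing(\LS_{\cG_1})$, and under this identification the global nilpotent cone $\Nilp_{\cG_2}$ corresponds to $\LS_{\cG_2}\times_{\LS_{\cG_1}}\Nilp_{\cG_1}$, because nilpotency is a condition on an element of the Lie algebra, which the isogeny leaves unchanged.

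Finally, one concludes as follows. The analogous statement with $\IndCoh$ in place of $\IndCoh_\Nilp$ — that $\QCoh(\LS_{\cG_2})\otimes_{\QCoh(\LS_{\cG_1})}\IndCoh(\LS_{\cG_1})\to\IndCoh(\LS_{\cG_2})$ is an equivalence — holds because $\phi^\vee$ is étale (so that $(\phi^\vee)^!\simeq (\phi^\vee)^{\IndCoh,*}$) and finite flat up to a gerbe, hence descendable, via $\IndCoh$-descent together with \cite{AG}. As $\QCoh(\LS_{\cG_2})$ is dualizable over $\QCoh(\LS_{\cG_1})$, the functor $-\otimes_{\QCoh(\LS_{\cG_1})}\QCoh(\LS_{\cG_2})$ preserves fully faithful functors, so $\QCoh(\LS_{\cG_2})\otimes_{\QCoh(\LS_{\cG_1})}\IndCoh_\Nilp(\LS_{\cG_1})\hookrightarrow\IndCoh(\LS_{\cG_2})$ is fully faithful; its essential image lands in $\IndCoh_\Nilp(\LS_{\cG_2})$ since it is generated under colimits and the $\QCoh(\LS_{\cG_2})$-action by $(\phi^\vee)^!(\Coh_\Nilp(\LS_{\cG_1}))$, whose singular support lies in $\LS_{\cG_2}\times_{\LS_{\cG_1}}\Nilp_{\cG_1}=\Nilp_{\cG_2}$ (tensoring by a quasi-coherent sheaf only enlarges the support by the zero section, which $\Nilp$ already contains). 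Conversely, $\phi^\vee$ being an étale surjection onto a clopen substack and descendable, $\Coh_\Nilp(\LS_{\cG_2})$ is generated under that action by $(\phi^\vee)^!(\Coh_\Nilp(\LS_{\cG_1}))$, giving the reverse inclusion and hence the equivalence.

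The main obstacle is exactly this last matching of singular-support conditions on the nose — equality rather than mere containment — and I expect the cleanest route is to organize the whole argument as étale descent along $\phi^\vee$, so that the statement for $\IndCoh_\Nilp$ is reduced term by term to the (already available) base change for $\IndCoh$ together with the compatibility $\Nilp_{\cG_2}=\LS_{\cG_2}\times_{\LS_{\cG_1}}\Nilp_{\cG_1}$ from the second step; this also makes transparent that no hypothesis on $\phi^\vee$ beyond what holds here is needed. For a general almost isogeny $\phi$ one factors it as in \secref{sss:type B}, where the only remaining case is a surjection with connected-torus kernel, which is immediate since $\LS$ of a torus is smooth, so that its $\IndCoh_\Nilp$ already agrees with $\QCoh$.
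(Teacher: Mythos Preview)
The paper's own ``proof'' is a one-line citation: it declares the proposition to be a particular case (extended to stacks) of \cite[Corollary 7.6.2]{AG}. Your proposal is essentially an unpacking of that citation in the isogeny case: you verify that $\phi^\vee$ is \'etale, that the global nilpotent cones match under pullback ($\Nilp_{\cG_2}=\LS_{\cG_2}\times_{\LS_{\cG_1}}\Nilp_{\cG_1}$), and then run the $\IndCoh$-with-singular-support base-change argument. This is correct and is precisely the content of the cited result specialized to this map; the ``main obstacle'' you flag (matching singular-support conditions exactly) is the substance of that corollary.

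Two minor remarks. First, the paper states the proposition for an arbitrary almost isogeny, and the citation to \cite{AG} covers this directly; your reduction at the end to the isogeny and torus-kernel cases is fine but not strictly needed. Second, your essential-surjectivity step is the only place the sketch is thin: the clean way to phrase it is that under the already-established equivalence $\QCoh(\LS_{\cG_2})\otimes_{\QCoh(\LS_{\cG_1})}\IndCoh(\LS_{\cG_1})\simeq\IndCoh(\LS_{\cG_2})$, the full subcategory cut out by the condition $\Nilp_{\cG_1}$ on the $\IndCoh(\LS_{\cG_1})$ factor corresponds exactly to $\IndCoh_{\Nilp_{\cG_2}}(\LS_{\cG_2})$, because singular support is \'etale-local and $\Nilp_{\cG_2}$ is the pullback of $\Nilp_{\cG_1}$ --- which is how \cite{AG} organizes the argument.
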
 

Indeed, this is particular case of \cite[Corollary 7.6.2]{AG}
(extended to stacks in a straightforward way\footnote{One uses the fact that for any affine scheme $S$ mapping to $\LS_{\cG_i}$,
the operation $\QCoh(S)\underset{\QCoh(\LS_{\cG_i})}\otimes -$ commutes with limits on $\QCoh(\LS_{\cG_i})$-module categories.}). 

\sssec{}

In \secref{ss:change group 2} below, we will prove:
\begin{thm} \label{t:change group 2}
The functor
\begin{equation} \label{e:change group 2}
\QCoh(\LS_{\cG_2})\underset{\QCoh(\LS_{\cG_1})}\otimes  \Dmod_{\frac{1}{2}}(\Bun_{G_1})  \to \Dmod_{\frac{1}{2}}(\Bun_{G_2})
\end{equation}
is an equivalence.
\end{thm}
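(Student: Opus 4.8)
The plan is to deduce the statement from two inputs already in hand: the conservativity of $\BL_{G_2}$ (\cite[Theorem 1.6.2]{GLC4}) and geometric class field theory for the kernel of $\phi$. Write $F$ for the functor \eqref{e:change group 2}; it is $\QCoh(\LS_{\cG_2})$-linear, and by the universal property of the relative tensor product it is the unique such functor whose precomposition with the coaugmentation $\iota\colon\Dmod_{\frac12}(\Bun_{G_1})\to\QCoh(\LS_{\cG_2})\underset{\QCoh(\LS_{\cG_1})}\otimes\Dmod_{\frac12}(\Bun_{G_1})$ is $\phi_!$, the left adjoint of the ($\QCoh(\LS_{\cG_1})$-linear) functor $\phi^!$. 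I would prove that $F$ is fully faithful and has dense image; since a fully faithful left adjoint whose essential image generates the target under colimits is automatically an equivalence, this suffices.

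For the density, pass to left adjoints in \eqref{e:alm isogeny} and tensor up to obtain the commutative square \eqref{e:alm isogeny ten}; using \propref{p:change group 1} to identify its upper-right corner with $\IndCoh_\Nilp(\LS_{\cG_2})$, the square exhibits $\BL^L_{G_2}$ as the composite of $F$ with an explicit functor out of $\IndCoh_\Nilp(\LS_{\cG_2})$. Hence the essential image of $F$ contains that of $\BL^L_{G_2}$; as $\BL_{G_2}$ is conservative, the essential image of its left adjoint generates $\Dmod_{\frac12}(\Bun_{G_2})$ under colimits, and therefore so does the essential image of $F$.

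The full faithfulness is the substance. Unwinding adjunctions — $\iota$ is left adjoint to the forgetful functor, and the associated monad on $\Dmod_{\frac12}(\Bun_{G_1})$ is tensoring by the commutative algebra $(\phi^\vee)_*\CO_{\LS_{\cG_2}}\in\QCoh(\LS_{\cG_1})$ — shows that $F$ is fully faithful precisely when the natural comparison morphism $(\phi^\vee)_*\CO_{\LS_{\cG_2}}\otimes(-)\to\phi^!\phi_!$ of monads on $\Dmod_{\frac12}(\Bun_{G_1})$ is an isomorphism. I would prove this after reducing, exactly as in the proof of \propref{p:alm isogeny}, to the cases (a) $\phi$ surjective with kernel a central torus $T$ and (b) $\phi$ a central isogeny with finite kernel $K$. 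In both cases $\phi\colon\Bun_{G_1}\to\Bun_{G_2}$ is, onto its image, a torsor under the group prestack $\Bun_K$ (with $K=T$ in case (a)), so $\phi^!\phi_!\simeq(-)\star\omega_{\Bun_K}$, convolution along the $\Bun_K$-action. Dually, $\cG_1\to\cG_1/\cG_2=:\cT$ is a torus quotient in case (a) and $\cG_2\to\cG_1$ an isogeny with finite kernel $\check K$ Cartier-dual to $K$ in case (b), and in either case $\LS_{\cG_2}\simeq\LS_{\cG_1}\times_{\LS_{\check K}}\on{pt}$ (with $\check K=\cT$ in case (a)), so $(\phi^\vee)_*\CO_{\LS_{\cG_2}}$ is the pullback to $\LS_{\cG_1}$ of the skyscraper at the trivial $\check K$-local system. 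Now geometric class field theory for $K$ — Fourier--Mukai for $T$ in case (a), finite-group Fourier duality in case (b) — identifies $\Dmod(\Bun_K)$ monoidally with $\QCoh(\LS_{\check K})$, sends $\omega_{\Bun_K}$ to that skyscraper, and, via the construction of the spectral action in \cite{GLC1}, matches $\Bun_K$-convolution on $\Dmod_{\frac12}(\Bun_{G_1})$ with the $\QCoh(\LS_{\check K})$-action obtained from $\LS_{\cG_1}\to\LS_{\check K}$. This identifies $(-)\star\omega_{\Bun_K}$ with $(\phi^\vee)_*\CO_{\LS_{\cG_2}}\otimes(-)$, compatibly with the comparison morphism, which is thus an isomorphism.

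The hard part will be that last identification: checking that $\Bun_K$-translation on $\Dmod_{\frac12}(\Bun_{G_1})$ and the pertinent piece of the spectral $\QCoh$-action agree not just as module structures but compatibly with the monad structure and the unit of the $F$-adjunction, rather than merely citing geometric class field theory for $K$ in isolation. The half-twist makes the normalization bookkeeping (e.g.\ the cohomological shift relating $\omega_{\Bun_K}$ to the constant sheaf) fussy, and case (b) additionally requires running finite-group Fourier duality and keeping track of the torsor over a proper union of connected components of $\Bun_{G_2}$; neither of these affects the shape of the argument.
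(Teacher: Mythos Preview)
Your approach is correct in outline but takes a genuinely different route from the paper's proof in \secref{ss:change group 2}.

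The paper factors $\phi$ as a composition of maps of \emph{type B} (surjective with connected kernel) and \emph{type A} (injective), and proves each case by a direct argument, never splitting into fully-faithful-plus-dense. Type B is handled exactly as your case (a): Fourier--Mukai for the torus kernel. Type A, however, is treated quite differently from your isogeny case (b): the paper passes to the abelianization $T=\on{coker}(\phi)$, identifies a direct summand $\Dmod_{\frac{1}{2}}(\Bun_{G_1})'\subset\Dmod_{\frac{1}{2}}(\Bun_{G_1})$ cut out by characters of $Z_{G_1}$, invokes \propref{p:neutral} (a special case of \thmref{t:Hecke Z 0}) to match this summand with the spectral support condition, and then concludes by a clean general statement about categories acted on by a group-stack torsor, reducing to 1-affineness of $\LS_{\cG_1}$. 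In particular the paper never uses conservativity of $\BL_{G_2}$.

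Your density argument via conservativity of $\BL_{G_2}$ is valid and elegant, but it imports a deep result from \cite{GLC4} that the paper's direct proof does not need. For full faithfulness, your case (b) rests on the assertion that $\Bun_K$-convolution on $\Dmod_{\frac{1}{2}}(\Bun_{G_1})$ matches the $\QCoh(\LS_{\check K})$-action inherited from the spectral action; you say this follows from ``the construction of the spectral action in \cite{GLC1}'', but in fact this compatibility is precisely the content of Theorems \ref{t:Hecke Z 0} and \ref{t:Hecke Z 1}, which the paper proves only in \secref{s:gerbes} via the 2-categorical Fourier--Mukai transform. The paper's type A argument also forward-references this material (through \propref{p:neutral}), so your approach is not worse in this respect---but you should not expect the matching to fall out of \cite{GLC1} alone. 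Your closing paragraph correctly flags this as the hard part.
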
 

\sssec{}

Combining \propref{p:change group 1} and \thmref{t:change group 2}, we obtain:

\begin{cor} \label{c:change group}
Suppose that the functor $\BL_{G_1}$ is an equivalence. Then so is $\BL_{G_2}$. 
\end{cor}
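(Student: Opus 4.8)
The plan is to deduce the statement by a short diagram chase in the commutative square \eqref{e:alm isogeny ten}, using \propref{p:change group 1} and \thmref{t:change group 2} as the two geometric inputs. First I would observe that the hypothesis that $\BL_{G_1}$ is an equivalence upgrades for free to the statement that its left adjoint $\BL^L_{G_1}$ is an equivalence: the left adjoint of an equivalence is an inverse, hence an equivalence. Since $\QCoh(\LS_{\cG_1})$ is rigid monoidal, $\BL^L_{G_1}$ is moreover $\QCoh(\LS_{\cG_1})$-linear, so the functor $\id\underset{\QCoh(\LS_{\cG_1})}\otimes\BL^L_{G_1}$ obtained by tensoring up along $\QCoh(\LS_{\cG_1})\to\QCoh(\LS_{\cG_2})$ --- which is precisely the top horizontal arrow of \eqref{e:alm isogeny ten} --- is again an equivalence, because tensoring up module categories preserves equivalences.

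Next I would invoke the two inputs. By \propref{p:change group 1} the right vertical arrow of \eqref{e:alm isogeny ten} is an equivalence, and by \thmref{t:change group 2} (proved in \secref{ss:change group 2}) the left vertical arrow, namely the functor \eqref{e:change group 2}, is an equivalence. The square \eqref{e:alm isogeny ten} commutes --- this is exactly the $\QCoh(\LS_{\cG_1})$-linear datum furnished by \propref{p:alm isogeny}, base-changed along $\QCoh(\LS_{\cG_1})\to\QCoh(\LS_{\cG_2})$ and passed to left adjoints. Hence in this square three of the four functors (the top horizontal and both verticals) are equivalences, and therefore the fourth one, $\BL^L_{G_2}$, is an equivalence as well. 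Finally, $\BL_{G_2}$ is the right adjoint of $\BL^L_{G_2}$, and the right adjoint of an equivalence is an equivalence; this gives the assertion of the corollary.

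There is no genuine obstacle internal to this argument: all of the real content sits in \thmref{t:change group 2} and in \cite[Corollary 7.6.2]{AG}, both of which we are allowed to assume here. The only two points that warrant a line of care are (a) that \eqref{e:alm isogeny ten} commutes as a diagram of functors rather than merely of abstract module categories --- this is precisely why \propref{p:alm isogeny} was established together with its compatibility with the $\QCoh(\LS_{\cG_1})$-action, which is what legitimizes the base change; and (b) the harmless passage back and forth between $\BL_{G_i}$ and its left adjoint $\BL^L_{G_i}$.
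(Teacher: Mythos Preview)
Your argument is correct and is exactly the approach the paper has in mind: the corollary is stated as an immediate consequence of \propref{p:change group 1} and \thmref{t:change group 2}, and you have spelled out precisely the 2-out-of-3 step in the square \eqref{e:alm isogeny ten} that makes this work. The points (a) and (b) you flag are indeed the only things to check, and both are handled by the $\QCoh(\LS_{\cG_1})$-linearity established in \propref{p:alm isogeny}.
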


\sssec{}

For a given reductive group $G$, take $G_2=G$, and let $G_1:=G_{\on{sc}}$ be the simply-connected cover of its derived group.
Let $\phi$ be the canonical map 
$$G_{\on{sc}}\to G.$$

As a particular case of \corref{c:change group}, we obtain:

\begin{cor} \label{c:prel sc}
If GLC holds for $G_{\on{sc}}$, then it also holds for $G$.
\end{cor}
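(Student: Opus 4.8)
The plan is to read this off directly from \corref{c:change group}, which was stated precisely for this purpose; the only things that need a word of comment are that the canonical map $\phi\colon G_{\on{sc}}\to G$ is indeed an \emph{almost isogeny} in the sense recalled at the beginning of \secref{s:sc}, and that \conjref{c:GLC} for a group $H$ is by definition the statement ``$\BL_H$ is an equivalence.'' For the first point: since $G_{\on{sc}}$ is semisimple, one has $[G_{\on{sc}},G_{\on{sc}}]=G_{\on{sc}}$, and $\phi$ factors as $G_{\on{sc}}\to [G,G]\hookrightarrow G$, where the first map is the canonical map exhibiting $G_{\on{sc}}$ as the simply-connected cover of $[G,G]$. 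In particular $\phi$ induces the map $[G_{\on{sc}},G_{\on{sc}}]=G_{\on{sc}}\to [G,G]$ on derived groups, which is a (central) isogeny; hence $\phi$ is an almost isogeny. The second point is a tautology.

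Granting this, I would apply \corref{c:change group} with $G_1:=G_{\on{sc}}$, $G_2:=G$ and $\phi$ the canonical map: it says that if $\BL_{G_{\on{sc}}}$ is an equivalence, then $\BL_G$ is an equivalence, which is exactly the assertion that GLC for $G_{\on{sc}}$ implies GLC for $G$. That is the entire argument at the level of this corollary.

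As for the main obstacle: at the level of \corref{c:prel sc} there is essentially none — it is a one-line invocation of \corref{c:change group}. All the genuine content has already been absorbed into the machinery preceding it: the compatibility of $\BL_{G_1}$ and $\BL_{G_2}$ along $\phi$ ($\propref{p:alm isogeny}$, itself reduced through the coarse Langlands functors to the tautological compatibility of the Poincar\'e functors with $\phi_!$), the spectral-side base-change statement $\propref{p:change group 1}$ (cited from \cite{AG}), and — the one truly substantial input — \thmref{t:change group 2}, asserting that base change along $(\phi^\vee)^*\colon\QCoh(\LS_{\cG_1})\to\QCoh(\LS_{\cG_2})$ identifies $\QCoh(\LS_{\cG_2})\underset{\QCoh(\LS_{\cG_1})}{\otimes}\Dmod_{\frac{1}{2}}(\Bun_{G_1})$ with $\Dmod_{\frac{1}{2}}(\Bun_{G_2})$, whose proof is deferred to \secref{ss:change group 2}. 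So the honest answer is that the ``hard part'' lives one level down, in \thmref{t:change group 2}, and \corref{c:prel sc} is simply its formal payoff.
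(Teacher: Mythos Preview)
Your proof is correct and follows exactly the paper's approach: the paper simply states that taking $G_1:=G_{\on{sc}}$, $G_2:=G$, and $\phi$ the canonical map, \corref{c:prel sc} is a particular case of \corref{c:change group}. Your verification that $\phi$ is an almost isogeny is a detail the paper leaves implicit.
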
 

\sssec{}

Let $G=G_1\times G_2$. We have
$$\Bun_G\simeq \Bun_{G_1}\times \Bun_{G_2},$$
and hence
$$\Dmod_{\frac{1}{2}}(\Bun_G) \simeq \Dmod_{\frac{1}{2}}(\Bun_{G_1}) \otimes \Dmod_{\frac{1}{2}}(\Bun_{G_2}).$$

We also have:
$$\cG\simeq \cG_1\times \cG_2,$$
and hence
$$\LS_\cG\simeq \LS_{\cG_1}\times \LS_{\cG_2},$$
so that
$$\IndCoh_\Nilp(\LS_\cG)\simeq \IndCoh_\Nilp(\LS_{\cG_1})\otimes \IndCoh_\Nilp(\LS_{\cG_2}).$$

It is clear that under the above identifications,
$$\BL_G\simeq \BL_{G_1}\otimes \BL_{G_2}.$$

Combining with \corref{c:prel sc}, we obtain:

\begin{cor} \label{c:sc}
If GLC holds for all $G$ that are almost simple and simply-connected, then it holds for any
reductive $G$.
\end{cor}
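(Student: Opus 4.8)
The plan is to chain the two reductions already in place: \corref{c:prel sc}, which passes from an arbitrary reductive $G$ to the simply-connected cover of its derived group, together with the product compatibility $\BL_{G_1\times G_2}\simeq \BL_{G_1}\otimes \BL_{G_2}$ established just above, which breaks a semisimple simply-connected group into its almost-simple factors.

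First I would apply \corref{c:prel sc}: if GLC holds for $G_{\on{sc}}$, the simply-connected cover of $[G,G]$, then it holds for $G$. Since $G_{\on{sc}}$ is semisimple and simply-connected, this reduces the statement of \corref{c:sc} to the case of a semisimple simply-connected group $H$. By the structure theory of semisimple groups, such an $H$ is a \emph{finite} product $H\simeq \prod_{i=1}^{k}G_i$ of almost-simple simply-connected groups $G_i$, one for each connected component of the Dynkin diagram of $H$.

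Then I would iterate the two-factor formula over these $k$ factors: the identifications $\Bun_H\simeq \prod_i\Bun_{G_i}$ and $\LS_{\check H}\simeq \prod_i\LS_{\cG_i}$ induce, under the corresponding Künneth factorizations of the automorphic category $\Dmod_{\frac{1}{2}}(\Bun_H)$ and of the spectral category $\IndCoh_\Nilp(\LS_{\check H})$, an identification $\BL_H\simeq \bigotimes_{i=1}^{k}\BL_{G_i}$ (the $k$-fold version of the displayed formula above). By the hypothesis of \corref{c:sc} each $\BL_{G_i}$ is an equivalence, and a finite tensor product of equivalences of DG categories is again an equivalence; hence $\BL_H$ is an equivalence, i.e., GLC holds for $H$. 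Combined with the first step, this yields GLC for every reductive $G$.

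I do not expect a genuine obstacle here: the mathematical content lies entirely in \corref{c:change group} (hence in \thmref{t:change group 2}) and in the product compatibility, both already available. The only steps needing a line of justification are that the Künneth decomposition of $\Dmod_{\frac{1}{2}}(\Bun_H)$ is compatible with the Hecke and spectral structures, so that the tensor-product formula for $\BL_H$ is literally the one stated (immediate from the constructions of \cite{GLC1}), and that the product over the simple factors is finite, so that the iteration terminates.
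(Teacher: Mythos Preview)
Your proposal is correct and follows essentially the same route as the paper: the paper's argument is precisely to combine \corref{c:prel sc} with the product decomposition $\BL_{G_1\times G_2}\simeq \BL_{G_1}\otimes \BL_{G_2}$, applied to the decomposition of $G_{\on{sc}}$ into its almost-simple simply-connected factors.
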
 

\ssec{Proof of \thmref{t:change group 2}} \label{ss:change group 2}

\sssec{}

We will distinguish two special types of homomorphisms $\phi$:

\medskip

\noindent Type A: $\phi$ is injective;

\smallskip

\noindent Type B: $\phi$ is surjective with a connected kernel.

\medskip

Note that any $\phi$ can be factored as a composition
$$G_1\to G'_1\to G'_2\to G_2,$$
where:

\begin{itemize}

\item The homomorphisms $G_1\to G'_1$ and $G'_2\to G_2$ are of type $B$;

\item $G'_1\to G'_2$ is of type $A$. 

\end{itemize}

\medskip

So, it is enough to prove \thmref{t:change group 2} for $\phi$ of each of the above two types separately. 


\sssec{Proof for type B} \label{sss:type B}

Set 
$$T:=\on{ker}(\phi).$$ 

(In this subsection $T$ is just a torus, i.e., it is \emph{not} the Cartan subgroup of either $G_1$ or $G_2$.)

\medskip

We have an action of $\Bun_T$ on $\Bun_{G_1}$, and 
\begin{equation} \label{e:Bun as quot}
\Bun_{G_2}\simeq \Bun_{G_1}/\Bun_T.
\end{equation} 

We also have a projection
$$\LS_{\cG_1}\to \LS_\cT,$$
and 
\begin{equation} \label{e:LS as fiber}
\LS_{\cG_2}\simeq \on{pt}\underset{\LS_\cT}\times \LS_{\cG_1}
\end{equation} 
where $\on{pt}\to \LS_\cT$ is the unit point. 

\medskip

From \eqref{e:Bun as quot} we obtain that the naturally defined functor 
\begin{equation} \label{e:Bun as quot bis}
\Vect \underset{\Dmod(\Bun_T)}\otimes \Dmod_{\frac{1}{2}}(\Bun_{G_1})\to \Dmod_{\frac{1}{2}}(\Bun_{G_2})
\end{equation} 
is an equivalence, where:

\begin{itemize}

\item $\Dmod(\Bun_T)$ acts on $\Dmod_{\frac{1}{2}}(\Bun_{G_1})$ by !-convolution;

\item The functor $\Dmod(\Bun_T)\to \Vect$ is cohomology with compact supports. 

\end{itemize}

\medskip

From \eqref{e:LS as fiber} we obtain:
$$\QCoh(\LS_{\cG_2})\simeq \Vect\underset{\QCoh(\LS_\cT)}\otimes \QCoh(\LS_{\cG_1}),$$
and hence
$$\QCoh(\LS_{\cG_2})\underset{\QCoh(\LS_{\cG_1})}\otimes  \Dmod_{\frac{1}{2}}(\Bun_{G_1})$$
can be rewritten as 
$$\Vect\underset{\QCoh(\LS_\cT)}\otimes \Dmod_{\frac{1}{2}}(\Bun_{G_1}).$$

However, by Fourier-Mukai (i.e., GLC for tori)
$$\QCoh(\LS_\cT) \overset{\on{FM}}\simeq \Dmod(\Bun_T).$$

Hence, we obtain 
\begin{multline*}
\QCoh(\LS_{\cG_2})\underset{\QCoh(\LS_{\cG_1})}\otimes  \Dmod_{\frac{1}{2}}(\Bun_{G_1}) \simeq 
\Vect\underset{\QCoh(\LS_\cT)}\otimes \Dmod_{\frac{1}{2}}(\Bun_{G_1}) \overset{\on{FM}}\simeq \\
\simeq \Vect\underset{\Dmod(\Bun_T)}\otimes \Dmod_{\frac{1}{2}}(\Bun_{G_1})
\overset{\text{\eqref{e:Bun as quot bis}}}\simeq  \Dmod_{\frac{1}{2}}(\Bun_{G_2}).
\end{multline*}

This is the desired equivalence \eqref{e:change group 2}. 

\sssec{Proof for type A}

First, replacing $G_1$ by its derived group, we obtain that it is enough to consider the case when $G_1$
is semi-simple, which we will from now on assume.

\medskip

Denote by $T$ the cokernel of $\phi$, and denote by $\psi$ the projection
$$\Bun_{G_2}\to \Bun_T.$$

Consider $\Dmod(\Bun_T)$ as a (symmetric) monoidal category with respect to the pointwise $\sotimes$ tensor product,
and let it act on $\Dmod_{\frac{1}{2}}(\Bun_{G_2})$ via $\psi^!(-)\sotimes (-)$. 

\medskip

Denote
\begin{equation} \label{e:neutral subcateg}
\Dmod_{\frac{1}{2}}(\Bun_{G_1})':=\Vect\underset{\Dmod(\Bun_T)}\otimes \Dmod_{\frac{1}{2}}(\Bun_{G_2}),
\end{equation} 
where $\Dmod(\Bun_T)\to \Vect$ is the functor of !-fiber at the unit point.

\medskip

The functor $\phi^!$ naturally factors as 
$$\Dmod_{\frac{1}{2}}(\Bun_{G_2})\to \Dmod_{\frac{1}{2}}(\Bun_{G_1})' \overset{(\phi^!)'}\to \Dmod_{\frac{1}{2}}(\Bun_{G_1}),$$
and it is easy to see that the functor $(\phi^!)'$ is fully faithful. In fact, its essential image is a \emph{direct summand} in 
$\Dmod_{\frac{1}{2}}(\Bun_{G_1})$, described as follows.


\sssec{}

Note that the group $Z_{G_1}$ (which is finite, due to the assumption that $G_1$ is semi-simple) 
acts by automorphisms of the identity functor of $\Dmod_{\frac{1}{2}}(\Bun_{G_1})$. Hence,
the category $\Dmod_{\frac{1}{2}}(\Bun_{G_1})$ splits as a direct sum according to characters of $Z_{G_1}$: 
$$\Dmod_{\frac{1}{2}}(\Bun_{G_1})=\underset{\alpha\in (Z_{G_1})^\vee}\oplus\, \Dmod_{\frac{1}{2}}(\Bun_{G_1})_\alpha.$$

\medskip

We can identify $G_1$ with the derived group of $G_2$; moreover
$$G_2\simeq G_1\overset{\Gamma}\times Z^0_{G_2},$$
where $\Gamma$ is a finite group equipped with embeddings
\begin{equation} \label{e:grp Gamma}
Z_{G_1}\hookleftarrow \Gamma \hookrightarrow  Z^0_{G_2}.
\end{equation} 

\medskip

We claim:

\begin{lem} \label{l:comps that arise}
$$\Dmod_{\frac{1}{2}}(\Bun_{G_1})'= \underset{\alpha}\oplus\, \Dmod_{\frac{1}{2}}(\Bun_{G_1})_\alpha,$$
where $\alpha$ runs over the subset consisting of those characters that vanish on $\Gamma$. 
\end{lem}

\begin{proof}

The subcategory $\Dmod_{\frac{1}{2}}(\Bun_{G_1})'$ is the full subcategory in 
$\Dmod_{\frac{1}{2}}(\Bun_{G_1})$, generated by the essential image of $\phi^!$.  

\medskip

Denote
$$\Bun'_{G_1}:=\Bun_{G_1}\overset{\on{pt}/\Gamma}\times (\on{pt}/Z^0_{G_2}),$$ so that we
can factor $\phi$ as
$$\Bun_{G_1}\overset{\phi'}\to \Bun_{G'_1}\to \Bun_{G_2},$$
where the second arrow is a closed embedding. Hence, it suffices to show that the essential image of $(\phi')^!$
generates the subcategory described in the statement of the lemma. 

\medskip

Consider the projection
$$\Bun'_{G_1} \overset{\phi''}\to \Bun_{G_1}/(\on{pt}/\Gamma).$$
It is a gerbe with typical fiber $\on{pt}/Z^0_{G_2}$. Hence, the essential image of $(\phi'')^!$ generates 
$\Dmod_{\frac{1}{2}}(\Bun'_{G_1})$. 

\medskip

Hence, the subcategory generated by the essential image of $(\phi')^!$ is the same as the one generated
by the essential image of $(\phi''\circ \phi')^!$. However, the latter is exactly the category singled out
by the condition on the characters, since $\phi''\circ \phi'$ is the projection
$$\Bun_{G_1}\to \Bun_{G_1}/(\on{pt}/\Gamma).$$

\end{proof}

\medskip


\sssec{}

Consider the map
$$\phi^\vee:\LS_{\cG_2}\to \LS_{\cG_1}.$$

Note that we have a commutative diagram
$$
\CD
\pi_0(\LS_{\cG_2}) @>>> \pi_0(\LS_{\cG_1}) \\
@VVV @VVV \\
(Z_{G_2/Z^0_{G_2}})^\vee @>>> (Z_{G_1})^\vee,
\endCD
$$
see \secref{sss:alg fund group}, and note that
$$G_2/Z^0_{G_2}\simeq G_1/\Gamma \,\Rightarrow\, Z_{G_2/Z^0_{G_2}}\simeq Z_{G_1}/\Gamma,$$
where $\Gamma$ is as in \eqref{e:grp Gamma}.

\medskip

Let 
\begin{equation} \label{e:im LS}
\LS'_{\cG_1}\subset \LS_{\cG_1}
\end{equation}
be the union of connected components that lie in the essential image of $\phi^\vee$. The following assertion is a particular case 
of \thmref{t:Hecke Z 0} below: 

\begin{prop} \label{p:neutral}
The full subcategory
$$\Dmod_{\frac{1}{2}}(\Bun_{G_1})' \subset \Dmod_{\frac{1}{2}}(\Bun_{G_1})$$
equals
$$\QCoh(\LS'_{\cG_1})\underset{\QCoh(\LS_{\cG_1})}\otimes \Dmod_{\frac{1}{2}}(\Bun_{G_1}).$$
\end{prop}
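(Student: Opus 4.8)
The plan is to deduce the proposition from \thmref{t:Hecke Z 0}, which describes how the spectral action of $\QCoh(\LS_{\cG_1})$ on $\Dmod_{\frac{1}{2}}(\Bun_{G_1})$ interacts with the decomposition of $\Dmod_{\frac{1}{2}}(\Bun_{G_1})$ into isotypic summands for the action of $Z_{G_1}$ on the identity functor. Applied to $G_1$, that theorem identifies, for each character $\alpha$ of $Z_{G_1}$, the summand $\Dmod_{\frac{1}{2}}(\Bun_{G_1})_\alpha$ with the full subcategory $\QCoh(\LS_{\cG_1,\alpha})\underset{\QCoh(\LS_{\cG_1})}\otimes \Dmod_{\frac{1}{2}}(\Bun_{G_1})$ attached to the corresponding clopen substack $\LS_{\cG_1,\alpha}\subseteq \LS_{\cG_1}$.

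Granting \thmref{t:Hecke Z 0}, I would argue as follows. The discussion preceding the proposition exhibits $\Dmod_{\frac{1}{2}}(\Bun_{G_1})'$ as $\underset{\alpha\in S}\oplus\, \Dmod_{\frac{1}{2}}(\Bun_{G_1})_\alpha$, where $S$ is the set of characters of $Z_{G_1}$ that are trivial on $\on{ker}(Z_{G_1}\to Z_{G_2}/Z^0_{G_2})$. Summing the identification of \thmref{t:Hecke Z 0} over $\alpha\in S$ rewrites $\Dmod_{\frac{1}{2}}(\Bun_{G_1})'$ as $\QCoh\big(\underset{\alpha\in S}\sqcup\, \LS_{\cG_1,\alpha}\big)\underset{\QCoh(\LS_{\cG_1})}\otimes \Dmod_{\frac{1}{2}}(\Bun_{G_1})$. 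Hence the proposition follows once one checks the purely geometric equality $\underset{\alpha\in S}\sqcup\, \LS_{\cG_1,\alpha}=\LS'_{\cG_1}$, i.e.\ that a connected component of $\LS_{\cG_1}$ lies in the essential image of $\phi^\vee:\LS_{\cG_2}\to\LS_{\cG_1}$ exactly when its index $\alpha$ lies in $S$.

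For this geometric step, recall that after the reductions already made $\phi$ identifies $G_1$ with the derived group $[G_2,G_2]$; consequently $T=G_2/G_1$ is the abelianization torus of $G_2$, one has $Z_{G_1}\subseteq Z_{G_2}$ with $\on{ker}(Z_{G_1}\to Z_{G_2}/Z^0_{G_2})=Z_{G_1}\cap Z^0_{G_2}$, and dually $\phi^\vee:\cG_2\to\cG_1$ is a surjection whose kernel is the central torus $\cT$ dual to $T$. Since $\cT$ is central, $\LS_\cT$ acts on $\LS_{\cG_2}$ over $\LS_{\cG_1}$, every non-empty fiber of $\phi^\vee$ is a torsor under the connected group-stack $\LS_\cT$, and the obstruction to the fiber over a point $\sigma_1$ being non-empty — the obstruction to lifting the underlying $\cG_1$-bundle to $\cG_2$ while carrying a compatible flat connection — depends only on the connected component of $\sigma_1$. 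Hence the essential image of $\phi^\vee$ is a union of connected components of $\LS_{\cG_1}$, and unwinding the short exact sequences $1\to G_1\to G_2\to T\to 1$ and $1\to \cT\to \cG_2\to \cG_1\to 1$ through the identification of $\pi_0(\LS_{\cG_1})$ with (a quotient of) $(Z_{G_1})^\vee$ used by \thmref{t:Hecke Z 0} shows that the components hit are exactly those indexed by $S$.

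The conceptual content is carried entirely by \thmref{t:Hecke Z 0}; the work left specific to this proposition is the final combinatorial identification, and I expect the only real pitfall there to be bookkeeping. The point is that $\pi_0$ of the de Rham stack $\LS_{\cG}$ is genuinely coarser than $\pi_1(\cG)$ — only bundles of degree zero admit a flat connection — so one must keep the component group of $\LS_{\cG_1}$ in view rather than that of $\Bun_{\cG_1}$, and it is easy to get the relevant dualities pointing the wrong way when matching the automorphic condition ``$\alpha|_{Z_{G_1}\cap Z^0_{G_2}}=1$'' with the spectral condition ``the $\alpha$-component of $\LS_{\cG_1}$ lifts along $\phi^\vee$''.
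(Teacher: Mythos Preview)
Your proposal is correct and takes exactly the same approach as the paper: the paper simply asserts that \propref{p:neutral} ``is a particular case of \thmref{t:Hecke Z 0}'' and moves on, while you unpack what this means --- summing the idempotent identifications of \thmref{t:Hecke Z 0} over the relevant set $S$ of characters and matching $S$ with the set of connected components of $\LS_{\cG_1}$ in the image of $\phi^\vee$. One small inaccuracy: you should not need the caveat about $\pi_0(\LS_{\cG_1})$ being coarser than $\pi_1(\cG_1)$, since $G_1$ has already been reduced to the semi-simple case and hence $\cG_1$ is semi-simple as well; the degree-zero constraint is only an issue on the $\cG_2$ side, which you implicitly handle by noting that the $\cT$-torsor structure on the fibers controls which components are hit.
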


\sssec{}

Let us assume this proposition and proceed with the proof of Case A of \thmref{t:change group 2}. Note that 
the functor $\phi_!$ factors as 
$$\Dmod_{\frac{1}{2}}(\Bun_{G_1})\to  \Dmod_{\frac{1}{2}}(\Bun_{G_1})' \overset{(\phi_!)'}\to \Dmod_{\frac{1}{2}}(\Bun_{G_2}),$$
where the first arrow is the corresponding orthogonal projection. 

\medskip

We obtain that the functor 
\eqref{e:change group 2} is an equivalence if and only if the functor
\begin{equation} \label{e:change group 2'}
\QCoh(\LS_{\cG_2})\underset{\QCoh(\LS'_{\cG_1})}\otimes  \Dmod_{\frac{1}{2}}(\Bun_{G_1})'  \to \Dmod_{\frac{1}{2}}(\Bun_{G_2}),
\end{equation}
induced by $(\phi_!)'$, is an equivalence. 

\sssec{}

Consider $\QCoh(\LS_\cT)$ as a monoidal category with respect to \emph{convolution}. As such,
it acts on $\QCoh(\LS_{\cG_2})$ by convolution, corresponding to the action of $\LS_\cT$ on $\LS_{\cG_2}$
given by the map 
$$\psi^\vee:\cT\to Z_{\cG_2}.$$

\medskip

Note that using the Fourier-Mukai equivalence (i.e., GLC for tori)
$$\QCoh(\LS_\cT) \overset{\on{FM}}\simeq \Dmod(\Bun_T),$$
we can rewrite $\Dmod_{\frac{1}{2}}(\Bun_{G_1})'$ as
$$\Vect\underset{\QCoh(\LS_\cT)}\otimes \Dmod_{\frac{1}{2}}(\Bun_{G_2}),$$
where the functor $\QCoh(\LS_\cT)\to \Vect$ is $\Gamma(\LS_\cT,-)$.

\sssec{}

Note the action of $\QCoh(\LS_{\cG_2})$ on $\Dmod_{\frac{1}{2}}(\Bun_{G_2})$ is compatible with 
the actions of $\QCoh(\LS_\cT)$ on both: indeed, this is a particular case of the compatibility in \secref{sss:isog compat}
for the homomorphism $G_2\to G_2\times T$. 

\medskip

Thus, \eqref{e:change group 2'} can be rewritten as the special
case (for $\bC:=\Dmod_{\frac{1}{2}}(\Bun_{G_2})$) of the functor 
\begin{equation} \label{e:change group 2''}
\QCoh(\LS_{\cG_2})\underset{\QCoh(\LS'_{\cG_1})}\otimes  
\left(\Vect\underset{\QCoh(\LS_\cT)}\otimes \bC\right) \to \bC,
\end{equation}
defined for a DG category $\bC$, equipped with an action of $\QCoh(\LS_{\cG_2})$ and a compatible action of
$\QCoh(\LS_\cT)$.

\sssec{}

We claim that \eqref{e:change group 2''} is an equivalence for any such $\bC$. Here is the general paradigm: 

\medskip

Let
$\CY$ be an algebraic stack with an affine diagonal, and let 
$$\wt\CY\to \CY$$
be a torsor with respect to a group-stack $\CT$, also with an affine diagonal. 

\medskip

Assume that both $\CY$ and $\CT$ are quasi-compact, locally almost of finite type and eventually coconnective
(so that \cite[Theorem 2.2.6]{Ga3}\footnote{Some gaps in the proof of this theorem have been found after its publication,
which have subsequently been fixed. However, for our purposes here, we are only using the case of algebraic stacks
that can be written as global quotients, in which case the result follows from the (easier) Theorem 2.2.4 in {\it loc.cit.}}
is applicable). 

\medskip

Then the 2-category of DG categories tensored over $\QCoh(\wt\CY)$ and equipped with a compatible
action of $\QCoh(\CT)$ is equivalent to the 2-category of DG categories tensored over $\QCoh(\CY)$,
with the mutually inverse equivalences being
$$\bC \mapsto \Vect\underset{\QCoh(\CT)}\otimes \bC$$
and
$$\bD\mapsto \QCoh(\wt\CY) \underset{\QCoh(\CY)}\otimes \bD.$$

\sssec{}

We apply this to 
$$\CY:=\LS'_{\cG_1},\,\, \wt\CY:=\LS_{\cG_2},\,\, \CT:=\LS_{\cT}.$$

\qed[\thmref{t:change group 2}]

\section{Low genus cases} \label{s:low genus}

The device that we use to prove the GLC breaks down when 
$X$ has genus $0$ or $1$.
In this section, we treat these cases separately. 
We highlight the key role played by the main 
results of \cite{GLC3} and \cite{GLC4} in this material. 

\ssec{What do we need to prove?}

According to \secref{sss:red to irred}, in order to prove GLC, we 
need to prove that the map \eqref{e:AG irred}
is an isomorphism.

\medskip

We will show that this is automatic when $X$ has low genus. 

\ssec{The case of \texorpdfstring{$g=0$}{g0}}

Note that for a curve of genus $0$, we have $\LS^{\on{irred}}=\emptyset$, so that \eqref{e:AG irred} holds trivially.

\ssec{The case \texorpdfstring{$g=1$}{g1}}

\sssec{}

According to \corref{c:sc}, we can assume that $G$ is almost simple and simply-connected.
We will separate two cases:

\medskip

\noindent(a) $G=SL_n$;

\medskip

\noindent(b) $G\neq SL_n$. 

\sssec{}

In case (a), the dual group $\cG$ is isomorphic to $PGL_n$. In this case, 
\cite[Conjecture 4.5.7]{GLC4} is known (in fact, it is a trivial particular case of
\cite{BKS})\footnote{Fix an irreducible $PGL_n$ local system $\sigma$, and choose its generic lift to an $SL_n$-local
system; denote the underlying vector bundle by $\CE$. Then the space of generic oper structures on 
$\sigma$ is isomorphic to the space of generically defined line subbundles in $\CE$, and this
space is known to be homologically contractible by \cite{Ga5}.}.

\medskip

This implies GLC by \cite[Corollary 4.5.5]{GLC4}.

\sssec{} \label{sss:g 2}

Note that this proof covers the case of $G=SL_n$ for \emph{any} genus. 

\sssec{}

We now consider case (b). 

\begin{prop}[\cite{KS}, \cite{BFM}] \label{p:genus 1}
Let $\sG$ be an adjoint group different from $PGL_n$. Then for a curve of genus $1$, we have
$\LS_\sG^{\on{irred}}=\emptyset$.
\end{prop}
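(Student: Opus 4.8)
The statement asserts that for a simple adjoint group $\sG$ not of type $A_{n-1}$ (i.e.\ $\sG\neq PGL_n$) and a genus-$1$ curve $X$, the stack of irreducible $\sG$-local systems on $X$ is empty. My plan is to reduce this to a classical fact about representations of abelian groups, together with the classification of which simple groups contain a regular semisimple element that is not contained in a proper parabolic subgroup of positive semisimple corank.

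First I would unwind what a $\sG$-local system on an elliptic curve is. Since $\pi_1(X)\simeq \ZZ^2$ is abelian, a $\sG$-local system is (up to equivalence) a pair of commuting elements $g_1,g_2\in \sG$, modulo simultaneous conjugation; irreducibility means that the centralizer $Z_{\sG}(g_1,g_2)$ is finite (equivalently, $\langle g_1,g_2\rangle$ is not contained in any proper parabolic). So the claim becomes: if $\sG$ is simple adjoint and not of type $A$, then any two commuting elements of $\sG$ have a positive-dimensional common centralizer. Next I would invoke the structure theory of commuting pairs in a connected group: a classical argument (going back to Borel–Friedman–Morgan, and used in \cite{KS,BFM}) shows that for a \emph{simply-connected} group the identity component of $Z_{\sG}(g_1)$ is again reductive and $g_2$ lies in it, so by induction one reduces to understanding when a commuting pair generates a Zariski-dense enough subgroup; the punchline of \cite{BFM} is that $\pi_0$ of the moduli of commuting pairs, and the condition for the existence of an \emph{isolated} (hence irreducible) such pair, is governed by the torsion in the fundamental group of $\sG$ together with a numerical invariant that, among simple groups, only for type $A$ admits the requisite ``almost faithful'' finite abelian subgroup of the adjoint torus. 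Concretely: an isolated commuting triple/pair exists iff $\sG$ contains a finite abelian subgroup whose centralizer is finite, and a case-by-case inspection of the simple Lie algebras shows this happens precisely in type $A$ (where one uses the subgroup generated by a cyclic shift and a diagonal matrix of $n$-th roots of unity in $PGL_n$, i.e.\ the Heisenberg/Weyl construction).

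Since the paper only needs the statement as a black box — it is cited to \cite{KS,BFM} — the cleanest route is to quote those references directly rather than reprove the classification. The remaining content, which I would spell out, is the translation: $\LS_{\sG}^{\on{irred}}(X)\neq\emptyset$ for $X$ elliptic is \emph{equivalent} to the existence of a commuting pair in $\sG$ with finite centralizer, because the Betti description of $\LS_{\sG}$ identifies it with $\Hom(\pi_1(X),\sG)/\sG$ and the condition of being irreducible (not factoring through a proper parabolic) is exactly finiteness of the automorphism group of the underlying local system. One must be slightly careful that "irreducible" in the de Rham sense matches "not contained in a proper parabolic" in the Betti sense; this is standard (reducibility of a local system $=$ existence of a reduction of structure group to a parabolic), so I would state it and move on.

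The main obstacle is not the local-systems translation — that is routine — but rather pinning down the precise group-theoretic input one is allowed to cite. In \cite{BFM} the relevant statement is phrased in terms of \emph{triples} of commuting elements (flat $\sG$-bundles on the three-torus) or of the topology of $\on{Hom}(\ZZ^n,\sG)$; one needs the $n=2$ specialization and needs to extract exactly the ``no isolated pair for non-type-$A$ simple adjoint $\sG$'' corollary. If instead one wanted a self-contained proof, the hard part would be the case analysis showing that the only simple Lie algebra whose adjoint group contains a finite abelian subgroup with finite centralizer is $\mathfrak{sl}_n$ — this is where the special role of the Weyl/Heisenberg subgroup of $PGL_n$ enters, and it requires either a rank-by-rank computation with the extended Dynkin diagram (Borel–de Siebenthal) or an appeal to Kac's classification of finite-order automorphisms. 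Given the paper's stance, I would present the proof as: recall the Betti description, reduce to commuting pairs with finite centralizer, and cite \cite{KS,BFM} for the non-existence outside type $A$, with a one-line remark indicating the type-$A$ exception is genuine.
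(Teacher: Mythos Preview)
Your reduction is correct and matches the paper's: pass to the Betti picture, so that an irreducible $\sG$-local system on an elliptic curve is a commuting pair $(g_1,g_2)$ in $\sG_\BC$ with finite centralizer, and then invoke \cite{KS,BFM}. The paper, however, inserts one short but sharp observation before citing \cite{BFM} that you do not make explicit: since $g_1$ and $g_2$ commute, the subgroup they generate is contained in $Z_\sG(g_1,g_2)$, which is finite; hence $g_1,g_2$ lie in a finite subgroup of $\sG_\BC$ and therefore in a compact real form $K$. This lets the paper cite a specific, clean statement (\cite[Proposition 4.1.1]{BFM}) about commuting pairs in compact simple Lie groups, rather than having to locate or extract the right ``$n=2$ specialization'' from the more elaborate triples/moduli discussion in \cite{BFM}. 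Your route is not wrong, but the paper's extra step buys a precise citation and avoids the vagueness you yourself flag about which BFM result to quote.
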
 

From the proposition, we obtain that \eqref{e:AG irred} holds trivially in this case.  

\sssec{Proof of \propref{p:genus 1}}

Appealing to Riemann-Hilbert, it is enough to show that a Riemann surface $X$ of genus $1$ does not
admit irreducible (Betti) $\sG_\BC$-local systems. 

\medskip

A $\sG_\BC$-local system $\sigma$ on $X$ is given by a pair of commuting elements
$$g_1,g_2\in \sG_\BC.$$

Consider the subgroup 
$$Z_\sG(g_1,g_2)\simeq \on{Aut}_\sigma.$$

A standard argument shows that if $\sigma$ is irreducible then (in any genus) 
the Lie algebra of $\on{Aut}_\sigma$ is zero. Hence, $Z_\sG(g_1,g_2)$ is finite. 

\medskip

Since $g_1$ and $g_2$ commute, the subgroup that they generate is contained 
in $Z_\sG(g_1,g_2)$, and hence is itself finite. Hence, the pair $(g_1,g_2)$ 
is contained in a compact form $K$ of $\sG_\BC$.

\medskip

However, now \cite[Proposition 4.1.1]{BFM}\footnote{A related result is established also in \cite{KS}.}  
implies that $K\simeq PSU_n$
for some $n$; hence $\sG\simeq PGL_n$.

\qed[\propref{p:genus 1}]

\section{Calculation of the fundamental group} \label{s:pi 1}

In this section we let $G$ be a semi-simple group. 

\medskip

One of the key parts of the argument in the proof of GLC is that\footnote{Under the assumption that $g\geq 2$ (and if $g=2$, the Dynkin diagram
of $G$ has no $A_1$ factors).} the fundamental group of the stack $\LS_\cG^{\on{irred}}$ is small
(outside a few exceptional cases).

\medskip

For example, if $G$ is adjoint (in which case $\cG$ is simply-connected), the stack $\LS_\cG^{\on{irred}}$ is also simply-connected. The reader may
choose to focus on this case on the first pass.

\medskip

For a general $G$, we will show that the fundamental group of $\LS_\cG^{\on{irred}}$ is controlled by the finite group $Z_G$. 

\medskip 

We remark that the arguments in this section are of de Rham nature.
It would be nice to also have a direct topological proof of
\thmref{t:pi 1 LS} in its Betti incarnation.

\ssec{The fundamental groupoid of \texorpdfstring{$\Bun_{\cG}$}{BunGc}}

\sssec{}

Let $\CS$ be a connective spectrum. We can regard it as a constant prestack, and we let
$\CS_{\on{et}}$ be its \'etale sheafification. 

\medskip

For example, if $\CS=B(\Gamma)$, where $\Gamma$ is a finite abelian group, then $B(\Gamma)_{\on{et}}$
is the \'etale stack $\on{pt}/\Gamma$.



\sssec{}

Consider the tautological map 
$$\cG\to \on{pt}/\pi_1(\cG),$$
where $\pi_1(\cG)$ denotes the \'etale fundamental group of $\cG$.

\medskip

The above map induces a map
$$\on{pt}/\cG\to B^2(\pi_1(\cG))_{\on{et}},$$
and hence to a map 
\begin{equation} \label{e:to gerbes}
\Bun_\cG=\bMaps(X,\on{pt}/\cG)\to \bMaps(X,B^2(\pi_1(\cG))_{\on{et}})=:\on{Ge}_{\pi_1(\cG)}(X),
\end{equation}
where $\bMaps(-,-)$ denoted the prestack of maps. 

\begin{rem}
The map \eqref{e:to gerbes} means that to a $\cG$-bundle we can canonically associate an \'etale $\pi_1(\cG)$-gerbe. Namely,
this is the gerbe of \'etale-local lifts of our bundle to the simply-connected cover of $\cG$.
\end{rem} 

\sssec{}

Note that we can think of $\on{Ge}_{\pi_1(\cG)}(X)$ also as
$$B^2(\on{C}^\cdot(X,\pi_1(\cG)))_{\on{et}},$$
where $\on{C}^\cdot(X,\pi_1(\cG))$ is the spectrum of \'etale cochains on $X$ with coefficients in $\pi_1(\cG)$.

\medskip

Accordingly, the (2)-stack $\on{Ge}_{\pi_1(\cG)}(X)$ splits into connected components indexed by $H^2(X,\pi_1(\cG))$. 
The neutral connected component is canonically isomorphic to
$$B(\Bun_{\pi_1(\cG)})_{\on{et}}.$$

\sssec{}

We will prove:

\begin{prop} \label{p:pi 1 Bun}
The map \eqref{e:to gerbes} defines an isomorphism of $\tau_{\leq 1}$ truncations
of \'etale homotopy types.
\end{prop}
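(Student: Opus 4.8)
The plan is to extract Proposition~\ref{p:pi 1 Bun} from the classical connectedness and simple-connectedness of $\Bun_H$ for $H$ semisimple and simply connected, propagated along a fiber sequence. Write $\cG_{\on{sc}}$ for the simply-connected cover of $\cG$, so that $1\to \pi_1(\cG)\to \cG_{\on{sc}}\to \cG\to 1$ is a central extension of étale sheaves of groups. Passing to classifying stacks produces a fiber sequence $B\cG_{\on{sc}}\to B\cG\to B^2(\pi_1(\cG))_{\on{et}}$, whose last arrow is exactly the map inducing \eqref{e:to gerbes}; since $\bMaps(X,-)$ preserves limits, we obtain a fiber sequence of prestacks
\[
\Bun_{\cG_{\on{sc}}}\longrightarrow \Bun_\cG\longrightarrow \on{Ge}_{\pi_1(\cG)}(X).
\]
Concretely, for a $k$-point $\eta$ of $\on{Ge}_{\pi_1(\cG)}(X)$, corresponding to an étale $\pi_1(\cG)$-gerbe $\mathcal{G}_\eta$ on $X$, the fiber of this map over $\eta$ is the prestack $\Bun^{\eta}_{\cG_{\on{sc}}}$ classifying $\cG_{\on{sc}}$-bundles equipped with an identification of their associated gerbe with $\mathcal{G}_\eta$ --- a twisted form of $\Bun_{\cG_{\on{sc}}}$, in particular étale-locally on $X$ it agrees with $\Bun_{\cG_{\on{sc}}}$. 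I claim it suffices to show that each $\Bun^{\eta}_{\cG_{\on{sc}}}$ has trivial $\tau_{\leq 1}$ étale homotopy type, i.e.\ is étale-connected with trivial étale fundamental group: then the Puppe sequence of the fiber sequence identifies $\pi_0$ of the middle and right terms, and identifies $\pi_1$ over each connected component, which is precisely the assertion of the proposition.

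Two of the three inputs are routine. The statement on $\pi_0$ can in any case be checked by hand: $\pi_0(\Bun_\cG)\simeq \pi_1(\cG)$ and $\pi_0(\on{Ge}_{\pi_1(\cG)}(X))=H^2_{\on{et}}(X,\pi_1(\cG))\simeq \pi_1(\cG)$ by Poincar\'e duality on $X$, and \eqref{e:to gerbes} realizes the obstruction-to-lifting class, hence is a bijection on $\pi_0$. Connectedness of $\Bun^{\eta}_{\cG_{\on{sc}}}$ is the classical theorem (Harder, Drinfeld--Simpson) that $\Bun_H$ is connected for $H$ semisimple simply connected, which goes through verbatim for twisted forms. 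The remaining --- and genuinely new --- point is simple-connectedness, which I would obtain from uniformization: fixing $x\in X$, one has $\Bun^{\eta}_{\cG_{\on{sc}}}\simeq \big[\,\cG_{\on{sc}}(\mathcal{O}_{X\setminus x})\,\backslash\,\Gr^{\eta}_{\cG_{\on{sc}}}\,\big]$, where $\Gr^{\eta}_{\cG_{\on{sc}}}$ is the ($\mathcal{G}_\eta$-twisted) affine Grassmannian --- an increasing union of normal projective Schubert varieties, hence étale-simply-connected --- and the loop group $\cG_{\on{sc}}(\mathcal{O}_{X\setminus x})$ is a connected ind-affine group scheme. The fibration $\Gr^{\eta}_{\cG_{\on{sc}}}\to \Bun^{\eta}_{\cG_{\on{sc}}}\to B\big(\cG_{\on{sc}}(\mathcal{O}_{X\setminus x})\big)$ then forces $\pi_1^{\on{et}}(\Bun^{\eta}_{\cG_{\on{sc}}})=\ast$, being squeezed between $\pi_1^{\on{et}}(\Gr^{\eta}_{\cG_{\on{sc}}})=\ast$ and $\pi_1^{\on{et}}\big(B(\cG_{\on{sc}}(\mathcal{O}_{X\setminus x}))\big)=\pi_0(\cG_{\on{sc}}(\mathcal{O}_{X\setminus x}))=\ast$. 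Note that the finite center $Z_{\cG_{\on{sc}}}$, which is the generic automorphism group of $\Bun_{\cG_{\on{sc}}}$, does not obstruct this: in the displayed presentation it only contributes to $\pi_2$.

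The main obstacle is not any single one of these geometric facts but the bookkeeping with étale homotopy types: the étale homotopy type functor does not take arbitrary fiber sequences of prestacks to fiber sequences, so the ``Puppe sequence'' step must be justified, and this is delicate because $\on{Ge}_{\pi_1(\cG)}(X)$ is a genuine $2$-stack which is not $1$-affine. I would handle it by testing against finite coefficients: an isomorphism on $\tau_{\leq 1}$ of étale homotopy types is equivalent to inducing, for every finite group $\Gamma$, an equivalence on the groupoids of $\Gamma$-local systems, and for our map this reduces to comparing $H^0_{\on{et}}$ and $H^1_{\on{et}}$ (and the analogous nonabelian $H^1$) with coefficients in finite locally constant sheaves. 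Base-changing the map along a point $\eta$ of $\on{Ge}_{\pi_1(\cG)}(X)$ gives the honest algebraic stack $\Bun^{\eta}_{\cG_{\on{sc}}}$, and the connectedness and simple-connectedness just established give $R^0p_\ast\Gamma=\Gamma$, $R^1p_\ast\Gamma=0$, whence $H^{\leq 1}_{\on{et}}(\Bun_\cG,\Gamma)\cong H^{\leq 1}_{\on{et}}(\on{Ge}_{\pi_1(\cG)}(X),\Gamma)$, i.e.\ the desired isomorphism on $\tau_{\leq 1}$. A secondary technical point needing care is the precise form of uniformization for gerbe-twisted $\cG_{\on{sc}}$-bundles, together with simple-connectedness of the twisted affine Grassmannian; both are expected, but should be spelled out or referenced.
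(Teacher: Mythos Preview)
Your approach is correct in spirit and relies on the same core geometric input as the paper --- the connectedness and simple-connectedness of $\Bun_H$ for $H$ semisimple simply-connected --- but the paper takes a different and slicker route that sidesteps precisely the two technical concerns you flag.

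Rather than using the simply-connected cover $\cG_{\on{sc}}\to\cG$ (finite kernel), the paper chooses an auxiliary reductive $G'\twoheadrightarrow G$ (with the notation switched from $\cG$ to $G$) whose kernel $T_0$ is a \emph{connected torus} and whose derived group is simply-connected; setting $T_1:=G'_{\on{ab}}$, one has $\pi_1(G)\simeq\ker(T_0\to T_1)$. The map $\Bun_{G'}\to\Bun_{T_1}$ is now a smooth morphism between honest algebraic stacks with fibers $\Bun_{G_1}$ for $G_1$ a twisted form of $[G',G']$, hence connected and simply-connected; so this map is a $\tau_{\le 1}$-isomorphism with no bookkeeping about fiber sequences over $2$-stacks. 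Quotienting both sides by the $\Bun_{T_0}$-action gives $\Bun_G\to\Bun_{T_1}/\Bun_{T_0}$, and finally one identifies $\Bun_{T_1}/\Bun_{T_0}\simeq\on{Ge}_{\pi_1(G)}(X)$ directly. In effect the paper resolves the finite group $\pi_1(G)$ by a two-term complex of tori $[T_0\to T_1]$, replacing the problematic $2$-stack base by a quotient of one torus-$\Bun$ by another.

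What each approach buys: yours is more direct and conceptually transparent (it says exactly \emph{why} the map is a $\tau_{\le 1}$-isomorphism: the fibers are simply-connected), but you correctly note it demands care with \'etale homotopy of fiber sequences over a $2$-stack, plus the twisted affine Grassmannian and twisted uniformization. The paper's approach trades this for an extra auxiliary group and avoids both issues entirely --- no Puppe sequence over a $2$-stack, and the twisted forms that arise are inner twists of a semisimple simply-connected group over $X$, for which the required facts are standard.
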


The concrete meaning of this proposition is that the map \eqref{e:to gerbes} defines a bijection on the sets
of connected components, and on each connected component an isomorphism of 
\'etale fundamental groups.

\medskip

The proof will be given in \secref{ss:calc pi 1 Bun}.

%
%
%
%
%

\ssec{Line bundles on \texorpdfstring{$\Bun_\cG$}{lnbndle}}

We will now use the map \eqref{e:to gerbes} to construct line bundles on $\Bun_\cG$ starting from $Z_G$-torsors. 
This will be part of a more general construction, which will be extensively used in \secref{s:gerbes}. 

\sssec{} 

Note that we have a canonical identification
\begin{equation} \label{e:pi 1 Z duality}
\pi_1(\cG)\simeq (Z_G)^\vee(1),
\end{equation} 
where $(-)^\vee$ denotes Cartier duality and $(1)$ denotes the Tate twist. 

\sssec{}

Combining \eqref{e:pi 1 Z duality} with Verdier duality 
$$\on{C}^\cdot(X,Z_G)^\vee\simeq B^2\left(\on{C}^\cdot(X,(Z_G)^\vee(1))\right),$$
we obtain an identification
\begin{equation} \label{e:Verdier duality}
\on{C}^\cdot(X,Z_G)^\vee \simeq B^2(\on{C}^\cdot(X,\pi_1(\cG))),
\end{equation} 
and in particular a bilinear pairing
\begin{equation} \label{e:Verdier pairing initial}
B^2(\on{C}^\cdot(X,Z_G))\times B^2(\on{C}^\cdot(X,\pi_1(\cG)))\to B^2(\mu_\infty).
\end{equation}

\sssec{}

After \'etale sheafification, from \eqref{e:Verdier pairing initial} we obtain a bilinear pairing
\begin{equation} \label{e:gerbe pairing}
\on{Ge}_{Z_G}(X)\times \on{Ge}_{\pi_1(\cG)}(X)\to \on{Ge}_{\mu_\infty}(\on{pt}),
\end{equation}
where
$$\on{Ge}_{\mu_\infty}(\on{pt}):=B^2(\mu_\infty)_{\on{et}}.$$

\sssec{} \label{sss:loop gerbes}

Looping \eqref{e:gerbe pairing} along the first factor, we obtain a pairing\footnote{We will return to the untruncated pairing \eqref{e:gerbe pairing}
in \secref{s:gerbes}, where it will play a fundamental role.}
\begin{equation} \label{e:truncated pairing}
\Bun_{Z_G} \times \on{Ge}_{\pi_1(\cG)}(X) \to B(\mu_\infty)_{\on{et}}\to \on{pt}/\BG_m. 
\end{equation}

In particular, we obtain that a point 
$$\CP_{Z_G}\in \Bun_{Z_G}$$
gives rise to a canonically defined $\mu_\infty$-torsor, to be denoted
$$\CL_{\CP_{Z_G}},$$  
on $\on{Ge}_{\pi_1(\cG)}(X)$.

\medskip

The fact that \eqref{e:Verdier pairing initial} is a perfect pairing implies:

\begin{lem} \label{l:every torsor}
Every $\mu_\infty$-torsor on every connected component of $\on{Ge}_{\pi_1(\cG)}(X)$ is the restriction of $\CL_{\CP_{Z_G}}$
for some $\CP_{Z_G}$. 
\end{lem} 

\sssec{}  \label{sss:every torsor}

We will denote by the same symbol $\CL_{\CP_{Z_G}}$ the pullback of the above $\mu_\infty$-torsor along the map \eqref{e:to gerbes}.
By a slight abuse of notation, we will continue to use the same symbol $\CL_{\CP_{Z_G}}$ the corresponding \'etale local
system of $k$-vector spaces on $\Bun_\cG$.

\medskip

By \propref{p:pi 1 Bun}, every \'etale local system of $k$-vector spaces on a given component of $\Bun_\cG$
splits as a direct sum of 1-dimensional ones, and by \lemref{l:every torsor}, each of the latter is isomorphic to the restriction of 
$\CL_{\CP_{Z_G}}$ for some $\CP_{Z_G}$. 

\sssec{} \label{sss:L P Z G}

Since $\CL_{\CP_{Z_G}}$, viewed as an \'etale local system of $k$-vector spaces comes from a $\mu_\infty$-torsor, 
we can canonically associate to it a de Rham local system, which we will still denote by the
same character $\CL_{\CP_{Z_G}}$. 

\medskip

We will also use the same symbol $\CL_{\CP_{Z_G}}$ to denote the corresponding line bundle (viewed either
as the line bundle underlying the corresponding de Rham local system, or equivalently, as a $\CO^\times$-torsor
induced by the map $\mu_\infty\to \CO^\times$). 

\medskip

By a further abuse of notation, we will use the same symbol $\CL_{\CP_{Z_G}}$ to denote its pullback
(in all three incarnations: \'etale, de Rham, coherent) along the map
$$\LS_\cG\to \Bun_\cG.$$

\ssec{The fundamental group of \texorpdfstring{$\LS_\cG^{\on{irred}}$}{fnd}}

\sssec{}

Consider the map
\begin{equation} \label{e:LS irred to Bun}
\LS_\cG^{\on{irred}}\to \LS_\cG\to \Bun_\cG.
\end{equation}

The main result of this subsection is the following assertion:

\begin{thm} \label{t:pi 1 LS}
Assume that $g\geq 2$, and if $g=2$, then its root system does not have $A_1$ factors. 
Then the map \eqref{e:LS irred to Bun} induces an isomorphism on the 
$\tau_{\leq 1}$ truncations of \'etale homotopy types.
\end{thm}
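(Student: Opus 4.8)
The plan is to prove the theorem by analyzing the \emph{forgetful map} \eqref{e:LS irred to Bun} directly, rather than by comparing both sides to $\on{Ge}_{\pi_1(\cG)}(X)$; the latter enters only through \propref{p:pi 1 Bun}, which, in tandem with the statement proved here, will then identify the common homotopy type concretely. The idea is that over the \emph{stable} locus of $\Bun_\cG$ the map \eqref{e:LS irred to Bun} becomes an affine bundle, that the stable locus and its preimage are of codimension $\geq 2$ inside $\Bun_\cG$ and $\LS^{\on{irred}}_\cG$ respectively, and that the two open embeddings and the affine bundle are all $\tau_{\leq 1}$-equivalences of \'etale homotopy types. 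Throughout I use that $G$ is semisimple — so that $\LS^{\on{irred}}_\cG$ is a smooth stack — and that we work in characteristic zero.

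First, let $\Bun^{\on{st}}_\cG\subseteq\Bun_\cG$ be the open substack of stable bundles and set $U:=\LS^{\on{irred}}_\cG\times_{\Bun_\cG}\Bun^{\on{st}}_\cG$. I claim $U\to\Bun^{\on{st}}_\cG$ is a torsor under a vector bundle of rank $\dim(\cG)(g-1)$. Indeed, for stable $\CP$ and semisimple $\cG$ one has $H^0(X,\on{ad}\CP)=0$, so by Serre duality the Atiyah class of $\CP$ lies in $H^1(X,\omega_X\otimes\on{ad}\CP)\cong H^0(X,\on{ad}\CP)^\vee=0$ and hence vanishes; thus $\CP$ admits a connection, and the space of connections on $\CP$ is a torsor under $H^0(X,\omega_X\otimes\on{ad}\CP)$, of dimension $\dim(\cG)(g-1)$ by Riemann--Roch (the corresponding $H^1$ vanishes again). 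Moreover every connection on a stable bundle is automatically irreducible: a $\nabla$-invariant reduction of $\CP$ to a proper parabolic would give connections on the associated line bundles, forcing their degrees to be $0$ and contradicting stability. Hence $U$ is literally the affine bundle of connections over $\Bun^{\on{st}}_\cG$, and since torsors under vector bundles are $\mathbb{A}^1$-equivalences and $\mathbb{A}^1$ is \'etale-contractible in characteristic zero, $U\to\Bun^{\on{st}}_\cG$ is a $\tau_{\leq 1}$-equivalence.

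It then remains to check that $\Bun_\cG\setminus\Bun^{\on{st}}_\cG$ and $\LS^{\on{irred}}_\cG\setminus U$ both have codimension $\geq 2$; granting this, Zariski--Nagata purity on the smooth stacks $\Bun_\cG$ and $\LS^{\on{irred}}_\cG$ makes the open embeddings $U\hookrightarrow\LS^{\on{irred}}_\cG$ and $\Bun^{\on{st}}_\cG\hookrightarrow\Bun_\cG$ into $\tau_{\leq 1}$-equivalences, so the chain $\LS^{\on{irred}}_\cG\hookleftarrow U\to\Bun^{\on{st}}_\cG\hookrightarrow\Bun_\cG$ exhibits \eqref{e:LS irred to Bun} as a $\tau_{\leq 1}$-equivalence, which is the assertion. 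The codimension of the non-stable locus of $\Bun_\cG$ is $\geq 2$ precisely under the hypotheses $g\geq 2$ and (for $g=2$) no $A_1$ factor in the root system; in the excluded cases the strictly semistable locus is a divisor (already for $SL_2$ at $g=2$), which is where the stated exceptions originate. For the second bound, a Harder--Narasimhan stratum $S\subseteq\Bun_\cG$ of codimension $c$ over which $\CP$ carries $\delta$-dimensional automorphisms contributes a locus of dimension $\bigl(\dim(\cG)(g-1)-c\bigr)+\dim(\cG)(g-1)$ in $\LS^{\on{irred}}_\cG$ --- the jump $\delta$ in $\dim H^0(X,\omega_X\otimes\on{ad}\CP)$ exactly cancels the stacky defect, using that irreducible local systems have only central automorphisms --- so $\LS^{\on{irred}}_\cG\setminus U$ again has codimension $\geq 2$ under the same hypotheses. (As a byproduct, this reproves that $\LS^{\on{irred}}_\cG$ is nonempty over every connected component of $\Bun_\cG$.)

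The main difficulty is this last step: matching the codimension bookkeeping with exactly the advertised genus and root-system restrictions, and in particular justifying the dimension count for $\LS^{\on{irred}}_\cG$ over the deeper strata --- this is where the smoothness of $\LS^{\on{irred}}_\cG$ of the expected dimension $2\dim(\cG)(g-1)$, and the vanishing of non-central automorphisms of irreducible local systems, are genuinely needed. A secondary point, automatic in the present approach, is naturality: since every intermediate map above is a $\tau_{\leq 1}$-equivalence \emph{over} $\Bun_\cG$, the resulting isomorphism of fundamental groupoids is the one induced by \eqref{e:LS irred to Bun}, not merely an abstract one; combining with \propref{p:pi 1 Bun} then yields the concrete description of the fundamental groupoid of $\LS^{\on{irred}}_\cG$ in terms of $Z_G$.
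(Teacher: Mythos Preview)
Your overall strategy is sound and close to the paper's, but the crucial dimension count for the codimension of $\LS^{\on{irred}}_\cG\setminus U$ in $\LS^{\on{irred}}_\cG$ is incorrect. Over a stratum $S\subset\Bun_\cG$ where bundles have $\delta$-dimensional automorphism groups, the scheme-theoretic fiber of $\LS^{\on{irred}}_\cG\to\Bun_\cG$ (i.e., $\LS^{\on{irred}}_\cG\times_{\Bun_\cG}\on{Spec}(k)$) is an open in an affine space of dimension $\dim(\cG)(g-1)+\delta$, and the correct formula is
\[
\dim f^{-1}(S)\;\leq\;\dim S+\bigl(\dim(\cG)(g-1)+\delta\bigr)\;=\;2\dim(\cG)(g-1)-c+\delta,
\]
so the codimension of $f^{-1}(S)$ in $\LS^{\on{irred}}_\cG$ is only $\geq c-\delta$, not $\geq c$. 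The ``stacky defect'' $\delta$ of the base is already absorbed into the stack dimension $\dim S=\dim(\cG)(g-1)-c$; it does not cancel the jump in fiber dimension. (A sanity check: for $\cG=SL_2$, $g=3$, the preimage of the trivial bundle $[\CO^2]\in\Bun_{SL_2}$ has dimension $6$ in the $12$-dimensional $\LS^{\on{irred}}_{SL_2}$, giving codimension $6=c-\delta=9-3$, not $9$.) So to make your argument work you would need to prove $c_S\geq\delta_S+2$ for every non-stable stratum $S$, which you have not done and which is not obviously easier than the theorem itself.

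The paper sidesteps this difficulty entirely. It does \emph{not} establish that $\LS^{\on{stbl}}_\cG\hookrightarrow\LS^{\on{irred}}_\cG$ has codimension $\geq 2$; it only proves \emph{density}. Density of an open in a smooth stack gives a \emph{surjection} $\pi_1(\LS^{\on{stbl}}_\cG)\twoheadrightarrow\pi_1(\LS^{\on{irred}}_\cG)$, and since the composite $\pi_1(\LS^{\on{stbl}}_\cG)\to\pi_1(\LS^{\on{irred}}_\cG)\to\pi_1(\Bun_\cG)$ is already known to be an isomorphism (by your steps 1--2, which are exactly \propref{p:conn on stable bundles} and \propref{p:compl 1}), the middle map is forced to be an isomorphism too. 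The price is that proving density requires knowing $\pi_0(\LS^{\on{irred}}_\cG)=\pi_0(\LS_\cG)$, which the paper gets from \propref{p:compl 2} (the codimension-$2$ bound for the \emph{reducible} locus in $\LS_\cG$) and the Cohen--Macaulay property. So the paper trades your unproved codimension bound in $\LS^{\on{irred}}_\cG$ for a different codimension bound (\propref{p:compl 2}) that it needs elsewhere anyway.
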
 

Before we proceed to the proof, we record the following corollary, obtained by combining
\thmref{t:pi 1 LS} and \propref{p:pi 1 Bun} (see \secref{sss:every torsor}):

\begin{cor} \label{c:pi 1 LS}
For every connected component of $\LS_\cG^{\on{irred}}$, every \'etale
local system of $k$-vector spaces on it splits as a direct sum of 1-dimensional ones. 
Each of the latter is isomorphic to the restriction of $\CL_{\CP_{Z_G}}$ for
some $\CP_{Z_G}\in \Bun_{Z_G}$.
\end{cor}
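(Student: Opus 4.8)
The plan is to deduce \corref{c:pi 1 LS} by simply concatenating \thmref{t:pi 1 LS} with \propref{p:pi 1 Bun}, together with the discussion of the local systems $\CL_{\CP_{Z_G}}$ in \secref{ss:L P Z G}. First I would fix a connected component $C$ of $\LS_\cG^{\on{irred}}$ and observe that \thmref{t:pi 1 LS} tells us the composite map \eqref{e:LS irred to Bun} induces an isomorphism on $\tau_{\leq 1}$ \'etale homotopy types; in particular $C$ maps to a single connected component $C'$ of $\Bun_\cG$, and the induced map $\pi_1^{\on{et}}(C)\to \pi_1^{\on{et}}(C')$ is an isomorphism (after choosing compatible base points).

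Next I would invoke \propref{p:pi 1 Bun}: the map \eqref{e:to gerbes} identifies $\tau_{\leq 1}$ of the \'etale homotopy type of $\Bun_\cG$ with that of $\on{Ge}_{\pi_1(\cG)}(X)$. Hence $\pi_1^{\on{et}}(C')$ is identified with the fundamental group of the corresponding connected component of $\on{Ge}_{\pi_1(\cG)}(X)$, which by the discussion preceding \propref{p:pi 1 Bun} is (a torsor-shifted copy of) $\Bun_{\pi_1(\cG)}$; in particular it is a finite abelian group whose characters are, by the perfectness of the Verdier pairing \eqref{e:Verdier pairing initial}, in bijection with $\Bun_{Z_G}$ (i.e.\ with the possible $\CP_{Z_G}$). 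Combining the two displayed isomorphisms, $\pi_1^{\on{et}}(C)$ is a finite abelian group, so any \'etale local system of $k$-vector spaces on $C$ — i.e.\ any continuous representation of $\pi_1^{\on{et}}(C)$ on a finite-dimensional $k$-vector space — decomposes as a direct sum of $1$-dimensional representations (here one uses that $k$ is algebraically closed of characteristic $0$, so $\Rep$ of a finite abelian group is semisimple and all irreducibles are characters).

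Finally, I would match up the $1$-dimensional summands with the $\CL_{\CP_{Z_G}}$. Each character of $\pi_1^{\on{et}}(C)$ transports, via the isomorphisms above, to a character of the relevant component of $\on{Ge}_{\pi_1(\cG)}(X)$, hence — by the surjectivity statement recorded in \secref{sss:loop gerbes} (every $\mu_\infty$-torsor on every connected component of $\on{Ge}_{\pi_1(\cG)}(X)$ is the restriction of some $\CL_{\CP_{Z_G}}$) — arises as the restriction of $\CL_{\CP_{Z_G}}$ for a suitable $\CP_{Z_G}\in \Bun_{Z_G}$. Pulling back along \eqref{e:LS irred to Bun} and using the abuse-of-notation conventions of \secref{sss:L P Z G} (which let us regard $\CL_{\CP_{Z_G}}$ equally as an \'etale local system on $\Bun_\cG$, hence on $\LS_\cG$ and on $\LS_\cG^{\on{irred}}$), we conclude that each $1$-dimensional summand on $C$ is isomorphic to the restriction of $\CL_{\CP_{Z_G}}$, as claimed.

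There is essentially no obstacle here, since all the content is in \thmref{t:pi 1 LS} and \propref{p:pi 1 Bun}, both of which are assumed; the only mild point to be careful about is base points and the distinction between the fundamental \emph{group} and the fundamental \emph{groupoid}, but since the statement is componentwise this is harmless, and the only arithmetic input — semisimplicity of $\Rep$ of a finite abelian group over $k$ — is standard.
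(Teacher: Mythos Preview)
Your proposal is correct and follows exactly the approach the paper takes: the paper simply records this corollary as ``obtained by combining \thmref{t:pi 1 LS} and \propref{p:pi 1 Bun},'' and the details you spell out (transporting the question along the $\tau_{\leq 1}$-isomorphisms to $\on{Ge}_{\pi_1(\cG)}(X)$, using that the fundamental group there is finite abelian so representations split into characters, and then invoking the surjectivity statement from \secref{sss:loop gerbes} to realize each character as some $\CL_{\CP_{Z_G}}$) are precisely what the paper has in mind.
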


The rest of this subsection is devoted to the proof of \thmref{t:pi 1 LS}. 

\sssec{}

Let 
$$\Bun_\cG^{\on{stbl}}\subset \Bun_\cG$$
be the stable locus. We will prove:

\begin{prop} \label{p:compl 1}
Under the assumptions on $G$ and $g$ specified in \thmref{t:pi 1 LS}, the complement
of $\Bun_\cG^{\on{stbl}}$ in $\Bun_\cG$ has codimension $\geq 2$.
\end{prop}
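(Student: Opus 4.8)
The plan is a dimension count based on the parabolic-reduction stratification of $\Bun_\cG$. Since $G$ is semi-simple, so is $\cG$, and $\Bun_\cG$ is a smooth algebraic stack, each of whose connected components is irreducible of dimension $(g-1)\dim\fg$; in particular $Z_\cG$ and $\pi_0(\Bun_\cG)$ are finite. A $\cG$-bundle $\sigma$ fails to be stable exactly when it admits a reduction $\sigma_P$ to a proper parabolic $P\subsetneq\cG$ with $\deg\bigl(\fu_P(\sigma_P)\bigr)\geq 0$, where $\fu_P(\sigma_P)$ is the vector bundle attached to $\sigma_P$ via the adjoint action of $P$ on $\fu_P$. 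Hence
\[
\Bun_\cG\setminus\Bun_\cG^{\on{stbl}}\ =\ \bigl(\Bun_\cG\setminus\Bun_\cG^{\on{ss}}\bigr)\,\cup\,\bigl(\Bun_\cG^{\on{ss}}\setminus\Bun_\cG^{\on{stbl}}\bigr),
\]
and I would bound the codimension of the two summands in turn. The common input is that, writing $\Bun_P^\theta$ for the stack of $P$-bundles whose associated $M_P$-bundle has degree $\theta$, a Riemann--Roch computation (the fibration $\Bun_P\to\Bun_{M_P}$ has relative dimension $-\chi\bigl(\fu_P(\sigma_P)\bigr)$) gives $\dim\Bun_P^\theta=\dim\Bun_\cG-(g-1)\dim\fu_P-\deg\bigl(\fu_P(\sigma_\theta)\bigr)$.

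For the unstable locus $\Bun_\cG\setminus\Bun_\cG^{\on{ss}}$: it is closed and is the union of the Harder--Narasimhan strata $\Bun_\cG^\tau$ with $\tau$ nonzero, each of which, by uniqueness of the canonical reduction, is an open substack of some $\Bun_{P_\tau}^{\theta_\tau}$ in which $\deg\bigl(\fu_{P_\tau}(\sigma_\tau)\bigr)$ is a strictly positive integer. By the formula above,
\[
\on{codim}_{\Bun_\cG}\bigl(\Bun_\cG^\tau\bigr)\ =\ (g-1)\dim\fu_{P_\tau}\,+\,\deg\bigl(\fu_{P_\tau}(\sigma_\tau)\bigr)\ \geq\ (g-1)+1\ =\ g\ \geq\ 2,
\]
using only $\dim\fu_{P_\tau}\geq 1$. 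Since $\overline{\Bun_\cG^\tau}$ has the same codimension as $\Bun_\cG^\tau$, the unstable locus has codimension $\geq 2$; this step uses nothing beyond $g\geq 2$.

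For the strictly semistable locus $\Bun_\cG^{\on{ss}}\setminus\Bun_\cG^{\on{stbl}}$: semistable bundles of fixed type form a bounded family, so $\Bun_\cG^{\on{ss}}$ is quasi-compact, and this locus is covered by \emph{finitely many} substacks of the form $\bigl(\text{image of }\Bun_P^\theta\text{ in }\Bun_\cG\bigr)\cap\Bun_\cG^{\on{ss}}$ with $P\subsetneq\cG$ proper and $\deg\bigl(\fu_P(\sigma_\theta)\bigr)=0$. Each has dimension $\leq\dim\Bun_P^\theta=\dim\Bun_\cG-(g-1)\dim\fu_P$, so the strictly semistable locus has codimension $\geq (g-1)\cdot\min\{\dim\fu_P:P\subsetneq\cG\text{ a parabolic}\}$. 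This minimum equals $1$ precisely when $\cG$ has a simple factor of type $A_1$ — equivalently, as the $A_1$ diagram is self-dual, when $G$ does — and is $\geq 2$ otherwise: any proper parabolic of a simple factor of rank $\geq 2$ lies in a maximal one $P_i$, and $\fu_{P_i}$ already contains the root spaces of $\alpha_i$ and of $\alpha_i+\alpha_j$ for a neighbour $j$ of the node $i$. Under the hypotheses of \thmref{t:pi 1 LS}: if $G$ has no $A_1$-factor the bound is $\geq 2(g-1)\geq 2$; if $G$ has an $A_1$-factor then $g\geq 3$ and the bound is $\geq g-1\geq 2$. Combined with the previous paragraph, this proves the proposition.

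The underlying computations are routine; the only points I expect to require care are keeping the codimension estimate uniform over the infinitely many strata — handled by noting that the unstable locus is closed and HN-stratified while the semistable locus is quasi-compact — and the combinatorial observation that $\dim\fu_P\geq 2$ for every proper parabolic fails exactly for $A_1$-factors, which is precisely what forces the $g=2$ exclusion in that case.
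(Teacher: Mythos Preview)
Your argument is correct and is essentially the same dimension count as the paper's: both reduce to the inequality $\dim(\fn(P))(g-1)+\langle 2\rhoch_P,\deg(\CP_P)\rangle\geq 2$ for $\langle 2\rhoch_P,\deg(\CP_P)\rangle\geq 0$, which fails only when $g=2$ and $\dim(\fn(P))=1$. The paper treats this in one stroke over maximal parabolics rather than splitting into unstable versus strictly semistable, and does not fuss over the infinitude of strata (dimension being a local notion), so your decomposition via Harder--Narasimhan and the quasi-compactness of the semistable locus, while correct, is more elaborate than needed.
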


\begin{rem}

Statements of this type are classical in the 
literature of $\Bun_G$; they begin with \cite[Sect. 9]{NR}.
The literature we found concerned coarse moduli spaces instead
of moduli stacks, so we include the argument for
\propref{p:compl 1} in \secref{ss:proof of compl 1}.
There are no significant differences between our argument
and those in the existing literature.

\end{rem}

%
%

\sssec{}

Denote
$$\LS_\cG^{\on{stbl}}:=\LS_\cG\underset{\Bun_\cG}\times \Bun_\cG^{\on{stbl}}.$$

The following is well-known: 

\begin{prop} \label{p:conn on stable bundles}
The map 
$$\LS_\cG^{\on{stbl}}\to \Bun_\cG^{\on{stbl}}$$
is smooth and surjective. The fibers 
of this map are affine spaces.
\end{prop}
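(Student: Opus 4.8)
To prove the proposition, the plan is to analyze the forgetful map $\LS_\cG^{\on{stbl}}\to\Bun_\cG^{\on{stbl}}$ fiberwise. The first step is to identify the fiber over a point $\CP\in\Bun_\cG^{\on{stbl}}$. Write $\on{ad}_\CP:=\CP\times_\cG\cg$ for the associated adjoint bundle. Since $X$ is a curve, every connection on $\CP$ is automatically flat (its curvature would be a section of $\on{ad}_\CP\otimes\omega_X^{\otimes 2}=0$), so this fiber is the space $\on{Conn}(\CP)$ of all connections on $\CP$. The difference of two connections is a global section of $\on{ad}_\CP\otimes\omega_X$, and adding such a section to a connection again gives a connection; hence $\on{Conn}(\CP)$, when non-empty, is an affine space over the vector space $H^0(X,\on{ad}_\CP\otimes\omega_X)$. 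Moreover $Z_\cG$ acts trivially on $\on{ad}_\CP$, hence on $\on{Conn}(\CP)$, so the automorphism gerbe of $\LS_\cG^{\on{stbl}}$ matches that of $\Bun_\cG^{\on{stbl}}$ over this fiber, and the fiber of the map of \emph{stacks} is the scheme $\on{Conn}(\CP)$.

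Next I would show $\on{Conn}(\CP)\neq\emptyset$; this gives both surjectivity and the statement that the fibers are affine spaces. The obstruction to a connection on $\CP$ is the Atiyah class in $\Ext^1_{\CO_X}(T_X,\on{ad}_\CP)\simeq H^1(X,\on{ad}_\CP\otimes\omega_X)$. By Serre duality the latter is dual to $H^0(X,(\on{ad}_\CP)^\vee)$, and since $\cG$ is semi-simple the Killing form identifies $(\on{ad}_\CP)^\vee\simeq\on{ad}_\CP$; thus this group is $H^0(X,\on{ad}_\CP)^\vee=\Lie(\Aut_\cG(\CP))^\vee$. For a stable $\cG$-bundle $\CP$ this group reduces to $\Lie(Z_\cG)$, which is trivial as $\cG$ is semi-simple (the simplicity of stable bundles is standard; we are in characteristic zero). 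Hence the obstruction group vanishes and $\on{Conn}(\CP)\neq\emptyset$.

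Finally, for smoothness I would compute the relative tangent complex of $\LS_\cG\to\Bun_\cG$ along the stable locus. Deformations of a pair $(\CP,\nabla)$ with $\CP$ held fixed are deformations of $\nabla$ alone; these are unobstructed (flatness of a connection on a curve is automatic) and form a torsor over $H^0(X,\on{ad}_\CP\otimes\omega_X)$, and there are no non-trivial relative automorphisms. So the relative tangent complex at $(\CP,\nabla)$ is $H^0(X,\on{ad}_\CP\otimes\omega_X)$ placed in cohomological degree $0$; by Serre duality this is $H^1(X,\on{ad}_\CP)^\vee$, and Riemann--Roch together with $\deg(\on{ad}_\CP)=0$ (valid since $\cG$ is semi-simple) and $H^0(X,\on{ad}_\CP)=0$ on the stable locus show its dimension is the constant $\dim(\cG)\cdot(g-1)$. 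A perfect complex concentrated in degree $0$ of constant rank is a vector bundle, so the relative cotangent complex is a vector bundle in degree $0$ and the morphism is smooth. Equivalently, one checks that $\LS_\cG^{\on{stbl}}\to\Bun_\cG^{\on{stbl}}$ is a torsor over the total space of the vector bundle $R\pi_*(\on{ad}_\CP\otimes\omega_X)$ on $\Bun_\cG^{\on{stbl}}$ — a vector bundle because $R^1\pi_*(\on{ad}_\CP\otimes\omega_X)$ has vanishing fibers $H^1(X,\on{ad}_\CP\otimes\omega_X)=0$ — and a torsor over a vector bundle is smooth with affine-space fibers.

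The argument is essentially routine; the one point requiring care — and the step I expect to be the crux — is the vanishing $H^0(X,\on{ad}_\CP)=0$ for $\CP$ in the stable locus, i.e.\ the simplicity of stable $\cG$-bundles in characteristic zero. This single input feeds both the non-emptiness of the fibers (via Serre duality, annihilating the obstruction group) and the constancy of the fiber dimension (via Riemann--Roch), and everything else is formal.
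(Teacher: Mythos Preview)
Your proposal is correct and follows essentially the same approach as the paper: identify the fibers as torsors for $H^0(X,\on{ad}_\CP\otimes\omega_X)$, reduce both smoothness and surjectivity via Serre duality and the Killing form to the vanishing $H^0(X,\on{ad}_\CP)=0$ on the stable locus, and invoke the Atiyah class for the obstruction to existence of a connection. The only substantive difference is that the paper supplies a self-contained proof of this vanishing (first showing any infinitesimal automorphism has nilpotent characteristic polynomial, since otherwise it yields a Levi reduction contradicting stability, and then using the Jacobson--Morozov filtration to derive a degree contradiction), whereas you cite it as standard---which it is, going back to Ramanathan.
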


For the sake of completeness, we will supply a proof in \secref{ss:conn on stable bundles}. 
%

\sssec{Proof of \thmref{t:pi 1 LS}}

First, we claim that there is an inclusion 
$$\LS_\cG^{\on{stbl}}\subset \LS_\cG^{\on{irred}}$$
with both spaces being smooth Deligne-Mumford stacks, and both are open inside $\LS_\cG$.

\medskip

It is enough to establish the inclusion at the level of $k$-points. 
Suppose $\sigma \in \LS_\cG^{\on{stbl}}$; we need to show it
does not admit a reduction $\sigma_\cP$ to any proper parabolic $\cP \subset \cG$. 
Suppose we had such a reduction. Then, by the definition of stability (see Appendix \ref{s:stable}), 
we would have $\langle 2\rhoch_\cP,\deg(\sigma_P)\rangle<0$. However, the above integer is the degree
of the line bundle induced from $\sigma_\cP$ using $2\rhoch_\cP$, viewed as a homomorphism
$\cP \to \cM \to \BG_m$. Since the line bundle is endowed with a connection, its degree must be zero,
which is a contradiction. 

\medskip

Next, we claim that $\LS_\cG^{\on{stbl}}$ is dense in $\LS_\cG^{\on{irred}}$. This is equivalent to saying that 
$\LS_\cG^{\on{stbl}}$ has a non-empty intersection with every irreducible component 
of $\LS_\cG^{\on{irred}}$. Since $\LS_\cG^{\on{irred}}$ is smooth, its irreducible components
are the same as connected components. 

\medskip

According to \corref{c:compl 2} below, the embedding  
$$\LS^{\on{irred}}_\cG\hookrightarrow \LS_\cG$$
induces a bijection on the sets of connected components. 

\medskip

Hence, we obtain that it is sufficient to show that the embedding
$$\LS_\cG^{\on{stbl}}\hookrightarrow \LS_\cG$$
induces a bijection on the sets of connected components. 

\medskip

We have a commutative square
$$
\CD
\LS^{\on{stbl}}_\cG @>>>  \LS_\cG \\
@VVV @VVV \\
\Bun^{\on{stbl}}_\cG @>>>  \Bun_\cG.
\endCD
$$

In it, the lower horizontal arrow and the left vertical arrow induce 
bijections on the sets of connected components, by Propositions
\ref{p:compl 1} and \ref{p:conn on stable bundles}, respectively.

\medskip

Hence, the desired assertion follows from the fact that the map
$$\LS_\cG \to \Bun_\cG$$
induces a bijection on $\pi_0$ (see \cite[Proposition 2.11.4]{BD}). 

\medskip

Thus, we can work at one connected (=irreducible) component at a time (denote it by the subscript $\alpha$). 
We will show that the maps
\begin{equation} \label{e:pi 1 maps}
\pi_1(\LS^{\on{stbl}}_{\cG,\alpha}) \to \pi_1(\LS^{\on{irred}}_{\cG,\alpha})\to \pi_1(\Bun_{\cG,\alpha})
\end{equation} 
(for some choice of a base-point on $\LS^{\on{stbl}}_{\cG,\alpha}$) are isomorphisms. 

\medskip

The first map in \eqref{e:pi 1 maps} is surjective, thanks to the density. Hence, it is sufficient to show
that the composite map is an isomorphism. 

\medskip

However, the above composite map is
$$\pi_1(\LS^{\on{stbl}}_{\cG,\alpha}) \to \pi_1(\Bun^{\on{stbl}}_{\cG,\alpha})\to \pi_1(\Bun_{\cG,\alpha}).$$

In the latter composition, the first map is an isomorphism thanks to \propref{p:conn on stable bundles}. 
The corresponding fact for the second map follows
from \propref{p:compl 1}.

\qed[\thmref{t:pi 1 LS}]
  
\section{The core of the proof} \label{s:proof}

Throughout this section, we will assume that $G$ is semi-simple. 

\medskip

After some preparations, in this section we will give a proof of GLC for curves of genus $\geq 2$. 

\medskip

We note that the proof is particularly simple when $G$ is adjoint, so that $\cG$ is simply-connected
(in this case, one needs neither \secref{ss:act of cen} nor \thmref{t:end Poinc vac b}). The reader 
may choose to focus on this case on the first pass. 

\ssec{Action of the center} \label{ss:act of cen}

\sssec{}

Note that the (abelian) group-stack $\Bun_{Z_G}$ acts on $\Bun_G$, and this action lifts to
an action of $\Bun_{Z_G}$ on $\Dmod_{\frac{1}{2}}(\Bun_G)$. 

\medskip

For $\CP_{Z_G}\in \Bun_{Z_G}$, we will denote the corresponding automorphism of $\Dmod_{\frac{1}{2}}(\Bun_G)$ by 
\begin{equation} \label{e:central shift}
\CP_{Z_G}\cdot -.
\end{equation} 

\sssec{}

Note that the group $Z_G$ acts on the unit point $\CP^0_{Z_G}\in \Bun_{Z_G}$. Since 
$$\CP^0_{Z_G} \cdot (-)$$ 
is the identity functor, we obtain that $Z_G$ acts by automorphisms of the identity endofunctor
of $\Dmod_{\frac{1}{2}}(\Bun_G)$.

\medskip

Let
$$\Dmod_{\frac{1}{2}}(\Bun_G) \simeq \underset{\alpha}\oplus\, \Dmod_{\frac{1}{2}}(\Bun_G)_\alpha, \quad \alpha\in (Z_G)^\vee$$
denote the corresponding decomposition. Denote by $\sP_\alpha$ the idempotent on $\Dmod_{\frac{1}{2}}(\Bun_G)$, corresponding 
to $\Dmod_{\frac{1}{2}}(\Bun_G)_\alpha$.

\sssec{} \label{sss:alg fund group}

Let $\pi_{1,\on{alg}}(\cG)$ denote the \emph{algebraic fundamental group} of $\cG$, i.e., the quotient of the coweight lattice by the coroot lattice. 
We have
$$\pi_{1,\on{alg}}(\cG)\simeq \pi_0(\Bun_\cG)\hookleftarrow \pi_0(\LS_\cG),$$
and the latter inclusion is an isomorphism if $g\geq 2$. 

\medskip

Note that we have 
$$\pi_{1,\on{alg}}(\cG)\simeq \pi_1(\cG)(-1),$$
so that we have a canonical identification
$$(Z_G)^\vee\simeq \pi_{1,\on{alg}}(\cG).$$

\sssec{}

For a given $\alpha\in (Z_G)^\vee$, let $\LS_{\cG,\alpha}$ denote the corresponding connected component of
$\LS_\cG$. Consider the corresponding idempotent
$$\CO_{\LS_{\cG,\alpha}}\in \QCoh(\LS_\cG).$$

We will prove (see \secref{sss:proof central Hecke}):

\begin{thm} \label{t:Hecke Z 0}
For $\alpha\in (Z_G)^\vee$ as above, the idempotent 
$$\CO_{\LS_{\cG,\alpha}}\otimes (-):\Dmod_{\frac{1}{2}}(\Bun_G)\to \Dmod_{\frac{1}{2}}(\Bun_G),$$
where $\otimes$ denotes the spectral action of $\QCoh(\LS_\cG)$ on $\Dmod_{\frac{1}{2}}(\Bun_G)$,
identifies canonically with $\sP_{\alpha}$.
\end{thm}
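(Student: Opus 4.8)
The plan is to match the two gradings of $\Dmod_{\frac{1}{2}}(\Bun_G)$ by $(Z_G)^\vee$ that enter the statement. (For $G$ adjoint the theorem is trivial: $Z_G$ is trivial, $\LS_\cG$ is connected, and both idempotents are $\on{Id}$; so the content is in the non-adjoint case.) On the right, $\{\sP_\alpha\}$ is the family of orthogonal idempotents attached to the action of $Z_G=\Aut(\CP^0_{Z_G})$ on $\on{Id}_{\Dmod_{\frac{1}{2}}(\Bun_G)}$, indexed by $\alpha\in(Z_G)^\vee$. On the left, $\{\CO_{\LS_{\cG,\alpha}}\otimes(-)\}$ is the family of orthogonal idempotents indexed by $\pi_0(\LS_\cG)=\pi_{1,\on{alg}}(\cG)$, which we identify with $(Z_G)^\vee$ via \eqref{e:pi 1 Z duality}; equivalently, it is the isotypic decomposition for the action of $Z_G$ on $\on{Id}$ in which $z$ acts on the $\alpha$-summand by $\langle\alpha,z\rangle$, an action induced via the spectral action from the evident action of $Z_G$ on $\CO_{\LS_\cG}$ scaling the $\alpha$-component by $\langle\alpha,z\rangle$. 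Thus the theorem asserts exactly that these two $Z_G$-actions on $\on{Id}_{\Dmod_{\frac{1}{2}}(\Bun_G)}$ coincide; and since $\BL_G$ is conservative and $\QCoh(\LS_\cG)$-linear, this is in turn equivalent to the single statement that $\BL_G$ sends $\Dmod_{\frac{1}{2}}(\Bun_G)_\alpha$ into the summand $\IndCoh_\Nilp(\LS_{\cG,\alpha})$ (given $\QCoh$-linearity this also forces $\BL_G$ to send $\on{Im}(\CO_{\LS_{\cG,\alpha}}\otimes(-))$ into $\IndCoh_\Nilp(\LS_{\cG,\alpha})$, and then comparing the two direct-sum decompositions and using conservativity of $\BL_G$ gives equality of the two idempotents).

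To prove that the two $Z_G$-actions on $\on{Id}$ agree, the plan is to describe both gerbe-theoretically and to use Poincar\'e (Verdier) duality on $X$. The $\Bun_{Z_G}$-action on $\Dmod_{\frac{1}{2}}(\Bun_G)$, hence the residual action of $Z_G=\Aut(\CP^0_{Z_G})$ on $\on{Id}$, is encoded in an upgrade of $\Dmod_{\frac{1}{2}}(\Bun_G)$ to a sheaf of categories over the 2-stack $\on{Ge}_{Z_G}(X)$, obtained by delooping the $\Bun_{Z_G}$-action. On the other hand, the spectral action together with the map $\LS_\cG\to\Bun_\cG\to\on{Ge}_{\pi_1(\cG)}(X)$ of \eqref{e:to gerbes} upgrades $\Dmod_{\frac{1}{2}}(\Bun_G)$ to a sheaf of categories over $\on{Ge}_{\pi_1(\cG)}(X)$, with the idempotents $\CO_{\LS_{\cG,\alpha}}$ pulled back from $\pi_0(\on{Ge}_{\pi_1(\cG)}(X))=H^2(X,\pi_1(\cG))$. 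By construction of the pairing \eqref{e:gerbe pairing} out of Cartier duality \eqref{e:pi 1 Z duality} and Verdier duality on $X$, its restriction to $\pi_1(\Bun_{Z_G})\times\pi_0(\on{Ge}_{\pi_1(\cG)}(X))$ is the tautological pairing $Z_G\times(Z_G)^\vee\to\BG_m$. Hence the two $Z_G$-actions on $\on{Id}$ are identified once one knows that the above two sheaf-of-categories structures on $\Dmod_{\frac{1}{2}}(\Bun_G)$ are interchanged by the 2-categorical Fourier-Mukai transform attached to \eqref{e:gerbe pairing}. That matching of sheaves of categories is the real input; it is established in \secref{s:gerbes}, and \thmref{t:Hecke Z 0} is its $\tau_{\leq 1}$-truncated shadow, read off on $\pi_0$ and $\pi_1$ of the gerbe 2-stacks (the same input also yields \thmref{t:Hecke Z 1} and, as a special case, \propref{p:neutral}).

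I expect the heart of the difficulty to be exactly this last point: first, upgrading $\Dmod_{\frac{1}{2}}(\Bun_G)$ to a sheaf of categories over each of the (not 1-affine) stacks $\on{Ge}_{Z_G}(X)$ and $\on{Ge}_{\pi_1(\cG)}(X)$ in a way manifestly compatible with the $\Bun_{Z_G}$-action and with the spectral action respectively (keeping track of gerbes and of the half-twist), and then carrying out the 2-categorical Fourier-Mukai manipulation exchanging them. A tempting shortcut --- pushing the $Z_G$-action through the vacuum Whittaker coefficient functor $\on{coeff}^{\on{Vac,glob}}_G$ and the Casselman-Shalika equivalence, and invoking the Poincar\'e/Whittaker characterization of $\BL_G$ --- looks elementary but is in fact subtle: since $Z_G$ acts trivially on $\Gr_{G,\Ran}$, the interaction of the automorphic $Z_G$-action with $\on{coeff}^{\on{Vac,glob}}_G$ is governed by a global twist invisible in the local Satake picture, and making it precise seems to require essentially the same Poincar\'e-duality input. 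This is why routing the proof through the 2-categorical Fourier-Mukai transform seems cleanest.
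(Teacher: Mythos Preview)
Your proposal is correct and follows essentially the same route as the paper: both theorems are deduced from the 2-categorical Fourier--Mukai equivalence (\thmref{t:FM L} and \corref{c:FM L}) between the two sheaf-of-categories structures on $\Dmod_{\frac{1}{2}}(\Bun_G)$ over $\on{Ge}_{Z_G}(X)$ and $\on{Ge}_{\pi_1(\cG)}(X)$, with \thmref{t:Hecke Z 0} being exactly the abstract \lemref{l:Hecke 0 abs} specialized to this situation. One small point of packaging: where you describe the sheaf over $\on{Ge}_{Z_G}(X)$ as obtained by ``delooping the $\Bun_{Z_G}$-action,'' the paper instead constructs it as the pushforward of $\ul{\Dmod}_{\frac{1}{2}}(\Bun_{G_{\on{ad}}})$ along $(\Bun_{G_{\on{ad}}})_\dr\to\on{Ge}_{Z_G}(X)$, so that $\Dmod_{\frac{1}{2}}(\Bun_G)$ appears as the fiber at the trivial gerbe---but this is equivalent to your description on the neutral component, which is all that is needed here.
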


\sssec{}

We will also prove (see \secref{sss:proof central Hecke}):

\begin{thm} \label{t:Hecke Z 1}
For $\CP_{Z_G}\in \Bun_{Z_G}$ and the corresponding $\CL_{\CP_{Z_G}}\in \QCoh(\LS_\cG)$,
the functor 
$$\CL^{\otimes -1}_{\CP_{Z_G}}\otimes(-):\Dmod_{\frac{1}{2}}(\Bun_G)\to \Dmod_{\frac{1}{2}}(\Bun_G),$$
where $\otimes$ denotes the spectral action of $\QCoh(\LS_\cG)$ on $\Dmod_{\frac{1}{2}}(\Bun_G)$,
identifies canonically with \eqref{e:central shift}.
\end{thm}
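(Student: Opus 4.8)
The plan is to prove \thmref{t:Hecke Z 1} in tandem with \thmref{t:Hecke Z 0}, by realizing both the $\Bun_{Z_G}$-action by translations on $\Dmod_{\frac{1}{2}}(\Bun_G)$ and the ``line-bundle'' part of the spectral action as the \emph{monodromy} of a sheaf of categories, and then matching these two sheaves of categories by a $2$-categorical Fourier-Mukai transform. The input is the perfect pairing \eqref{e:gerbe pairing} between $\on{Ge}_{Z_G}(X)$ and $\on{Ge}_{\pi_1(\cG)}(X)$ valued in $B^2(\mu_\infty)_{\on{et}}$. Since neither of these $2$-stacks is $1$-affine (\remref{r:not 1 aff}), this pairing does not descend to an equivalence of module categories over a monoidal category; but — and this is the amusing point — it does induce an equivalence of $2$-categories
$$\on{FM}:\on{ShvCat}(\on{Ge}_{Z_G}(X))\,\iso\,\on{ShvCat}(\on{Ge}_{\pi_1(\cG)}(X)),$$
a categorified Cartier duality, which one builds from the elementary Fourier-Mukai duality attached to $Z_G$ by the standard descent/gluing formalism for sheaves of categories. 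Under $\on{FM}$ the neutral-component monodromy of a sheaf of categories — the action of $\Omega_{\on{pt}}\on{Ge}_{Z_G}(X)=\Bun_{Z_G}$ on its fiber at the base point — is exchanged with the operation of tensoring by the line bundles $\CL_{\CP_{Z_G}}$, $\CP_{Z_G}\in\Bun_{Z_G}$, produced by \eqref{e:truncated pairing}; this is just Cartier duality between $\Bun_{Z_G}$ and $\Bun_{\pi_1(\cG)}=\Omega_{\on{pt}}\on{Ge}_{\pi_1(\cG)}(X)$, promoted one categorical level.

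On the automorphic side, since $Z_G$ is central one may twist $\Bun_G$ by a $Z_G$-gerbe on $X$; carrying the half-twist along — one checks that the metaplectic datum defining $\Dmod_{\frac{1}{2}}$ transforms compatibly under such twists — this yields a sheaf of categories $\underline{\Dmod}_{\frac{1}{2}}(\Bun_G)^{\on{aut}}\in\on{ShvCat}(\on{Ge}_{Z_G}(X))$ with fiber $\Dmod_{\frac{1}{2}}(\Bun_G)$ over the base point. Unwinding the gerbe twist identifies its $\Bun_{Z_G}$-monodromy with the translation action \eqref{e:central shift}.

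On the spectral side, the $\QCoh(\LS_\cG)$-module $\Dmod_{\frac{1}{2}}(\Bun_G)$ defines, by $1$-affineness of the nice algebraic stack $\LS_\cG$, a sheaf of categories over $\LS_\cG$; pushing it forward along $\LS_\cG\to\Bun_\cG\to\on{Ge}_{\pi_1(\cG)}(X)$ (the second arrow being \eqref{e:to gerbes}) produces $\underline{\Dmod}_{\frac{1}{2}}(\Bun_G)^{\on{spec}}\in\on{ShvCat}(\on{Ge}_{\pi_1(\cG)}(X))$. Its global sections recover $\Dmod_{\frac{1}{2}}(\Bun_G)$; the splitting of $\on{Ge}_{\pi_1(\cG)}(X)$ into connected components (indexed by $(Z_G)^\vee\simeq\pi_0(\LS_\cG)$) induces the idempotent decomposition $\sP_\alpha$ on global sections, which is what will give \thmref{t:Hecke Z 0}; and, by \secref{sss:L P Z G} — where $\CL_{\CP_{Z_G}}$ on $\LS_\cG$ was constructed precisely as a pullback from $\on{Ge}_{\pi_1(\cG)}(X)$ — tensoring this sheaf of categories by the corresponding line bundle on $\on{Ge}_{\pi_1(\cG)}(X)$ is, on global sections, exactly the spectral action of $\CL_{\CP_{Z_G}}\in\QCoh(\LS_\cG)$ on $\Dmod_{\frac{1}{2}}(\Bun_G)$.

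It then remains to produce a canonical identification $\on{FM}\bigl(\underline{\Dmod}_{\frac{1}{2}}(\Bun_G)^{\on{aut}}\bigr)\simeq\underline{\Dmod}_{\frac{1}{2}}(\Bun_G)^{\on{spec}}$; granted this, comparing monodromies yields \thmref{t:Hecke Z 1} (the inverse in $\CL^{\otimes-1}_{\CP_{Z_G}}$ is the expected discrepancy between translating by, and pulling back along, $\CP_{Z_G}$), and comparing connected components and fibers yields \thmref{t:Hecke Z 0}. I expect this identification to be the heart of the matter: it amounts to the compatibility of the full Hecke structure on $\Dmod_{\frac{1}{2}}(\Bun_G)$ with $Z_G$-twists of $\Bun_G$, and it must be established \emph{without} invoking GLC. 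The route I would take is that every structure involved — the $\Bun_{Z_G}$-action, the map \eqref{e:to gerbes}, the pairing \eqref{e:gerbe pairing} — factors through the abelian data carried by $Z_G$, so that the needed compatibility is pinned down by the already-available Fourier-Mukai input for $Z_G$ (of a piece with the use of GLC for tori in \secref{ss:change group 2}) together with the behaviour of the Hecke action of $\Rep(\cG)_\Ran$ under restriction along the isogeny $\cG_{\on{sc}}\to\cG$. A secondary, milder point is the bookkeeping of the metaplectic half-twist through these twists, which I do not expect to cause real trouble.
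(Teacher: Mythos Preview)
Your proposal is essentially the paper's own approach: both sheaves of categories, the 2-categorical Fourier--Mukai transform between $\on{Ge}_{Z_G}(X)$ and $\on{Ge}_{\pi_1(\cG)}(X)$, and the formal deduction of Theorems~\ref{t:Hecke Z 0} and~\ref{t:Hecke Z 1} from the identification $\on{FM}(\text{aut})\simeq\text{spec}$ are exactly as in \secref{s:gerbes} (your $\underline{\Dmod}^{\on{aut}}$ is the paper's pushforward of $\ul\Dmod_{\frac{1}{2}}(\Bun_{G_{\on{ad}}})$ along $\Bun_{G_{\on{ad}}}\to\on{Ge}_{Z_G}(X)$, and your $\underline{\Dmod}^{\on{spec}}$ is its $\ul\Dmod^{\pi_1(\cG)}_{\frac{1}{2}}(\Bun_G)$).

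The one place where your sketch is thinner than the paper is exactly the step you flag as the heart. The paper does not reduce the identification to ``abelian data for $Z_G$'' alone: it constructs, for each $\pi_1(\cG)$-gerbe $\fG$ on $X$, a \emph{gerbe-twisted geometric Satake} functor $\Rep(\cG_{\on{sc}})_{\Ran,\fG^{-1}}\to\Sph(G_{\on{ad}})_{\CG_\fG}$ and a resulting twisted Hecke action on $\Dmod_{\frac{1}{2}}(\Bun_{G_{\on{ad}}})_{\CG_\fG}$, and then must show this action factors through $\QCoh(\LS_{\cG_{\on{sc}},\fG^{-1}})$ (\propref{p:key local}). That last factorization is proved by adapting the Beilinson--Drinfeld oper/Kac--Moody localization argument of \cite{Ga4} to the twisted setting (\secref{ss:proof key local b}); the isogeny $\cG_{\on{sc}}\to\cG$ enters precisely here, but the substance is the oper machinery rather than anything formal about $Z_G$. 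Once this is in place, the paper builds a comparison map and checks it is an equivalence by passing to global sections over $\LS_\cG$ and using its $1$-affineness, where the map becomes the identity on $\Dmod_{\frac{1}{2}}(\Bun_G)$.
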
 
%
%

\ssec{Endomorphisms of the vacuum Poincar\'e object}

In this subsection, we will assume that $g\geq 2$. 

\sssec{}

Recall the object
$$\on{Poinc}^{\on{Vac,glob}}_{G,!}\in \Dmod_{\frac{1}{2}}(\Bun_G),$$
see \cite[Sect. 9.6.4]{GLC2}. 
For $\alpha\in (Z_G)^\vee$, let $\on{Poinc}^{\on{Vac}}_{G,!,\alpha}$ denote the corresponding direct summand. 

\sssec{}

We will prove (see \secref{ss:proof of end Poinc}):

\begin{thm} \label{t:end Poinc vac a}
For every $\alpha$, the map
$$k\to H^0(\CEnd(\on{Poinc}^{\on{Vac}}_{G,!,\alpha}))$$ 
is an isomorphism.
\end{thm}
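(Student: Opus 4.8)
The plan is to compute $\CEnd(\Poinc^{\on{Vac,glob}}_{G,!})$ directly on the automorphic side, stratify, and then read off the $(Z_G)^\vee$-graded pieces. Since $\Poinc^{\on{Vac,glob}}_{G,!}$ is by construction (\cite[Sect.~9.6.4]{GLC2}) the image of the vacuum under a functor left adjoint to the ($\Vect$-valued) vacuum Whittaker coefficient functor $\on{coeff}^{\on{Vac,glob}}_G$, adjunction gives a canonical identification
\[
\CEnd_{\Dmod_{\frac{1}{2}}(\Bun_G)}(\Poinc^{\on{Vac,glob}}_{G,!})\;\simeq\;\on{coeff}^{\on{Vac,glob}}_G\big(\Poinc^{\on{Vac,glob}}_{G,!}\big),
\]
so the theorem is equivalent to the assertion that the vacuum Whittaker coefficient of the vacuum Poincar\'e sheaf is concentrated in cohomological degrees $\geq 0$ with one-dimensional $H^0$ on each $\alpha$-summand. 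First I would unwind both $\on{coeff}^{\on{Vac,glob}}_G$ and the vacuum Poincar\'e functor as (renormalized) pull--push along their defining correspondences over the Ran space --- the Hecke-modification correspondence between $\Bun_G$ and $\Gr_{G,\Ran}$, and the Whittaker-averaging correspondence along the $\rho(\omega_X)$-twisted $\Bun_N$ --- and apply base change to rewrite the composite as $C^\bullet_c(\CZ,\CL)$, where $\CZ$ is the stack classifying a $G$-bundle equipped with two generic Whittaker ($N^\omega$-)structures together with compatible modification data, and $\CL$ is the Artin--Schreier local system attached to the \emph{difference} of the two Whittaker characters. (Part of this identification should already be extractable from \cite{GLC2}.)

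Next, using \thmref{t:Hecke Z 0} --- which matches the $Z_G$-action by automorphisms of the identity functor with the decomposition of $\Gr_{G,\Ran}$ and of $\LS_\cG$ into connected components indexed by $(Z_G)^\vee$ --- the identification above refines to $\CEnd(\Poinc^{\on{Vac}}_{G,!,\alpha})\simeq C^\bullet_c(\CZ_\alpha,\CL)$ for the corresponding open-and-closed piece $\CZ_\alpha\subset\CZ$. I would then stratify $\CZ_\alpha$ by the generic relative position of the two Whittaker structures. On every stratum other than the open one where the structures are in general position (generic relative position $w_0$), the fibers of $\CL$ carry a nonconstant additive character, so fiberwise Artin--Schreier vanishing annihilates the contribution of that stratum --- the classical Gelfand--Graev / Whittaker-Hecke-algebra phenomenon. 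Hence $C^\bullet_c(\CZ_\alpha,\CL)\simeq C^\bullet_c(\CZ^\circ_\alpha,\CL|_{\CZ^\circ_\alpha})$, where $\CZ^\circ_\alpha$ is the open general-position locus. This is exactly the point at which $g\geq 2$ enters: one needs the non-generic strata to have positive codimension and the relevant spaces of rational maps to be of their expected dimension, which fails for $g\leq 1$, matching the remark in the Introduction that the method degenerates in low genus.

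On the general-position locus the $\CL$-twist becomes canonically trivializable --- a $w_0$-pair of generic $N^\omega$-structures is the same datum as a generic trivialization of the $G$-bundle --- so $C^\bullet_c(\CZ^\circ_\alpha,\CL)\simeq C^\bullet_c(\CZ^\circ_\alpha)$ up to a shift, and it remains to show that $\CZ^\circ_\alpha$ is connected and that nothing else contributes in degree $0$. Both follow from a dimension and connectivity analysis of the space of generic trivializations carrying the modification data (again using $g\geq 2$), which gives $H^0=k$. Finally, to identify this $k$ with the unit map $k\to H^0\CEnd(\Poinc^{\on{Vac}}_{G,!,\alpha})$, one notes that the ``diagonal'' substack of $\CZ_\alpha$ --- two coinciding Whittaker structures, diagonal modification data --- lies in $\CZ^\circ_\alpha$ and represents the identity endomorphism of $\Poinc^{\on{Vac}}_{G,!,\alpha}$; tracking it through the computation fixes the normalization and yields the isomorphism.

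I expect the main obstacle to be the combination of the last two steps: proving that the vacuum Whittaker coefficient of the vacuum Poincar\'e sheaf contributes exactly the single copy of $k$ coming from the diagonal. Concretely this is the genus-dependent estimate controlling the codimensions of the bad strata of $\CZ_\alpha$ and the connectivity of $\CZ^\circ_\alpha$ --- precisely the ingredient that has no analogue when $g\leq 1$.
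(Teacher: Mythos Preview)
Your adjunction starting point is fine, but from there your route diverges sharply from the paper's, introduces machinery that is not needed in the vacuum case, and rests on a stratification picture that does not quite fit.

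The paper's argument is essentially a one-step reduction. Write $\on{Poinc}^{\on{Vac,glob}}_{G,!}=\fp_!\bigl((\chi^{\on{glob}})^*(\on{exp})\bigr)$ for $\fp:\Bun_{N,\rho(\omega_X)}\to\Bun_G$, and factor
\[
\Bun_{N,\rho(\omega_X)}\;\overset{\sff}\longrightarrow\;\Bun_{N,\rho(\omega_X)}/T\;\hookrightarrow\;\Bun^{(g-1)\cdot 2\rho}_B\;\longrightarrow\;\Bun_G.
\]
The key observation --- and this is exactly where $g\geq 2$ enters --- is that $(g-1)\cdot 2\rho\in\Lambda_G^{++}$, so the last map is a \emph{locally closed embedding} and $!$-pushforward along it is fully faithful. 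Hence the endomorphism computation takes place entirely on $\Bun_{N,\rho(\omega_X)}/T$. A Cartesian diagram then reduces everything to $\BA^r/T$, and using $\BA^r/T\simeq \BA^r/T_{\on{ad}}\times \on{pt}/Z_G$ together with K\"unneth one is left with the classical fact that for $\sff''':\BA^1\to\BA^1/\BG_m$ the object $\sff'''_!(\on{exp})$ is the clean extension of $k$ from the open point. The $\alpha$-decomposition is read off from the $\on{pt}/Z_G$ factor; no appeal to \thmref{t:Hecke Z 0} is needed.

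By contrast, your proposal has several problematic features. First, the vacuum Poincar\'e object and $\on{coeff}^{\on{Vac,glob}}_G$ are purely global constructions (pushforward from $\Bun_{N,\rho(\omega_X)}$); there is no Ran space, no Hecke modification data, and hence no correspondence space $\CZ$ of the shape you describe. The fiber product that actually arises is $\Bun_{N,\rho(\omega_X)}\times_{\Bun_G}\Bun_{N,\rho(\omega_X)}$. Second, your ``relative position $w_0$'' stratification is borrowed from the local Gelfand--Graev setting (or from $N$ versus $N^-$), but here both structures are reductions to the \emph{same} $N$ inducing the \emph{same} $T$-bundle $\rho(\omega_X)$; the paper's locally closed embedding says precisely that the two underlying $B$-reductions are forced to coincide, so there is nothing to stratify. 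Third, the role you assign to $g\geq 2$ --- codimension estimates for bad strata and connectivity of an open locus --- is not how the hypothesis is actually used; it enters solely through the strict dominance of $(g-1)\cdot 2\rho$.
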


In fact, we will prove a more precise result, but only \thmref{t:end Poinc vac a} will be needed for the proof
of GLC:

\begin{thm} \label{t:end Poinc vac a'}
$H^i(\CEnd(\on{Poinc}^{\on{Vac,glob}}_{G,!}))=0$ for $i\neq 0$.  
\end{thm}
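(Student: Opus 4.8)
The plan is to compute $\CEnd(\on{Poinc}^{\on{Vac,glob}}_{G,!})$ directly on the automorphic side, avoiding any appeal to a property of $\CA_G$ (which would be circular, since the ultimate goal is $\CA_G\simeq\CO_{\LS_\cG}$). Recall from \cite[Sect. 9.6.4]{GLC2} that $\on{Poinc}^{\on{Vac,glob}}_{G,!}=\on{Poinc}_{G,!}(\on{Vac})$ is the image of the vacuum object $\on{Vac}\in\Whit(\Gr_{G,\Ran})$ under the global Poincar\'e functor $\on{Poinc}_{G,!}$, which is the left adjoint of the global Whittaker coefficient functor $\on{coeff}^{\on{Vac,glob}}_G\colon\Dmod_{\frac{1}{2}}(\Bun_G)\to\Whit(\Gr_{G,\Ran})$. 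The first step is then the tautological identification
\[
\CEnd(\on{Poinc}^{\on{Vac,glob}}_{G,!})\ \simeq\ \on{Hom}_{\Whit(\Gr_{G,\Ran})}\!\bigl(\on{Vac},\,\on{coeff}^{\on{Vac,glob}}_G(\on{Poinc}^{\on{Vac,glob}}_{G,!})\bigr),
\]
and since $H^0$ of this complex automatically contains $k$ (the identity endomorphism), \thmref{t:end Poinc vac a'} is equivalent to the assertion that this complex is concentrated in cohomological degree $0$.

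The second step is to unwind both $\on{Poinc}_{G,!}$ and $\on{coeff}^{\on{Vac,glob}}_G$ through their geometric models from \cite{GLC2}: both are built from (the $\Ran$-enhanced, rational incarnation of) the stack $\Bun_{N^{\rho(\omega_X)}}$ of $\rho(\omega_X)$-twisted $N$-bundles, its canonical non-degenerate additive character $\chi$, the exponential D-module $\on{exp}$ it pulls back, and the projection $\mathfrak{p}$ to $\Bun_G$; concretely $\on{Poinc}^{\on{Vac,glob}}_{G,!}$ is (a shift of) $\mathfrak{p}_!\,\chi^{*}\on{exp}$. Using base change along the Cartesian square defining the self-intersection
\[
Z\ :=\ \Bun_{N^{\rho(\omega_X)}}\underset{\Bun_G}\times\Bun_{N^{\rho(\omega_X)}},\qquad p_1,\,p_2\colon Z\rightrightarrows\Bun_{N^{\rho(\omega_X)}},
\]
together with the projection formula and the invertibility of $\chi^{*}\on{exp}$, the complex above becomes the de Rham cohomology of $Z$ with coefficients in the exponential D-module attached to the difference $\chi\circ p_1-\chi\circ p_2\colon Z\to\BG_a$ (up to a global shift, and with $*$/$!$ normalizations to be tracked). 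This manipulation must be carried out with the correct refined (rational, $\Ran$-enhanced) spaces rather than the naive $\Bun_N$, and verifying that the base change is licit there is a point that needs care.

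The third and decisive step is to analyze $Z$ by the generic relative position of its two $N^{\rho(\omega_X)}$-structures on a common $G$-bundle, which stratifies $Z$ with the diagonal copy of $\Bun_{N^{\rho(\omega_X)}}$ as one stratum. On every other stratum the function $\chi\circ p_1-\chi\circ p_2$ is non-constant along some additive direction of that stratum — this is exactly the non-degeneracy of $\chi$ on the simple-root subgroups — so the corresponding exponentially-twisted cohomology vanishes by the standard Whittaker cleanness mechanism (compactly supported cohomology of a non-trivially twisted exponential D-module on $\BG_a$ is zero). On the diagonal stratum the two characters cancel and what remains is the (shifted) de Rham cohomology of $\Bun_{N^{\rho(\omega_X)}}$ in its refined form, which is $k$ concentrated in degree $0$; it is here that the hypothesis $g\geq 2$ enters in an essential way. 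Assembling the contributions via the filtration on the cohomology of $Z$ cut out by the stratification then gives \thmref{t:end Poinc vac a'}. Running the same computation over each connected component of $\Bun_G$ separately, and matching $\pi_0(\Bun_G)\simeq(Z_G)^\vee$ with the spectral decomposition of $\QCoh(\LS_\cG)$ via \thmref{t:Hecke Z 0}, upgrades this to \thmref{t:end Poinc vac a}.

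The step I expect to be the main obstacle is the last one: carrying out the relative-position stratification of $Z$ precisely in the $\Ran$-enhanced setting, checking that on each non-diagonal stratum one genuinely has an additive direction along which $\chi\circ p_1-\chi\circ p_2$ is non-constant (so that Whittaker cleanness applies), and — the genuinely genus-sensitive input — establishing that the diagonal stratum contributes $k$ in degree $0$ when $g\geq 2$, which fails for the naive $\Bun_N$ and for $g\leq 1$ (those genera being treated separately in \secref{s:low genus}).
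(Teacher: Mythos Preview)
Your approach via the self-fiber product $Z=\Bun_{N,\rho(\omega_X)}\times_{\Bun_G}\Bun_{N,\rho(\omega_X)}$ and a relative-position stratification is a reasonable template, but the paper takes a genuinely different and much shorter route that avoids $Z$ entirely. The crucial observation (highlighted explicitly in the paper) is that for $g\geq 2$ the coweight $(g-1)\cdot 2\rho$ lies in $\Lambda_G^{++}$, so by \cite[Theorem 7.4.3(1')]{DG} the map $\Bun_B^{(g-1)\cdot 2\rho}\to\Bun_G$ is a locally closed embedding; since $\Bun_{N,\rho(\omega_X)}/T\hookrightarrow\Bun_B^{(g-1)\cdot 2\rho}$ is closed, the $!$-pushforward along their composite is fully faithful, and $\CEnd(\on{Poinc}^{\on{Vac,glob}}_{G,!})$ can be computed \emph{before} pushing to $\Bun_G$, namely as $\CEnd(\sff_!((\chi^{\on{glob}})^*\on{exp}))$ on $\Bun_{N,\rho(\omega_X)}/T$. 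A Cartesian square with smooth affine-space horizontal arrows then reduces this to a finite-dimensional computation on $\BA^r/T$, and further to $\CEnd(\sff'''_!(\on{exp}))$ on $\BA^1/\BG_m$, where $\sff'''_!(\on{exp})$ is identified with the clean $*$-extension of $k$ from the open point $(\BA^1\setminus 0)/\BG_m\simeq\on{pt}$.

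This buys a great deal: the delicate base change on the infinite-type stack $Z$, the stratification by generic relative position in the $\Ran$-enhanced setting, and the contribution of the diagonal stratum---all of which you correctly flagged as the obstacles---simply disappear. Note also that the role of $g\geq 2$ is sharper than you located it: it is precisely what makes $(g-1)\cdot 2\rho$ strictly dominant and hence the map to $\Bun_G$ a locally closed embedding, not a cohomology computation on $\Bun_{N,\rho(\omega_X)}$. Your detour through the adjunction with $\on{coeff}^{\on{Vac,glob}}_G$ and $\Whit(\Gr_{G,\Ran})$ is also unnecessary once one works directly with the geometric definition $\fp_!((\chi^{\on{glob}})^*\on{exp})$.
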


As a corollary of \thmref{t:end Poinc vac a}, we obtain:

\begin{cor} \label{c:end Poinc vac a} 
$\dim\left(H^0(\CEnd(\on{Poinc}^{\on{Vac,glob}}_{G,!}))\right)=|Z_G|$.
\end{cor}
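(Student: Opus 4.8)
The plan is to deduce this directly from \thmref{t:end Poinc vac a} using the orthogonal decomposition of $\Dmod_{\frac{1}{2}}(\Bun_G)$ by characters of $Z_G$ recorded in \secref{ss:act of cen}. First I would recall that
\[
\Dmod_{\frac{1}{2}}(\Bun_G) \simeq \underset{\alpha\in (Z_G)^\vee}\oplus\, \Dmod_{\frac{1}{2}}(\Bun_G)_\alpha
\]
is a decomposition into mutually orthogonal full subcategories, cut out by the idempotents $\sP_\alpha$. Since $G$ is semi-simple, $Z_G$ is finite, so this sum is finite. Applying $\sP_\alpha$ to $\on{Poinc}^{\on{Vac,glob}}_{G,!}$ produces, by definition, the object $\on{Poinc}^{\on{Vac}}_{G,!,\alpha}$, and hence $\on{Poinc}^{\on{Vac,glob}}_{G,!}\simeq \underset{\alpha}\oplus\, \on{Poinc}^{\on{Vac}}_{G,!,\alpha}$.

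Next, orthogonality of the summands means that $\CHom$ between objects lying in distinct summands vanishes, so the endomorphism complex splits as the finite direct sum of the diagonal blocks:
\[
\CEnd(\on{Poinc}^{\on{Vac,glob}}_{G,!})\simeq \underset{\alpha\in (Z_G)^\vee}\oplus\, \CEnd(\on{Poinc}^{\on{Vac}}_{G,!,\alpha}).
\]
Taking $H^0$, which commutes with a finite direct sum, and invoking \thmref{t:end Poinc vac a} (which gives that each $H^0(\CEnd(\on{Poinc}^{\on{Vac}}_{G,!,\alpha}))$ is one-dimensional), I obtain
\[
\dim\left(H^0(\CEnd(\on{Poinc}^{\on{Vac,glob}}_{G,!}))\right)= \big|(Z_G)^\vee\big| = |Z_G|,
\]
where the last equality is Cartier duality for the finite abelian group $Z_G$.

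There is essentially no obstacle here: the mathematical content lies entirely in \thmref{t:end Poinc vac a}. The only point requiring (minimal) care is that the decomposition of $\Dmod_{\frac{1}{2}}(\Bun_G)$ by characters of $Z_G$ is genuinely a direct sum of \emph{orthogonal} subcategories, so that the off-diagonal contributions to $\CEnd(\on{Poinc}^{\on{Vac,glob}}_{G,!})$ vanish — but this is immediate from the construction in \secref{ss:act of cen}.
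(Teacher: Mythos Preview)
Your proposal is correct and is exactly the intended argument: the paper states the corollary immediately after \thmref{t:end Poinc vac a} with no further proof, precisely because the orthogonal decomposition by characters of $Z_G$ (as you recall from \secref{ss:act of cen}) makes the deduction immediate.
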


\sssec{}

We will also prove (see \secref{ss:proof of end Poinc transl}):

\begin{thm} \label{t:end Poinc vac b}
For a non-trivial $\CP_{Z_G}\in \Dmod_{\frac{1}{2}}(\Bun_G)$,
$$H^0\left(\CHom(\on{Poinc}^{\on{Vac,glob}}_{G,!},\CP_{Z_G}\cdot \on{Poinc}^{\on{Vac,glob}}_{G,!})\right)=0.$$
\end{thm}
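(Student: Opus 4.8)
The plan is to reduce $\CHom\bigl(\on{Poinc}^{\on{Vac,glob}}_{G,!},\,\CP_{Z_G}\cdot\on{Poinc}^{\on{Vac,glob}}_{G,!}\bigr)$ (reading $\CP_{Z_G}$ as an object of $\Bun_{Z_G}$, as in \secref{ss:act of cen}) to a $\CP_{Z_G}$-twisted variant of the object already analyzed in the proof of \thmref{t:end Poinc vac a'}, and then to show that twisting by a \emph{nontrivial} $\CP_{Z_G}$ annihilates exactly the degree-$0$ part — the part that in the untwisted case accounts for the $|Z_G|$-dimensional $H^0$. As a first step, recall that $\on{Poinc}^{\on{Vac,glob}}_{G,!}$ is the value on the vacuum object $W_0\in\Whit(\Gr_{G,\Ran})$ of the functor $\on{Poinc}^{\on{Vac,glob}}_{G,!}$, which is left adjoint to $\on{coeff}^{\on{Vac,glob}}_G$. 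Hence by adjunction
\[
\CHom\bigl(\on{Poinc}^{\on{Vac,glob}}_{G,!},\,\CP_{Z_G}\cdot\on{Poinc}^{\on{Vac,glob}}_{G,!}\bigr)\;\simeq\;\CHom_{\Whit(\Gr_{G,\Ran})}\bigl(W_0,\;\on{coeff}^{\on{Vac,glob}}_G(\CP_{Z_G}\cdot\on{Poinc}^{\on{Vac,glob}}_{G,!})\bigr).
\]

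Next I would use that $Z_G\subset T$ lies in the kernel of every simple root, so that translation by a point of $\Bun_{Z_G}$ preserves the pair $(N,\chi)$ defining the Whittaker condition. Consequently $\on{coeff}^{\on{Vac,glob}}_G$ intertwines the automorphism $\CP_{Z_G}\cdot(-)$ of $\Dmod_{\frac12}(\Bun_G)$ with the automorphism $t_{\CP_{Z_G}}$ of $\Whit(\Gr_{G,\Ran})$ given by translation along the image of $\CP_{Z_G}$ under $\Bun_{Z_G}\to\Bun_T$. Moving this automorphism onto the other argument, the $\CHom$ above becomes
\[
\CHom_{\Whit(\Gr_{G,\Ran})}\bigl(t_{\CP_{Z_G}}^{-1}(W_0),\;\on{coeff}^{\on{Vac,glob}}_G\circ\on{Poinc}^{\on{Vac,glob}}_{G,!}(W_0)\bigr),
\]
i.e. the very monad whose value on $W_0$ governs $\CEnd(\on{Poinc}^{\on{Vac,glob}}_{G,!})$, now paired against the \emph{translated} vacuum $t_{\CP_{Z_G}}^{-1}(W_0)$ in place of $W_0$.

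Finally I would feed this into the geometric computation behind \thmref{t:end Poinc vac a'}: unwinding $\on{coeff}^{\on{Vac,glob}}_G\circ\on{Poinc}^{\on{Vac,glob}}_{G,!}$ through $\BunNb$-type spaces (Zastava spaces, via factorization) expresses the pairing as de Rham cohomology of an explicit space of generic $N$-reductions, carrying a rank-one local system determined by the translation parameter. For the trivial parameter this cohomology is concentrated in degree $0$, of total dimension $|Z_G|$, the classes indexed by the relevant degree/connected-component datum. For nontrivial $\CP_{Z_G}$ the local system in question is nontrivial — up to the identifications of \secref{ss:act of cen} and \thmref{t:Hecke Z 1} it is a pullback of $\CL_{\CP_{Z_G}}$, nontrivial because the pairing \eqref{e:Verdier pairing initial} is perfect and $\CP_{Z_G}\neq 0$ — and a nontrivial finite-monodromy local system on a connected space has no global flat sections. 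Hence the degree-$0$ part vanishes, which is the assertion of \thmref{t:end Poinc vac b}.

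The main obstacle is the combination of the middle and final steps: making precise the compatibility of $\on{coeff}^{\on{Vac,glob}}_G$ with $\Bun_{Z_G}$-translation at the level of the Ran/Whittaker categories, and then tracking the resulting local-system twist through the factorization geometry so that what appears in cohomological degree $0$ is genuinely cut out by a nontrivial monodromy representation. This is exactly where the argument must go beyond — and refine — the proof of \thmref{t:end Poinc vac a'}; the reductions surrounding it are formal.
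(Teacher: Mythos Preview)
Your approach is far more elaborate than needed, and the step you flag as the ``main obstacle'' is in fact where all the content would have to be supplied; as written this is a plan, not a proof. The paper's argument bypasses the Whittaker adjunction, the monad $\on{coeff}\circ\on{Poinc}$, and any factorization geometry entirely.

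The key observation is that since $g\geq 2$, the coweight $(g-1)\cdot 2\rho$ is strictly dominant, so the map $\Bun_B^{(g-1)\cdot 2\rho}\to\Bun_G$ is a locally closed embedding and $!$-pushforward along it is fully faithful. Hence $\CHom$'s between the Poincar\'e object and its $\Bun_{Z_G}$-translates may be computed inside $\Dmod_{\frac12}(\Bun_B^{(g-1)\cdot 2\rho})$. Now $\on{Poinc}^{\on{Vac,glob}}_{G,!}$ is supported on the closed substack $\Bun_{N,\rho(\omega_X)}/T\hookrightarrow\Bun_B^{(g-1)\cdot 2\rho}$, i.e.\ on the fiber over $\rho(\omega_X)\in\Bun_T$, while its translate $\CP_{Z_G}\cdot\on{Poinc}^{\on{Vac,glob}}_{G,!}$ is supported on the fiber over $\CP_{Z_G}\cdot\rho(\omega_X)$. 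For nontrivial $\CP_{Z_G}$ these are distinct points of the coarse moduli $\Bun_T/B(T)$, so the two closed substacks are \emph{disjoint} and the entire $\CHom$ vanishes --- in all cohomological degrees, giving \thmref{t:end Poinc vac b'} at once.

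So the proof is a two-line support argument, not a cohomological computation with a twisted local system. Your route is not wrong in spirit, but it trades a trivial disjointness check for an intricate monodromy analysis that you have not actually carried out, and which would require you to independently identify the twist with $\CL_{\CP_{Z_G}}$ and justify its nontriviality on the relevant space --- essentially rebuilding part of \secref{s:gerbes} in a different guise.
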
 

As in the case of \thmref{t:end Poinc vac a}, we will actually prove a more precise result
(but only \thmref{t:end Poinc vac b} will be needed for the proof of GLC):

\begin{thm} \label{t:end Poinc vac b'}
For a non-trivial $\CP_{Z_G}\in \Dmod_{\frac{1}{2}}(\Bun_G)$,
$$\CHom(\on{Poinc}^{\on{Vac,glob}}_{G,!},\CP_{Z_G}\cdot \on{Poinc}^{\on{Vac,glob}}_{G,!})=0.$$
\end{thm}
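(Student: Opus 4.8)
The plan is to reduce the assertion, by adjunction, to a Whittaker-side computation, and then to the vanishing of the de Rham cohomology of a non-trivial rank-one local system. Since $\on{Poinc}^{\on{Vac,glob}}_{G,!}$ is obtained by applying the left adjoint of the vacuum Whittaker coefficient functor $\on{coeff}^{\on{Vac,glob}}_G$ to the vacuum object $\on{Vac}\in\Whit(\Gr_{G,\Ran})$, the adjunction gives
\begin{multline*}
\CHom\big(\on{Poinc}^{\on{Vac,glob}}_{G,!},\,\CP_{Z_G}\cdot\on{Poinc}^{\on{Vac,glob}}_{G,!}\big)\simeq\\
\CHom_{\Whit(\Gr_{G,\Ran})}\big(\on{Vac},\,\on{coeff}^{\on{Vac,glob}}_G(\CP_{Z_G}\cdot\on{Poinc}^{\on{Vac,glob}}_{G,!})\big).
\end{multline*}
Because the $Z_G$-action on the identity functor of $\Dmod_{\frac{1}{2}}(\Bun_G)$ is central, the translation functor \eqref{e:central shift} commutes with the idempotents $\sP_\alpha$; hence $\CP_{Z_G}\cdot\on{Poinc}^{\on{Vac}}_{G,!,\alpha}$ again lies in the $\alpha$-summand, and it is enough to prove the vanishing of $\CHom(\on{Poinc}^{\on{Vac}}_{G,!,\alpha},\CP_{Z_G}\cdot\on{Poinc}^{\on{Vac}}_{G,!,\alpha})$ for each $\alpha\in(Z_G)^\vee$ separately, the $\CHom$ between distinct summands vanishing for free.

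The next step is to compute the right-hand side geometrically. The composite $\on{coeff}^{\on{Vac,glob}}_G\circ(\CP_{Z_G}\cdot(-))\circ\on{Poinc}^{\on{Vac,glob}}_{G,!}$ is given by the correspondence obtained from the defining diagrams of $\on{coeff}_G$ and $\on{Poinc}_{G,!}$ (in terms of the enhanced Whittaker stacks of \cite{GLC2}) by inserting the central twist. Unwinding it exactly as in the computation underlying \thmref{t:end Poinc vac a'}, but now keeping track of the extra $Z_G$-torsor, identifies $\CHom_{\Whit(\Gr_{G,\Ran})}(\on{Vac},-)$ of this composite with (a shift of) the de Rham cohomology of a space $\CZ_{\CP_{Z_G}}$ that parametrizes a $G$-bundle $\CP$ together with a generic Whittaker structure on $\CP$ and a generic Whittaker structure on $\CP\otimes\CP_{Z_G}$. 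The non-degeneracy built into $\on{Vac}$ singles out the open ``principal'' stratum $\oCZ_{\CP_{Z_G}}\subset\CZ_{\CP_{Z_G}}$, where the two generic Whittaker structures are in general position; for $\CP_{Z_G}$ trivial this is exactly the locus whose de Rham cohomology is $k$ and which produces the scalar endomorphisms of \thmref{t:end Poinc vac a}. For the non-principal strata, the contribution to $\CHom$ out of $\on{Vac}$ vanishes by the same argument that proves \thmref{t:end Poinc vac a'}: the relevant relative-cohomology computations involve only the unipotent radical and its additive characters, and are insensitive to the central twist.

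The crux is therefore the principal stratum $\oCZ_{\CP_{Z_G}}$ for a non-trivial $\CP_{Z_G}$: one must show that its de Rham cohomology vanishes. The datum of $\CP_{Z_G}$ equips $\oCZ_{\CP_{Z_G}}$ with the pullback of the non-trivial rank-one de Rham local system $\CL_{\CP_{Z_G}}$ of \secref{sss:L P Z G}, and the contribution in question is $\Gamma(\oCZ_{\CP_{Z_G}},\CL_{\CP_{Z_G}})$. The point is that $\oCZ_{\CP_{Z_G}}$, although ``cohomologically a point'' for constant coefficients, carries the class of $\CP_{Z_G}$ as a non-trivial character of its fundamental groupoid: heuristically, two generic Whittaker structures living on bundles that differ by $\CP_{Z_G}$ cannot be rigidified against one another without trivializing the finite \'etale torsor $\CP_{Z_G}$, which is impossible on a proper curve. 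Making this rigorous --- pinning down $\oCZ_{\CP_{Z_G}}$ precisely and checking that $\CL_{\CP_{Z_G}}$ restricts to it non-trivially, using the $\pi_1$-computations of \propref{p:pi 1 Bun} and \corref{c:pi 1 LS} --- is the main obstacle, and is also where the hypothesis $g\geq 2$ enters, both through the inputs of \thmref{t:end Poinc vac a'} and through the geometry of $\oCZ_{\CP_{Z_G}}$. As a consistency check, the same reduction is available purely spectrally: using \thmref{t:Hecke Z 1} to rewrite $\CP_{Z_G}\cdot(-)$ as the spectral action of $\CL^{\otimes -1}_{\CP_{Z_G}}$, together with the $\QCoh(\LS_\cG)$-linearity of $\BL^L_G$ and the identification of $\on{Poinc}^{\on{Vac,glob}}_{G,!}$ with $\BL^L_G(\CO_{\LS_\cG})$, one gets $\CHom(\on{Poinc}^{\on{Vac,glob}}_{G,!},\CP_{Z_G}\cdot\on{Poinc}^{\on{Vac,glob}}_{G,!})\simeq\Gamma(\LS_\cG,\CA_G\otimes\CL^{\otimes -1}_{\CP_{Z_G}})$, so that the theorem becomes the vanishing of these global sections --- again governed by the non-triviality of $\CL_{\CP_{Z_G}}$.
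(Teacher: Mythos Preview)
Your approach has a genuine gap that you yourself flag: the ``main obstacle'' of pinning down the principal stratum $\oCZ_{\CP_{Z_G}}$ and verifying that $\CL_{\CP_{Z_G}}$ restricts non-trivially is never actually carried out. A proof proposal that ends with an unverified obstacle is not a proof. Moreover, your spectral ``consistency check'' is circular: the identification
\[
\CHom(\on{Poinc}^{\on{Vac,glob}}_{G,!},\CP_{Z_G}\cdot\on{Poinc}^{\on{Vac,glob}}_{G,!})\simeq \Gamma(\LS_\cG,\CA_G\otimes\CL^{\otimes -1}_{\CP_{Z_G}})
\]
is exactly how the paper \emph{uses} this theorem (see Step~2, equation \eqref{eq:no Hom with L}), so it cannot serve as an independent route to proving it.

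The paper's argument is dramatically simpler and avoids Whittaker correspondences entirely. Recall that $\on{Poinc}^{\on{Vac,glob}}_{G,!}$ is the $!$-pushforward of $(\chi^{\on{glob}})^*(\on{exp})$ along $\fp:\Bun_{N,\rho(\omega_X)}\to \Bun_G$, which factors through the locally closed embedding $\Bun_B^{(g-1)\cdot 2\rho}\hookrightarrow \Bun_G$ (this is where $g\geq 2$ enters, as $(g-1)\cdot 2\rho\in\Lambda_G^{++}$). Since $!$-pushforward along a locally closed embedding is fully faithful, the $\CHom$ can be computed on $\Bun_B^{(g-1)\cdot 2\rho}$. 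There, $\on{Poinc}^{\on{Vac,glob}}_{G,!}$ is supported on the closed substack $\Bun_{N,\rho(\omega_X)}/T$, and its translate by $\CP_{Z_G}$ is supported on the $\CP_{Z_G}$-translate of that substack. These two closed substacks are \emph{disjoint}: their images under $\Bun_B\to\Bun_T\to\Bun_T/B(T)$ are the distinct points $\rho(\omega_X)$ and $\CP_{Z_G}\cdot\rho(\omega_X)$, since a non-trivial $Z_G$-bundle is non-trivial as a $T$-bundle. Disjoint supports immediately give $\CHom=0$.
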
 

\ssec{Algebraic geometry of \texorpdfstring{$\LS_\cG$}{LS}}

In this subsection, we continue to assume that $g\geq 2$. 

\sssec{}

First, we recall (see \cite[Proposition 2.11.2]{BD}):

\begin{thm} \label{t:LS CM}
The stack $\LS_\cG$ is a classical locally complete intersection of dimension 
$$\dim(\fg)\cdot 2(g-1).$$
\end{thm}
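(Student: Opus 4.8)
The plan is to split the assertion into a formal deformation-theoretic part (giving quasi-smoothness and the virtual dimension) and a genuinely geometric part (a dimension bound on the classical truncation), and then to glue these by a standard commutative-algebra observation. Concretely, writing $\LS_\cG=\bMaps(X_{\dR},\on{pt}/\cG)$, the tangent complex at a point $(\CP,\nabla)$ is $R\Gamma_{\dR}(X,\fg_\CP)[1]$, the de Rham cohomology of the flat adjoint bundle, placed in cohomological degrees $[-1,1]$ (namely $H^0_{\dR}$, $H^1_{\dR}$, $H^2_{\dR}$ in degrees $-1,0,1$, since $X$ is a curve). Hence $\LS_\cG$ is quasi-smooth, i.e.\ a \emph{derived} locally complete intersection. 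Its virtual dimension at $(\CP,\nabla)$ equals $-\chi_{\dR}(X,\fg_\CP)$; since the de Rham Euler characteristic is $\rk(\fg_\CP)\cdot\chi_{\on{top}}(X)=\dim(\fg)\cdot(2-2g)$, independently of the point, the virtual dimension is the constant $\dim(\fg)\cdot 2(g-1)$. (Via the Killing form one moreover has $H^2_{\dR}(X,\fg_\CP)\simeq H^0_{\dR}(X,\fg_\CP)^\vee$, which exhibits the self-duality of the tangent complex; this will not be needed.)

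The commutative-algebra step is the following: a quasi-smooth derived stack is, smooth-locally, the derived vanishing locus of $c$ functions $f_1,\dots,f_c$ on a smooth scheme $W$, with $c=\dim(W)-(\text{virtual dimension})$, and its classical truncation is $V(f_1,\dots,f_c)$, whose every component has dimension $\ge\dim(W)-c=$ the virtual dimension. If, conversely, one knows that $\dim V(f_1,\dots,f_c)\le$ the virtual dimension, then $f_1,\dots,f_c$ is a regular sequence (a smooth, hence Cohen--Macaulay, ring has this property as soon as the quotient has the minimal possible dimension), so the Koszul resolution shows both that the derived vanishing locus is already classical and that $V(f_1,\dots,f_c)$ is a complete intersection in $W$ of dimension exactly the virtual dimension. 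Thus the theorem reduces to the estimate
\[
\dim\LS_\cG\;\le\;\dim(\fg)\cdot 2(g-1).
\]

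For this estimate I would argue via uniformization. By a theorem of Drinfeld--Simpson, for $\cG$ semi-simple every $\cG$-bundle on $X$ (with or without connection) becomes trivial after restriction to $\oX:=X\setminus x$, uniformly in families and Zariski-locally on the base. Consequently $\LS_\cG$ is the quotient by $\cG(\oX)$ of the space of pairs $(A,g)$, where $A\in\fg\otimes\Omega^1(\oX)$ encodes the connection $d+A$ on the trivial bundle over $\oX$ and $g\in\cG(\CK_x)$ ($\CK_x$ the Laurent series at $x$) is a gluing datum to the trivially-connected trivial bundle on the formal disc, subject to the requirement that $gAg^{-1}+dg\cdot g^{-1}$ extend regularly over $x$. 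Filtering by the order of pole of $g$ at $x$ --- that is, by the finite-dimensional Schubert strata $\Gr^{\le N}_{\cG,x}$ of the affine Grassmannian --- and counting how many conditions the regularity requirement imposes, one obtains the bound. This is precisely the computation of \cite[Proposition 2.11.2]{BD}, which we follow.

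The main obstacle is exactly this last dimension count: formal deformation theory alone does not rule out that the classical truncation of $\LS_\cG$ is too large (this is what would happen near the trivial local system were the quadratic obstruction map onto $H^2_{\dR}$ not surjective), and controlling its dimension everywhere is the real content of the statement. An alternative route to the same bound, avoiding the pole-order bookkeeping, would be to degenerate to $\lambda$-connections: the Hodge family over $\AA^1$ has general fibre $\LS_\cG$ and special fibre the (derived) cotangent stack $T^*\Bun_\cG$, i.e.\ the stack of $\cG$-Higgs bundles, so flatness of this family reduces the claim to the classical lci-ness of the Higgs stack, which has dimension $2\dim\Bun_\cG=\dim(\fg)\cdot 2(g-1)$.
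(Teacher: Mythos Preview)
The paper does not give its own proof of this statement; it simply recalls it as a citation to \cite[Proposition 2.11.2]{BD}. Your proposal is a correct outline of the standard argument, and in fact is essentially what \cite{BD} does: deformation theory yields quasi-smoothness with the stated virtual dimension, and the only non-formal input is the upper bound on the dimension of the classical truncation, which then forces classicality and the lci property by the regular-sequence argument you describe. Your primary route to the dimension bound, via loop-group uniformization and stratification by the pole order at a point, is exactly the approach of \cite{BD}.

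One caution on your alternative via the Hodge family: invoking flatness of the family of $\lambda$-connections over $\BA^1$ is not automatic, and establishing it typically requires precisely the kind of fibrewise dimension control you are trying to prove. So this route risks circularity unless you first verify by an independent argument that the Higgs moduli stack (the special fibre) is a classical lci of the right dimension---and that verification is of comparable difficulty to the original statement. The uniformization argument is the cleaner path and is the one actually carried out in the reference.
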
 

\begin{cor}  \label{c:LS CM}
The stack $\LS_\cG$ is Cohen-Macaulay of dimension $\dim(\fg)\cdot 2(g-1)$.
\end{cor}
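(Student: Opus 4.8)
\textbf{Proof plan for Corollary \ref{c:LS CM}.}

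The plan is to deduce this directly from \thmref{t:LS CM} together with the standard commutative-algebra fact that a locally complete intersection is Cohen--Macaulay. The dimension assertion is already contained in \thmref{t:LS CM}, so the only content is the Cohen--Macaulay property.

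First I would recall that for a classical algebraic stack locally of finite type, both the property of being a locally complete intersection and the property of being Cohen--Macaulay are smooth-local: if $U\to \CY$ is a smooth surjection from a scheme $U$, then $\CY$ is l.c.i. (resp.\ Cohen--Macaulay) if and only if $U$ is. This is because a smooth morphism is flat with regular (in particular Cohen--Macaulay and l.c.i.) fibers, and both properties ascend and descend along flat morphisms whose fibers have the property in question. Applying this to a smooth atlas $U\to \LS_\cG$, \thmref{t:LS CM} tells us that $U$ is a classical locally complete intersection scheme (of pure dimension $\dim(\fg)\cdot 2(g-1)$ plus the relative dimension of the atlas).

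Next I would invoke the local statement: a Noetherian scheme which is a locally complete intersection is Cohen--Macaulay. Indeed, working on an affine chart $\Spec(R)\subseteq U$ with $R=S/(f_1,\dots,f_c)$, $S$ regular and $f_1,\dots,f_c$ a regular sequence, one has $\operatorname{depth}(R)=\dim(S)-c=\dim(R)$, so $R$ is Cohen--Macaulay. Hence $U$ is Cohen--Macaulay, and therefore so is $\LS_\cG$ by the smooth-local characterization above. Combined with the dimension count from \thmref{t:LS CM}, this gives the statement.

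There is essentially no serious obstacle here: the argument is purely formal once \thmref{t:LS CM} is in hand. The only point requiring a small amount of care is the smooth-local characterization of the Cohen--Macaulay property for Artin stacks, but this is routine and standard.
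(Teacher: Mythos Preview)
Your proposal is correct and matches the paper's approach: the corollary is stated immediately after \thmref{t:LS CM} with no separate proof, since the implication ``l.c.i.\ $\Rightarrow$ Cohen--Macaulay'' is standard. Your elaboration of the smooth-local reduction to schemes and the regular-sequence argument is exactly the routine justification one would supply if asked to expand.
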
 

\sssec{}

Next we claim:

\begin{prop} \label{p:compl 2}
Excluding the case of $g=2$ with the root system of $G$ containing an $A_1$ factor, 
the complement to $\LS_\cG^{\on{irred}}$ in $\LS_\cG$ has codimension $\geq 2$.
\end{prop}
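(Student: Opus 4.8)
The plan is to estimate the codimension of the complement of $\LS_\cG^{\on{irred}}$ inside $\LS_\cG$ by stratifying the reducible locus according to the conjugacy class of the parabolic to which a local system reduces, and then computing the dimension of each stratum against the known dimension of $\LS_\cG$, namely $\dim(\fg)\cdot 2(g-1)$ (\thmref{t:LS CM}). The reducible locus $\LS_\cG^{\on{red}}$ is the union over proper parabolics $\cP\subsetneq\cG$ of the images of the proper maps $\LS_\cP\to\LS_\cG$, so it suffices to bound $\dim(\LS_\cP)$ for each maximal proper parabolic $\cP$; the non-maximal ones factor through maximal ones and give smaller strata, so they cannot increase the dimension.

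**First I would** recall the dimension formula for $\LS_\cP$. Writing $\cM$ for the Levi quotient of $\cP$ and $\check{\fm},\check{\fn}(\cP)$ for the Lie algebras of $\cM$ and of the unipotent radical, one has (e.g.\ from the analogue of \thmref{t:LS CM} for $\cP$, or by a direct deformation-theoretic count) $\dim(\LS_\cP) = \dim(\check\fm)\cdot 2(g-1)$: the contribution of the unipotent part to the virtual dimension cancels because $\check{\fn}(\cP)$ and its dual pair up under the $2(g-1)$ Euler-characteristic weighting, exactly as in the reductive case. Hence the codimension of the image of $\LS_\cP\to\LS_\cG$ is at least
\[
\big(\dim(\check\fg)-\dim(\check\fm)\big)\cdot 2(g-1) \;=\; 2\dim(\check{\fn}(\cP))\cdot (g-1).
\]
For $g\geq 3$ this is $\geq 2\cdot 2\cdot 1 = 4\geq 2$ already for the smallest possible $\dim(\check{\fn}(\cP))$ (which is at least $1$, in fact at least the number of positive roots killed in the Levi, so $\geq 1$), so the bound is comfortable. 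For $g=2$ the bound becomes $2\dim(\check{\fn}(\cP))$, which is $\geq 2$ precisely when $\dim(\check{\fn}(\cP))\geq 1$ — always true — so naively one expects no problem even at $g=2$; the subtlety, and the reason for excluding $A_1$ factors when $g=2$, is that the map $\LS_\cP\to\LS_\cG$ need not be generically finite onto its image, and more importantly one must be careful that the \emph{whole} of $\LS_\cP$ (not just its irreducible-Levi part) maps in, and in the $A_1$-factor case the relevant parabolic is a Borel in that factor with $\dim\check\fn=1$, giving codimension exactly $2\cdot 1=2$, but the point is then that this codimension-$2$ stratum, combined with reductions to parabolics in the complementary factors, can conspire to produce a codimension-$1$ piece — I would need to check whether the statement I want is codimension $\geq 2$ and whether the $A_1$, $g=2$ case genuinely drops to codimension $1$ or is merely not handled by this crude count.

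**The main obstacle** I anticipate is precisely this low-genus bookkeeping: making the dimension count sharp enough to see that the exclusion "$g=2$ with an $A_1$ factor" is exactly the locus where it fails, rather than an artifact of a lossy estimate. Concretely, I would: (1) reduce to maximal parabolics; (2) for each maximal $\cP$ with Levi $\cM$, bound $\dim(\LS_\cP)\leq \dim(\check\fm)\cdot 2(g-1) + (\text{correction from }H^0\text{ of the unipotent part})$ — the correction is the place where $g=2$ behaves worse, since $H^0$ of a local system valued in $\check{\fn}(\cP)$ over a genus-$2$ curve can be nonzero for the trivial sub-local-system, inflating the stratum; (3) observe that for $G$ without $A_1$ factors (or $g\geq 3$) every maximal parabolic has $\dim(\check{\fn}(\cP))$ large enough, or the $H^0$-correction small enough, that codimension $\geq 2$ survives; (4) conclude by taking the union over the finitely many conjugacy classes of proper parabolics. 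I expect step (2), controlling the unipotent-radical cohomology contribution uniformly, to be the delicate part, and it is exactly there that the hypothesis on $g$ and the $A_1$ factors enters.
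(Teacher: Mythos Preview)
Your proposal has a genuine gap at its core: the claimed formula $\dim(\LS_\cP) = \dim(\check\fm)\cdot 2(g-1)$ is wrong, and there is no ``cancellation'' of the unipotent contribution. The virtual dimension of $\LS_\cP$ is $\dim(\check\fp)\cdot 2(g-1)$, and the actual dimension is larger still. Your own computation should have flagged this: your estimate gives codimension $\geq 2$ for every $G$ when $g=2$, yet the proposition explicitly excludes $A_1$ factors there. In fact for $G=SL_2$ and $g=2$ one computes directly that the reducible locus has dimension $5$ inside $\LS_\cG$ of dimension $6$, so the codimension is exactly $1$; your formula is off by at least one already in this smallest case.

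The paper argues instead via the projection $\sfq\colon \LS_\cP \to \LS_\cM$. Two corrections appear, neither of which you identify correctly. First, $\dim(\LS_\cM) = \dim(\check\fm)\cdot(2g-2) + \dim(\fz_{\cM})$, where $\fz_{\cM}$ is the Lie algebra of the center of $\cM$; the torus part makes $\LS_\cM$ strictly larger than its virtual dimension. Second, the fibers of $\sfq$ are quasi-smooth of virtual relative dimension $\dim(\check\fn(\cP))\cdot(2g-2)$, but can jump: the paper bounds each fiber by $\dim(\check\fn(\cP))\cdot(2g-1)$ via a bound on $H^0(X,\check\fn(\cP)^\vee_{\sigma_\cP})$, and shows $\sfq$ is smooth over a dense open by twisting $\sigma_\cM$ by a generic $Z^0_\cM$-local system. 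These combine to give $\dim(\LS_\cP) \leq \dim(\LS_\cM) + \dim(\check\fn(\cP))\cdot(2g-1) - 1$, and after simplification (using $\dim(\fz_{\cM})=1$ for $\cP$ maximal) the required inequality is $2 \leq \dim(\check\fn(\cP))\cdot(2g-3)$. This fails precisely when $g=2$ and $\dim(\check\fn(\cP))=1$, i.e., when there is an $A_1$ factor. So the exclusion is a direct consequence of the sharp dimension count, not a matter of factors ``conspiring'' or of the map $\LS_\cP\to\LS_\cG$ failing to be generically finite.
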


This proposition will be proved in \secref{ss:proof of compl 2}.

\sssec{}

Note that from \corref{c:LS CM} and \propref{p:compl 2}, we obtain:

\begin{cor} \label{c:compl 2} 
The embedding $\LS^{\on{irred}}_\cG\hookrightarrow \LS_\cG$
induces a bijection between the sets of connected components.
\end{cor}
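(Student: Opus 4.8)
The plan is to combine \corref{c:LS CM} with \propref{p:compl 2} via the classical fact that a connected scheme satisfying Serre's condition $(S_2)$ — in particular a connected Cohen--Macaulay scheme — is \emph{connected in codimension one}, i.e.\ cannot be disconnected by removing a closed subset of codimension $\geq 2$. The relevant closed subset is $C:=\LS_\cG\setminus\LS^{\on{irred}}_\cG$, which has codimension $\geq 2$ in $\LS_\cG$ by \propref{p:compl 2} under the running hypotheses.

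First I would treat surjectivity on $\pi_0$. By \thmref{t:LS CM} the stack $\LS_\cG$ is equidimensional of dimension $\dim(\fg)\cdot 2(g-1)$, which is $\geq 2$ since $g\geq 2$. As $C$ has codimension $\geq 2$ it contains no irreducible component of $\LS_\cG$, so $\LS^{\on{irred}}_\cG=\LS_\cG\setminus C$ is dense in $\LS_\cG$ and hence meets every connected component. Thus the map $\pi_0(\LS^{\on{irred}}_\cG)\to\pi_0(\LS_\cG)$ is surjective, and it remains to see that each connected component $Z\subseteq\LS_\cG$ contains only one connected component of $\LS^{\on{irred}}_\cG$ — equivalently, that $Z\cap\LS^{\on{irred}}_\cG=Z\setminus(Z\cap C)$ is connected (it is nonempty by what we just saw).

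For this I would argue as follows. Such a $Z$ is connected, Cohen--Macaulay by \corref{c:LS CM}, and equidimensional by \thmref{t:LS CM}, and $Z\cap C$ has codimension $\geq 2$ in $Z$. Since $\LS_\cG$ is an algebraic stack, I would pass to schemes: choose a smooth surjective atlas $U\to Z$ with $U$ a scheme; then $U$ and $U\times_Z U$ are Cohen--Macaulay and equidimensional, and (the morphisms being flat, everything being of finite type over a field, hence catenary) the preimages of $Z\cap C$ in $U$ and in $U\times_Z U$ still have codimension $\geq 2$. Applying connectedness in codimension one componentwise to $U$ and to $U\times_Z U$ shows that removing these preimages does not change $\pi_0$; since $\pi_0(Z)$ is the coequalizer of $\pi_0(U\times_Z U)\rightrightarrows\pi_0(U)$ and likewise for the open substack with its induced atlas, we get $\pi_0(Z\cap\LS^{\on{irred}}_\cG)\cong\pi_0(Z)$, a one-element set. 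This gives the required bijectivity.

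Everything above is formal once \corref{c:LS CM} and \propref{p:compl 2} are granted; the one genuine external input is the connectedness-in-codimension-one theorem, which is classical (locally: if $\CO_{\LS_\cG,x}$ is Cohen--Macaulay of dimension $\geq 2$ its depth is $\geq 2$, which forces the punctured spectrum to be connected, and the global statement follows by Noetherian induction). The only step requiring genuine care is the reduction from the stack $\LS_\cG$ to schemes through an atlas — it is routine, but must be arranged so that Cohen--Macaulay-ness, equidimensionality, and the codimension bound all transfer correctly.
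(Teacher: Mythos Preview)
Your proposal is correct and follows essentially the same approach as the paper: the paper simply records this corollary as an immediate consequence of \corref{c:LS CM} and \propref{p:compl 2} without further argument, and what you have written is precisely the standard unpacking of that implication via Hartshorne's connectedness-in-codimension-one theorem, together with the routine atlas argument to pass from stacks to schemes. There is nothing to correct.
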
 

\ssec{Structure of \texorpdfstring{$\CA_{G,\on{irred}}$}{strAG}} \label{ss:A G irred}

In this subsection we continue to assume that $g\geq 2$, and we will exclude the case that $g=2$ and the root system of 
$G$ contains a factor of $A_1$ (as in \propref{p:compl 2}). 

\sssec{}

Recall that $\CA_{G,\on{irred}}$ is a vector bundle on $\LS_\cG^{\on{irred}}$ (see \secref{sss:AG nice}). 
The next proposition provides an explicit description of the shape that $\CA_{G,\on{irred}}$ can have. This
description will play a crucial role in the proof of GLC given below. 

\begin{prop} \label{p:A irred}
The restriction of $\CA_{G,\on{irred}}$ to every connected component is isomorphic to a
direct sum of lines bundles, each of which is a restriction of some $\CL_{\CP_{Z_G}}$ 
(see \secref{sss:L P Z G}). 
\end{prop}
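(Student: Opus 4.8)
The plan is to combine the structural description of $\CA_{G,\on{irred}}$ recalled in \secref{sss:AG conn} with the topological input of \corref{c:pi 1 LS}. By \cite[Corollary 4.2.5]{GLC4} we may write $\CA_{G,\on{irred}}\simeq \oblv^l(\CF)$ for a canonically defined $\CF\in \Dmod(\LS_\cG^{\on{irred}})$, and by \cite[Proposition 4.2.8]{GLC4} this $\CF$ is a local system (in particular a flat vector bundle) whose monodromy is finite. Consequently $\CF$ is classified by a continuous finite-dimensional $k$-representation of the \'etale fundamental groupoid of $\LS_\cG^{\on{irred}}$ with finite image; equivalently, $\CF$ is the de Rham incarnation of an \'etale local system of $k$-vector spaces on $\LS_\cG^{\on{irred}}$. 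This identification is the one slightly delicate point, and it is exactly where finiteness of the monodromy enters: a flat bundle with finite monodromy factors through a finite quotient of the fundamental group, and over such a finite quotient the de Rham, Betti and \'etale incarnations of local systems agree.

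Granting this, fix a connected component of $\LS_\cG^{\on{irred}}$ and restrict $\CF$ to it. By \corref{c:pi 1 LS}, the resulting \'etale local system of $k$-vector spaces splits as a direct sum of $1$-dimensional ones (the sum being finite, since $\CA_{G,\on{irred}}$ has finite rank), each of which is isomorphic to the restriction of $\CL_{\CP_{Z_G,i}}$ for some $\CP_{Z_G,i}\in \Bun_{Z_G}$; conversely each such $\CL_{\CP_{Z_G}}$ is itself a finite-monodromy local system, being induced from a $\mu_\infty$-torsor, so it lies in the image of the comparison used above. Since each $\CL_{\CP_{Z_G}}$ is supplied with mutually compatible \'etale, de Rham and coherent incarnations (see \secref{sss:L P Z G}), the splitting transports to the de Rham setting: $\CF$, restricted to the chosen component, becomes $\bigoplus_i \CL_{\CP_{Z_G,i}}$ as de Rham local systems.

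It remains to apply $\oblv^l$. On a rank-one de Rham local system this functor returns the underlying line bundle, and for $\CL_{\CP_{Z_G}}$ the underlying line bundle is precisely its coherent incarnation (again \secref{sss:L P Z G}). Hence, restricted to our chosen component, $\CA_{G,\on{irred}}\simeq \oblv^l(\CF)\simeq \bigoplus_i \CL_{\CP_{Z_G,i}}$ in $\QCoh(\LS_\cG^{\on{irred}})$, which is the desired statement. The one genuine obstacle to be careful with is the first paragraph: one must pin down the three incarnations of $\CF$ and check that \corref{c:pi 1 LS}, which is phrased \'etale-ly, really controls the de Rham object $\CA_{G,\on{irred}}$ --- this is harmless here precisely because everything in sight has finite monodromy.
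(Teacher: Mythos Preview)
Your proof is correct and follows essentially the same approach as the paper: use the finite-monodromy property of $\CF$ to pass to the \'etale setting, then invoke \corref{c:pi 1 LS} to split into line bundles of the form $\CL_{\CP_{Z_G}}$. The paper's version is terser, but your added care in tracking the \'etale/de Rham/coherent comparisons and the final application of $\oblv^l$ simply makes explicit what the paper leaves implicit.
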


\sssec{Proof of \propref{p:A irred}}

According to \secref{ss:AG nice}, the object $\CA_{G,\on{irred}}$ is the vector bundle
underlying a local system $\CF$ with finite monodromy (in particular, 
it has regular singularities).

\medskip

Thanks to the finite monodromy property, we can think of $\CF$ as an \'etale local system of $k$-vector spaces. 
The assertion of the proposition follows now from \corref{c:pi 1 LS}. 

\qed[\propref{p:A irred}]

\ssec{Proof of GLC} \label{ss:proof of GLC}

Let $g$ and $G$ be as in \secref{ss:A G irred}. Note that it is sufficient to prove GLC under these assumptions:

\medskip

Indeed, by \secref{s:low genus}, we can assume that $g\geq 2$. By \corref{c:sc}, we may assume that $G$ is almost simple and 
simply-connected. By \secref{sss:g 2} we can assume that it is not isomorphic to $SL_2$. 

\sssec{Step 0}

Fix a connected component $\LS_{\cG,\alpha}$ of $\LS_\cG$. Denote
$$\LS_{\cG,\alpha}^{\on{irred}}:=\LS_{\cG,\alpha}\cap \LS_\cG^{\on{irred}}.$$

\medskip

Using \propref{p:A irred}, we can write
\begin{equation} \label{e:decomp A}
\CA_{G,\on{irred}}|_{\LS_{\cG,\alpha}^{\on{irred}}}\simeq \underset{\CP_{Z_G}\in \Bun_{Z_G}}\oplus\, 
\CL^{\oplus\, n_{\CP_{Z_G},\alpha}}_{\CP_{Z_G}}|_{\LS_{\cG,\alpha}^{\on{irred}}}
\end{equation} 
for some integers $n_{\CP_{Z_G},\alpha}$. 

\medskip

It is sufficient to show that for every $\alpha$
$$
n_{\CP_{Z_G},\alpha}=
\begin{cases}
&1 \text{ if } \CP_{Z_G} \text{ is trivial};\\
&0 \text{ if } \CP_{Z_G} \text{ is non-trivial}.
\end{cases}
$$

Indeed, since each $\CA_{G,\on{irred}}|_{\LS_{\cG,\alpha}^{\on{irred}}}$ is a unital associative algebra in $\QCoh(\LS_{\cG,\alpha}^{\on{irred}})$,
the latter will automatically imply (see \cite[Lemma 17.3.7]{GLC3}) that the unit map
\begin{equation} \label{e:unit alpha irred}
\CO_{\LS_{\cG,\alpha}^{\on{irred}}}\to \CA_{G,\on{irred}}|_{\LS_{\cG,\alpha}^{\on{irred}}}
\end{equation}
is an isomorphism, i.e., \eqref{e:AG irred} is an isomorphism. 

\sssec{Step 1}\label{sss:step 1}

Fix a particular $\CP_{Z_G}$. We are going to prove that the map
$$\CA_G\otimes \CL_{\CP_{Z_G}}\to \jmath_*\circ \jmath^*(\CA_G\otimes \CL_{\CP_{Z_G}}) = 
\jmath_*(\CA_{G,\on{irred}}) \otimes \CL_{\CP_{Z_G}}$$
induces an isomorphism at the level of $H^0(\Gamma(\LS_\cG,-))$.

\medskip

Recall that $\wh\iota^!$ denotes the right adjoint to the embedding
$$\QCoh(\LS_\cG)_{\on{red}}\hookrightarrow \QCoh(\LS_\cG),$$
so that for every $\CF\in \QCoh(\LS_\cG)$ we have a fiber sequence
$$\wh\iota^!(\CF)\to \CF\to \jmath_*\circ \jmath^*(\CF).$$

Thus, it is sufficient to prove that 
$$\wh\iota^!(\CA_G\otimes \CL_{\CP_{Z_G}})$$
is concentrated in cohomological degrees $\geq 2$.

\medskip

We have
$$\wh\iota^!(\CA_G\otimes \CL_{\CP_{Z_G}})\simeq \wh\iota^!(\CA_G)\otimes \CL_{\CP_{Z_G}},$$
where $\CL_{\CP_{Z_G}}$ is a line bundle. So it is enough to show that 
$$\wh\iota^!(\CA_G)$$
is concentrated in cohomological degres $\geq 2$.

\medskip

Recall (see \secref{sss:on Eis}) that the unit map 
$$\CO_{\LS_\cG}\to \CA_G$$ 
induces an isomorphism 
$$\wh\iota^!(\CO_{\LS_\cG})\to \wh\iota^!(\CA_G).$$

Thus, it remains to show that 
$$\wh\iota^!(\CO_{\LS_\cG})$$
is concentrated in cohomological degres $\geq 2$.

\medskip

However, this follows from \propref{p:compl 2} and \corref{c:LS CM}: 

\medskip

If $Y$ is a 
Cohen-Macaulay scheme/stack and $i:Z\subset Y$ is a closed subscheme/substack of 
codimension $\geq k$, then, $\wh{i}^!(\CO_Y)$ lives in cohomological degrees
$\geq k$. 

\begin{rem}
Note that the last step in the above argument shows that the map
$$\CO_{\LS_\cG} \to \jmath_*(\CO_{\LS^{\on{irred}}_\cG})$$
also induces an isomorphism at the level of $H^0(\Gamma(\LS_\cG,-))$.
\end{rem} 

\sssec{Step 2} \label{sss:step 2}

We will now show that $n_{\CP_{Z_G},\alpha}=0$ if $\CP_{Z_G}$ is non-trivial. 

\medskip 

We begin by showing that
\begin{equation}\label{eq:no Hom with L}
\Gamma(\LS_\cG,\CL^{\otimes -1}_{\CP_{Z_G}}\otimes \CA_G) = 0.
\end{equation}

\medskip 

Recall that
$$\CA_G = \BL_G\circ \BL_G^L(\CO_{\LS_\cG}).$$
Therefore, we have
\begin{multline} \label{e:Hom with L}
\Gamma(\LS_\cG,\CL^{\otimes -1}_{\CP_{Z_G}}\otimes \CA_G)=
\Gamma(\LS_\cG,\CL^{\otimes -1}_{\CP_{Z_G}}\otimes \BL_G\circ \BL_G^L(\CO_{\LS_\cG}))\,\,\,\,\overset{\QCoh(\LS_\cG)\text{-linearity of\,}\BL^L_G \text{ and } \BL_G}\simeq \\
\simeq \Gamma(\LS_\cG,\BL_G\circ \BL_G^L(\CL^{\otimes -1}_{\CP_{Z_G}}\otimes \CO_{\LS_\cG}))=
\CHom_{\QCoh(\LS_\cG)}(\CO_{\LS_\cG},\BL_G\circ \BL_G^L(\CL^{\otimes -1}_{\CP_{Z_G}}\otimes \CO_{\LS_\cG}))\simeq \\
\overset{\text{adjunction}}\simeq \CHom_{\Dmod_{\frac{1}{2}}(\Bun_G)}(\BL_G^L(\CO_{\LS_\cG}),\BL_G^L(\CL^{\otimes -1}_{\CP_{Z_G}}\otimes \CO_{\LS_\cG})).
\end{multline} 

Recall that 
$$\BL_G^L(\CO_{\LS_\cG})\simeq \on{Poinc}^{\on{Vac,glob}}_{G,!}.$$

Hence, since the functor $\BL^L_\cG$ is $\QCoh(\LS_\cG)$-linear, and taking into account
\thmref{t:Hecke Z 1}, we can rewrite the right-hand side in \eqref{e:Hom with L} as 
$$\CHom(\on{Poinc}^{\on{Vac,glob}}_{G,!},\CP_{Z_G}\cdot \on{Poinc}^{\on{Vac,glob}}_{G,!}).$$

By \thmref{t:end Poinc vac b}, this expression vanishes, so the same is true of
\eqref{eq:no Hom with L}.

\medskip

Applying Step 1 (with $\CL^{\otimes -1}_{\CP_{Z_G}}$ instead of $\CL_{\CP_{Z_G}}$), we find that
\[
H^0\left(\Gamma(\LS^{\on{irred}}_\cG,\CL^{\otimes -1}_{\CP_{Z_G}}\otimes \CA_{G,\on{irred}})\right)
= H^0\left(\Gamma(\LS_\cG,\CL^{\otimes -1}_{\CP_{Z_G}}\otimes \CA_G)\right) = 0.
\]

\medskip

Hence, we obtain 
$$H^0\left(\Gamma(\LS^{\on{irred}}_\cG,\CL^{\otimes -1}_{\CP_{Z_G}}\otimes \CA_{G,\on{irred}})\right)=0.$$

\medskip

However, by definition, $\CL^{\otimes -1}_{\CP_{Z_G}}\otimes \CA_{G,\on{irred}}$ 
carries $n_{\CP_{Z_G},\alpha}$ direct summands isomorphic to $\CO_{\LS_{\cG,\alpha}^{\on{irred}}}$.
Therefore, we obtain that
\[
0 = \dim H^0\left(\Gamma(\LS^{\on{irred}}_\cG,\CL^{\otimes -1}_{\CP_{Z_G}}\otimes \CA_{G,\on{irred}})\right) \geq 
H^0\left(\Gamma(\LS^{\on{irred}}_\cG,\CO_{\LS_{\cG,\alpha}^{\on{irred}}}^{\oplus n_{\CP_{Z_G},\alpha}})\right)
 \geq n_{\CP_{Z_G},\alpha}
\]

\noindent meaning that $n_{\CP_{Z_G},\alpha} = 0$, as was desired.



\sssec{Step 3} \label{sss:step 3}

Thus, we obtain that the decomposition \eqref{e:decomp A} is in fact of the form
\begin{equation} \label{e:decomp A bis}
\CA_{G,\on{irred}}|_{\LS_{\cG,\alpha}^{\on{irred}}}\simeq \CO^{\oplus\, n_\alpha}_{\LS_{\cG,\alpha}^{\on{irred}}},
\end{equation} 
for some integers $n_\alpha$, and we wish to show that they are all equal to $1$.

\medskip

By Step 1, for every $\alpha$, we have
$$H^0\left(\Gamma(\LS_{\cG,\alpha},\CA_G|_{\LS_{\cG,\alpha}})\right)\simeq
H^0\left(\Gamma(\LS^{\on{irred}}_{\cG,\alpha},\CA_{G,\on{irred}}|_{\LS^{\on{irred}}_{\cG,\alpha}})\right).$$

Hence,
$$\dim H^0\left(\Gamma(\LS_{\cG,\alpha},\CA_G|_{\LS_{\cG,\alpha}})\right) =
\dim H^0\left(\Gamma(\LS^{\on{irred}}_{\cG,\alpha},\CO^{\oplus\, n_\alpha}_{\LS_{\cG,\alpha}^{\on{irred}}})\right)
\geq n_\alpha.$$

Therefore,
$$\dim H^0\left(\Gamma(\LS_\cG,\CA_G)\right) \geq \underset{\alpha}\Sigma\, n_\alpha.$$

Now,
\begin{multline} \label{e:global sect}
\Gamma(\LS_\cG,\CA_G)\simeq \CHom_{\Dmod_{\frac{1}{2}}(\Bun_G)}(\BL_G^L(\CO_{\LS_\cG}),\BL_G^L(\CO_{\LS_\cG})) \simeq \\
\simeq \CHom_{\Dmod_{\frac{1}{2}}(\Bun_G)}(\on{Poinc}^{\on{Vac,glob}}_{G,!},\on{Poinc}^{\on{Vac,glob}}_{G,!}).
\end{multline}

Applying \corref{c:end Poinc vac a}, we obtain 
$$\underset{\alpha}\Sigma\, 1=|Z_G|=|\pi_0(\LS_\cG)|\geq \underset{\alpha}\Sigma\, n_\alpha.$$

Hence, in order to prove the desired equality, it suffices to show that $n_\alpha\neq 0$ for all $\alpha$. I.e., we have to show
that $\CA_{G,\on{irred}}$ does not vanish on any connected component $\LS^{\on{irred}}_{\cG,\alpha}$.


\sssec{Step 4}

%

By \thmref{t:Hecke Z 0}, we have:
$$\Gamma(\LS_{\cG,\alpha},\CA_G|_{\LS_{\cG,\alpha}})\simeq 
\CEnd_{\Dmod_{\frac{1}{2}}(\Bun_G)}(\on{Poinc}^{\on{Vac}}_{G,!,\alpha}).$$
As $\on{Poinc}^{\on{Vac}}_{G,!,\alpha} \neq 0$
by \thmref{t:end Poinc vac a}, we must have:
\[
0 \neq \on{id} \in 
H^0\left(\CEnd(\on{Poinc}^{\on{Vac}}_{G,!,\alpha}) \right)\simeq
H^0\left(\Gamma(\LS_{\cG,\alpha},\CA_G|_{\LS_{\cG,\alpha}}) \right)\simeq
H^0\left(\Gamma(\LS^{\on{irred}}_{\cG,\alpha},\CA_{G,\on{irred}}|_{\LS^{\on{irred}}_{\cG,\alpha}})\right).
\]

\noindent 

\qed[GLC]

\ssec{Additional remarks}

\sssec{}

For $G$ simply-connected, the assertion that all $\CA_{G,\on{irred}}|_{\LS^{\on{irred}}_{\cG,\alpha}}$ are non-zero can be also deduced
from the main result of \cite{Ari}:

\medskip

Let $\sigma$ be a point of $\LS^{\on{irred}}_{\cG,\alpha}$. Recall that according to \cite[Theorem 3.1.5]{GLC4}, the fiber
of $\CA_G$ at $\sigma$ is isomorphic to the homology of the space of generic oper structures on $\sigma$.

\medskip

Thus, it is sufficient to know that the latter space is non-empty. However, this is precisely the 
result\footnote{Note that the result of \cite{Ari} is about $\cg$-opers, which are different from $\cG$-opers, unless $\cG$ is adjoint.
However, as we have seen earlier, it is sufficient to prove GLC in the latter case.}
of \cite{Ari}.

\sssec{}

Note that taking into account \thmref{t:end Poinc vac a'}, and knowing that $\CO_{\LS_\cG}\simeq \CA_G$, from 
\eqref{e:global sect} we obtain:

\begin{cor} \label{c:O on LS}
For $g\geq 2$, for every connected component $\alpha$ of $\LS_\cG$, the map
$$k\to \Gamma(\LS_{\cG,\alpha},\CO_{\LS_{\cG,\alpha}})$$
is an isomorphism.
\end{cor}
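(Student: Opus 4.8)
The plan is to read this off from \eqref{e:global sect}, now that GLC has been established, together with the refined endomorphism computations \thmref{t:end Poinc vac a} and \thmref{t:end Poinc vac a'} and the central-character analysis of \thmref{t:Hecke Z 0}.

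First I would upgrade \eqref{e:global sect} to a statement about a single connected component $\LS_{\cG,\alpha}$. Arguing exactly as in \eqref{e:Hom with L} but with $\CO_{\LS_{\cG,\alpha}}$ in place of $\CL^{\otimes -1}_{\CP_{Z_G}}$ — i.e.\ using $\QCoh(\LS_\cG)$-linearity of $\BL_G$ and $\BL^L_G$, the identity $\CA_G=\BL_G\circ\BL^L_G(\CO_{\LS_\cG})$, the identification $\BL^L_G(\CO_{\LS_\cG})\simeq\on{Poinc}^{\on{Vac,glob}}_{G,!}$, and \thmref{t:Hecke Z 0} (which realizes the spectral idempotent $\CO_{\LS_{\cG,\alpha}}\otimes(-)$ as the automorphic idempotent $\sP_\alpha$, so that $\BL^L_G(\CO_{\LS_{\cG,\alpha}})\simeq\on{Poinc}^{\on{Vac}}_{G,!,\alpha}$) — one obtains
\[
\Gamma(\LS_{\cG,\alpha},\CA_G|_{\LS_{\cG,\alpha}})\simeq \CHom_{\Dmod_{\frac{1}{2}}(\Bun_G)}\big(\on{Poinc}^{\on{Vac,glob}}_{G,!},\,\sP_\alpha(\on{Poinc}^{\on{Vac,glob}}_{G,!})\big)=\CEnd_{\Dmod_{\frac{1}{2}}(\Bun_G)}(\on{Poinc}^{\on{Vac}}_{G,!,\alpha}).
\]

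Next I would invoke the main theorem of the paper: GLC asserts that the unit map $\CO_{\LS_\cG}\to\CA_G$ is an isomorphism, hence so is its restriction $\CO_{\LS_{\cG,\alpha}}\to\CA_G|_{\LS_{\cG,\alpha}}$, and thus the left-hand side above is just $\Gamma(\LS_{\cG,\alpha},\CO_{\LS_{\cG,\alpha}})$. On the right-hand side, \thmref{t:end Poinc vac a'} guarantees that $\CEnd(\on{Poinc}^{\on{Vac,glob}}_{G,!})$ is concentrated in cohomological degree $0$, hence so is its direct summand $\CEnd(\on{Poinc}^{\on{Vac}}_{G,!,\alpha})$; and \thmref{t:end Poinc vac a} identifies $H^0$ of this summand with $k$ via the unit map. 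Combining, $\Gamma(\LS_{\cG,\alpha},\CO_{\LS_{\cG,\alpha}})\simeq k$.

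Finally I would check that the isomorphism so produced is the structure map $k\to\Gamma(\LS_{\cG,\alpha},\CO_{\LS_{\cG,\alpha}})$. This is the only point demanding any care, and it is pure bookkeeping: the equivalence in \eqref{e:global sect} (and its $\alpha$-component refinement above), the identification $\BL^L_G(\CO_{\LS_\cG})\simeq\on{Poinc}^{\on{Vac,glob}}_{G,!}$, the decomposition afforded by \thmref{t:Hecke Z 0}, and the GLC isomorphism $\CO_{\LS_\cG}\simeq\CA_G$ are all compatible with units and with the identity endomorphism, so $1\in k$ is carried to $1\in\Gamma(\LS_{\cG,\alpha},\CO_{\LS_{\cG,\alpha}})$, which is exactly the assertion. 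There is no genuine obstacle here — everything is a formal consequence of results already in hand — so the ``hard part'' is merely making sure all the distinguished maps match up, so that the resulting equivalence is genuinely the canonical one and not a twist of it.
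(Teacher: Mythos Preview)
Your proof is correct and follows the same approach as the paper, which derives the corollary in one sentence from \eqref{e:global sect}, the now-established isomorphism $\CO_{\LS_\cG}\simeq\CA_G$, and \thmref{t:end Poinc vac a'}. You have merely made explicit the per-component refinement (via \thmref{t:Hecke Z 0} and \thmref{t:end Poinc vac a}), which the paper leaves implicit; note that the identification $\Gamma(\LS_{\cG,\alpha},\CA_G|_{\LS_{\cG,\alpha}})\simeq\CEnd(\on{Poinc}^{\on{Vac}}_{G,!,\alpha})$ you use was in fact already recorded in Step~4 of the proof of GLC.
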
 

\begin{rem}
One can prove \corref{c:O on LS} directly by a deformation argument using 
\cite[Theorem 4.2]{FT}. 
\end{rem}

\begin{rem}
We remark that Corollary \ref{c:O on LS} is a special feature of the de Rham setting; 
there are many more global functions on the Betti moduli stack. 
\end{rem}

\sssec{}

Similarly, from \thmref{t:end Poinc vac b'}, we obtain:

\begin{cor}
For $g\geq 2$, for a non-trivial $\CP_{Z_G}$, we have $\Gamma(\LS_\cG,\CL_{\CP_{Z_G}})=0$. 
\end{cor}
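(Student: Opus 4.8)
The plan is to mirror, with a central twist, the derivation of \corref{c:O on LS} from \thmref{t:end Poinc vac a'}. Fix a non-trivial $\CP_{Z_G}\in \Bun_{Z_G}$. First I would repeat the chain of isomorphisms in \eqref{e:Hom with L} verbatim: using the $\QCoh(\LS_\cG)$-linearity of $\BL_G$ and $\BL^L_G$, the identification $\BL^L_G(\CO_{\LS_\cG})\simeq \on{Poinc}^{\on{Vac,glob}}_{G,!}$, and \thmref{t:Hecke Z 1} to rewrite the spectral twist by $\CL^{\otimes -1}_{\CP_{Z_G}}$ as the geometric translation by $\CP_{Z_G}$, one gets a canonical identification
$$\Gamma(\LS_\cG,\CL^{\otimes -1}_{\CP_{Z_G}}\otimes \CA_G)\simeq \CHom_{\Dmod_{\frac{1}{2}}(\Bun_G)}\big(\on{Poinc}^{\on{Vac,glob}}_{G,!},\,\CP_{Z_G}\cdot \on{Poinc}^{\on{Vac,glob}}_{G,!}\big).$$

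Next I would invoke \thmref{t:end Poinc vac b'}: for non-trivial $\CP_{Z_G}$ the right-hand side vanishes identically, hence $\Gamma(\LS_\cG,\CL^{\otimes -1}_{\CP_{Z_G}}\otimes \CA_G)=0$. Since GLC has now been proved, the unit map $\CO_{\LS_\cG}\to \CA_G$ is an isomorphism, so $\CL^{\otimes -1}_{\CP_{Z_G}}\otimes \CA_G\simeq \CL^{\otimes -1}_{\CP_{Z_G}}$, giving $\Gamma(\LS_\cG,\CL^{\otimes -1}_{\CP_{Z_G}})=0$. Finally, replacing $\CP_{Z_G}$ by the inverse $Z_G$-bundle — which is non-trivial precisely when $\CP_{Z_G}$ is, and under which $\CL_{\CP_{Z_G}}$ passes to $\CL^{\otimes -1}_{\CP_{Z_G}}$ by bilinearity of the pairing \eqref{e:truncated pairing} — yields the asserted vanishing $\Gamma(\LS_\cG,\CL_{\CP_{Z_G}})=0$.

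There is essentially no obstacle beyond bookkeeping; the only points worth flagging are that this is a corollary of the \emph{whole} paper (it uses the established equivalence $\CA_G\simeq \CO_{\LS_\cG}$), and that the passage from $\CL^{\otimes -1}_{\CP_{Z_G}}$ to $\CL_{\CP_{Z_G}}$ is the elementary observation about inverting a $Z_G$-torsor. One could alternatively avoid invoking GLC and argue as in Steps~1--2 of \secref{ss:proof of GLC} that $H^0\big(\Gamma(\LS_\cG,\CL_{\CP_{Z_G}})\big)$ injects into $H^0\big(\Gamma(\LS^{\on{irred}}_\cG,\CL_{\CP_{Z_G}}|_{\LS^{\on{irred}}_\cG})\big)$, combining this with the vanishing above; but with GLC in hand the direct route is cleanest.
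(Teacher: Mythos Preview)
Your argument is correct and is exactly the paper's intended proof: the corollary is stated immediately after \corref{c:O on LS} with the words ``Similarly, from \thmref{t:end Poinc vac b'}, we obtain'', meaning one runs the same computation \eqref{e:Hom with L} (in place of \eqref{e:global sect}), invokes \thmref{t:end Poinc vac b'}, and uses the now-established isomorphism $\CO_{\LS_\cG}\simeq \CA_G$. Your final remark about replacing $\CP_{Z_G}$ by its inverse to pass from $\CL^{\otimes -1}_{\CP_{Z_G}}$ to $\CL_{\CP_{Z_G}}$ is the only detail the paper leaves implicit, and it is indeed trivial.
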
 

\section{The vacuum Poincar\'e object} \label{s:Poinc}

In this section we will prove Theorems \ref{t:end Poinc vac a} and \ref{t:end Poinc vac b}, along with their strengthenings,
given by Theorems \ref{t:end Poinc vac a'} and \ref{t:end Poinc vac b'}, respectively. 

\ssec{How to calculate those endomorphisms?} 

\sssec{}

Recall that the object $\on{Poinc}^{\on{Vac,glob}}_{G,!}$ is the !-direct image along the map
$$\fp:\Bun_{N,\rho(\omega_X)}\to \Bun_G$$
of $(\chi^{\on{glob}})^*(\on{exp})$, see \cite[Sect. 1.3.6]{GLC1}.

\medskip

We factor the above map $\fp$ as a composition
\begin{equation} \label{e:factor p}
\Bun_{N,\rho(\omega_X)}\overset{\sff}\to \Bun_{N,\rho(\omega_X)}/T\hookrightarrow \Bun^{(g-1)\cdot 2\rho}_B\to \Bun_G.
\end{equation} 

\sssec{}

Here is the crucial observation:

\medskip

Since $g\geq 2$, the coweight $(g-1)\cdot 2\rho$ belongs to $\Lambda_G^{++}$, and hence the map
$$\Bun^{(g-1)\cdot 2\rho}_B\to \Bun_G$$
is a locally closed embedding (see \cite[Theorem 7.4.3(1')]{DG}). Hence, the !-direct image with respect to it
is fully faithful.

\medskip

Hence, for the proofs of the theorems of in this section, we can perform the calculations on $\Bun^{(g-1)\cdot 2\rho}_B$. 
Here we remark that
because $Z_G\subset B$, the action of $\Bun_{Z_G}$ on $\Bun_G$ lifts to an action on $\Bun^{(g-1)\cdot 2\rho}_B$

\ssec{Proof of Theorems \ref{t:end Poinc vac a} and \ref{t:end Poinc vac a'}} \label{ss:proof of end Poinc}

\sssec{}

The map
\begin{equation} \label{e:Bun N mod T}
\Bun_{N,\rho(\omega_X)}/T\hookrightarrow \Bun^{(g-1)\cdot 2\rho}_B
\end{equation}
is a closed embedding as it comes via base-change from the closed embedding
$\on{pt}/T \overset{\rho(\omega_X)}{\rightarrow} \Bun_T$.

\medskip

Note that the action of 
$$B(Z_G)\subset \Bun_{Z_G}$$
on $\Bun^{(g-1)\cdot 2\rho}_B$ preserves the above locally closed embedding.

\medskip

Hence, in order to prove the theorems in this subsection, we can perform 
the calculations on $\Bun_{N,\rho(\omega_X)}/T$.

\sssec{}

Let $r$ be the semi-simple rank of $G$. For each vertex $i$ of the Dynkin diagram $I$ we have a canonically
defined map
$$\chi^{\on{glob},i}:\Bun_{N,\rho(\omega_X)} \to \BA^1.$$

The composite map
$$\Bun_{N,\rho(\omega_X)} \overset{\chi^{\on{glob},I}}\to (\BA^1)^r=\BA^r \overset{\on{sum}}\to \BA^1$$
is the map $\chi^{\on{glob}}$ (see \cite[Sect. 1.3.6]{GLC1}).

\medskip

We have a Cartesian square
\begin{equation} \label{e:res diag}
\CD
\Bun_{N,\rho(\omega_X)} @>{\chi^{\on{glob},I}}>> \BA^r \\
@V{\sff}VV @VV{\sff'}V \\
\Bun_{N,\rho(\omega_X)}/T @>{\chi^{\on{glob},I}/T}>> \BA^r/T,
\endCD
\end{equation} 
where 
$T$ acts on $\BA^r$ via
$$T\to T_{\on{ad}}\simeq (\BG_m)^r.$$

\sssec{}

Since 
$$Z_G=\on{ker}(T\to T_{\on{ad}}),$$
we have an action of $B(Z_G)$ on $\BA^r/T$, and the 
above map 
$$\chi^{\on{glob},I}/T:\Bun_{N,\rho(\omega_X)}/T\to \BA^r/T$$
is $B(Z_G)$-equivariant.

\medskip

Since the horizontal arrows in \eqref{e:res diag} are towers of torsors with respect to vector group-stacks\footnote{I.e., we take group-stacks 
locally of the form $\CE_1/\CE_0$, where $\CE_0$ and $\CE_1$ are vector spaces, viewed as group-schemes.} 
(and hence, the corresponding pullback functors are fully faithful), we obtain that 
in order to prove Theorems \ref{t:end Poinc vac a} and \ref{t:end Poinc vac a'},
it suffices to perform the corresponding calculation on $\BA^r/T$. 

\medskip

I.e., we have to show that for every $\alpha\in (Z_G)^\vee$, the map 
$$k\to \CEnd((\sff'_!\circ \on{sum}^*(\on{exp}))_\alpha)$$
is an isomorphism, where $(\sff'_!\circ \on{sum}^*(\on{exp}))_\alpha$ is the corresponding direct summand of
$\sff'_!\circ \on{sum}^*(\on{exp})$.

\medskip

Note also that
\begin{equation} \label{e:exp r}
\on{sum}^*(\on{exp})\simeq \on{exp}^{\boxtimes r}.
\end{equation} 

\sssec{}

Note that the map
$$\BA^r\to \BA^r/T$$
naturally factors as
$$\BA^r=\BA^r \times \on{pt} \to \BA^r\times \on{pt}/Z_G\simeq \BA^r/Z_G \overset{\sff''}\to \BA^r/T.$$

The direct summand $(\sff'_!\circ \on{sum}^*(\on{exp}))_\alpha$ can be explicitly described as
\begin{equation} \label{e:alpha dir sum}
\sff''_!( \on{sum}^*(\on{exp})\boxtimes \psi_\alpha),
\end{equation} 
where $\psi_\alpha$ is the (character) sheaf of $\on{pt}/Z_G$ corresponding to the character $\alpha$.

\sssec{}

First, we claim that \eqref{e:alpha dir sum} is the *-extension of its own restriction to 
$$(\BA^1-0)^r/T\hookrightarrow \BA^r/T.$$

To prove this, we have to show this for each coordinate hyperplane separately, and it is sufficient
to do so for the pullback of \eqref{e:alpha dir sum} to $\BA^r$; denote it by $\CG_\alpha$. 

\medskip

Thus, let us choose a vertex $i$ of the Dynkin diagram. Let
$$\sk_i:\BA^{r-1}\leftrightarrow \BA^r:\pi_i$$
be the inclusion and the projection into the corresponding hyperplane. We wish to show that $\sk_i^!(\CG_\alpha)=0$. 

\medskip

Choose a coweight of $T$ that
is a positive multiple of the fundamental coweight corresponding to $i$ (the latter might itself not be a
coweight of $T$). Note that the resulting action of $\BG_m$ \emph{contracts} $\BA^r$ onto the above
copy of $\BA^{r-1}$. 

\medskip

Our object is equivariant with respect to $T$, and hence also with respect to the above action of $\BG_m$. 
Hence, by the \emph{contraction principle},
$$\sk_i^!(\CG_\alpha)\simeq (\pi_i)_!(\CG_\alpha).$$

Now, $(\pi_i)_!(\CG_\alpha)$ is the pullback along $\BA^{r-1}\to \BA^{r-1}/T$ of 
$$\sff''_!((\pi_i)_!(\on{sum}^*(\on{exp}))\boxtimes \psi_\alpha),$$
where by a slight abuse of notation we denote by the same symbol $\sff''$ the map
$$\BA^{r-1}\times \on{pt}/Z_G \overset{\sff''}\to \BA^{r-1}/T.$$

However,
$$(\pi_i)_!(\on{sum}^*(\on{exp}))=0$$
(this follows, e.g., from \eqref{e:exp r}).

\sssec{}

Thus, it remains to show that the (derived) endomorphisms of the restriction of \eqref{e:alpha dir sum} to
the open substack $(\BA^1-0)^r/T$ are scalars. 

\medskip

Consider the corresponding map
$$\overset{\circ}\sff{}'':(\BA^1-0)^r\times \on{pt}/Z_G\to (\BA^1-0)^r/T\simeq \on{pt}/Z_G,$$
so that 
$$\sff''_!( \on{sum}^*(\on{exp})\boxtimes \psi_\alpha)|_{(\BA^1-0)^r/T}\simeq
(\overset{\circ}\sff{}'')_!(\on{sum}^*(\on{exp})|_{(\BA^1-0)^r}\boxtimes \psi_\alpha).$$

Note that the map $\overset{\circ}\sff{}''$ is equivariant with respect to $\on{pt}/Z_G$. Hence, the object 
\begin{equation} \label{e:alpha open}
(\overset{\circ}\sff{}'')_!(\on{sum}^*(\on{exp})|_{(\BA^1-0)^r}\boxtimes \psi_\alpha)
\end{equation}
is a tensor multiple of $\psi_\alpha$. 

\medskip

It remains to show that this multiple is by a one-dimensional vector space. I.e., we have to show that the fiber of
\eqref{e:alpha open} at the point $1\in \on{pt}/Z_G$ is 1-dimensional (in some cohomological degree). 

\sssec{}

It is easy to see that the map $\overset{\circ}\sff{}''$ equals the composition 
$$(\BA^1-0)^r\times \on{pt}/Z_G \simeq \BG_m^r\times  \on{pt}/Z_G \to \on{pt}/Z_G \times \on{pt}/Z_G \to \on{pt}/Z_G ,$$
where:

\begin{itemize}

\item The second arrow is induced by the map $\BG_m^r\simeq T_{\on{ad}}\to \on{pt}/Z_G$,
corresponding to the cover $T\to T_{\on{ad}}$;

\medskip

\item The third arrow is the multiplication map. 

\end{itemize}

From here we obtain that the fiber of \eqref{e:alpha open} at $1\in \on{pt}/Z_G$ identifies with
$$\on{C}^\cdot_c(\BG_m^r,\on{exp}^{\boxtimes r}|_{\BG_m^r}\otimes \Psi^{\otimes -1}_\alpha),$$
where $\Psi_\alpha$ is the Kummer sheaf on $\BG_m^r$ corresponding to $\psi_\alpha$. 

\medskip

The question readily reduces to the case when $r=1$. I.e., we are interested in 
\begin{equation} \label{e:Gauss}
\on{C}^\cdot_c(\BG_m,\on{exp}|_{\BG_m}\otimes \Psi),
\end{equation} 
where $\Psi$ is a Kummer sheaf. We wish to show that this cohomology is 1-dimensional. 
This is well-known, but we supply an argument for completeness. 

\sssec{}

We distinguish two scenarios.

\medskip

If $\Psi$ is trivial, the result follows from the fiber sequence
$$\on{C}^\cdot_c(\BG_m,\on{exp}|_{\BG_m})\to 
\on{C}^\cdot_c(\BA^1,\on{exp}) \to \on{C}^\cdot_c(\on{pt},\on{exp}|_0),$$
since the middle term vanishes.

\medskip

If $\Psi$ is non-trivial, \eqref{e:Gauss} is the fiber at the point $1$ in the \emph{dual} $\BA^1$ of the Fourier transform
of (the clean) extension of $\Psi$ to $\BA^1$. This Fourier transform is an irreducible perverse sheaf, which is
equivariant for the action of $\BG_m$ against (the inverse of) $\Psi$. Hence, this Fourier transform is a tensor
multiple of (the inverse of) $\Psi$.

\qed[Theorems \ref{t:end Poinc vac a} and \ref{t:end Poinc vac a'}]

\ssec{Proof of Theorems \ref{t:end Poinc vac b} and \ref{t:end Poinc vac b'}} \label{ss:proof of end Poinc transl}

\sssec{}

To prove Theorems \ref{t:end Poinc vac b} and \ref{t:end Poinc vac b'}, it suffices to show that the image of the
closed embedding \eqref{e:Bun N mod T} and its translate by means of a non-trivial $\CP_{Z_G}$ are disjoint.

\medskip

For that, it suffices to show that their images under the projection
$$\Bun_B\to \Bun_T$$
are disjoint. 

\sssec{}

For the latter it is sufficient to show that under the further projection
$$\Bun_T\to \Bun_T/B(T)$$
(where $\Bun_T/B(T)$ is the coarse moduli scheme), the above two images
correspond to two distinct points:
$$\rho(\omega_X) \text{ and } \CP_{Z_G}\cdot \rho(\omega_X).$$

\sssec{}

However, the latter follows from the fact that $\CP_{Z_G}$ is \emph{non-trivial} as a $T$-bundle. 

\qed[Theorems \ref{t:end Poinc vac b} and \ref{t:end Poinc vac b'}]

\section{Geometry of \texorpdfstring{$\Bun_G$}{geomBunG}} \label{s:LS}

The goal of this section is to prove Propositions \ref{p:pi 1 Bun}, \ref{p:compl 1}, \ref{p:compl 2} and \ref{p:conn on stable bundles}. 

\medskip

For the duration of this section, we will change the notation from $\cG$ to $G$, and we assume that it is
semi-simple. 

\ssec{Proof of \propref{p:pi 1 Bun}} \label{ss:calc pi 1 Bun}

\sssec{}

Let $G'$ be a reductive group equipped with a surjective map 
$$\phi:G'\to G,$$
such that\footnote{Such a data is known as a \emph{z-extension} in the sense of Kottwitz.}

\begin{itemize}

\item The kernel $T_0$ of $\phi$ is a (connected) torus (which automatically lies in the center of $G'$);

\item The derived group of $G'$ is simply-connected.

\end{itemize}

\sssec{}

Denote
$$T_1:=G'_{\on{ab}}.$$

We obtain an isogeny 
$$\phi':=T_0\to T_1,$$
and it is easy to see that we have a canonical isomorphism
\begin{equation} \label{e:pi 1 via tori}
\on{ker}(\phi')\simeq \pi_1(G):
\end{equation} 
indeed
$$\pi_1(G)\simeq \on{ker}(G'_{\on{der}}\to G)\simeq \on{ker}(T_0\to T_1).$$

\sssec{Example}

One can take $G'$ be the \emph{dual} group of
$$\cG\times \cT/Z_\cG,$$
where $\cT$ is the (abstract) Cartan of $\cG$.

\medskip

Then $$T_0=(\cT/Z_\cG)^\vee \simeq T_{\on{sc}},$$ where $T_{\on{sc}}$ is the (abstract) Cartan
of the simply-connected cover $G_{\on{sc}}$ of $G$. We also have $T_1=T$, so \eqref{e:pi 1 via tori} becomes
the isomorphism
$$\pi_1(G)\simeq \on{ker}(G_{sc}\to G)\simeq \on{ker}(T_{sc}\to T).$$

\sssec{Example}

Let $G=PGL_n$. In this case we can take $G':=GL_n$. 
We have 
$$T_0\simeq \BG_m \text{ and } T_0\simeq \BG_m,$$
and the map $\phi'$ is raising to the power $n$. 

\medskip

Then \eqref{e:pi 1 via tori} becomes the identification
$$\pi_1(PGL_n)\simeq \mu_n.$$

\sssec{}

Note that the map
\begin{equation} \label{e:to 2nd torus}
\Bun_{G'}\to \Bun_{T_1}
\end{equation} 
is smooth with fibers that are connected and simply-connected:

\medskip

Indeed, the fibers are isomorphic to $\Bun_{G_1}$,
where $G_1$ is a twisted form of $[G',G']$, the derived group of $G'$, and
the moduli stack of bundles for a simply-connected group is connected and 
simply-connected (see \cite[Corollary 3.4]{BMP}). 

\medskip

Hence, we obtain that the map \eqref{e:to 2nd torus}
induces an isomorphism of the $\tau_{\leq 1}$-truncations of \'etale homotopy types.

\sssec{}

The map $\phi$ induces an isomorphism
$$\Bun_{G'}/\Bun_{T_0}\simeq \Bun_G.$$

\medskip

Hence, we obtain that the map
$$\Bun_G\simeq \Bun_{G'}/\Bun_{T_0}\to \Bun_{T_1}/\Bun_{T_0}$$
induces an isomorphism of the $\tau_{\leq 1}$-truncations of \'etale homotopy types.

\sssec{}

Finally, we note that the isomorphism \eqref{e:pi 1 via tori} induces am identification 
$$\Bun_{T_1}/\Bun_{T_0} \simeq B^2(\on{C}^\cdot(X,\pi_1(G)))_{\on{et}},$$
and the resulting map
$$\Bun_G\to B^2(\on{C}^\cdot(X,\pi_1(G)))_{\on{et}}$$
is the same as \eqref{e:to gerbes}.

\qed[\propref{p:pi 1 Bun}]

\ssec{Proof of \propref{p:compl 1}} \label{ss:proof of compl 1}

\sssec{} At this point, we 
refer the reader to Appendix \ref{s:stable} for background
on stable bundles and some relevant notation.

\medskip 

Let $\Bun_G^{\on{unstbl}} \subset \Bun_G$ be the closed substack
of unstable\footnote{Here``unstable" means ``not stable", rather than ``not semi-stable".}  bundles. We need to show $\Bun_G^{\on{unstbl}}$ has 
codimension $\geq 2$ under our hypotheses on $X$.

\medskip 

By definition of stability for $G$-bundles, 
every point of $\Bun_G^{\on{unstbl}}$ is in the image of some map
$\Bun_P^{\lambda}$ for $P \subsetneq G$ a maximal parabolic
with Levi quotient $M$ and $\lambda \in \pi_{1,\on{alg}}(M)$
satisfying $\langle 2\rhoch_P,\lambda\rangle \geq 0$.

\medskip

Therefore, it suffices to show
$$\dim(\Bun_P^{\lambda}) \leq \dim(\Bun_G) - 2$$
for such $\lambda$.

\sssec{}

By Riemann-Roch, 
$$\dim(\Bun_G)=\dim(\fg)\cdot (g-1) = 
\dim(\fm)\cdot(g-1)+2\dim(\fn(P))\cdot (g-1)$$
and
$$\dim(\Bun^\lambda_P)=\dim(\fm)\cdot (g-1)+\dim(\fn(P))\cdot (g-1) -\langle 2\rhoch_P,\lambda\rangle,$$
where $2\rhoch_P$ is as in Appendix \ref{s:stable}. 

\medskip 

Therefore, we have to show 
$$2-\langle 2\rhoch_P,\lambda\rangle \leq \dim(\fn(P))\cdot (g-1).$$

By assumption on $\lambda$, the left hand side is at most $2$.
As $P$ is a proper parabolic and $g \geq 2$, this inequality obviously 
holds outside the exceptional
case where $\dim(\fn(P)) = (g-1) = 1$, which only happens if
$g = 2$ and $G_{\on{ad}}$ contains a $PGL_2$ factor.

\qed[\propref{p:compl 1}]

\begin{rem}
Note that the assertion of \propref{p:compl 1} is false for $G=SL_2$ and $g=2$: in this case the dimension
of the semi-stable but unstable locus is $2$, which is $>$ than 
$$1=3-2=\dim(\Bun_G)-2.$$
\end{rem} 

\ssec{Proof of \propref{p:compl 2}} \label{ss:proof of compl 2}

It is enough to show that for every maximal parabolic subgroup $P \subsetneq G$, we have
\begin{equation} \label{e:LS ineq}
\dim(\LS_P)\leq \dim(\LS_G)-2=\dim(\fg)\cdot (2g-2)-2.
\end{equation} 

\sssec{}

Consider the stack $\LS_M$. It is quasi-smooth of virtual dimension\footnote{See, e.g., \cite[Proposition 10.4.5]{AG}.} 
$$\dim(\fm)\cdot (2g-2),$$
and if $g\geq 2$, by \thmref{t:LS CM}, its underlying classical
stack is a locally complete intersection of dimension
$$\dim(\fm)\cdot (2g-2)+\dim(\fz_M),$$
where $\fz_M:=\on{Lie}(Z_M)$.

\medskip

Indeed, this follows by considering the fibration
$\LS_M \to \LS_{M/[M,M]}$, applying \thmref{t:LS CM} for
the derived group $[M,M]$, and noting that
$\LS_{\BG_m}$ has dimension one more than its virtual
dimension by explicit analysis.

\sssec{}

Consider the map
\begin{equation} \label{e:q map}
\sfq:\LS_P\to \LS_M.
\end{equation}

It is quasi-smooth of virtual relative dimension 
$$\dim(\fn(P))\cdot (2g-2).$$

\begin{lem} \hfill \label{l:LS P estim}

\smallskip

\noindent{\em(a)} Each fiber of the map $\sfq$ has
dimension $\leq \dim(\fn(P))\cdot (2g-1)$.

\smallskip

\noindent{\em(b)} There exists a dense open substack of $\LS_M$
over which $\sfq$ is smooth.

\end{lem}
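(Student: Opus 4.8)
The plan is to reduce the statement to the abelian case by means of the lower central series of $N(P)$, and then to a short computation with de Rham cohomology on the curve $X$. Write $N(P) = C^1 \supset C^2 \supset \cdots \supset C^{k+1} = \{1\}$ for the descending central series, put $P_i := P/C^{i+1}$ (so $P_0 = M$ and $P_k = P$) and $V_i := C^i/C^{i+1}$; then $V_i$ is an abelian group on which $P_{i-1}$ acts through $M$, and $\bigoplus_{i\ge 1}V_i \cong \fn(P)$ as $M$-modules (with $V_i$ the $i$-th graded piece of $\fn(P)$). Passing to moduli of de Rham local systems gives a tower $\LS_P = \LS_{P_k}\to\LS_{P_{k-1}}\to\cdots\to\LS_{P_0} = \LS_M$ whose composite is $\sfq$. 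Since $V_i$ is central in $N(P)/C^{i+1}\triangleleft P_i$, the fiber of $\LS_{P_i}\to\LS_{P_{i-1}}$ over a point lying over $\sigma_M\in\LS_M$ is, when non-empty, a torsor over $R\Gamma_\dR(X,(V_i)_{\sigma_M})[1]$, where $(V_i)_{\sigma_M}$ is the local system attached to $\sigma_M$ and the $M$-module $V_i$; the Levi splitting $M\hookrightarrow P$ in particular shows $\sfq$ is surjective.

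For part (a) I would observe that the classical dimension of the stack $R\Gamma_\dR(X,(V_i)_{\sigma_M})[1]$ is $h^1 - h^0$, where $h^j := \dim H^j_\dR(X,(V_i)_{\sigma_M})$. Since $\chi(X,(V_i)_{\sigma_M}) = (2-2g)\dim V_i$, and, by de Rham Poincaré duality on the curve, $h^2 = \dim H^0_\dR(X,(V_i^\vee)_{\sigma_M}) \le \dim V_i$, one gets $h^1 - h^0 = h^2 - \chi \le (2g-1)\dim V_i$. This bound is uniform in the base point, and fiber dimensions add along a tower, so the fiber of $\sfq$ over any $\sigma_M$ has dimension at most $\sum_{i\ge 1}(2g-1)\dim V_i = (2g-1)\dim\fn(P)$.

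For part (b): each $\LS_{P_i}\to\LS_{P_{i-1}}$ is quasi-smooth of virtual relative dimension $(2g-2)\dim V_i$, and it is smooth over $\sigma_M$ as soon as the fiber $R\Gamma_\dR(X,(V_i)_{\sigma_M})[1]$ has the expected dimension, equivalently as soon as $H^2_\dR(X,(V_i)_{\sigma_M}) = 0$: then that fiber is $\BA^N\times B\BG_a^{h^0}$, which is smooth, and the morphism — being flat (a quasi-smooth morphism whose fibers have the expected dimension) with smooth fibers — is smooth. By duality this condition reads $H^0_\dR(X,(V_i^\vee)_{\sigma_M}) = 0$. Now, via the Killing form, $V_i^\vee$ is a graded piece of $\fn(P^-)$, and it has no trivial $M$-submodule, since $Z^0_M$ acts on every irreducible constituent through a nonzero character (these constituents being spanned by root spaces $\fg_\alpha$ with $\alpha$ not in the span of the simple roots of $M$). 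Hence it suffices to show that for each of the finitely many nontrivial irreducible $M$-modules $W$ occurring, the locus $\{\sigma_M: H^0_\dR(X,W_{\sigma_M})\ne 0\}$ is a proper closed substack of $\LS_M$; the intersection of the complements is then the required dense open over which $\sfq$ is smooth. Closedness is upper semicontinuity of $h^0$, and for properness one restricts to the locus of $\sigma_M$ whose Tannakian monodromy group is all of $M$, where $H^0_\dR(X,W_{\sigma_M}) = W^M = 0$.

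The hard part is this last point: the density in $\LS_M$ of the locus of local systems with full monodromy group, equivalently that the degenerate loci in $\LS_M$ (along which $\sigma_M$ factors through a proper subgroup) have positive codimension. This is the only place where the hypothesis $g\ge 2$ is used seriously, and I would establish it by the same sort of dimension estimate as in Propositions \ref{p:compl 1} and \ref{p:compl 2}, bounding $\dim\LS_H$ for $H\subsetneq M$. The remaining ingredients — the tower, the identification of its fibers with shifted de Rham cohomology, and the bookkeeping (in particular the role of $H^2_\dR$ of the curve in the estimate in (a)) — are routine once arranged correctly.
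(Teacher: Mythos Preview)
Your argument for part (a) is correct and amounts to the same estimate as the paper's, just packaged differently: the paper applies a single lemma (for a quasi-smooth stack $Y$ of virtual dimension $d$, if $\dim H^{-1}(T^*_y Y)\le m$ for all $y$ then $\dim Y\le d+m$) directly to the fiber $\LS_{N(P)_{\sigma_M}}$, whereas you filter by the lower central series and sum the step-by-step bounds. Both reduce to $h^2\le\dim V$ via Poincar\'e duality.

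For part (b), your reduction to the vanishing of $H^0_\dR(X,W_{\sigma_M})$ for the irreducible $M$-constituents $W$ of $\fn(P^-)$ is correct, and you even record the key structural fact: $Z_M^0$ acts on each such $W$ through a \emph{nontrivial} character. But you then take a detour through density of full-monodromy local systems in $\LS_M$, and this is where the gap lies. The proposed method---bounding $\dim\LS_H$ for all proper closed $H\subsetneq M$ in the style of \propref{p:compl 2}---faces two problems: first, \propref{p:compl 2} itself relies on the lemma you are proving, so invoking it for $M$ would require an inductive scheme you have not set up; second, and more seriously, arbitrary proper subgroups $H$ (not just parabolics) come in continuous families, so a naive union of their images need not have positive codimension without substantially more work.

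The paper avoids this entirely by using the $Z_M^0$-observation you already made, but \emph{directly}: given any $\sigma_M$, translate it by a generic $\sigma_Z\in\LS_{Z_M^0}$. Then $W_{\sigma_Z\cdot\sigma_M}\simeq W_{\sigma_M}\otimes\chi_W(\sigma_Z)$ for a nontrivial character $\chi_W$, and a generic rank-one twist kills $H^0$. Since $\LS_{Z_M^0}$ is irreducible, $\sigma_Z\cdot\sigma_M$ lies in the same irreducible component as $\sigma_M$, so every component contains a good point. This gives the dense open you want with no appeal to full monodromy and no genus hypothesis beyond what makes $\LS_{Z_M^0}$ move.
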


Let us assume this lemma for a moment and proceed with the proof of \eqref{e:LS ineq}. 

\medskip 

It follows from point (b) of the lemma that the generic fiber of $\sfq$ has dimension
$\dim(\fn(P))\cdot (2g-2)$, so the substack of $\LS_M$
over which $\sfq$ has fibers of larger dimension 
has codimension at least one. We obtain:

\begin{cor}\label{c:LS P estim}

$\dim(\LS_P) \leq \dim(\LS_M) + \dim(\fn(P))\cdot (2g-1) - 1$.

\end{cor}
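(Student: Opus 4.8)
The plan is to stratify $\LS_M$ according to the locus where $\sfq$ is smooth and to estimate the dimension of the preimage of each piece separately, taking the maximum at the end. Granting \lemref{l:LS P estim}, this will be a routine dimension count, the genuine content having already been isolated in that lemma.

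First I would invoke part (b) of \lemref{l:LS P estim} to produce a dense open substack $U\subseteq \LS_M$ over which $\sfq$ is smooth. As $\sfq$ is quasi-smooth of virtual relative dimension $\dim(\fn(P))\cdot(2g-2)$, over $U$ it is an honest smooth morphism of that relative dimension, whence
$$\dim\bigl(\sfq^{-1}(U)\bigr)\ \leq\ \dim(U)+\dim(\fn(P))\cdot(2g-2)\ =\ \dim(\LS_M)+\dim(\fn(P))\cdot(2g-2),$$
using that $U$ is dense, so $\dim(U)=\dim(\LS_M)$. Next, set $Z:=\LS_M\setminus U$, a closed substack of dimension $\leq\dim(\LS_M)-1$; by part (a) of \lemref{l:LS P estim} every fiber of $\sfq$ has dimension $\leq\dim(\fn(P))\cdot(2g-1)$, so
$$\dim\bigl(\sfq^{-1}(Z)\bigr)\ \leq\ \dim(Z)+\dim(\fn(P))\cdot(2g-1)\ \leq\ \dim(\LS_M)-1+\dim(\fn(P))\cdot(2g-1).$$

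Since $\LS_P=\sfq^{-1}(U)\cup\sfq^{-1}(Z)$, its dimension is the larger of the two right-hand sides above. Because $P$ is a proper parabolic, $\dim(\fn(P))\geq1$, so $\dim(\fn(P))\cdot(2g-1)\geq\dim(\fn(P))\cdot(2g-2)+1$ and the second estimate dominates; this yields exactly $\dim(\LS_P)\leq\dim(\LS_M)+\dim(\fn(P))\cdot(2g-1)-1$, as claimed. I expect no real obstacle in this step: the one delicate point — upgrading the virtual relative dimension of $\sfq$ to a pointwise bound on fiber dimensions and producing a dense smooth locus — is precisely the content of \lemref{l:LS P estim}, which we are assuming here.
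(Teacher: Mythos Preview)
Your proof is correct and is essentially the same as the paper's. The paper phrases the key step as ``the substack of $\LS_M$ over which $\sfq$ has fibers of larger dimension has codimension at least one,'' which is exactly your decomposition into $U$ and $Z$, and then takes the maximum implicitly; your write-up just makes this explicit and records the check that $\dim(\fn(P))\geq 1$ (needed so that the bound over $U$ does not exceed the claimed bound).
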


\sssec{}

From \corref{c:LS P estim}, we obtain:
$$\dim(\LS_P)\leq \dim(\fm)\cdot (2g-2)+\dim(\fz_M) + \dim(\fn(P))\cdot (2g-1)-1.$$

Thus it remains to show that, under the assumptions of \propref{p:compl 2}, 
\begin{multline} \label{e:LS ineq 1}
\dim(\fm)\cdot (2g-2)+\dim(\fz_M) + \dim(\fn(P))\cdot (2g-1)-1\leq \dim(\fg)\cdot (2g-2)-2 \\ 
=\dim(\fm) \cdot (2g-2) + 2\dim(\fn(P)) \cdot (2g-2) - 2.
\end{multline} 

This is equivalent to 
\begin{equation} \label{e:LS ineq 2}
\dim(\fz_M)+1 \leq 
\dim(\fn(P))\cdot(2g-3).
\end{equation} 

\sssec{}

We now use the assumption that $G$ is semi-simple and that the corank of $P$ is one, so that 
$\dim(\fz_M)=1$, i.e., \eqref{e:LS ineq 2} becomes
\begin{equation} \label{e:LS ineq 3}
2 \leq \dim(\fn(P))\cdot(2g-3).
\end{equation} 

This holds automatically if $g\geq 3$. If $g=2$, the above inequality can only be violated if $\dim(\fn(P))=1$,
but this only happens if the Dynkin diagram of $G$ has an $A_1$ factor.

\qed[\propref{p:compl 2}]

\begin{rem}
Note that the assertion of \propref{p:compl 2} is false for $G=SL_2$ and $g=2$: in this case the dimension
of the reducible locus is $5$, which is greater than 
$$4=6-2=\dim(\LS_G)-2.$$
\end{rem} 

\sssec{Proof of \lemref{l:LS P estim}(b)}

It suffices to show that for every $\sigma_M\in \LS_M$, there exists a point $\sigma'_M$ that lies in the same irreducible
component, over which the fiber of \eqref{e:q map} is smooth.

\medskip

Note that for $\sigma_M\in \LS_M$ and 
$$\sigma_P\in \sfq^{-1}(\sigma_M)\simeq \LS_{N(P)_{\sigma_M}},$$
the obstruction to the smoothness of the fiber is
$$H^2(X,\fn(P)_{\sigma_P}).$$

The latter is non-zero if for some subquotient $V$ of $\fn(P)$ as a $M$-representation, 
the local system $V_{\sigma_M}$ admits a trivial quotient. 

\medskip

Let $Z_M^0$ denote the neutral connected component 
of $Z_M$ and consider its action on $\fn(P)$. 
It acts on every $V$ as above by a \emph{non-trivial}
character. Hence, for a generic point $\sigma_Z\in \LS_{Z_M}$ and 
$$\sigma'_M:=\sigma_Z\otimes \sigma_M,$$
the local system $V_{\sigma'_M}$ will not have trivial quotients. 

\medskip

Since $\LS_{Z_M^0}$ is irreducible, its action on $\LS_M$ preserves irreducible components, i.e.,
$\sigma'$ lies in the same irreducible component as $\sigma$.

\qed[\lemref{l:LS P estim}(b)]

\sssec{Proof of \lemref{l:LS P estim}(a)}

We will use the following assertion:

\begin{lem}
Let $Y$ be a quasi-smooth scheme of virtual dimension $d$. Suppose that $m$ is an integer such that
for all field-valued
points $y\in Y$ we have
$$\dim(H^{-1}(T^*_y(Y)))\leq m.$$
Then
$\dim(Y)\leq d+m$.
\end{lem}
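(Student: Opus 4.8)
The plan is to reduce the bound on $\dim(Y)$ to the given pointwise estimate on Zariski tangent spaces, using two standard inputs from derived algebraic geometry. First, recall that quasi-smoothness of $Y$ means precisely that the cotangent complex $T^*Y$ is perfect of Tor-amplitude $[-1,0]$; hence for every field-valued point $y$ the fiber $T^*_y(Y)$ has cohomology concentrated in degrees $-1$ and $0$. The rank of a perfect complex is computed by the Euler characteristic of any of its fibers, and by definition this rank is the virtual dimension $d$. Therefore
\[
\dim\big(H^0(T^*_y(Y))\big)-\dim\big(H^{-1}(T^*_y(Y))\big)=d
\]
for every $y$, and combining this with the hypothesis $\dim(H^{-1}(T^*_y(Y)))\le m$ we get
\[
\dim\big(H^0(T^*_y(Y))\big)=d+\dim\big(H^{-1}(T^*_y(Y))\big)\le d+m .
\]

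Next I would pass to the classical truncation and estimate local dimensions. By definition $\dim(Y):=\dim(Y^{\mathrm{cl}})$, and since $Y^{\mathrm{cl}}$ is a scheme of finite type its dimension is the supremum over closed points $y$ of the local dimension $\dim_y(Y^{\mathrm{cl}})=\dim(\mathcal O_{Y^{\mathrm{cl}},y})$; by the basic inequality for Noetherian local rings this is bounded by the embedding dimension $\dim_{\kappa(y)}(\mathfrak{m}_y/\mathfrak{m}_y^2)$, i.e.\ by the dimension of the Zariski cotangent space. Finally, the truncation map $Y^{\mathrm{cl}}\to Y$ is a closed immersion inducing an isomorphism on $\pi_0$, so it induces an isomorphism on $H^0$ of cotangent complexes, giving $H^0(T^*Y)|_{Y^{\mathrm{cl}}}\iso \Omega^1_{Y^{\mathrm{cl}}}$; passing to fibers at $y$ identifies the Zariski cotangent space with $H^0(T^*_y(Y))$. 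Combining with the display above, $\dim_y(Y^{\mathrm{cl}})\le \dim(H^0(T^*_y(Y)))\le d+m$ for every closed point $y$, and taking the supremum over $y$ yields $\dim(Y)\le d+m$, as claimed.

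There is no genuinely hard step; the only point requiring a little care is the bookkeeping with residue fields when $k$ is not algebraically closed, where at a closed point $y$ one should use $\dim_{\kappa(y)}(\mathfrak{m}_y/\mathfrak{m}_y^2)\le \dim_{\kappa(y)}(\Omega^1_{Y^{\mathrm{cl}}/k}\otimes\kappa(y))=\dim_{\kappa(y)}(H^0(T^*_y(Y)))$. If one prefers to avoid this entirely, one may base change to $\bar k$ at the outset: the quantities $\dim(Y)$, the virtual dimension $d$, quasi-smoothness, and the fibers $H^{\pm1}(T^*_y(Y))$ are all unchanged, so it suffices to treat the case where every closed point has residue field $\bar k$, in which the Zariski cotangent space is literally $H^0(T^*_y(Y))$ over $\bar k$.
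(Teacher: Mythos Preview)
Your proof is correct and follows essentially the same approach as the paper: compute $\dim H^0(T^*_y(Y))=d+\dim H^{-1}(T^*_y(Y))\le d+m$ via the Euler characteristic, then bound the classical dimension by the dimension of the Zariski cotangent space, which is $H^0(T^*_y(Y))$. You supply more detail than the paper (the identification of $H^0(T^*_y(Y))$ with $\mathfrak{m}_y/\mathfrak{m}_y^2$ via the classical truncation, and the remark on residue fields), but the argument is the same.
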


\begin{proof}

It is enough to show that for every field-valued point $y\in Y$, the dimension of the classical
cotangent space to $Y$ at $y$ is $\leq d+m$. However, the classical
cotangent space is just $H^0(T^*_y(Y))$. We have
$$\dim(H^0(T^*_y(Y)))=\dim(H^0(T^*_y(Y))-\dim(H^1(T^*_y(Y)))+\dim(H^1(T^*_y(Y))),$$
where $\dim(H^0(T^*_y(Y))-\dim(H^1(T^*_y(Y)))$ is the virtual dimension of $Y$.

\end{proof} 

\begin{cor}
Let $\CY$ be a quasi-smooth algebraic stack of virtual dimension\footnote{Recall that for a quasi-smooth algebraic stack $\CY$, its dimension/virtual dimension are defined
as follows: for a smooth cover $Y\to \CY$ with $Y$ a scheme, the dimension/virtual dimension of $\CY$ equals that of $Y$ minus the dimension of the fibers.}
 $d$. Suppose that $m$ is an integer such that
for all field-valued
points $y\in \CY$ we have
$$\dim(H^{-1}(T^*_y(\CY)))\leq m.$$
Then
$\dim(\CY)\leq d+m$.
\end{cor}

We apply the corollary to the fibers of the map
\eqref{e:q map}, i.e., to the stacks
$$\LS_{N(P)_{\sigma_M}}, \quad \sigma_M\in \LS_M.$$

It remains to show that
$$\dim(H^{-1}(T^*_{\sigma_P}(\LS_{N(P)_{\sigma_M}}))\leq \dim(\fn(P)), \quad \sigma_P\in \LS_{N(P)_\sigma}.$$

We have:
$$T^*_{\sigma_P}(\LS_{N(P)_{\sigma_M}})\simeq \on{C}^\cdot(X,\fn(P)_{\sigma_P}[1])^\vee,$$
so 
$$H^{-1}(T^*_{\sigma_P}(\LS_{N(P)_{\sigma_M}}))\simeq H^2(X,\fn(P)_{\sigma_P})^\vee,$$
which identifies with 
$$H^0\left(X,(\fn(P))^\vee_{\sigma_P}\right)$$
by Verdier duality.

\medskip

We clearly have
$$\dim(H^0(X,(\fn(P))^\vee_{\sigma_P})\leq \dim(\fn(P)).$$

\qed[\lemref{l:LS P estim}(a)]

\ssec{Proof of \propref{p:conn on stable bundles}} \label{ss:conn on stable bundles}

\sssec{}

The fact that the non-empty fibers of the map 
$$\LS_G\to \Bun_G$$
are affine spaces is completely general: 

\medskip

For a given $\CP_G\in \Bun_G$, the fiber in question is a torsor for the (derived) vector space
\begin{equation} \label{e:conn on bun}
\Gamma(X,\fg_{\CP_G}\otimes \omega_X)
\end{equation} 
(see \cite[Corollary 10.5.5]{AG}\footnote{At the classical level, it is clear that connections on a given $G$-bundle on a curve
form a torsor over the space of (Ad-twisted) $\cg$-valued 1-forms. However, in derived algebraic geometry, this requires some care, hence the reference.}).

\medskip

\noindent {\it Warning}: In the above formula $\omega_X$ stands for the canonical line bundle on $X$, and \emph{not} 
the dualizing sheaf of $X$, which is the $[1]$ shift of that. This deviates from the conventions adopted
in this series, according to which for a prestack $\CY$, we denote by $\omega_\CY$ the dualizing
sheaf on $\CY$. So, the curve $X$ itself is the only exception for this convention.  

\sssec{}

Let us show that the map in question is smooth over the stable locus. This is equivalent to the fibers
being smooth as derived schemes. 

\medskip

By the above torsor property, it suffices to show that if $\CP_G\in \Bun_G$ is stable, then the derived vector
space \eqref{e:conn on bun} is classical, i.e., that
$$H^1(X,\fg_{\CP_G}\otimes \omega_X)=0.$$

By Serre duality (and using the Killing form on $\fg$), this is equivalent to
$$H^0(X,\fg_{\CP_G})=0.$$

I.e., we need to show that stable bundles do not admit infinitesimal automorphisms. This is standard;
we supply a proof for completeness. 

\sssec{}

Suppose the contrary. Let $A$ be an infinitesimal automorphism of $\CP_G$. First, we show that $A$ is nilpotent.

\medskip

Consider the characteristic polynomial of the
$\CO_X$-valued Higgs field $A$, i.e., the map
$$X\to \ft/\!/W=:\fa$$
coming from $A$.

\medskip

This map is necessarily constant; denote its image by $a$. 

\medskip

Let $t\in \ft$ be a semi-simple element that maps under $\ft\to \fa$ to $a$. 
In this case, $\CP_G$ admits a reduction to $Z_G(t)$, which is a Levi subgroup. 

\medskip

If $a$ were not nilpotent, we would have $t\neq 0$, and $Z_G(t)$ is a \emph{proper}
Levi subgroup. However, the existence of such a reduction contradicts the assumption that $\CP_G$ is stable.

\sssec{}

Thus, $A$ is (non-zero) nilpotent. The Jacobson-Morosov theory supplies a (decreasing) filtration on the vector
bundle $\fg_{\CP_G}$ so that $$(\fg_{\CP_G})^{\geq 1}\subset \fg_{\CP_G}$$
is the unipotent radical of a parabolic reduction canonically associated to $A$, and 
the operator $(\on{ad}_A)^n$ defines a map
\begin{equation} \label{e:nilp map}
\on{gr}^{-n}(\fg_{\CP_G})\to \on{gr}^n(\fg_{\CP_G}),
\end{equation} 
which is an isomorphism at the generic point of $X$.

\medskip

Since $\CP_G$ was assumed stable, $\deg((\fg_{\CP_G})^{\geq 1})<0$. Hence, for some $n\geq 1$, we have
$\deg(\on{gr}^n(\fg_{\CP_G}))<0$. However,
$$\deg(\on{gr}^{-n}(\fg_{\CP_G}))=-\deg(\on{gr}^n(\fg_{\CP_G})),$$
and this contradicts the existence of \eqref{e:nilp map}. 

\sssec{}

It remains show that every stable $G$-bundle \emph{admits} a connection. For a general $\CP_G\in \Bun_G$,
the obstruction to having a connection is given by its \emph{Atiyah class}, which is an element of
$$H^1(X,\fg_{\CP_G}\otimes \omega_X).$$

However, we have just proved that this group is zero
for stable $\CP_G$.

\qed[\propref{p:conn on stable bundles}]

\begin{rem}
The above argument can be refined to prove the following criterion (originally due to A.~Weil) for a $G$-bundle 
$\CP_G$ to admit a connection: 

\medskip

This happens if and only if, for every reduction of $\CP_G$ to a \emph{Levi subgroup} $M$, this reduction, viewed as
an $M$-bundle, has degree $0$. See \cite{AB} for more
details.

\end{rem}

\section{2-Fourier-Mukai transform of the automorphic category} \label{s:gerbes}

The goal of this section is to prove Theorems \ref{t:Hecke Z 0} and \ref{t:Hecke Z 1}. We will do
so by considering a more general picture that involves twisting the constant group-scheme with fiber $G$ by $Z_G$-gerbes.

\ssec{The notion of 2-Fourier-Mukai transform}

Recall that the usual Fourier-Mukai transform is a functor between categories of quasi-coherent sheaves on a 
pair of prestacks.

\medskip

In this subsection we introduce the notion of \emph{2-Fourier-Mukai transform}, which is a (2-)functor between
2-categories of \emph{sheaves of categories} on a pair of prestacks.

\sssec{}

Let $\CY_1$ and $\CY_2$ be a pair of prestacks equipped with a map
\begin{equation} \label{e:FM pairing}
\CY_1\times \CY_2\to \on{pt}/\BG_m,
\end{equation}
i.e., a line bundle, denoted $\CL_{1,2}$ on $\CY_1\times \CY_2$.

\medskip

Assume that the functor $(p_2)_*:\QCoh(\CY_1\times \CY_2)\to \QCoh(\CY_2)$ is continuous
(this happens, e.g., when $\CY_1$ is quasi-compact with an affine diagonal). 

\medskip

Consider the functor
\begin{equation} \label{e:FM functor}
\on{FM}_{\CY_1\to \CY_2}:\QCoh(\CY_1)\to \QCoh(\CY_2), \quad 
\CF\mapsto (p_2)_*(\CL_{1,2}\otimes p_1^*(\CF)).
\end{equation}

We shall say that the map \eqref{e:FM pairing} is of Fourier-Mukai type if the functor \eqref{e:FM functor}
is an equivalence.

\sssec{} \label{sss:2FM setting}

Let us now be given a map 
\begin{equation} \label{e:2-FM pairing}
\CY_1\times \CY_2\to \on{Ge}_{\BG_m}(\on{pt}),
\end{equation}
where $\on{Ge}_{\BG_m}(\on{pt}) = B^2(\BG_m)_{\on{et}}$ is the (2-algebraic) stack classifying $\BG_m$-gerbes. Let
$\CG_{1,2}$ denote the corresponding gerbe on $\CY_1\times \CY_2$.

\medskip

Recall the notion of a \emph{sheaf of categories}, see \cite[Sect. 1.1]{Ga3}. Consider the 2-functor
\begin{equation} \label{e:2-FM functor}
\on{2-FM}_{\CY_1\to \CY_2}:\on{ShvCat}(\CY_1)\to \on{ShvCat}(\CY_2), \quad
\ul\bC\mapsto (p_2)_*((p_1^*(\ul\bC)_{\CG_{1,2}}),
\end{equation}
where:

\begin{itemize}

\item For a morphism $f:\CY'\to \CY''$ between prestacks, $f^*$ denotes the pullback functor
$$\on{ShvCat}(\CY'')\to \on{ShvCat}(\CY'),$$
see \cite[Sect. 3.1.2]{Ga3} (in {\it loc. cit.} it is denoted ${\mathbf{cores}}_f$); 

\medskip

\item For a morphism $f_*:\CY'\to \CY''$ between prestacks, $f_*$ denotes the pushforward functor
$$\on{ShvCat}(\CY')\to \on{ShvCat}(\CY''),$$
see \cite[Sect. 3.1.3]{Ga3} (in {\it loc. cit.} it is denoted ${\mathbf{coind}}_f$); 

\medskip

\item For a prestack $\CY$, and $\bC\in \on{ShvCat}(\CY)$ and a $\BG_m$-gerbe $\CG$ on $\CY$,
we denote by $\bC_\CG$ the twist of $\bC$ by $\CG$, see \cite[Sect. 1.7.2]{GLys}. 

\end{itemize} 

\medskip

We shall say that the map \eqref{e:2-FM pairing} is of 2-Fourier-Mukai type if the functor \eqref{e:2-FM functor}
is an equivalence.

\medskip

\noindent Note that the notion of being of 2-Fourier-Mukai type is a priori
asymmetric. 

\sssec{Example} \label{sss:Ge finite}

Let $\Gamma$ be a finite abelian group, and let $\Gamma^\vee$ be its Cartier dual. Take
$$\CY_1=B^2(\Gamma)_{\on{et}}=:\on{Ge}_{\Gamma}(\on{pt})$$
and 
$$\CY_2=\Gamma^\vee.$$

Then $\on{ShvCat}(\CY_1)$ is the 2-category of DG categories acted on by $\on{pt}/\Gamma$.
In other words, these are categories $\bC$ equipped with an action of $\Gamma$ on $\on{Id}_\bC$.
Decomposing with respect to the characters of $\Gamma$, we obtain that a datum of such $\bC$ is 
equivalent to the datum of a category graded by $\Gamma^\vee$
\begin{equation} \label{e:2-FM functor finite group}
\bC\mapsto \{\bC_\chi,\, \chi\in \Gamma^\vee\}.
\end{equation} 

\medskip

Evaluation defines a map
\begin{equation} \label{e:2-FM pairing finite group}
\on{Ge}_{\Gamma}(\on{pt})\times \Gamma^\vee\to \on{Ge}_{\BG_m}(\on{pt}).
\end{equation}

We claim that \eqref{e:2-FM pairing finite group} is of 2-Fourier-Mukai type. 

\medskip

Indeed, unwinding the definitions, we obtain that the functor $\on{2-FM}_{\on{Ge}_{\Gamma}\to \Gamma^\vee}$ 
is given exactly by \eqref{e:2-FM functor finite group}, and hence is an equivalence. 

\sssec{} \label{sss:Ge finite rev}

Swapping the factors in \eqref{e:2-FM pairing finite group} we obtain a pairing
\begin{equation} \label{e:2-FM pairing finite group reverse}
\Gamma^\vee\times \on{Ge}_{\Gamma}(\on{pt})\to \on{Ge}_{\BG_m}(\on{pt}),
\end{equation}
and it is easy to see that it is also of 2-Fourier-Mukai type. 

\medskip

Indeed, the corresponding functor $\on{2-FM}_{\Gamma^\vee\to \on{Ge}_{\Gamma}(\on{pt})}$ is
the inverse of $\on{2-FM}_{\on{Ge}_{\Gamma}(\on{pt})\to \Gamma^\vee}$ up to the inversion on $\Gamma$. 

\begin{rem}\label{r:not 1 aff}
The central players in the paper \cite{Ga3} are prestacks that are \emph{1-affine}, 
i.e., those for each the functor
of \emph{enhanced} global sections
\begin{equation} \label{e:glob sect shv categ}
\on{ShvCat}(\CY)\overset{\bGamma^{\on{enh}}(\CY,-)}\longrightarrow \QCoh(\CY)\mmod
\end{equation} 
is an equivalence.

\medskip

Note that the prestack $\on{Ge}_{\Gamma}(\on{pt})$ is \emph{not} 1-affine. Namely 
$\QCoh(\on{Ge}_{\Gamma}(\on{pt}))\simeq \Vect$, and the functor \eqref{e:glob sect shv categ} sends
$\bC$ as above to $\bC_0$, i.e., the fiber of $\on{2-FM}_{\on{Ge}_{\Gamma}(\on{pt})\to \Gamma^\vee}(\bC)$
at the point $0\in \Gamma^\vee$. 

\end{rem}

\sssec{Example} \label{sss:tors finite}

For $\Gamma$ as above, take 
$$\CY_1:=\on{pt}/\Gamma \text{ and } \CY_2:=\on{pt}/\Gamma^\vee.$$

Cup product defines a map
\begin{equation} \label{e:2-FM pairing finite group sym}
\on{pt}/\Gamma \times \on{pt}/\Gamma^\vee \to \on{Ge}_{\BG_m}(\on{pt}).
\end{equation}

We claim that \eqref{e:2-FM pairing finite group sym} is of 2-Fourier-Mukai type. 

\medskip

Note that $\on{ShvCat}(\on{pt}/\Gamma)$ (resp., $\on{ShvCat}(\on{pt}/\Gamma^\vee)$) 
identifies with the 2-category of DG categories equipped with an action of $\QCoh(\Gamma)$ (resp., $\QCoh(\Gamma^\vee)$),
viewed as a monoidal category with respect to \emph{convolution}. Note also that $\on{pt}/\Gamma$ is 1-affine, and
$$\QCoh(\on{pt}/\Gamma)\simeq \Rep(\Gamma).$$

Unwinding the definitions, we obtain that the functor $\on{2-FM}_{\on{pt}/\Gamma\to \on{pt}/\Gamma^\vee}$ 
identifies with 
$$\on{ShvCat}(\on{pt}/\Gamma) \overset{\bGamma^{\on{enh}}(\on{pt}/\Gamma,-)}\longrightarrow
\Rep(\Gamma)\mmod\simeq \QCoh(\Gamma^\vee)\mmod,$$
where we identify 
$$\Rep(\Gamma) \simeq \QCoh(\Gamma^\vee)$$
by Fourier transform. 

%

\sssec{} \label{l:FM adj}

Let us return to the general setting of \secref{sss:2FM setting}. Let us make the following assumptions
(essentially, on the geometric properties of $\CY_1)$:

\begin{itemize}

\item For $\ul\bD\in \ShvCat(\CY_1\times \CY_2)$, the functor
$$(p_2)^*\circ (p_2)_*(\ul\bD)\to \bD$$
admits a left adjoint;

\item For $\ul\bC_2\in \ShvCat(\CY_2)$, the functor
$$\ul\bC_2\to (p_2)_*\circ (p_2)^*(\ul\bC_2)$$
admits a left adjoint.

\end{itemize}

\medskip

Note that this case, the above left adjoints provide a unit and counit, making
$(p_2)^*$ the \emph{right} adjoint of $(p_2)_*$ (i.e., the usual 
$((p_2)^*,(p_2)_*)$-adjunction is \emph{ambidexterous}.

\medskip

In particular, the new $((p_2)_*,(p_2)^*)$-adjunction gives rise to an adjunction\footnote{In the formula below, $\on{2-FM}_{\CY_1\to \CY_2}$
is defined using the initial gerbe $\CG_{1,2}$, and $\on{2-FM}_{\CY_2\to \CY_1}$ is defined using its inverse.} 
$$(\on{2-FM}_{\CY_1\to \CY_2},\on{2-FM}_{\CY_2\to \CY_1}).$$

\medskip

In particular, we obtain:

\begin{lem} \label{l:invol}
Under the above circumstances, if $\on{2-FM}_{\CY_1\to \CY_2}$ is an equivalence, then so is
$\on{2-FM}_{\CY_2\to \CY_1}$, and 
$$\on{2-FM}_{\CY_2\to \CY_1}\circ \on{2-FM}_{\CY_1\to \CY_2}\simeq \on{Id}.$$
\end{lem} 

\ssec{Some compatibilities} \label{ss:compat}

\sssec{}

In this subsection, we will assume that $\CY_1:=\CH_1$ and $\CY_2:=\CH_2$ are commutative group-prestacks, and the map 
\eqref{e:2-FM pairing} is bilinear. 

\medskip

For a point $h_1\in \CH_1$ (resp., $h_2\in \CH_2)$ let $\CG_{h_1}$ (resp., $\CG_{h_2}$) denote the
corresponding $\BG_m$-gerbe on $\CH_2$ 

\medskip

Let $\ul\bC_1$ be an object of $\ShvCat(\CH_1)$ and set $\ul\bC_2:=\on{2-FM}_{\CH_1\to \CH_2}(\bC_2)$.
Denote also
$$\bC_1:=\bGamma(\CH_1,\ul\bC_1) \text{ and } \bC_2:=\bGamma(\CH_2,\ul\bC_2).$$

\sssec{}

Note that for $h_2\in \CH_2$ we have:
\begin{equation} \label{e:fiber FM gen}
\ul\bC_2|_{h_2}\simeq \bGamma(\CH_1,(\ul\bC_1)_{\CG_{h_2}}),
\end{equation} 
where:

\begin{itemize}

\item $(-)|_{y_2}$ denotes the fiber of a given sheaf of categories at $y_2$;

\item $(-)_{\CG}$ denotes the twist of a given sheaf of categories by a $\BG_m$-gerbe.

\end{itemize}. 

As particular case of \eqref{e:fiber FM gen}, we have:
\begin{equation} \label{e:fiber FM gen zero}
\ul\bC_2|_{\one_{\CH_2}}\simeq \bC_1.
\end{equation} 

\sssec{}

Looping the pairing \eqref{e:2-FM pairing} along $\CH_2$ we obtain a pairing
$$\CH_1\times \CA_2\to \on{pt}/\BG_m.,$$
where 
$$\CA:=\on{Aut}_{\CH_2}(\one_{\CH_2}).$$

\medskip

In particular, a point $a\in \CA$ gives rise to a line
bundle, to be denoted $\CL_a$ on $\CH_1$.

\medskip

Unwinding, we obtain:

\begin{lem} \label{l:Hecke 1 abs gen}
Under the identification of \eqref{e:fiber FM gen zero}, the action of $a$ on $\bC_2|_{\zero_{\CH_2}}$
corresponds to the endofunctor of $\bC_1$, given by tensoring wity $\CL_a$. 
\end{lem} 

\sssec{}

Consider now the group
$$\Omega:=\on{Aut}_{\CA}(\one_\CA).$$

Note that the group $\Omega$ acts on $\CO_{\CH_1}\in \QCoh(\CH_1)$. 

\medskip

Let $\omega$ be an idempotent in the category $\Rep(\Omega)$. Note if $\bd$ is an object of a DG category,
equipped with an action of $\Omega$, we can attach to it a direct summand
$\bd_\omega \subset \bd$.  

\medskip

In particular, to $\omega$
there corresponds a connected component $(\CH_1)_{\omega}$ of $\CH_1$. 

\medskip

Unwinding, we obtain:

\begin{lem} \label{l:Hecke 0 abs gen}
Under the identification of \eqref{e:fiber FM gen zero}, the direct summand 
$$\bC_{1,\omega}:=\Gamma((\CH_1)_\omega,\ul\bC_1)\subset \Gamma(\CH_1,\ul\bC_1)=\bC_1$$
corresponds to the direct summand
$$(\ul\bC_2|_{\one_{\CH_2}})_\omega \subset \ul\bC_2|_{\one_{\CH_2}}.$$
 \end{lem} 

\sssec{}

Let us suppose that the assumptions from \secref{l:FM adj} hold. Unwinding, we obtain:

\begin{lem}  \label{l:fiber and global sects gen}
For $\ul\bC_1\in \ShvCat(\CH_1)$, the following diagram commutes:
$$
\CD
\bGamma(\CH_2,\on{2-FM}_{\CH_1\to \CH_2}(\ul\bC_1)) @>{\on{ev}|_{h_2}}>>  \on{2-FM}_{\CH_2\to \CH_1}(\ul\bC_1)|_{h_2} \\
@A{\text{\eqref{e:fiber FM gen zero}}}A{\sim}A \\
\on{2-FM}_{\CH_2\to \CH_1}\circ \on{2-FM}_{\CH_1\to \CH_2}(\ul\bC_1))|_{\one_{\CH_1}}  & & @A{\sim}A{\text{\eqref{e:fiber FM gen}}}A \\
@AAA  \\
\ul\bC_1|_{\one_{\CH_1}} @>{(\on{ev}|_{\one_{\CH_1}})^L}>> \bGamma(\CH_1,(\ul\bC_1)_{\CG_{h_2}}),
\endCD
$$
where:

\begin{itemize}

\item $\on{ev}_{h_2}$ denotes the natural evaluation functor
$$\bGamma(\CH_2,\ul\bC_2)\to \ul\bC_2|_{h_2}$$
for $\ul\bC_2\in  \ShvCat(\CH_2)$;

\medskip

\item $(\on{ev}|_{\one_{\CH_1}})^L$ is the left adjoint of the similarly defined evaluation functor on $\CH_1$, where we note that
$$(\ul\bC_1)_{\CG_{h_2}}|_{\one_{\CH_1}}\simeq \ul\bC_1|_{\one_{\CH_1}},$$
since $\CG_{h_2}$ is trivialized at $\one_{\CH_1}$;

\medskip

\item The lower left vertical arrow is the unit of the $(\on{2-FM}_{\CH_1\to \CH_2},\on{2-FM}_{\CH_2\to \CH_1})$-adjunction.

\end{itemize}

\end{lem}

\ssec{The 2-Fourier-Mukai transform and Verdier duality} \label{ss:FM and Verdier}

In this subsection we consider a particular pair of prestacks that are 2-Fourier-Mukai dual to each
other. 

\medskip

Both sides have to do with gerbes for a finite abelian group on a smooth and complete curve $X$. 

\sssec{}

Let $\Gamma$ be a finite abelian group as above. Take
$$\CY_1:=\on{Ge}_{\Gamma}(X) \text{ and } \CY_2:=\on{Ge}_{\Gamma^\vee(1)}(X),$$
where $(-)(1)$ denotes the Tate twist, so that 
$$\Gamma^\vee \simeq \Hom(\Gamma,\BZ/n\BZ)(1)$$
for $n\cdot \Gamma = 0$.

\medskip

Verdier duality defines a pairing
$$B^2(\on{C}^\cdot(X,\Gamma))\times B^2(\on{C}^\cdot(X,\Gamma^\vee(1)))\to B^2(\mu_\infty).$$

Composing with $\mu_\infty\to \BG_m$ and applying \'etale sheafification, we obtain a pairing
\begin{equation} \label{e:Verdier pairing Gamma}
\on{Ge}_{\Gamma}(X)\times \on{Ge}_{\Gamma^\vee(1)}(X)\to \on{Ge}_{\BG_m}(\on{pt}).
\end{equation} 

\sssec{}

We claim:

\begin{thm} \label{t:Verdier FM} 
The pairing \eqref{e:Verdier pairing Gamma} is of 2-Fourier-Mukai type.
\end{thm}

\sssec{Proof of \thmref{t:Verdier FM}}

Recall that we can think of $\on{Ge}_{\Gamma}(X)$ as
$$B^2(\on{C}^\cdot(X,\Gamma))_{\on{et}},$$
and similarly for $\on{Ge}_{\Gamma^\vee(1)}(X)$.

\medskip

Choose a pair of points $x_1,x_2\in X$; denote $U_i=X-x_i$. Restriction to $x_1$ 
gives rise to an isomorphism
$$\on{C}^\cdot(X,\Gamma) \simeq \on{C}^\cdot(X;x_1,\Gamma) \times \Gamma,$$
where we note that
$$\on{C}^\cdot(X;x_1,\Gamma) \simeq \on{C}_c^\cdot(U_1,\Gamma).$$ 

The inclusion of $x_2$ gives rise to an isomorphism
$$\on{C}^\cdot(X;x_1,\Gamma) \simeq \on{C}^\cdot(U_2;x_1,\Gamma) \times  \Omega^2(\Gamma(-1)),$$
where $\Omega$ is the functor of loops on spectra, and where we note that
$$\Omega^2(\Gamma(-1)) \simeq \Omega^2(H^2_{\on{et}}(X,\Gamma))\simeq \on{C}^\cdot(X;U_2,\Gamma)$$
and
$$\on{C}^\cdot(U_2;x_1,\Gamma) \simeq \Omega(H^1_{\on{et}}(X,\Gamma)).$$

\medskip

Altogether we obtain an identification
$$\on{C}^\cdot(X,\Gamma)\simeq \Gamma \times \Omega(H^1_{\on{et}}(X,\Gamma)) \times \Omega^2(\Gamma(-1)),$$
and hence 
$$\on{Ge}_{\Gamma}(X)\simeq \on{Ge}_\Gamma(\on{pt}) \times \on{pt}/H^1_{\on{et}}(X,\Gamma) \times H^2_{\on{et}}(X,\Gamma).$$

\medskip

Similarly, we obtain 
$$\on{Ge}_{\Gamma^\vee(1)}(X)\simeq \on{Ge}_{\Gamma^\vee(1)}(\on{pt}) 
\times \on{pt}/H^1_{\on{et}}(X,\Gamma^\vee(1)) \times H^2_{\on{et}}(X,\Gamma^\vee(1)).$$

\medskip

Under this identification, the pairing \eqref{e:Verdier pairing Gamma} splits as a product of:

\begin{itemize}

\item The pairing \eqref{e:2-FM pairing finite group}, where we identify
$H^2(X,\Gamma^\vee(1))\simeq \Gamma^\vee$;

\medskip

\item The pairing \eqref{e:2-FM pairing finite group} with the two sides swapped, where we identify $H^2_{\on{et}}(X,\Gamma)\simeq (\Gamma^\vee(1))^\vee$;

\medskip

\item The pairing \eqref{e:2-FM pairing finite group sym}, where we identify 
$H_{\on{et}}^1(X,\Gamma^\vee(1))\simeq H_{\on{et}}^1(X,\Gamma)^\vee$. 

\end{itemize}

Now the assertion of the theorem follows by combining the examples from 
Sects. \ref{sss:Ge finite}, \ref{sss:Ge finite rev} and \ref{sss:tors finite}. 

\qed[\thmref{t:Verdier FM}]

\sssec{}

It is easy to see that the prestack $\on{Ge}_{\Gamma}(X)$ satisfies the assumptions of \secref{l:FM adj}. Hence, 
combining \thmref{t:Verdier FM} and \lemref{l:invol}, we obtain: 

\begin{cor} \label{c:Verdier FM invol}
The composition 
$$\on{2-FM}_{\on{Ge}_{\Gamma^\vee(1)}(X)\to \on{Ge}_{\Gamma}(X)}\circ 
\on{2-FM}_{\on{Ge}_{\Gamma}(X)\to \on{Ge}_{\Gamma^\vee(1)}(X)}$$
is the involution of $\on{ShvCat}(\on{Ge}_{\Gamma}(X))$ coming 
from the inversion on $\Gamma$,
$$(\Gamma^\vee(1))^\vee(1)\simeq \Gamma.$$
\end{cor}

\ssec{Some further compatibilities}

In this subsection we will specialize the discussion in \secref{ss:compat} to the special case of
$$\CH_1:=\on{Ge}_{\Gamma}(X) \text{ and } \CH_2:=\on{Ge}_{\Gamma^\vee(1)}(X).$$

This is done primarily in order to have a convenient reference in subsequent
subsections. The material here should be skipped and returned to when necessary.

\sssec{} \label{sss:fibers of 2-FM}

For a $\Gamma^\vee(1)$-gerbe $\fG_{\Gamma^\vee(1)}$ on $X$, let $\CG_{\fG_{\Gamma^\vee(1)}}$ be the $\BG_m$-gerbe 
on $\on{Ge}_{\Gamma}(X)$, corresponding to the restriction of the map \eqref{e:Verdier pairing Gamma} along
$$\on{Ge}_{\Gamma}(X)\times \{\fG_{\Gamma^\vee(1)}\}\to \on{Ge}_{\Gamma}(X)\times \on{Ge}_{\Gamma^\vee(1)}(X).$$

We obtain that for an object
$$\ul\bC_{\Gamma}\in \on{ShvCat}(\on{Ge}_{\Gamma}(X))$$
and the corresponding object
$$\ul\bC_{\Gamma^\vee(1)}:=\on{2-FM}_{\on{Ge}_{\Gamma}(X)\to \on{Ge}_{\Gamma^\vee(1)}(X)}(\ul\bC_{\Gamma})\in 
\on{ShvCat}(\on{Ge}_{\Gamma^\vee(1)}(X)),$$
we have
\begin{equation} \label{e:fib glob sect 1}
\ul\bC_{\Gamma^\vee(1)}|_{\fG_{\Gamma^\vee(1)}}\simeq \bGamma\left(\on{Ge}_{\Gamma}(X),(\ul\bC_{\Gamma})_{\CG_{\fG_{\Gamma^\vee(1)}}}\right),
\end{equation}
where:

\begin{itemize}

\item $(-)|_{\fG_{\Gamma^\vee(1)}}$ denotes the fiber of a given sheaf of categories at the point $\fG_{\Gamma^\vee(1)}\in \on{Ge}_{\Gamma^\vee(1)}(X)$; 

\item $(-)_\CG$ denotes the twist of a given sheaf of categories over some prestack by a $\BG_m$-gerbe $\CG$ on that prestack.

\end{itemize}

\sssec{}

By \corref{c:Verdier FM invol}, we also obtain that for a $\Gamma$-gerbe 
$\fG_\Gamma$ on $X$, and the corresponding $\BG_m$-gerbe $\CG_{\fG_\Gamma}$ on $\on{Ge}_{\Gamma^\vee(1)}(X)$, we have
\begin{equation} \label{e:fib glob sect 2}
\ul\bC_{\Gamma}|_{\fG^{-1}_\Gamma}\simeq \bGamma\left(\on{Ge}_{\Gamma^\vee(1)}(X),(\ul\bC_{\Gamma^\vee(1)})_{\CG_{\fG_\Gamma}}\right).
\end{equation}

\sssec{}

Let now
$$0\to \Gamma_1\to \Gamma\to \Gamma_2\to 0$$
be a short exact sequence of finite abelian groups, and let
$$0\to \Gamma_2^\vee\to \Gamma^\vee\to \Gamma_1^\vee\to 0$$
be the dual short exact sequence. 

\medskip

Fix $\fG_{\Gamma_1^\vee(1)}\in \on{Ge}_{\Gamma_1^\vee(1)}(X)$, and let 
$\CG_{\fG_{\Gamma_1^\vee(1)}}$ be the corresponding $\BG_m$-gerbe on $\on{Ge}_{\Gamma_1}(X)$. 

\medskip

Generalizing \eqref{e:fib glob sect 1} and \eqref{e:fib glob sect 2}, we have:

\begin{lem} \label{l:sgrp}
There is a canonical equivalence
$$\bGamma\left(\on{Ge}_{\Gamma_1}(X), (\ul\bC_\Gamma|_{\on{Ge}_{\Gamma_1}(X)})_{\CG_{\fG_{\Gamma_1^\vee(1)}}}\right)
\simeq \bGamma\biggl(\on{Ge}_{\Gamma^\vee(1)}(X)\underset{\on{Ge}_{\Gamma_1^\vee(1)}(X)}\times \{\fG_{\Gamma_1^\vee(1)}\},\ul\bC_{\Gamma^\vee(1)}\biggr).$$
\end{lem}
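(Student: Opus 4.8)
The plan is to reduce the claimed equivalence to an instance of \eqref{e:fib glob sect 1} applied not to $\Gamma$ itself but to the subquotient group $\Gamma_1$, combined with a base-change identity for sheaves of categories along the fibration relating $\on{Ge}_{\Gamma^\vee(1)}(X)$ to $\on{Ge}_{\Gamma_1^\vee(1)}(X)$. First I would record the elementary observation, coming from the short exact sequence $0\to \Gamma_1\to \Gamma\to \Gamma_2\to 0$, that restricting an object $\ul\bC_\Gamma\in \on{ShvCat}(\on{Ge}_\Gamma(X))$ along the map $\on{Ge}_{\Gamma_1}(X)\to \on{Ge}_\Gamma(X)$ produces $\ul\bC_\Gamma|_{\on{Ge}_{\Gamma_1}(X)}$, whose 2-Fourier--Mukai partner (for the group $\Gamma_1$, using the Verdier pairing \eqref{e:Verdier pairing Gamma} for $\Gamma_1$ in place of $\Gamma$) I will compute. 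Its partner lives over $\on{Ge}_{\Gamma_1^\vee(1)}(X)$, and I claim it is the pushforward of $\ul\bC_{\Gamma^\vee(1)}$ along the map $\on{Ge}_{\Gamma^\vee(1)}(X)\to \on{Ge}_{\Gamma_1^\vee(1)}(X)$ induced by the surjection $\Gamma^\vee\epi \Gamma_1^\vee$ (equivalently, by the Tate-twisted surjection $\Gamma^\vee(1)\epi \Gamma_1^\vee(1)$). This is the precise sense in which the two sides are "dual relative to" the extra group $\Gamma_2$.

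The key steps, in order. (1) Show that the 2-FM transform is compatible with the "pushforward/pullback along a subgroup" operations: i.e., for the short exact sequences above, the square relating $\on{2-FM}$ for $\Gamma$ and for $\Gamma_1$ intertwines the restriction functor $\on{ShvCat}(\on{Ge}_\Gamma(X))\to \on{ShvCat}(\on{Ge}_{\Gamma_1}(X))$ (pullback along the closed-type map coming from $\Gamma_1\mono\Gamma$) with the pushforward functor $\on{ShvCat}(\on{Ge}_{\Gamma^\vee(1)}(X))\to \on{ShvCat}(\on{Ge}_{\Gamma_1^\vee(1)}(X))$ (along the map induced by $\Gamma^\vee(1)\epi\Gamma_1^\vee(1)$). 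This follows from the product decomposition used in the proof of \thmref{t:Verdier FM}: choosing $x\in X$ splits each $\on{Ge}$ into point-gerbe, $H^1$, and $H^2$ factors, and the asserted compatibility becomes the analogous compatibility for the three elementary pairings of Sects. \ref{sss:Ge finite}, \ref{sss:Ge finite rev}, \ref{sss:tors finite}, each of which is immediate by unwinding. (2) Apply \eqref{e:fib glob sect 1} with $\Gamma$ replaced by $\Gamma_1$: for the object $\ul\bC_\Gamma|_{\on{Ge}_{\Gamma_1}(X)}$ and the gerbe $\fG_{\Gamma_1^\vee(1)}$, its $\Gamma_1$-2-FM partner has fiber at $\fG_{\Gamma_1^\vee(1)}$ given by $\bGamma\bigl(\on{Ge}_{\Gamma_1}(X),(\ul\bC_\Gamma|_{\on{Ge}_{\Gamma_1}(X)})_{\CG_{\fG_{\Gamma_1^\vee(1)}}}\bigr)$, which is the left-hand side of the lemma. (3) By Step (1), that $\Gamma_1$-2-FM partner is $(\text{pushforward of }\ul\bC_{\Gamma^\vee(1)})$ along $\on{Ge}_{\Gamma^\vee(1)}(X)\to\on{Ge}_{\Gamma_1^\vee(1)}(X)$; by definition of pushforward of sheaves of categories (i.e. $\mathbf{ind}_f$ in the notation of \cite{Ga3}), its fiber at $\fG_{\Gamma_1^\vee(1)}$ is $\bGamma$ of $\ul\bC_{\Gamma^\vee(1)}$ restricted to the fiber $\on{Ge}_{\Gamma^\vee(1)}(X)\underset{\on{Ge}_{\Gamma_1^\vee(1)}(X)}\times\{\fG_{\Gamma_1^\vee(1)}\}$, which is exactly the right-hand side. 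Matching the identifications from Steps (2) and (3) gives the canonical equivalence.

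I expect the main obstacle to be Step (1): verifying that the 2-Fourier--Mukai equivalence of \thmref{t:Verdier FM} really does intertwine $\mathbf{cores}$ along $\Gamma_1\mono\Gamma$ with $\mathbf{ind}$ along $\Gamma^\vee(1)\epi\Gamma_1^\vee(1)$ at the level of sheaves of categories, rather than merely at the level of underlying global-sections categories. The safe route is to avoid any abstract functoriality claim and instead run the proof of \thmref{t:Verdier FM} a second time, carrying the subgroup data through the product decomposition: one checks that under $\on{Ge}_\Gamma(X)\simeq \on{Ge}_\Gamma(\on{pt})\times \on{pt}/H^1 \times H^2$ the map from $\on{Ge}_{\Gamma_1}(X)$ is the evident product of the maps on each factor, and then the three elementary compatibilities are what has to be verified — the point-gerbe factor reduces to the statement that for $\Gamma_1\mono\Gamma$ the grading-by-$\Gamma^\vee$ picture of \eqref{e:2-FM functor finite group} restricts to the grading-by-$\Gamma_1^\vee$ picture by summing over fibers of $\Gamma^\vee\epi\Gamma_1^\vee$, which is tautological; the $H^1$ factor is the standard compatibility of usual Fourier--Mukai for tori with isogenies; and the $H^2$ factor is Pontryagin duality for finite groups. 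Assembling these is routine but bookkeeping-heavy, so I would state it as a sub-lemma and relegate the diagram chase to a short paragraph. Everything else — Steps (2) and (3) — is a direct substitution into results already proved.
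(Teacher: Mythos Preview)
Your proposal is correct and is precisely the natural way to make rigorous what the paper leaves implicit: the paper states the lemma with only the phrase ``Generalizing \eqref{e:fib glob sect 1}, we have:'' and gives no further argument, so your Steps (1)--(3) supply the details the authors chose to omit. The reduction to \eqref{e:fib glob sect 1} for $\Gamma_1$ together with the compatibility of $\on{2\text{-}FM}$ with the subgroup/quotient maps is exactly what is meant.

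Two small points worth tightening. In Step (3), the identification of the fiber of a pushforward with global sections over the fiber is a base-change statement rather than the definition of $\mathbf{ind}_f$; it holds here because the map $\on{Ge}_{\Gamma^\vee(1)}(X)\to \on{Ge}_{\Gamma_1^\vee(1)}(X)$ is a torsor for $\on{Ge}_{\Gamma_2^\vee(1)}(X)$ and the product decomposition from the proof of \thmref{t:Verdier FM} makes this explicit. In your sketch of Step (1), the $H^1$-factor is governed by the pairing of \secref{sss:tors finite} for the finite groups $H^1_{\on{et}}(X,\Gamma)$ and $H^1_{\on{et}}(X,\Gamma^\vee(1))$, not by Fourier--Mukai for tori; the compatibility you need there is the elementary fact that for $\Gamma_1\hookrightarrow\Gamma$ the equivalence $\Rep(\Gamma)\simeq\QCoh(\Gamma^\vee)$ intertwines restriction to $\Rep(\Gamma_1)$ with pushforward along $\Gamma^\vee\to\Gamma_1^\vee$. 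Neither point affects the validity of your argument.
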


\sssec{}

For $\ul\bC_{\Gamma}$ as above, denote
$$\bC_\Gamma:=\bGamma(\on{Ge}_{\Gamma}(X),\ul\bC_{\Gamma}) \text{ and }
\bC_{\Gamma^\vee(1)}:=\bGamma(\on{Ge}_{\Gamma^\vee(1)}(X),\ul\bC_{\Gamma^\vee(1)}).$$

Let $\fG^0_\Gamma$ (resp., $\fG^0_{\Gamma^\vee(1)}$) denote the trivial $\Gamma$-gerbe 
(resp., $\Gamma^\vee(1)$)-gerbe on $X$. As a particular case of \eqref{e:fib glob sect 1},
we obtain an equivalence
\begin{equation} \label{e:fib glob sect neutral}
\ul\bC_{\Gamma^\vee(1)}|_{\fG^0_{\Gamma^\vee(1)}}\simeq \bC_\Gamma,
\end{equation}
and as a particular case of \eqref{e:fib glob sect 2} we obtain an equivalence
\begin{equation} \label{e:glob fib sect neutral}
\ul\bC_{\Gamma}|_{\fG^0_{\Gamma}}\simeq \bC_{\Gamma^\vee(1)}.
\end{equation}

\sssec{}

Note that $\bC_\Gamma$ is a category acted on by $\QCoh(\on{Ge}_{\Gamma}(X))$. For 
$$\alpha \in \Gamma(-1)\simeq H^2_{\on{et}}(X,\Gamma)\simeq \pi_0(\on{Ge}_{\Gamma}(X))),$$
consider the corresponding idempotent
$$\CO_{\on{Ge}_{\Gamma}(X),\alpha}\in \QCoh(\on{Ge}_{\Gamma}(X))$$
as acting on $\bC_\Gamma$. 

\medskip

Note also that $\Gamma^\vee(1)$ acts by automorphisms of the identity functor on $\fG^0_{\Gamma^\vee(1)}$, and hence
also by the automorphisms of the identity functor of $\ul\bC_{\Gamma^\vee(1)}|_{\fG^0_{\Gamma^\vee(1)}}$. For
$$\alpha\in \Gamma(-1)\simeq (\Gamma^\vee(1))^\vee$$ 
let $\sP_\alpha$ denote the corresponding idempotent on $\ul\bC_{\Gamma^\vee(1)}|_{\fG^0_{\Gamma^\vee(1)}}$.

\medskip

The following is a particular case of \eqref{l:Hecke 0 abs gen}: 

\begin{lem} \label{l:Hecke 0 abs}
Under the identification \eqref{e:fib glob sect neutral}, the action of $\CO_{\on{Ge}_{\Gamma}(X),\alpha}$ on $\bC_\Gamma$
corresponds to the action of
$\sP_\alpha$ on $\ul\bC_{\Gamma^\vee(1)}|_{\fG^0_{\Gamma^\vee(1)}}$. 
\end{lem}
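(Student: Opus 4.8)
The plan is to trace through the construction of the 2-Fourier-Mukai equivalence and see that the decomposition of $\QCoh(\on{Ge}_{\Gamma}(X))$ into idempotents indexed by $\pi_0$ matches the decomposition of $\ul\bC_{\Gamma^\vee(1)}|_{\fG^0_{\Gamma^\vee(1)}}$ by characters of $\Gamma^\vee(1)$. The point is that both decompositions come from the \emph{same} geometric input, namely the grading on $\on{Ge}_{\Gamma}(X)$ (equivalently $\on{Ge}_{\Gamma^\vee(1)}(X)$) by $H^2_{\on{et}}$, so the statement is essentially a bookkeeping exercise once the identifications are set up correctly.

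First I would reduce to the product decomposition used in the proof of \thmref{t:Verdier FM}. Fix a point $x\in X$, so that $\on{Ge}_{\Gamma}(X)$ splits as $\on{Ge}_\Gamma(\on{pt})\times \on{pt}/H^1_{\on{et}}(X,\Gamma)\times H^2_{\on{et}}(X,\Gamma)$, and similarly for $\Gamma^\vee(1)$; the pairing \eqref{e:Verdier pairing Gamma} splits as the product of the three pairings listed in that proof. The idempotent $\CO_{\on{Ge}_{\Gamma}(X),\alpha}$ lives entirely on the $H^2_{\on{et}}(X,\Gamma)$-factor, and under the identification $H^2_{\on{et}}(X,\Gamma)\simeq (\Gamma^\vee(1))^\vee$ (the one used in the second bullet of that proof, coming from Verdier/Poincar\'e duality on $X$) it corresponds to a character of $\Gamma^\vee(1)$. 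Dually, $\Gamma^\vee(1)$ acts on $\fG^0_{\Gamma^\vee(1)}$ because $\fG^0_{\Gamma^\vee(1)}$ sits in the $\on{Ge}_{\Gamma^\vee(1)}(\on{pt})$-factor, and the automorphisms of the identity functor on $\on{Ge}_\Gamma(\on{pt})$ are precisely $\Gamma$; passing to $\ul\bC_{\Gamma^\vee(1)}|_{\fG^0_{\Gamma^\vee(1)}}$, these act via the same grading.

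The core step is then \exref{sss:Ge finite}: for the factor $\on{Ge}_\Gamma(\on{pt})$ paired with $H^2_{\on{et}}(X,\Gamma^\vee(1))\simeq \Gamma^\vee$, the functor $\on{2-FM}$ is literally $\bC\mapsto\{\bC_\chi\}_{\chi\in\Gamma^\vee}$, i.e.\ it takes a category with a $\Gamma$-action on its identity to its $\Gamma^\vee$-graded pieces. Unwinding \eqref{e:2-FM functor}, restricting the resulting sheaf of categories to the point $0\in\Gamma^\vee$ (which is what \eqref{e:fib glob sect neutral} does, via \remref{r:not 1 aff}) picks out $\bC_0$; more generally, the action of the idempotent $\CO_{\alpha}$ on the source side becomes, after applying $\on{2-FM}$, restriction to the point $\alpha\in\Gamma^\vee$ on the target, which is exactly the projector $\sP_\alpha$ onto the $\alpha$-graded summand. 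Combining this with the other two (inert) factors of the product decomposition gives the claim.

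The main obstacle is purely organizational: keeping the many Cartier/Verdier/Pontryagin dualities straight, so that the two instances of "$\alpha\in\Gamma(-1)$" appearing in the statement — once as a point of $\pi_0(\on{Ge}_{\Gamma}(X))\simeq H^2_{\on{et}}(X,\Gamma)$ and once as a character of $\Gamma^\vee(1)$ — really do get matched by the \emph{same} identification that appears in the proof of \thmref{t:Verdier FM}, with no stray inversion. One must check in particular that the involutivity statement \thmref{t:Verdier FM}(b) is compatible with the identifications \eqref{e:fib glob sect neutral} and \eqref{e:glob fib sect neutral} in the way the lemma asserts; this is where a sign/inversion error would most plausibly hide, and it is worth verifying on the model \exref{sss:tors finite} that the relevant $H^1$-factor contributes trivially to the comparison of idempotents.
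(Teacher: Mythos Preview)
Your overall strategy---reducing via the product splitting from the proof of \thmref{t:Verdier FM} and checking factor by factor---is exactly what the paper's one-line ``Unwinding the construction, we obtain'' amounts to, and your final paragraph about bookkeeping the dualities identifies the only real subtlety.

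However, in your third paragraph you have swapped the two ``outer'' factors of the product. The idempotent $\CO_{\on{Ge}_{\Gamma}(X),\alpha}$ lives on the discrete factor $H^2_{\on{et}}(X,\Gamma)\simeq (\Gamma^\vee(1))^\vee$, \emph{not} on $\on{Ge}_\Gamma(\on{pt})$; and the projector $\sP_\alpha$ arises from the $\Gamma^\vee(1)$-action coming from the $\on{Ge}_{\Gamma^\vee(1)}(\on{pt})$ factor, \emph{not} from restriction to a point of $H^2_{\on{et}}(X,\Gamma^\vee(1))\simeq \Gamma^\vee$. So the relevant pairing is the \emph{second} bullet in the proof of \thmref{t:Verdier FM} (i.e., Example~\ref{sss:Ge finite rev} applied with $\Gamma$ replaced by $\Gamma^\vee(1)$), not the first bullet (Example~\ref{sss:Ge finite}) as you wrote. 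Your second paragraph in fact had this right before the third paragraph reversed it. The fix is mechanical: on this factor a sheaf of categories over $H^2_{\on{et}}(X,\Gamma)$ is a family $\{\bD_\alpha\}$, global sections are $\oplus_\alpha \bD_\alpha$ with $\CO_\alpha$ projecting to $\bD_\alpha$, and the 2-FM to $\on{Ge}_{\Gamma^\vee(1)}(\on{pt})$ packages this as a single category with $\Gamma^\vee(1)$ acting on the identity by the grading---so the fiber at $\fG^0_{\Gamma^\vee(1)}$ recovers $\oplus_\alpha \bD_\alpha$ with $\sP_\alpha$ projecting to $\bD_\alpha$. This is the same computation you wrote, just with the correct labels.
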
 

\sssec{}

As in \secref{sss:loop gerbes}, a point 
$$\CP_{\Gamma^\vee(1)}\in \Bun_{\Gamma^\vee(1)}$$
gives rise to a line bundle denoted $\CL_{\CP_{\Gamma^\vee(1)}}$ on $\on{Ge}_{\Gamma}(X)$. 

\medskip

In particular, we consider the endofunctor 
$$\CL_{\CP_{\Gamma^\vee(1)}}\otimes (-)$$
of  $\bC_\Gamma$.

\medskip

We can view $\CP_{\Gamma^\vee(1)}$ itself as an automorphism of $\fG^0_{\Gamma^\vee(1)}$. And as such, it induces an 
autoequivalence of $\ul\bC_{\Gamma^\vee(1)}|_{\fG^0_{\Gamma^\vee(1)}}$.

\medskip

The followings is a particular case of \lemref{l:Hecke 1 abs gen}:

\begin{lem} \label{l:Hecke 1 abs}
Under the identification \eqref{e:fib glob sect neutral}, the action of $\CL^{\otimes -1}_{\CP_{\Gamma^\vee(1)}}$ on $\bC_\Gamma$
corresponds to the action of $\CP_{\Gamma^\vee(1)}$ on $\ul\bC_{\Gamma^\vee(1)}|_{\fG^0_{\Gamma^\vee(1)}}$.
\end{lem}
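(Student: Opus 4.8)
\sssec{Proof of \lemref{l:Hecke 1 abs}}

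The plan is to unwind the construction of the $2$-Fourier--Mukai transform, in exact parallel with the proof of \lemref{l:Hecke 0 abs}. First I would recall that, by \eqref{e:fib glob sect 1} applied to $\fG^0_{\Gamma^\vee(1)}$, the fiber $\ul\bC_{\Gamma^\vee(1)}|_{\fG^0_{\Gamma^\vee(1)}}$ is computed as $\bGamma\bigl(\on{Ge}_\Gamma(X),(\ul\bC_\Gamma)_{\CG_{\fG^0_{\Gamma^\vee(1)}}}\bigr)$, where $\CG_{\fG^0_{\Gamma^\vee(1)}}$ is the restriction of the universal $\BG_m$-gerbe attached to the Verdier pairing \eqref{e:Verdier pairing Gamma} along $\on{Ge}_\Gamma(X)\times\{\fG^0_{\Gamma^\vee(1)}\}$. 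Since $\fG^0_{\Gamma^\vee(1)}$ is the neutral point and the pairing is bilinear, this gerbe is canonically trivial, and the resulting trivialization is precisely the identification \eqref{e:fib glob sect neutral}.

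Next I would track the action of $\CP_{\Gamma^\vee(1)}$. Regarded as an automorphism of $\fG^0_{\Gamma^\vee(1)}$ --- equivalently, as a point of $\Bun_{\Gamma^\vee(1)}=\Omega_{\fG^0_{\Gamma^\vee(1)}}\on{Ge}_{\Gamma^\vee(1)}(X)$ --- it acts by functoriality on everything pulled back along $\fG^0_{\Gamma^\vee(1)}$; in particular on the gerbe $\CG_{\fG^0_{\Gamma^\vee(1)}}$ together with its trivialization, hence by an automorphism of the trivial $\BG_m$-gerbe on $\on{Ge}_\Gamma(X)$, i.e. by tensoring with a line bundle. The key identification to make precise is that this line bundle is exactly $\CL^{\otimes -1}_{\CP_{\Gamma^\vee(1)}}$: by construction, $\CL_{\CP_{\Gamma^\vee(1)}}$ is obtained by looping the Verdier pairing \eqref{e:Verdier pairing Gamma} along the $\on{Ge}_{\Gamma^\vee(1)}(X)$-factor and then restricting the first slot to $\CP_{\Gamma^\vee(1)}$, exactly as in \secref{sss:loop gerbes} --- and the passage from an automorphism of the basepoint of $\on{Ge}_{\Gamma^\vee(1)}(X)$ to an automorphism of the trivialized fiber gerbe is tautologically this looping (transgression) of the pairing. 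The inversion is a bookkeeping artefact of our conventions, identical to the one already noted in \secref{sss:Ge finite rev}.

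Finally, I would invoke that twisting a sheaf of categories over $\on{Ge}_\Gamma(X)$ by the automorphism of the trivial gerbe given by a line bundle $\CL$ induces, on global sections, the endofunctor $\CL\otimes(-)$ of $\bC_\Gamma$ coming from the $\QCoh(\on{Ge}_\Gamma(X))$-module structure (this is built into \cite[Sect.~1.7.2]{GLys}). Combining the three steps then yields the claim: under \eqref{e:fib glob sect neutral}, the autoequivalence of $\ul\bC_{\Gamma^\vee(1)}|_{\fG^0_{\Gamma^\vee(1)}}$ induced by $\CP_{\Gamma^\vee(1)}$ corresponds to $\CL^{\otimes -1}_{\CP_{\Gamma^\vee(1)}}\otimes(-)$ on $\bC_\Gamma$.

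I expect the one genuinely delicate point to be the middle step: giving a clean, convention-consistent identification between the automorphism of the trivial $\BG_m$-gerbe induced by a loop $\CP_{\Gamma^\vee(1)}$ at $\fG^0_{\Gamma^\vee(1)}$ and the line bundle $\CL_{\CP_{\Gamma^\vee(1)}}$ produced by the looping construction, including pinning down the sign. An alternative that sidesteps some of this is to choose a point $x\in X$, use the product decomposition of $\on{Ge}_\Gamma(X)$ and $\on{Ge}_{\Gamma^\vee(1)}(X)$ from the proof of \thmref{t:Verdier FM}, and reduce the statement to the three model situations of \secref{sss:Ge finite}, \secref{sss:Ge finite rev} and \secref{sss:tors finite}, where the action of a point of $\Bun_{\Gamma^\vee(1)}$ --- which in the $H^1$-factor is an element of $H^1_{\on{et}}(X,\Gamma^\vee(1))$ acting on $\on{pt}/H^1_{\on{et}}(X,\Gamma)$ by an automorphism of the identity, i.e. a character, i.e. a line bundle --- can be checked directly.

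\qed[\lemref{l:Hecke 1 abs}]
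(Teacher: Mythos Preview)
Your proposal is correct and follows exactly the approach the paper indicates: the paper simply states ``Unwinding the construction, we obtain:'' before \lemref{l:Hecke 1 abs} and gives no further proof, so you have spelled out in detail what that unwinding entails. Your three-step argument (identifying the fiber via the trivialized gerbe, transgressing the loop $\CP_{\Gamma^\vee(1)}$ through the bilinear pairing to recover $\CL^{\otimes -1}_{\CP_{\Gamma^\vee(1)}}$, and invoking the effect of a line-bundle twist on global sections) is precisely the intended content, and your alternative via the product decomposition from \thmref{t:Verdier FM} is a legitimate way to double-check the sign.
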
 

\sssec{}

For $\fG_{\Gamma^\vee(1)}\in \on{Ge}_{\Gamma^\vee(1)}(X)$, denote by  
$$\on{ev}|_{\fG_{\Gamma^\vee(1)}}:\bC_{\Gamma^\vee(1)}\to (\ul\bC_{\Gamma^\vee(1)})|_{\fG_{\Gamma^\vee(1)}}$$
the canonical evaluation functor
$$\on{ev}|_{\fG_{\Gamma^\vee(1)}}:\bC_{\Gamma^\vee(1)}\to (\ul\bC_{\Gamma^\vee(1)})|_{\fG_{\Gamma^\vee(1)}}.$$

\medskip

More generally, for a $\BG_m$-gerbe $\CG$ on $\on{Ge}_{\Gamma^\vee(1)}(X)$, we have the evaluation functor
$$\on{ev}_\CG|_{\fG_{\Gamma^\vee(1)}}:\bGamma\left(\on{Ge}_{\Gamma^\vee(1)}(X),(\ul\bC_{\Gamma^\vee(1)})_\CG\right)\to
(\ul\bC_{\Gamma^\vee(1)})_\CG|_{\fG_{\Gamma^\vee(1)}}.$$

\sssec{}

For a given $\fG_{\Gamma^\vee(1)}\in \on{Ge}_{\Gamma^\vee(1)}(X)$, let 
\begin{equation} \label{e:insert 1}
(\on{ev}_{\CG_{\fG_{\Gamma^\vee(1)}}}|_{\fG^0_{\Gamma}})^L:\ul\bC_{\Gamma}|_{\fG^0_{\Gamma}}\to 
\bGamma\left(\on{Ge}_{\Gamma}(X),(\ul\bC_{\Gamma})_{\CG_{\fG_{\Gamma^\vee(1)}}}\right)
\end{equation}
be the left adjoint of the evaluation functor
$$\bGamma\left(\on{Ge}_{\Gamma}(X),(\ul\bC_{\Gamma})_{\CG_{\fG_{\Gamma^\vee(1)}}}\right)
\overset{\on{ev}_{\CG_{\fG_{\Gamma^\vee(1)}}}|_{\fG^0_{\Gamma}}}\longrightarrow 
(\ul\bC_{\Gamma})_{\CG_{\fG_{\Gamma^\vee(1)}}}|_{\fG^0_{\Gamma}}\simeq \ul\bC_{\Gamma}|_{\fG^0_{\Gamma}}.$$

\sssec{}

As a particular case of \lemref{l:fiber and global sects gen}, we obtain: 

\begin{lem} \label{l:fiber and global sects}
The following diagram commutes:
$$
\CD
\bC_{\Gamma^\vee(1)} @>{\on{ev}|_{\fG_{\Gamma^\vee(1)}}}>> (\ul\bC_{\Gamma^\vee(1)})|_{\fG_{\Gamma^\vee(1)}} \\
@A{\text{\eqref{e:glob fib sect neutral}}}A{\sim}A @A{\sim}A{\text{\eqref{e:fib glob sect 1}}}A \\
\ul\bC_{\Gamma}|_{\fG^0_{\Gamma}} @>{\text{\eqref{e:insert 1}}}>> \bGamma(\on{Ge}_{\Gamma}(X),(\ul\bC_{\Gamma})_{\CG_{\fG_{\Gamma^\vee(1)}}}).
\endCD
$$
\end{lem}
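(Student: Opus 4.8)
The plan is to unwind all four arrows in the square and identify each composite with one and the same manifestly canonical map, the ``global sections $\to$ stalk'' map of $\ul\bC_{\Gamma^\vee(1)}$ at $\fG_{\Gamma^\vee(1)}$, transported across the $2$-Fourier--Mukai equivalence of \thmref{t:Verdier FM}.

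First I would note that each of $\on{ev}|_{\fG_{\Gamma^\vee(1)}}$, \eqref{e:glob fib sect neutral}, \eqref{e:fib glob sect 1} and \eqref{e:insert 1} is natural in $\ul\bC_\Gamma$ (equivalently, in $\ul\bC_{\Gamma^\vee(1)}$), so it is enough to compare two natural transformations between the functors $\ul\bC_\Gamma\mapsto\ul\bC_\Gamma|_{\fG^0_\Gamma}$ and $\ul\bC_\Gamma\mapsto\bGamma(\on{Ge}_\Gamma(X),(\ul\bC_\Gamma)_{\CG_{\fG_{\Gamma^\vee(1)}}})$ on $\on{ShvCat}(\on{Ge}_\Gamma(X))$. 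Next I would recall how \eqref{e:fib glob sect 1} and \eqref{e:glob fib sect neutral} are produced. Since $\ul\bC_{\Gamma^\vee(1)}=(p_2)_*\bigl((p_1^*\ul\bC_\Gamma)_{\CG_{1,2}}\bigr)$ by definition of the $2$-FM functor, base change for pushforward and pullback of sheaves of categories along the Cartesian square cut out by restricting $p_2$ over the point $\fG_{\Gamma^\vee(1)}$ gives \eqref{e:fib glob sect 1}, using that the pullback of $\CG_{1,2}$ to $\on{Ge}_\Gamma(X)\times\{\fG_{\Gamma^\vee(1)}\}$ is $\CG_{\fG_{\Gamma^\vee(1)}}$ as in \secref{sss:fibers of 2-FM}; the same base change applied to the inverse equivalence (furnished by the involutivity in \thmref{t:Verdier FM}(b)) at the neutral point $\fG^0_\Gamma$, where $\CG_{\fG^0_\Gamma}$ is canonically trivial, gives \eqref{e:glob fib sect neutral} in the form $\ul\bC_\Gamma|_{\fG^0_\Gamma}\simeq\bGamma(\on{Ge}_{\Gamma^\vee(1)}(X),\ul\bC_{\Gamma^\vee(1)})=\bC_{\Gamma^\vee(1)}$.

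Granting this, the clockwise composite (the arrow \eqref{e:glob fib sect neutral} followed by $\on{ev}|_{\fG_{\Gamma^\vee(1)}}$) is, by construction, the tautological restriction map $\bGamma(\on{Ge}_{\Gamma^\vee(1)}(X),\ul\bC_{\Gamma^\vee(1)})\to\ul\bC_{\Gamma^\vee(1)}|_{\fG_{\Gamma^\vee(1)}}$ viewed through \eqref{e:fib glob sect 1}. The content of the lemma is that, transported to the $\on{Ge}_\Gamma(X)$-side, this is the functor \eqref{e:insert 1}, i.e. the left adjoint of evaluation at $\fG^0_\Gamma$ for the twisted sheaf $(\ul\bC_\Gamma)_{\CG_{\fG_{\Gamma^\vee(1)}}}$. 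I would prove this by tracing the base-change isomorphisms of the previous step: the $2$-FM equivalence implementing the identification of the source is realized by a pushforward of sheaves of categories along one of the projections, and the ``global sections $\to$ stalk'' map --- which is a unit/counit of the adjunction between pushforward and pullback of sheaves of categories --- is carried by this pushforward exactly to the adjunction unit defining \eqref{e:insert 1}, once one invokes the canonical trivialization of $\CG_{\fG_{\Gamma^\vee(1)}}$ over $\{\fG^0_\Gamma\}$ used to write $(\ul\bC_\Gamma)_{\CG_{\fG_{\Gamma^\vee(1)}}}|_{\fG^0_\Gamma}\simeq\ul\bC_\Gamma|_{\fG^0_\Gamma}$.

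I expect the main obstacle to be precisely this last point: correctly tracking the variance of the functors $\mathbf{cores}$ and $\mathbf{ind}$ of \cite{Ga3} and the directions of the base-change morphisms, so as to be sure that the transported ``global sections $\to$ stalk'' map is a \emph{left} rather than a right adjoint, and that it is the left adjoint of evaluation for the correctly twisted sheaf of categories. This is pure $2$-categorical bookkeeping rather than anything geometric; should the abstract argument become unwieldy, one can instead fix a point $x\in X$, split both $\on{Ge}_\Gamma(X)$ and $\on{Ge}_{\Gamma^\vee(1)}(X)$ as in the proof of \thmref{t:Verdier FM}, and verify the square separately in the three building-block cases of \secref{sss:Ge finite}, \secref{sss:Ge finite rev} and \secref{sss:tors finite}, where every functor in sight (gradings, projections and insertions of zero, and the Fourier transform on $\Rep(\Gamma)$) is completely explicit.
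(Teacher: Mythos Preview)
Your proposal is correct and takes essentially the same approach as the paper: the paper offers no proof beyond the phrase ``Unwinding the constructions we obtain,'' and what you have written is precisely that unwinding made explicit, tracking the base-change identifications behind \eqref{e:fib glob sect 1} and \eqref{e:glob fib sect neutral} and identifying the transported ``global sections $\to$ stalk'' map with the left adjoint \eqref{e:insert 1}. Your fallback via the splitting of \thmref{t:Verdier FM} is a reasonable alternative but not needed.
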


\ssec{Example: the usual Fourier-Mukai transform}

This subsection is included in order to illustrate how the 2-Fourier-Mukai transform works; this material 
is not needed in the rest of the paper.

\sssec{}

Let 
$$1\to \Gamma\to T_1\to T\to 1$$
be an isogeny of tori. Consider the dual isogeny 
$$1\to \Gamma^\vee(1)\to T^\vee\to T^\vee_1\to 1.$$

Consider the corresponding maps
$$\sfp_{\Gamma}:\Bun_T\to \on{Ge}_{\Gamma}(X) \text{ and }
\sfp_{\Gamma^\vee(1)}:\Bun_{T^\vee_1}\to \on{Ge}_{\Gamma^\vee(1)}(X).$$

\sssec{}

Consider the unit sheaf of categories 
$$\ul\QCoh(\Bun_T)$$ 
over $\Bun_T$, and let
$$\ul\QCoh^\Gamma(\Bun_T):=(\sfp_{\Gamma})_*(\ul\QCoh(\Bun_T))$$
be its direct image along $\sfp_{\Gamma}$, viewed as a sheaf of categories over $\on{Ge}_{\Gamma}(X)$.

\medskip 

Similarly, consider
$$\ul\QCoh^{\Gamma^\vee(1)}(\Bun_{T^\vee_1}):=(\sfp_{\Gamma^\vee(1)})_*(\ul\QCoh(\Bun_{T^\vee_1}))$$
as a sheaf of categories over $\on{Ge}_{\Gamma^\vee(1)}(X)$.

\sssec{}

We claim: 

\begin{lem}
With respect to the equivalence $\on{2-FM}_{\on{Ge}_{\Gamma}(X)\to \on{Ge}_{\Gamma^\vee(1)}(X)}$, the objects
$$\ul\QCoh^\Gamma(\Bun_T) \text{ and } \ul\QCoh^{\Gamma^\vee(1)}(\Bun_{T^\vee_1})$$
correspond to one another.
\end{lem}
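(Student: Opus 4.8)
The plan is to prove the statement fiber by fiber over $\on{Ge}_{\Gamma^\vee(1)}(X)$, using the general formula \eqref{e:fib glob sect 1} for the fibers of a $2$-Fourier--Mukai transform; since \thmref{t:Verdier FM} already guarantees that $\on{2-FM}_{\on{Ge}_{\Gamma}(X)\to \on{Ge}_{\Gamma^\vee(1)}(X)}$ is an equivalence, it suffices to identify the two objects. So I would fix a $\Gamma^\vee(1)$-gerbe $\fG$ on $X$ and compute both sides at $\fG$.

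First I would compute the fiber of $\on{2-FM}_{\on{Ge}_{\Gamma}(X)\to \on{Ge}_{\Gamma^\vee(1)}(X)}(\ul\QCoh^\Gamma(\Bun_T))$ at $\fG$. Applying \eqref{e:fib glob sect 1} to $\ul\bC_\Gamma=\ul\QCoh^\Gamma(\Bun_T)=(\sfp_\Gamma)_*\ul\QCoh(\Bun_T)$, and using that the $\CG_{\fG}$-twist of a pushforward along $\sfp_\Gamma$ agrees with the pushforward of the $\sfp_\Gamma^*(\CG_{\fG})$-twist, together with the fact that enhanced global sections over $\on{Ge}_\Gamma(X)$ of $(\sfp_\Gamma)_*(-)$ compute enhanced global sections over $\Bun_T$, this fiber becomes the category of $\sfp_\Gamma^*(\CG_{\fG})$-twisted quasi-coherent sheaves on $\Bun_T$, where $\CG_{\fG}$ is the $\BG_m$-gerbe on $\on{Ge}_\Gamma(X)$ attached to $\fG$ as in \secref{sss:fibers of 2-FM}. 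On the other hand, base change for pushforward of sheaves of categories identifies the fiber of $\ul\QCoh^{\Gamma^\vee(1)}(\Bun_{T^\vee_1})$ at $\fG$ with $\QCoh(\sfp_{\Gamma^\vee(1)}^{-1}(\fG))$. Now the isogeny $T^\vee\to T^\vee_1$ exhibits $\sfp_{\Gamma^\vee(1)}$ as a $\Bun_{T^\vee}$-torsor fibration (there is a fiber sequence $\Bun_{T^\vee}\to \Bun_{T^\vee_1}\to \on{Ge}_{\Gamma^\vee(1)}(X)$), so $\sfp_{\Gamma^\vee(1)}^{-1}(\fG)$ is a torsor under $\Bun_{T^\vee}$; dually $\sfp_\Gamma$ sits in $\Bun_{T_1}\to \Bun_T\to \on{Ge}_\Gamma(X)$. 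Unwinding the Verdier duality pairing \eqref{e:Verdier pairing Gamma}, one sees that under the Fourier--Mukai equivalence $\QCoh(\Bun_T)\simeq \QCoh(\Bun_{T^\vee})$ the operation of twisting by the character gerbe $\sfp_\Gamma^*(\CG_{\fG})$ corresponds precisely to translation by the $\Bun_{T^\vee}$-torsor $\sfp_{\Gamma^\vee(1)}^{-1}(\fG)$ --- this is the familiar exchange of $\BG_m$-gerbes and torsors under Fourier--Mukai, read off from the pairing. The input equivalence $\QCoh(\Bun_T)\simeq \QCoh(\Bun_{T^\vee})$ may itself be obtained, after choosing a point $x\in X$, by combining Mukai duality for the Jacobian of $X$ with the elementary duality between the discrete and classifying-stack components of $\Bun_T$ and $\Bun_{T^\vee}$. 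This gives the desired equivalence at each $\fG$.

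The hard part will be the last step: upgrading these fiberwise equivalences to an equivalence of sheaves of categories over $\on{Ge}_{\Gamma^\vee(1)}(X)$, i.e.\ showing they vary coherently in $\fG$. The clean route, which I would pursue, is not to argue fiberwise but to carry out the Fourier--Mukai identification directly in families: exhibit a single integral kernel on $\Bun_T\times \Bun_{T^\vee}$, built from the Poincar\'e line bundle and the pullback of the pairing \eqref{e:Verdier pairing Gamma}, which induces all of the above equivalences simultaneously, and then deduce the identification of sheaves of categories by base change. I expect the only genuine difficulty to be this bookkeeping --- keeping track of the $\BG_m$-gerbe $\sfp_\Gamma^*(\CG_{\fG})$ and the $\Bun_{T^\vee}$-torsor structure of the fibers of $\sfp_{\Gamma^\vee(1)}$ compatibly --- since everything else is formal manipulation of base change and gerbe twists for sheaves of categories, together with \thmref{t:Verdier FM}.
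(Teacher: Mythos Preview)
Your proposal is correct and is essentially a detailed unpacking of the paper's one-line proof, which simply says the lemma ``follows from the usual properties of the usual Fourier--Mukai equivalences $\QCoh(\Bun_T)\simeq\QCoh(\Bun_{T^\vee})$ and $\QCoh(\Bun_{T_1})\simeq\QCoh(\Bun_{T_1^\vee})$.'' The property you isolate---that under the Fourier--Mukai equivalence for a torus, twisting by a character $\BG_m$-gerbe on one side corresponds to translation by the dual torsor on the other---is exactly the ``usual property'' being invoked, and your fiberwise computation together with the suggestion to work with a universal kernel is the natural way to make this precise.
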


\begin{proof}[Sketch of proof]

The proof follows from the usual properties of the usual Fourier-Mukai equivalences
$$\QCoh(\Bun_T)\overset{\on{FM}}\simeq \QCoh(\Bun_{T^\vee})  \text{ and }
\QCoh(\Bun_{T_1})\overset{\on{FM}}\simeq \QCoh(\Bun_{T_1^\vee}),$$
combined with the following observation:

\medskip

The composition
$$\Bun_T \times \Bun_{T^\vee_1}\to \on{Ge}_{\Gamma}(X)\times \on{Ge}_{\Gamma^\vee(1)}(X) \overset{\text{\eqref{e:Verdier pairing Gamma}}}\longrightarrow
\on{Ge}_{\BG_m}(\on{pt})$$
is canonically trivial. Furthermore, the resulting map
$$\Bun_{T_1} \times \Bun_{T^\vee_1}\to \Bun_T \times \Bun_{T^\vee_1} \to \on{pt}/\BG_m$$
equals the Weil pairing.

\end{proof}

\ssec{Two sheaves associated with automorphic category}

Let $G$ be a semi-simple group and consider the category
$$\Dmod_{\frac{1}{2}}(\Bun_G).$$

We will upgrade it to two objects
$$\ul{\Dmod}^{Z_G}_{\frac{1}{2}}(\Bun_{G_{\on{ad}}})\in \on{ShvCat}(\on{Ge}_{Z_G}(X))
\text{ and }  \ul{\Dmod}^{\pi_1(\cG)}_{\frac{1}{2}}(\Bun_G)\in \on{ShvCat}(\on{Ge}_{\pi_1(\cG)}(X)).$$

We will state \thmref{t:FM L} that says that the above two objects correspond to one another under
the 2-Fourier-Mukai transform. The nature of the two constructions will 
then immediately yield Theorems \ref{t:Hecke Z 0} and \ref{t:Hecke Z 1}. 

\sssec{}

The short exact sequence of groups
$$1\to Z_G\to G \to G_{\on{ad}}\to 1$$
gives rise to a map
\begin{equation} 
\sfp_{Z_G}:\Bun_{G_{\on{ad}}}\to \on{Ge}_{Z_G}(X).
\end{equation}

Consider the induced map
$$(\sfp_{Z_G})_\dr:(\Bun_{G_{\on{ad}}})_\dr\to (\on{Ge}_{Z_G}(X))_\dr.$$

\sssec{}

Note, however, that (by nil-invariance of \'etale cohomology) for a finite abelian group $\Gamma$,  the map of prestacks 
$$\on{Ge}_{\Gamma}(X) \to (\on{Ge}_{\Gamma}(X))_{\dR}$$
is an isomorphism. 

\medskip

Hence, we can regard $(\sfp_{Z_G})_\dr$ as a map
\begin{equation} \label{e:to pi Ge}
(\Bun_{G_{\on{ad}}})_\dr\to \on{Ge}_{Z_G}(X).
\end{equation}

\sssec{}

The category $\Dmod_{\frac{1}{2}}(\Bun_{G_{\on{ad}}})$ is (tautologically) the category of global sections of a sheaf
of categories, denoted
$$\ul{\Dmod}^{G_{\on{ad}}}_{\frac{1}{2}}(\Bun_{G_{\on{ad}}})$$
over $(\Bun_{G_{\on{ad}}})_\dr$. 

\medskip

Set
$$\ul{\Dmod}^{Z_G}_{\frac{1}{2}}(\Bun_{G_{\on{ad}}}):=((\sfp_{Z_G})_\dr)_*(\ul{\Dmod}^{G_{\on{ad}}}_{\frac{1}{2}}(\Bun_{G_{\on{ad}}}))\in \on{ShvCat}(\on{Ge}_{Z_G}(X)).$$

\sssec{}

Tautologically, we have
\begin{equation} \label{e:glob sect BunG sheaf}
\bGamma\left(\on{Ge}_{Z_G}(X),\ul{\Dmod}^{Z_G}_{\frac{1}{2}}(\Bun_{G_{\on{ad}}})\right)\simeq
\Dmod_{\frac{1}{2}}(\Bun_{G_{\on{ad}}}).
\end{equation} 

In addition, 
\begin{equation} \label{e:fiber BunG sheaf}
\left(\ul{\Dmod}^{Z_G}_{\frac{1}{2}}(\Bun_{G_{\on{ad}}})\right)|_{\fG^0_{Z_G}}\simeq \Dmod_{\frac{1}{2}}(\Bun_G),
\end{equation} 
see the notations in \secref{sss:fibers of 2-FM}. 

%
%
%
%

\medskip

More generally, for a given $\fG_{Z_G}\in \on{Ge}_{Z_G}(X)$, we have
\begin{equation} \label{e:fiber BunG sheaf twisted}
\left(\ul{\Dmod}^{Z_G}_{\frac{1}{2}}(\Bun_{G_{\on{ad}}})\right)|_{\fG_{Z_G}}\simeq \Dmod_{\frac{1}{2}}(\Bun_{G,\fG_{Z_G}}),
\end{equation} 
where 
$$\Bun_{G,\fG_{Z_G}}:=\Bun_{G_{\on{ad}}}\underset{\on{Ge}_{Z_G}(X)}\times \{\fG_{Z_G}\}.$$

\begin{rem} \label{r:twisted groups}

We can think $\Bun_{G,\fG_{Z_G}}$ as follows: pick a $G_{\on{ad}}$-torsor $\CP_{G_{\on{ad}}}$ that maps to
$\fG_{Z_G}$, and let 
$$G_{\CP_{G_{\on{ad}}}}$$
be the corresponding (non-pure) inner form of the constant group-scheme with fiber $G$.

\medskip

Then
$$\Bun_{G,\fG_{Z_G}}\simeq \Bun_{G_{\CP_{G_{\on{ad}}}}}.$$

(Note that different choices for $\CP_{G_{\on{ad}}}$ differ by $G$-torsors, and hence the corresponding moduli spaces
$\Bun_{G_{\CP_{G_{\on{ad}}}}}$ are a priori canonically isomorphic.) 

\end{rem}

\sssec{}

We now consider $\Dmod_{\frac{1}{2}}(\Bun_G)$ as equipped with the spectral action of $\QCoh(\LS_\cG)$. Since the stack $\LS_\cG$ 
is 1-affine (say, by \cite[Theorem 2.2.4]{Ga3}), we can canonically attach to  $\Dmod_{\frac{1}{2}}(\Bun_G)$ an object
\begin{equation} \label{e:1-aff upgrade}
\ul{\Dmod}^{\cG}_{\frac{1}{2}}(\Bun_G)\in \on{ShvCat}(\LS_\cG), 
\end{equation} 

\medskip

The short exact sequence of groups
$$1\to \pi_1(\cG)\to \cG_{\on{sc}}\to \cG\to 1$$
give rise to a map 
\begin{equation} \label{e:LS to gerbe}
\sfp_{\pi_1(\cG)}:\LS_\cG\to \on{Ge}_{\pi_1(\cG)}(X),
\end{equation} 
where we note that $\cG_{\on{sc}}$ is the Langlands dual of $G_{\on{ad}}$.

\medskip

Denote
$$\ul{\Dmod}^{\pi_1(\cG)}_{\frac{1}{2}}(\Bun_G):=(\sfp_{\pi_1(\cG)})_*(\ul{\Dmod}^{\cG}_{\frac{1}{2}}(\Bun_G))\in 
\on{ShvCat}(\on{Ge}_{\pi_1(\cG)}(X)).$$

\medskip

Note that tautologically,
\begin{equation} \label{e:glob sect BunG sheaf dual}
\bGamma\left(\on{Ge}_{\pi_1(\cG)}(X),\ul{\Dmod}^{\pi_1(\cG)}_{\frac{1}{2}}(\Bun_G)\right) \simeq
\Dmod_{\frac{1}{2}}(\Bun_G).
\end{equation} 

\sssec{}

We will prove:

\begin{thm} \label{t:FM L}
Under the identification $\pi_1(\cG)\simeq (Z_G)^\vee(1)$, we have
$$\on{2-FM}_{\on{Ge}_{Z_G}(X)\to \on{Ge}_{\pi_1(\cG)}(X)}\left(\ul{\Dmod}^{Z_G}_{\frac{1}{2}}(\Bun_{G_{\on{ad}}})\right)\simeq
\ul{\Dmod}^{\pi_1(\cG)}_{\frac{1}{2}}(\Bun_G),$$
up to the inversion involution\footnote{The inversion involution has to do with our normalization of the geometric Satake equivalence.}  
on $Z_G$.
\end{thm}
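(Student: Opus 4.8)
The plan is to use \thmref{t:Verdier FM} to convert \thmref{t:FM L} into a comparison of fibers, and then to recognize both fibers through the interaction of $\Dmod_{\frac{1}{2}}(\Bun_G)$ with the center, the essential input being the ``central'' part of geometric Satake. Since the $2$-Fourier--Mukai transform $\on{2-FM}_{\on{Ge}_{Z_G}(X)\to\on{Ge}_{\pi_1(\cG)}(X)}$ is an equivalence by \thmref{t:Verdier FM}(a), it is enough to produce an equivalence $\on{2-FM}_{\on{Ge}_{Z_G}(X)\to\on{Ge}_{\pi_1(\cG)}(X)}\bigl(\ul{\Dmod}^{Z_G}_{\frac{1}{2}}(\Bun_{G_{\on{ad}}})\bigr)\simeq\ul{\Dmod}^{\pi_1(\cG)}_{\frac{1}{2}}(\Bun_G)$ in $\on{ShvCat}(\on{Ge}_{\pi_1(\cG)}(X))$. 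By the fiber formula \eqref{e:fib glob sect 1}, together with the compatibilities of \lemref{l:fiber and global sects} and \lemref{l:sgrp}, this reduces to matching, compatibly in $\fG_{\pi_1(\cG)}\in\on{Ge}_{\pi_1(\cG)}(X)$, the fiber $\bigl(\ul{\Dmod}^{\pi_1(\cG)}_{\frac{1}{2}}(\Bun_G)\bigr)|_{\fG_{\pi_1(\cG)}}$ --- which by the definition of $(\sfp_{\pi_1(\cG)})_*$ and $1$-affineness of $\LS_\cG$ is $\QCoh\bigl(\LS_\cG\underset{\on{Ge}_{\pi_1(\cG)}(X)}\times\{\fG_{\pi_1(\cG)}\}\bigr)\underset{\QCoh(\LS_\cG)}\otimes\Dmod_{\frac{1}{2}}(\Bun_G)$ --- with the twisted global sections $\bGamma\bigl(\on{Ge}_{Z_G}(X),(\ul{\Dmod}^{Z_G}_{\frac{1}{2}}(\Bun_{G_{\on{ad}}}))_{\CG_{\fG_{\pi_1(\cG)}}}\bigr)$, which by the construction of $\ul{\Dmod}^{Z_G}_{\frac{1}{2}}(\Bun_{G_{\on{ad}}})$ as $((\sfp_{Z_G})_\dr)_*$ of the tautological sheaf of categories on $(\Bun_{G_{\on{ad}}})_\dr$ is the category of $\sfp_{Z_G}^*\CG_{\fG_{\pi_1(\cG)}}$-twisted half-twisted $D$-modules on $\Bun_{G_{\on{ad}}}$, equivalently (cf. \eqref{e:fiber BunG sheaf twisted} and \remref{r:twisted groups}) of half-twisted $D$-modules on the stack of bundles for the inner form of $G$ cut out by $\fG_{\pi_1(\cG)}$.

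For the trivial gerbe $\fG^0$ both categories are canonically $\Dmod_{\frac{1}{2}}(\Bun_G)$, by \eqref{e:fiber BunG sheaf} on the one hand and by \eqref{e:glob fib sect neutral} combined with \eqref{e:glob sect BunG sheaf dual} on the other. Inserting gerbe twists reduces the general fiber to this one (this is where \lemref{l:sgrp} and \lemref{l:fiber and global sects} are used), so the real content is the identification of the \emph{residual structure} on $\Dmod_{\frac{1}{2}}(\Bun_G)$: on the automorphic side, the translation action of the Picard groupoid $\Bun_{Z_G}$ together with the $(Z_G)^\vee$-grading coming from the action of $Z_G$ on the identity functor --- the data packaged by $\ul{\Dmod}^{Z_G}_{\frac{1}{2}}$, cf. \lemref{l:Hecke 0 abs} and \lemref{l:Hecke 1 abs}; on the spectral side, the restriction of the spectral $\QCoh(\LS_\cG)$-action along $\sfp_{\pi_1(\cG)}^*\colon\QCoh(\on{Ge}_{\pi_1(\cG)}(X))\to\QCoh(\LS_\cG)$ together with the $\pi_0(\LS_\cG)\simeq\pi_{1,\on{alg}}(\cG)\simeq(Z_G)^\vee$-decomposition --- the data packaged by $\ul{\Dmod}^{\pi_1(\cG)}_{\frac{1}{2}}$.

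It therefore remains to identify, under $\pi_1(\cG)\simeq(Z_G)^\vee(1)$, the restricted spectral action with the $\Bun_{Z_G}$-translation action (and the two component-group decompositions with each other). I would obtain this by unwinding the construction of the spectral action through geometric Satake: the map $\sfp_{\pi_1(\cG)}$ corresponds, on spectral quasi-coherent sheaves, to restriction $\Rep(\cG)\to\Rep(Z(\cG))$ along the central subgroup $Z(\cG)\hookrightarrow\cG$, so that the restricted action is computed by the composite $\Rep(Z(\cG))_\Ran\to\Rep(\cG)_\Ran\xrightarrow{\Sat}\on{End}(\Dmod_{\frac{1}{2}}(\Bun_G))$; and by the classical computation of Hecke functors at central coweights (Beilinson--Drinfeld, via factorization --- the same mechanism that builds the Picard-stack action on $\Bun_G$ in the first place) this composite is precisely translation by the Picard groupoid $\Bun_{Z_G}$, with the same analysis identifying the gradings. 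The inversion involution on $Z_G$ in the statement is exactly the discrepancy introduced by the normalization of geometric Satake.

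The mathematical heart --- ``central Satake equals the Picard action'' --- is classical, so the genuine difficulty is to run the whole comparison \emph{coherently at the level of sheaves of categories} over $\on{Ge}_{Z_G}(X)$ and $\on{Ge}_{\pi_1(\cG)}(X)$: since these prestacks are not $1$-affine (\remref{r:not 1 aff}), one cannot argue with module categories over their global sections, and must instead carry the gerbe twists and the functorial dependence on $\fG_{\pi_1(\cG)}$ through every step. For this, Lemmas \ref{l:Hecke 0 abs}--\ref{l:sgrp} and the explicit splitting of $\on{Ge}_\Gamma(X)$ used in the proof of \thmref{t:Verdier FM} are the indispensable bookkeeping tools. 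A secondary subtlety, which I expect requires attention but no new idea, is the consistent tracking of Tate twists and of Cartier/Verdier dualities so that the identification $\pi_1(\cG)\simeq(Z_G)^\vee(1)$ threads the same way through the automorphic and the spectral constructions.
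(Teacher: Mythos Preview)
Your outline captures the right shape of the argument, and the identification of the two fibers at the trivial gerbe with $\Dmod_{\frac{1}{2}}(\Bun_G)$ is correct. But there are two places where your proposal underestimates what is needed, and the paper's proof differs from yours at both.

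\medskip

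\textbf{The equivalence step.} You correctly flag that $\on{Ge}_{Z_G}(X)$ and $\on{Ge}_{\pi_1(\cG)}(X)$ are not $1$-affine, so one cannot check an equivalence of sheaves of categories by looking at global sections. You then say that Lemmas~\ref{l:Hecke 0 abs}--\ref{l:sgrp} are ``the indispensable bookkeeping tools'' for carrying the comparison through coherently; but you do not say how, and in fact those lemmas alone do not close the gap. The paper's move is different and cleaner: rather than working over the gerbe stacks, it observes that \emph{both} sides lift to $\on{ShvCat}(\LS_\cG)$ --- the spectral side by definition, and the $2$-FM side because the twisted category $\Dmod_{\frac{1}{2}}(\Bun_{G_{\on{ad}}})_{\CG_{\fG_{\pi_1(\cG)}}}$ carries a $\QCoh(\LS_{\cG_{\on{sc}},\fG^{\otimes -1}_{\pi_1(\cG)}})$-action by \propref{p:key local}(b). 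Since $\LS_\cG$ \emph{is} $1$-affine, the comparison functor (once constructed) is an equivalence iff it is one on global sections, and there it becomes the identity on $\Dmod_{\frac{1}{2}}(\Bun_G)$ by \lemref{l:L FM glob sect}. Your sentence ``inserting gerbe twists reduces the general fiber to this one'' is where this idea should go, but as written it is a hope rather than an argument.

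\medskip

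\textbf{The construction step.} You treat ``central Satake equals the Picard action'' as a classical black box. At the level of untwisted categories that is fine, but the paper needs it in a gerbe-twisted form: for each $\fG_{\pi_1(\cG)}$, a monoidal action of $\Rep(\cG_{\on{sc}})_{\Ran,\fG^{\otimes -1}_{\pi_1(\cG)}}$ on $\Dmod_{\frac{1}{2}}(\Bun_{G_{\on{ad}}})_{\CG_{\fG_{\pi_1(\cG)}}}$, compatible with $\phi_!$ from $\Dmod_{\frac{1}{2}}(\Bun_G)$ and factoring through $\QCoh(\LS_{\cG_{\on{sc}},\fG^{\otimes -1}_{\pi_1(\cG)}})$. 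This is \propref{p:key local}, and its proof (Sections~\ref{ss:proof key local a}--\ref{ss:proof key local b}) is not just bookkeeping: part~(a) requires a twisted geometric Satake functor built from the local Verdier pairing~\eqref{e:Verdier pairing Gamma loc}, and part~(b) requires adapting the Beilinson--Drinfeld spectral decomposition argument to the twisted setting, including a twisted oper space and a check that the map $\Op^{\on{mon-free}}_{\cG,x}\to\on{Ge}_{\pi_1(\cG)}(\ocD_x)$ factors through the trivial gerbe. None of this is deep, but it is what ``via factorization'' actually means here, and without it you do not have the functor whose equivalence you want to check.
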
 

Combining \thmref{t:FM L} with \corref{c:Verdier FM invol}, we obtain: 

\begin{cor} \label{c:FM L}
Under the identification $Z_G\simeq (\pi_1(\cG)^\vee)(1)$, we have:
$$\on{2-FM}_{\on{Ge}_{\pi_1(\cG)}(X)\to \on{Ge}_{Z_G}(X)}\left(\ul{\Dmod}^{\pi_1(\cG)}_{\frac{1}{2}}(\Bun_G)\right)
\simeq \ul{\Dmod}^{Z_G}_{\frac{1}{2}}(\Bun_{G_{\on{ad}}}).$$
\end{cor}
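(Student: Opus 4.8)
The plan is to deduce \corref{c:FM L} as a purely formal consequence of \thmref{t:FM L} together with the involutivity assertion \thmref{t:Verdier FM}(b), specialized to $\Gamma = Z_G$, so that $\Gamma^\vee(1) = (Z_G)^\vee(1) \simeq \pi_1(\cG)$ via \eqref{e:pi 1 Z duality}.

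First I would record the content of \thmref{t:FM L}: under $\pi_1(\cG)\simeq (Z_G)^\vee(1)$,
$$\on{2-FM}_{\on{Ge}_{Z_G}(X)\to \on{Ge}_{\pi_1(\cG)}(X)}\left(\ul{\Dmod}^{Z_G}_{\frac{1}{2}}(\Bun_{G_{\on{ad}}})\right)\simeq \ul{\Dmod}^{\pi_1(\cG)}_{\frac{1}{2}}(\Bun_G),$$
up to inversion on $Z_G$. Applying the 2-functor $\on{2-FM}_{\on{Ge}_{\pi_1(\cG)}(X)\to \on{Ge}_{Z_G}(X)}$ to both sides turns the left-hand side into $\on{2-FM}_{\on{Ge}_{\pi_1(\cG)}(X)\to \on{Ge}_{Z_G}(X)}\big(\ul{\Dmod}^{\pi_1(\cG)}_{\frac{1}{2}}(\Bun_G)\big)$ — which is the object we want to identify — while the right-hand side becomes the composite $\on{2-FM}_{\on{Ge}_{\pi_1(\cG)}(X)\to \on{Ge}_{Z_G}(X)}\circ \on{2-FM}_{\on{Ge}_{Z_G}(X)\to \on{Ge}_{\pi_1(\cG)}(X)}$ applied to $\ul{\Dmod}^{Z_G}_{\frac{1}{2}}(\Bun_{G_{\on{ad}}})$.

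Next I would invoke \thmref{t:Verdier FM}(b): this composite is the involution of $\on{ShvCat}(\on{Ge}_{Z_G}(X))$ induced by inversion on $Z_G$, via the canonical identification $(\pi_1(\cG))^\vee(1)\simeq Z_G$ — exactly the identification appearing in the statement of \corref{c:FM L}. Thus the right-hand side is $\ul{\Dmod}^{Z_G}_{\frac{1}{2}}(\Bun_{G_{\on{ad}}})$ up to inversion on $Z_G$. It then remains to check that the two inversion involutions in play — the one built into \thmref{t:FM L} and the one produced by \thmref{t:Verdier FM}(b) — are compatibly transported by the 2-Fourier--Mukai functors (the inversion on $\on{Ge}_{Z_G}(X)$ being carried to the inversion on $\on{Ge}_{\pi_1(\cG)}(X)$ and back), so that they cancel and one is left with precisely the equivalence asserted in \corref{c:FM L}. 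This bookkeeping is the only point requiring any attention, and it is routine.

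Since the corollary is formal, the only genuine obstacle lies upstream, in the proof of \thmref{t:FM L} itself. There one must use the fiberwise description of the 2-Fourier--Mukai transform from \secref{sss:fibers of 2-FM} to match the fiber of $\ul{\Dmod}^{\pi_1(\cG)}_{\frac{1}{2}}(\Bun_G)$ over each $\pi_1(\cG)$-gerbe with the appropriate twisted global sections of $\ul{\Dmod}^{Z_G}_{\frac{1}{2}}(\Bun_{G_{\on{ad}}})$ over $\on{Ge}_{Z_G}(X)$ — these twisted global sections involving, via \eqref{e:fiber BunG sheaf twisted}, the categories $\Dmod_{\frac{1}{2}}$ of the non-pure inner forms $\Bun_{G,\fG_{Z_G}}$ — compatibly with the spectral $\QCoh(\LS_\cG)$-action; the substantive input is the behaviour of the Langlands functor under these gerbe twists. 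But none of this is needed to pass from \thmref{t:FM L} to \corref{c:FM L}.
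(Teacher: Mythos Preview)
Your proposal is correct and matches the paper's own approach exactly: the paper derives \corref{c:FM L} in one line by ``combining \thmref{t:FM L} with the involutivity assertion in \thmref{t:Verdier FM}'', which is precisely the argument you spell out. Your additional remark that the inversion involutions cancel is the only bookkeeping involved, and your commentary on where the substantive work lies (namely in \thmref{t:FM L}) is accurate but, as you note, not needed for the corollary itself.
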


\sssec{} \label{sss:proof central Hecke}

Let us show how \corref{c:FM L} implies Theorems \ref{t:Hecke Z 0} and  \ref{t:Hecke Z 1}. 

\medskip

Indeed, the two theorems follow immediately from Lemma \ref{l:Hecke 0 abs} and \ref{l:Hecke 1 abs}, respectively. 

\qed[Theorems \ref{t:Hecke Z 0} and  \ref{t:Hecke Z 1}]

\ssec{Proof of \thmref{t:FM L}}

\sssec{}

We start by constructing a functor 
\begin{equation} \label{e:FM L 1}
\ul{\Dmod}^{\pi_1(\cG)}_{\frac{1}{2}}(\Bun_G)\to \on{2-FM}_{\on{Ge}_{Z_G}(X)\to \on{Ge}_{\pi_1(\cG)}(X)}\left(\ul{\Dmod}^{Z_G}_{\frac{1}{2}}(\Bun_{G_{\on{ad}}})\right),
\end{equation}
up to the inversion involution.

\medskip

Since $\on{Ge}_{\pi_1(\cG)}(X)$ is algebro-geometrically discrete, the datum of \eqref{e:FM L 1} consists of the data of functors 
\begin{equation} \label{e:FM L 1 fib}
\ul{\Dmod}^{\pi_1(\cG)}_{\frac{1}{2}}(\Bun_G)|_{\fG^{\otimes -1}_{\pi_1(\cG)}}
\to \on{2-FM}_{\on{Ge}_{Z_G}(X)\to \on{Ge}_{\pi_1(\cG)}(X)}\left(\ul{\Dmod}^{Z_G}_{\frac{1}{2}}(\Bun_{G_{\on{ad}}})\right)|_{\fG_{\pi_1(\cG)}}
\end{equation}
that depend functorially on $\fG_{\pi_1(\cG)}\in \on{Ge}_{\pi_1(\cG)}(X)$. 

\medskip

Applying \eqref{e:fib glob sect 1}, the datum of \eqref{e:FM L 1 fib} is equivalent to that of a functor
\begin{equation} \label{e:FM L 1 fib glob}
\ul{\Dmod}^{\pi_1(\cG)}_{\frac{1}{2}}(\Bun_G)|_{\fG^{\otimes -1}_{\pi_1(\cG)}}\to 
\bGamma\left(\on{Ge}_{Z_G}(X),\ul{\Dmod}^{Z_G}_{\frac{1}{2}}(\Bun_{G_{\on{ad}}})_{\CG_{\fG_{\pi_1(\cG)}}}\right).
\end{equation}

We rewrite the right-hand side in \eqref{e:FM L 1 fib glob} as
\begin{equation} \label{e:FM L 1 fib glob RHS}
\Dmod_{\frac{1}{2}}(\Bun_{G_{\on{ad}}})_{\CG_{\fG_{\pi_1(\cG)}}},
\end{equation} 
where by a slight abuse of notation we regard $\CG_{\fG_{\pi_1(\cG)}}$ as a $\BG_m$-gerbe on $(\Bun_{G_{\on{ad}}})_\dr$. 

\sssec{}

We rewrite the left-hand side in \eqref{e:FM L 1 fib glob} as
\begin{equation} \label{e:FM L 1 fib glob LHS}
\Dmod_{\frac{1}{2}}(\Bun_G)\underset{\QCoh(\LS_\cG)}\otimes \QCoh(\LS_{\cG_{\on{sc}},\fG^{\otimes -1}_{\pi_1(\cG)}}),
\end{equation}
where  
$$\LS_{\cG_{\on{sc}},\fG^{\otimes -1}_{\pi_1(\cG)}}:=
\LS_\cG\underset{\on{Ge}_{\pi_1(\cG)}(X)}\times \{\fG^{\otimes -1}_{\pi_1(\cG)}\}.$$ 

The notation $\LS_{\cG_{\on{sc}},\fG^{\otimes -1}_{\pi_1(\cG)}}$ expresses the fact that this stack is a twisted form of $\LS_{\cG_{\on{sc}}}$.
Namely, for $\fG_{\pi_1(\cG)}=\fG^0_{\pi_1(\cG)}$ we have
$$\LS_{\cG_{\on{sc}},\fG^0_{\pi_1(\cG)}}=\LS_{\cG_{\on{sc}}}.$$

\sssec{}

Let $\Rep(\cG_{\on{sc}})_{\Ran,\fG^{\otimes -1}_{\pi_1(\cG)}}$ be the $\fG^{\otimes -1}_{\pi_1(\cG)}$-twist of $\Rep(\cG_{\on{sc}})$, i.e., this is 
the factorization category that associates to a point $\ul{x}\in \Ran$ the category
$$\Rep(\cG_{\on{sc}})_{\ul{x},\fG^{\otimes -1}_{\pi_1(\cG)}}:=\QCoh(\LS^\reg_{\cG_{\on{sc}},\fG^{\otimes -1}_{\pi_1(\cG)},\ul{x}}),$$
where:

\begin{itemize}

\item $\LS^\reg_{\cG_{\on{sc}},\fG^{\otimes -1}_{\pi_1(\cG)},\ul{x}}:=
\LS^\reg_{\cG,\ul{x}}\underset{\on{Ge}_{\pi_1(\cG)}(\cD_{\ul{x}})}\times \{\fG^{\otimes -1}_{\pi_1(\cG)}|_{\cD_{\ul{x}}}\}$;

\medskip

\item $\on{Ge}_{\pi_1(\cG)}(\cD_{\ul{x}})$ denotes the space of $\pi_1(\cG)$-gerbes on the formal disc $\cD_{\ul{x}}$ around $\ul{x}$.

\end{itemize} 

\sssec{}

Recall now that we have the (symmetric) monoidal localization functors
$$\Loc^{\on{spec}}_\cG:\Rep(\cG)_\Ran\to \QCoh(\LS_\cG) \text{ and }
\Loc^{\on{spec}}_{\cG_{\on{sc}}}:\Rep(\cG_{\on{sc}})_\Ran\to \QCoh(\LS_{\cG_{\on{sc}}}).$$

We have the corresponding twisted version:
$$\Loc^{\on{spec}}_{\cG_{\on{sc}},\fG^{\otimes -1}_{\pi_1(\cG)}}:\Rep(\cG_{\on{sc}})_{\Ran,\fG^{\otimes -1}_{\pi_1(\cG)}}\to \QCoh(\LS_{\cG_{\on{sc}},\fG^{\otimes -1}_{\pi_1(\cG)}}),$$
and as in the case of $\Loc^{\on{spec}}_{\cG_{\on{sc}}}$, one shows that the functor $\Loc^{\on{spec}}_{\cG_{\on{sc}},\fG^{\otimes -1}_{\pi_1(\cG)}}$
is a \emph{localization}, i.e., its right adjoint is fully faithful. 
See \cite[Prop. C.1.7]{GLC4} for a general result encompassing
these statements.

\sssec{}

Let $\phi$ denote the tautological map $G\to G_{\on{ad}}$, and also the map
$$\Bun_G\to \Bun_{G_{\on{ad}}}.$$

Note that the pullback along $\phi$ of $\CG_{\fG_{\pi_1(\cG)}}$, viewed as a gerbe on $(\Bun_{G_{\on{ad}}})_\dr$
canonically trivializes. Hence, we obtain a functor
\begin{equation} \label{e:from G to Gad twisted}
\phi_!:\Dmod_{\frac{1}{2}}(\Bun_G) \to \Dmod_{\frac{1}{2}}(\Bun_{G_{\on{ad}}})_{\CG_{\fG_{\pi_1(\cG)}}}.
\end{equation} 

\sssec{}

We will construct the following gerbe-twisted version of the Hecke action: 

\begin{prop}  \label{p:key local} 
There is a canonically defined action of the monoidal category 
$\Rep(\cG_{\on{sc}})_{\Ran,\fG^{\otimes -1}_{\pi_1(\cG)}}$ on $\Dmod_{\frac{1}{2}}(\Bun_{G_{\on{ad}}})_{\CG_{\fG_{\pi_1(\cG)}}}$.
Moreover, the following properties hold: 

\smallskip

\noindent{\em(a)} 
The functor $\phi_!$ of \eqref{e:from G to Gad twisted} is equivariant with respect to the $\Rep(\cG)_\Ran$-action on $\Dmod_{\frac{1}{2}}(\Bun_G)$ via
the tautological functor 
$$\Rep(\cG)_\Ran\to \Rep(\cG_{\on{sc}})_{\Ran,\fG^{\otimes -1}_{\pi_1(\cG)}}.$$

\smallskip

\noindent{\em(b)} The resulting $\Rep(\cG_{\on{sc}})_{\Ran,\fG^{\otimes -1}_{\pi_1(\cG)}}$-action on $\Dmod_{\frac{1}{2}}(\Bun_{G_{\on{ad}}})_{\CG_{\fG_{\pi_1(\cG)}}}$
factors canonically via the localization functor
$$\Loc^{\on{spec}}_{\cG_{\on{sc}},\fG^{\otimes -1}_{\pi_1(\cG)}}:\Rep(\cG_{\on{sc}})_{\Ran,\fG^{\otimes -1}_{\pi_1(\cG)}}\to 
\QCoh(\LS_{\cG_{\on{sc}},\fG^{\otimes -1}_{\pi_1(\cG)}}).$$

\end{prop}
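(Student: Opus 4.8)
The plan is to construct the action \emph{locally on the Ran space}, imitating the construction of the usual Hecke action of $\Rep(\cG_{\on{sc}})_\Ran$ on $\Dmod_{\frac{1}{2}}(\Bun_{G_{\on{ad}}})$ via modifications of bundles and factorizable geometric Satake, the only genuinely new ingredient being a comparison of $\BG_m$-gerbes on the Hecke stack. Recall that the untwisted action is assembled from the correspondence
\[
\Bun_{G_{\on{ad}}} \overset{\hl}\longleftarrow \on{Hecke}_{G_{\on{ad}},\Ran} \overset{\hr\times \on{ev}}\longrightarrow \Bun_{G_{\on{ad}}} \times \Gr_{G_{\on{ad}},\Ran}
\]
together with the factorizable Satake equivalence identifying the spherical category on $\Gr_{G_{\on{ad}},\Ran}$ with $\Rep(\cG_{\on{sc}})_\Ran$. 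To twist this, first observe that the map $\sfp_{Z_G}\colon\Bun_{G_{\on{ad}}}\to \on{Ge}_{Z_G}(X)$ extends over the Hecke stack: the two legs $\hl,\hr$ compose with $\sfp_{Z_G}$ to maps that differ by a canonical map $\on{Hecke}_{G_{\on{ad}},\Ran}\to \on{Ge}_{Z_G}(X)$ factoring through $\Gr_{G_{\on{ad}},\Ran}$, which records the change of the lifting gerbe of a $G_{\on{ad}}$-bundle (the $Z_G$-gerbe of \'etale-local lifts to a $G$-bundle) under a modification at a point. Consequently $\hl^*(\CG_{\fG_{\pi_1(\cG)}})$ and $\hr^*(\CG_{\fG_{\pi_1(\cG)}})$ differ by the pullback to $\on{Hecke}_{G_{\on{ad}},\Ran}$ of a $\BG_m$-gerbe on $\Gr_{G_{\on{ad}},\Ran}$.

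The key step — the one I expect to be the main obstacle — is to identify this discrepancy gerbe on $\Gr_{G_{\on{ad}},\Ran}$ with \emph{exactly} the $\BG_m$-gerbe used to define the twisted factorization category $\Rep(\cG_{\on{sc}})_{\Ran,\fG^{\otimes -1}_{\pi_1(\cG)}}$. This is a bookkeeping computation with gerbes on a curve: one unwinds the definition of $\sfp_{Z_G}$, computes its variation along a modification — governed by the canonical surjection $\pi_1(G_{\on{ad}})\twoheadrightarrow Z_G$ together with the component map $\Gr_{G_{\on{ad}},\Ran}\to \pi_1(G_{\on{ad}})$ — and then pairs the result against $\fG_{\pi_1(\cG)}$ using the Verdier pairing \eqref{e:Verdier pairing Gamma} (with $\Gamma=Z_G$, $\Gamma^\vee(1)=\pi_1(\cG)$). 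One then checks, factorizably, that this matches the gerbe on $\Gr_{\cG_{\on{sc}},\Ran}$ entering the construction of $\QCoh(\LS^\reg_{\cG_{\on{sc}},\fG^{\otimes -1}_{\pi_1(\cG)},\ul{x}})$, and hence of $\Rep(\cG_{\on{sc}})_{\Ran,\fG^{\otimes -1}_{\pi_1(\cG)}}$ through the twisted Satake equivalence. Granting this comparison, the correspondence above — now with $\Dmod_{\frac{1}{2}}(\Bun_{G_{\on{ad}}})_{\CG_{\fG_{\pi_1(\cG)}}}$ on both automorphic legs and the twisted Satake category on the $\Gr$-leg — produces the desired monoidal action, with associativity and factorization inherited from the untwisted case.

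For property (a): the map $\phi\colon\Bun_G\to \Bun_{G_{\on{ad}}}$ canonically trivializes $\CG_{\fG_{\pi_1(\cG)}}$ (as already noted), and there is a map of Hecke stacks $\on{Hecke}_{G,\Ran}\to \on{Hecke}_{G_{\on{ad}},\Ran}$ lying over $\phi$ on both legs and over the natural map $\Gr_{G,\Ran}\to \Gr_{G_{\on{ad}},\Ran}$ on the $\Gr$-leg; compatibility of geometric Satake for $G$ and for $G_{\on{ad}}$ with restriction along $\phi^\vee\colon\cG_{\on{sc}}\to \cG$ then yields the required equivariance of $\phi_!$. For property (b): the factorization of the untwisted $\Rep(\cG)_\Ran$-action through $\Loc^{\on{spec}}_\cG$ is one of the basic structures established in the previous papers, and the gerbe-twisted construction above is literally a twist of this structure; since $\Loc^{\on{spec}}_{\cG_{\on{sc}},\fG^{\otimes -1}_{\pi_1(\cG)}}$ is again a localization by \cite[Prop.~C.1.7]{GLC4}, the same formal argument shows that the twisted action factors through it. Thus the only real content is the gerbe comparison of the second paragraph, everything else being assembly and citation.
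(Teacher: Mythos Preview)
Your construction of the action and the argument for part (a) are essentially the paper's approach: the paper also sets up the local Verdier-type pairing on gerbes, observes that the local Hecke stack maps to $\on{Ge}_{Z_G}(\cD_{\ul{x}})_{\ul{x}}$, proves a twisted Satake lemma matching central characters of $\cG_{\on{sc}}$ to connected components of the Hecke stack, and then globalizes. Your gerbe-comparison paragraph is exactly the content of the paper's Sect.~8.7.

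The gap is in your treatment of part (b). Your claim that ``the gerbe-twisted construction above is literally a twist of this structure'' and that ``the same formal argument'' applies does not go through: when the class of $\fG_{\pi_1(\cG)}$ in $H^2_{\on{et}}(X,\pi_1(\cG))$ is non-trivial, the category $\Dmod_{\frac{1}{2}}(\Bun_{G_{\on{ad}}})_{\CG_{\fG_{\pi_1(\cG)}}}$ is \emph{not} a direct summand of the untwisted $\Dmod_{\frac{1}{2}}(\Bun_{G_{\on{ad}}})$, and likewise $\LS_{\cG_{\on{sc}},\fG^{\otimes -1}_{\pi_1(\cG)}}$ is a different fiber of $\LS_\cG\to\on{Ge}_{\pi_1(\cG)}(X)$, not a substack of $\LS_{\cG_{\on{sc}}}$. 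So the twisted factorization cannot be read off from the untwisted one. The fact that $\Loc^{\on{spec}}_{\cG_{\on{sc}},\fG^{\otimes -1}_{\pi_1(\cG)}}$ is a localization is necessary but far from sufficient: you still need to show that the kernel of this localization acts trivially, which in the untwisted case is the content of the Beilinson--Drinfeld oper argument, not a formal consequence.

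What the paper actually does for (b) is re-run the proof of the spectral decomposition theorem (\cite[Sect.~11.1]{Ga4}) in the twisted setting. It chooses a point $x\in X$, trivializes $\fG_{\pi_1(\cG)}$ over $X-x$ so that the gerbe is recorded by a character $\chi\in\pi_1(\cG)(-1)\simeq (Z_G)^\vee$, introduces the $\chi$-isotypic Kac--Moody category $\KL(G_{\on{ad}})_{\crit,\ul{x},\chi}$ and a twisted localization functor to $\Dmod_{\frac{1}{2}}(\Bun_{G_{\on{ad}}})_{\CG_{\fG_{\pi_1(\cG)}}}$, and then proves the key geometric input: a canonical map $\Op^{\on{mon-free}}_{\cG,x}\to \on{Ge}_{\pi_1(\cG)}(\cD_x)_x$ (constructed by showing that the underlying $\cG$-bundle of an oper has trivial lifting gerbe, via the explicit $\cT$-bundle $2\rhoch(\omega^{1/2})$). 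This lets one define the $\chi$-component of monodromy-free opers and show that the twisted Kac--Moody localization factors through it, whence through $\QCoh(\LS_{\cG_{\on{sc}},\fG_{\pi_1(\cG)}})$. None of this is formal, and it is the substance of the proof you are missing.
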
 

The proposition will be proved in Sects. \ref{ss:proof key local a} and \ref{ss:proof key local b}. 

\sssec{}

We now return to the sought-for functor \eqref{e:FM L 1 fib glob}, thought of as a functor
\begin{equation} \label{e:FM L LHS and RHS}
\Dmod_{\frac{1}{2}}(\Bun_G)\underset{\QCoh(\LS_\cG)}\otimes \QCoh(\LS_{\cG_{\on{sc}},\fG^{\otimes -1}_{\pi_1(\cG)}})\to
(\Bun_{G_{\on{ad}}})_{\CG_{\fG_{\pi_1(\cG)}}}.
\end{equation}

Its construction follows from \propref{p:key local} by considering the adjoint pair
$$\QCoh(\LS_\cG)\mmod \rightleftarrows \QCoh(\LS_{\cG_{\on{sc}},\fG^{\otimes -1}_{\pi_1(\cG)}})\mmod,$$
where the right adjoint is the forgeftul functor, and the left adjoint is the tensoring up functor
with respect to
$$\QCoh(\LS_\cG)\to \QCoh(\LS_{\cG_{\on{sc}},\fG^{\otimes -1}_{\pi_1(\cG)}}).$$

\sssec{}

The next assertion results from \lemref{l:fiber and global sects}: 

\begin{lem} \label{l:L FM glob sect}
The functor
\begin{multline} \label{e:FM L glob}
\bGamma\left(\on{Ge}_{\pi_1(\cG)}(X),\ul{\Dmod}^{\pi_1(\cG)}_{\frac{1}{2}}(\Bun_G)\right)\to \\
\to \bGamma\left(\on{Ge}_{\pi_1(\cG)}(X),\on{2-FM}_{\on{Ge}_{Z_G}(X)\to \on{Ge}_{\pi_1(\cG)}(X)}\left(\ul{\Dmod}^{Z_G}_{\frac{1}{2}}(\Bun_{G_{\on{ad}}})\right)\right)
\end{multline}
induced by \eqref{e:FM L 1}, identifies via
$$\bGamma\left(\on{Ge}_{\pi_1(\cG)}(X),\ul{\Dmod}^{\pi_1(\cG)}_{\frac{1}{2}}(\Bun_G)\right) \simeq 
\Dmod_{\frac{1}{2}}(\Bun_G)$$
and
\begin{multline}
\bGamma\left(\on{Ge}_{\pi_1(\cG)}(X),\on{2-FM}_{\on{Ge}_{Z_G}(X)\to \on{Ge}_{\pi_1(\cG)}(X)}\left(\ul{\Dmod}^{Z_G}_{\frac{1}{2}}(\Bun_{G_{\on{ad}}})\right)\right)
\overset{\text{\eqref{e:fib glob sect 2}}}\simeq \\
\simeq \left(\ul{\Dmod}^{Z_G}_{\frac{1}{2}}(\Bun_{G_{\on{ad}}})\right)|_{\fG^0_{Z_G}}\simeq \Dmod_{\frac{1}{2}}(\Bun_G)
\end{multline}
with the identity endofunctor on $\Dmod_{\frac{1}{2}}(\Bun_G)$.
\end{lem}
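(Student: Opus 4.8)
The plan is to deduce the claim from \lemref{l:fiber and global sects} together with the construction of \eqref{e:FM L 1}, by evaluating everything at the trivial gerbe $\fG^0_{\pi_1(\cG)}\in \on{Ge}_{\pi_1(\cG)}(X)$. Recall that \eqref{e:FM L glob} is $\bGamma(\on{Ge}_{\pi_1(\cG)}(X),-)$ applied to \eqref{e:FM L 1}, that its source is identified with $\Dmod_{\frac{1}{2}}(\Bun_G)$ by the tautological \eqref{e:glob sect BunG sheaf dual}, and that its target is identified with $\Dmod_{\frac{1}{2}}(\Bun_G)$ by \eqref{e:glob fib sect neutral} (which is a special case of \eqref{e:fib glob sect 2}) followed by \eqref{e:fiber BunG sheaf}. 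I would apply \lemref{l:fiber and global sects} with $\Gamma=Z_G$, $\Gamma^\vee(1)=\pi_1(\cG)$, $\ul\bC_{Z_G}=\ul{\Dmod}^{Z_G}_{\frac{1}{2}}(\Bun_{G_{\on{ad}}})$, $\ul\bC_{\pi_1(\cG)}=\on{2-FM}_{\on{Ge}_{Z_G}(X)\to \on{Ge}_{\pi_1(\cG)}(X)}(\ul{\Dmod}^{Z_G}_{\frac{1}{2}}(\Bun_{G_{\on{ad}}}))$ and $\fG_{\Gamma^\vee(1)}=\fG^0_{\pi_1(\cG)}$; since the associated $\BG_m$-gerbe is then canonically trivial, the lemma records a commuting square in which the evaluation functor $\bGamma(\on{Ge}_{\pi_1(\cG)}(X),\ul\bC_{\pi_1(\cG)})\to \ul\bC_{\pi_1(\cG)}|_{\fG^0_{\pi_1(\cG)}}$ is identified --- under \eqref{e:glob fib sect neutral} on the source and \eqref{e:fib glob sect 1} together with \eqref{e:glob sect BunG sheaf} on the target --- with the functor \eqref{e:insert 1}, which for the trivial gerbe is the left adjoint of the fiber-restriction functor of \eqref{e:fiber BunG sheaf}, i.e. with $\phi_!:\Dmod_{\frac{1}{2}}(\Bun_G)\to \Dmod_{\frac{1}{2}}(\Bun_{G_{\on{ad}}})$ of \eqref{e:from G to Gad twisted} for the trivial gerbe.

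I would then feed in the naturality of the fiber-at-$\fG^0_{\pi_1(\cG)}$ functor with respect to \eqref{e:FM L 1}. Because $\ul{\Dmod}^{\pi_1(\cG)}_{\frac{1}{2}}(\Bun_G)=(\sfp_{\pi_1(\cG)})_*(\ul{\Dmod}^{\cG}_{\frac{1}{2}}(\Bun_G))$, the evaluation $\bGamma(\on{Ge}_{\pi_1(\cG)}(X),\ul{\Dmod}^{\pi_1(\cG)}_{\frac{1}{2}}(\Bun_G))\to \ul{\Dmod}^{\pi_1(\cG)}_{\frac{1}{2}}(\Bun_G)|_{\fG^0_{\pi_1(\cG)}}$ is tautologically the base-change functor $\Dmod_{\frac{1}{2}}(\Bun_G)\to \Dmod_{\frac{1}{2}}(\Bun_G)\underset{\QCoh(\LS_\cG)}\otimes \QCoh(\LS_{\cG_{\on{sc}}})$; and the fiber of \eqref{e:FM L 1} at $\fG^0_{\pi_1(\cG)}$ is, by its construction via \eqref{e:FM L 1 fib glob}, \eqref{e:FM L 1 fib glob LHS}, \eqref{e:FM L 1 fib glob RHS} and \propref{p:key local}, the functor $\Dmod_{\frac{1}{2}}(\Bun_G)\underset{\QCoh(\LS_\cG)}\otimes \QCoh(\LS_{\cG_{\on{sc}}})\to \Dmod_{\frac{1}{2}}(\Bun_{G_{\on{ad}}})$ induced from $\phi_!$ by $\QCoh(\LS_{\cG_{\on{sc}}})$-linearity. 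Assembling, one obtains the commuting square
$$
\CD
\Dmod_{\frac{1}{2}}(\Bun_G) @>{\text{\eqref{e:FM L glob}}}>> \Dmod_{\frac{1}{2}}(\Bun_G) \\
@VVV @VV{\phi_!}V \\
\Dmod_{\frac{1}{2}}(\Bun_G)\underset{\QCoh(\LS_\cG)}\otimes \QCoh(\LS_{\cG_{\on{sc}}}) @>>> \Dmod_{\frac{1}{2}}(\Bun_{G_{\on{ad}}})
\endCD
$$
whose left vertical is base change and whose bottom horizontal is the $\phi_!$-induced functor; comparing it with the full commuting datum supplied by \lemref{l:fiber and global sects} (not merely with the identification of the right vertical with $\phi_!$), one reads off that \eqref{e:FM L glob}, traced through the canonical identifications of the first paragraph, is the identity endofunctor of $\Dmod_{\frac{1}{2}}(\Bun_G)$ --- concretely, the ``$\phi_!$'' appearing as the right vertical and the ``$\phi_!$'' used to build the bottom horizontal match up, via base change, exactly so as to collapse the top row.

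The difficulty here is entirely organizational: the argument is short once one fixes orientations, but it requires keeping straight the several canonical equivalences \eqref{e:glob sect BunG sheaf dual}, \eqref{e:fib glob sect 1}, \eqref{e:fib glob sect 2}, \eqref{e:fiber BunG sheaf}, \eqref{e:glob fib sect neutral}, \eqref{e:insert 1} and checking that the ``identity'' in the statement is the one they yield --- which is precisely the content signalled by ``results from \lemref{l:fiber and global sects}''. (Note that the square above also gives $\phi_!\circ\eqref{e:FM L glob}\simeq \phi_!$; but since $\phi_!$ is not conservative, this should not be used on its own to conclude, and the direct unwinding via \lemref{l:fiber and global sects} is what is needed.)
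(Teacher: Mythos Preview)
Your overall strategy---deduce \lemref{l:L FM glob sect} from \lemref{l:fiber and global sects} together with the construction of \eqref{e:FM L 1} via \eqref{e:FM L 1 fib glob}/\eqref{e:FM L LHS and RHS}---is exactly the one the paper intends (the paper only says ``results from \lemref{l:fiber and global sects}''). However, your execution has a genuine gap.

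You apply \lemref{l:fiber and global sects} only at the single point $\fG^0_{\pi_1(\cG)}$. As you yourself observe at the end, the resulting commuting square only gives $\phi_!\circ\text{\eqref{e:FM L glob}}\simeq\phi_!$, and since $\phi_!$ is not conservative this is insufficient. Your appeal to ``the full commuting datum'' does not repair this: the lemma at a single $\fG$ provides only a single commuting square, and no amount of coherence data in that one square lets you cancel a non-conservative $\phi_!$. The correct argument applies \lemref{l:fiber and global sects} \emph{functorially in $\fG_{\pi_1(\cG)}\in\on{Ge}_{\pi_1(\cG)}(X)$}. A map into the limit $\bGamma\bigl(\on{Ge}_{\pi_1(\cG)}(X),\ul\bC_{\pi_1(\cG)}\bigr)$ is determined by its composites with all evaluations $\on{ev}|_{\fG_{\pi_1(\cG)}}$ (together with their compatibilities). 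For each $\fG_{\pi_1(\cG)}$, \lemref{l:fiber and global sects} identifies $\on{ev}|_{\fG_{\pi_1(\cG)}}$ (under \eqref{e:glob fib sect neutral} and \eqref{e:fib glob sect 1}) with \eqref{e:insert 1}, which here is the twisted $\phi_!$ of \eqref{e:from G to Gad twisted}. On the other hand, by naturality of \eqref{e:FM L 1} and the construction via \eqref{e:FM L LHS and RHS}, one has
\[
\on{ev}|_{\fG_{\pi_1(\cG)}}\circ\text{\eqref{e:FM L glob}}
=\text{\eqref{e:FM L LHS and RHS}}\circ\bigl(\text{tensoring up along }\QCoh(\LS_\cG)\to\QCoh(\LS_{\cG_{\on{sc}},\fG^{-1}_{\pi_1(\cG)}})\bigr)
=\phi_!^{\text{twisted}},
\]
the last equality being exactly the defining property of \eqref{e:FM L LHS and RHS} in the adjoint pair. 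So the two maps into the limit agree after \emph{every} evaluation (compatibly), and hence \eqref{e:FM L glob} is the identity under the stated identifications. In short: your computation at the trivial gerbe is correct and is the model case, but you must run it for all $\fG_{\pi_1(\cG)}$ simultaneously to conclude.
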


\sssec{}

We are now ready to prove that the map \eqref{e:FM L 1} is an equivalence. 

\medskip

Note that by point (b) of \propref{p:key local}, for every $\fG_{\pi_1(\cG)}\in \on{Ge}_{\pi_1(\cG)}(X)$, 
the category 
$$\on{2-FM}_{\on{Ge}_{Z_G}(X)\to \on{Ge}_{\pi_1(\cG)}(X)}\left(\ul{\Dmod}^{Z_G}_{\frac{1}{2}}(\Bun_{G_{\on{ad}}})\right)|_{\fG^{\otimes -1}_{\pi_1(\cG)}}\simeq
\Dmod_{\frac{1}{2}}(\Bun_{G_{\on{ad}}})_{\CG_{\fG_{\pi_1(\cG)}}}$$
is a module 
over $\QCoh(\LS_{\cG_{\on{sc}},\fG^{\otimes -1}_{\pi_1(\cG)}})$, while $\LS_{\cG_{\on{sc}},\fG^{\otimes -1}_{\pi_1(\cG)}}$ is 1-affine. Hence, the object
$$\on{2-FM}_{\on{Ge}_{Z_G}(X)\to \on{Ge}_{\pi_1(\cG)}(X)}\left(\ul{\Dmod}^{Z_G}_{\frac{1}{2}}(\Bun_{G_{\on{ad}}})\right)\in 
\on{ShvCat}(\on{Ge}_{\pi_1(\cG)}(X))$$
is canonically of the form 
$$(\sfp_{\pi_1(\cG)})_*\left(\ul{\Dmod}_{\frac{1}{2}}(\Bun_{G_{\on{ad}}})_{\CG_{\on{univ}}}\right),$$
for an object
$$\ul{\Dmod}_{\frac{1}{2}}(\Bun_{G_{\on{ad}}})_{\CG_{\on{univ}}}\in \on{ShvCat}(\LS_\cG).$$

Moreover, by construction, the map \eqref{e:FM L 1} comes from a map
\begin{equation} \label{e:FM L 1 upstairs}
\ul{\Dmod}^\cG_{\frac{1}{2}}(\Bun_G)\to \ul{\Dmod}_{\frac{1}{2}}(\Bun_{G_{\on{ad}}})_{\CG_{\on{univ}}}
\end{equation} 
in $\on{ShvCat}(\LS_\cG)$, where $\ul{\Dmod}^\cG_{\frac{1}{2}}(\Bun_G)$ is as in \eqref{e:1-aff upgrade}. 

\medskip

It is sufficient to show that the map \eqref{e:FM L 1 upstairs} is an equivalence. However, the stack $\LS_\cG$
is 1-affine, and hence, the functor
$$\bGamma(\LS_\cG,-): \on{ShvCat}(\LS_\cG)\to \DGCat$$
is conservative. 

\medskip

Hence, it is sufficient to show that the resulting functor
$$\bGamma(\LS_\cG,\ul{\Dmod}^\cG_{\frac{1}{2}}(\Bun_G))\to  \bGamma(\LS_\cG,\ul{\Dmod}_{\frac{1}{2}}(\Bun_{G_{\on{ad}}})_{\CG_{\on{univ}}})$$
is an equivalence. 

\medskip

However, the latter functor identifies with the functor \eqref{e:FM L glob}, and hence is an equivalence by \lemref{l:L FM glob sect}. 

\qed[\thmref{t:FM L}]

%
%
%
%
%
%

\ssec{Proof of \propref{p:key local}(a)} \label{ss:proof key local a} 

\sssec{}

Let us return to the setting of \secref{ss:FM and Verdier}. Let $\ul{x}$ be a point of $\Ran$. Consider the spaces
$$\on{Ge}_\Gamma(X)_{\ul{x}}:=\on{Fib}(\on{Ge}_\Gamma(X)\to \on{Ge}_\Gamma(X-\ul{x}))$$
and 
$$\on{Ge}_\Gamma(\cD_{\ul{x}})_{\ul{x}}:=
\on{Fib}(\on{Ge}_\Gamma(\cD_{\ul{x}})\to \on{Ge}_\Gamma(\ocD_{\ul{x}})),$$
where 
$$\ocD_{\ul{x}}:=\cD_{\ul{x}}-\ul{x}.$$

\medskip

Restriction along $\cD_{\ul{x}}\to X$ defines an \emph{isomorphism} 
$$\on{Ge}_\Gamma(X)_{\ul{x}} \to \on{Ge}_\Gamma(\cD_{\ul{x}})_{\ul{x}}.$$

\begin{rem} 

Here is an explicit description of the spaces 
$\on{Ge}_\Gamma(\cD_{\ul{x}})$,  $\on{Ge}_\Gamma(\cD^\times_{\ul{x}})$ and 
$\on{Ge}_\Gamma(\cD_{\ul{x}})_{\ul{x}}$:

\medskip

Write $\Gamma$ as the kernel of a homomorphism of two tori $T_0\to T_1$. Then
$$\on{Ge}_\Gamma(\cD_{\ul{x}})=B^2(\on{ker}(\fL^+(T_0)_{\ul{x}}\to \fL^+(T_1)_{\ul{x}}))_{\on{et}}.$$

When $\ul{x}$ is a singleton, the above space is just $\on{Ge}_\Gamma(\on{pt})$. 

\medskip

Further,
$$\on{Ge}_\Gamma(\cD^\times_{\ul{x}})\simeq B^2(\on{Fib}(\fL(T_0)_{\ul{x}}\to \fL(T_1)_{\ul{x}}))_{\on{et}}\simeq 
B^1(\on{coFib}(\fL(T_0)_{\ul{x}}\to \fL(T_1)_{\ul{x}}))_{\on{et}}.$$

Finally, 
$$\on{Ge}_\Gamma(\cD_{\ul{x}})_{\ul{x}}\simeq \Gr_{T_1,\ul{x}}/\Gr_{T_0,\ul{x}}.$$

In particular, we obtain that for $\ul{x}=x$ being a singleton, we have: 
\begin{equation} \label{e:Gamma gr}
\on{Ge}_\Gamma(\cD_{\ul{x}})_{\ul{x}} \simeq \Gamma(-1),
\end{equation}
where the right-hand side is viewed as an algebro-geometrically discrete finite set. 

\end{rem} 

\sssec{} \label{sss:local Verdier}

A local variant of \eqref{e:Verdier pairing Gamma} is a pairing
\begin{equation} \label{e:Verdier pairing Gamma loc}
\on{Ge}_\Gamma(\cD_{\ul{x}})_{\ul{x}} \times \on{Ge}_{\Gamma^\vee(1)}(\cD_{\ul{x}})\to \on{Ge}_{\BG_m}(\on{pt}).
\end{equation}

\medskip

In particular, to a $\Gamma^\vee(1)$-gerbe $\fG^{\on{loc}}_{\Gamma^\vee(1)}$ on $\cD_{\ul{x}}$ we can canonically 
associate a $\BG_m$-gerbe $\CG_{\fG^{\on{loc}}_{\Gamma^\vee(1)}}$ on $\on{Ge}_\Gamma(\cD_{\ul{x}})_{\ul{x}}$.

\begin{rem}
Although we do not need it in what follows, we remark that the 
pairing \eqref{e:Verdier pairing Gamma loc} is also of Fourier-Mukai type. When $\ul{x}=x$ is a singleton, this follows immediately from 
the identifications
$$\on{Ge}_\Gamma(\cD_{\ul{x}})_{\ul{x}} \simeq \Gamma(-1) \text{ and } \on{Ge}_{\Gamma^\vee(1)}(\cD_{\ul{x}})\simeq \Gamma^\vee(1).$$
\end{rem} 

\sssec{}

Note that we have the following commutative diagram of pairings
\begin{equation} \label{e:loc vs global pairings}
\CD
\on{Ge}_\Gamma(X)_{\ul{x}} \times \on{Ge}_{\Gamma^\vee(1)}(X) @>>> \on{Ge}_\Gamma(X) \times \on{Ge}_{\Gamma^\vee(1)}(X) \\
@A{\simeq}AA  \\
\on{Ge}_\Gamma(\cD_{\ul{x}})_{\ul{x}} \times \on{Ge}_{\Gamma^\vee(1)}(X)  & & @VV{\text{\eqref{e:Verdier pairing Gamma}}}V \\
@VVV \\
\on{Ge}_\Gamma(\cD_{\ul{x}})_{\ul{x}}   \times \on{Ge}_{\Gamma^\vee(1)}(\cD_{\ul{x}})  
@>{\text{\eqref{e:Verdier pairing Gamma loc}}}>> \on{Ge}_{\BG_m}(\on{pt}).
\endCD
\end{equation} 

In particular, for a $\Gamma^\vee(1)$-gerbe $\fG_{\Gamma^\vee(1)}$ on $X$ and
$$\fG^{\on{loc}}_{\Gamma^\vee(1)}:=\fG_{\Gamma^\vee(1)}|_{\cD_{\ul{x}}}$$
we have
$$\CG_{\fG^{\on{loc}}_{\Gamma^\vee(1)}}|_{\on{Ge}_\Gamma(X)_{\ul{x}}}\simeq \CG_{\fG_{\Gamma^\vee(1)}}|_{\on{Ge}_\Gamma(X)_{\ul{x}}}.$$

\sssec{} \label{sss:loc Hecke to gerbes}

Let 
$$\on{Hecke}^{\on{loc}}_{G_{\on{ad}},\ul{x}}:=\fL^+(G_{\on{ad}})_{\ul{x}}\backslash \fL(G_{\on{ad}})_{\ul{x}}/\fL^+(G_{\on{ad}})_{\ul{x}}$$
be the local Hecke stack for $G_{\on{ad}}$ at $x$. 

\medskip

We have a natural projection
$$\on{Hecke}^{\on{loc}}_{G_{\on{ad}},\ul{x}} \to \on{Ge}_{Z_G}(\cD_{\ul{x}})\underset{\on{Ge}_{Z_G}(\ocD_{\ul{x}})}\times \on{Ge}_{Z_G}(\cD_{\ul{x}}).$$

The commutative group structure on the space of gerbes gives rise to a map\footnote{For a map of commutative group-objects $\CA\to \CB$, there is a 
canonical isomorphism $\CA\underset{\CB}\times \CA\simeq \CA\times \on{Fib}(\CA\to \CB)$.}
$$\on{Ge}_{Z_G}(\cD_{\ul{x}})\underset{\on{Ge}_{Z_G}(\ocD_{\ul{x}})}\times \on{Ge}_{Z_G}(\cD_{\ul{x}})\to 
\on{Ge}_{Z_G}(\cD_{\ul{x}})_{\ul{x}}.$$

Composing we obtain a map
\begin{equation} \label{e:loc Hecke to gerbes}
\on{Hecke}^{\on{loc}}_{G_{\on{ad}},\ul{x}} \to \on{Ge}_{Z_G}(\cD_{\ul{x}})_{\ul{x}}.
\end{equation}

\begin{rem}
Note that the map \eqref{e:loc Hecke to gerbes} induces a bijection on the sets of connected components when $G$
is simply-connected. 
\end{rem} 

\sssec{}

Let $\CG^{\on{loc}}$ be a $\BG_m$-gerbe on $\on{Ge}_{Z_G}(\cD_{\ul{x}})_{\ul{x}}$. By a slight abuse of notation,
we will denote by
$$\Sph(G_{\on{ad}})_{\ul{x},\CG^{\on{loc}}}$$ 
the corresponding twisted version of the category
$$\Dmod_{\frac{1}{2}}(\on{Hecke}^{\on{loc}}_{G_{\on{ad}},\ul{x}}),$$
obtained by pulling back the gerbe $\CG^{\on{loc}}$ along \eqref{e:loc Hecke to gerbes}.

\medskip

Assume now that $\CG^{\on{loc}}$ is \emph{multiplicative} (with respect to the group structure on $\on{Ge}_{Z_G}(\cD_{\ul{x}})_{\ul{x}}$).
Then the category $\Sph(G_{\on{ad}})_{\ul{x},\CG^{\on{loc}}}$ acquires a natural monoidal structure. 

\sssec{}

Let $\fG^{\on{loc}}_{\pi_1(\cG)}$ be a $\pi_1(\cG)$-gerbe on $\cD_{\ul{x}}$. Note that since the pairing 
\eqref{e:Verdier pairing Gamma loc} is bilinear, the corresponding $\BG_m$-gerbe $\CG_{\fG^{\on{loc}}_{\pi_1(\cG)}}$ on 
$\on{Ge}_{Z_G}(\cD_{\ul{x}})_{\ul{x}}$ has a natural multiplicative structure.

\medskip

The following is a twisted version of the (naive) geometric Satake functor:

\begin{lem}
There exists a monoidal functor
$$\Sat^{\on{nv}}_{G_{\on{ad}},\fG^{\on{loc}}_{\pi_1(\cG)}}:
\Rep(\cG_{\on{sc}})_{\ul{x},\fG^{\otimes -1}_{\pi_1(\cG)}}\to \Sph(G_{\on{ad}})_{\ul{x},\CG_{\fG^{\on{loc}}_{\pi_1(\cG)}}}.$$
\end{lem}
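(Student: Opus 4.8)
The plan is to obtain $\Sat^{\on{nv}}_{G_{\on{ad}},\fG^{\on{loc}}_{\pi_1(\cG)}}$ as a gerbe‑twisted deformation of the usual naive geometric Satake functor, the point being that the two gerbes involved — the spectral twist $\fG^{\otimes -1}_{\pi_1(\cG)}$ and the automorphic twist $\CG_{\fG^{\on{loc}}_{\pi_1(\cG)}}$ — are both manufactured from the single datum $\fG^{\on{loc}}_{\pi_1(\cG)}$ and are exchanged by the local form of Fourier--Mukai. First I would recall the untwisted input: the naive geometric Satake functor $\Sat^{\on{nv}}_{G_{\on{ad}}}\colon \Rep(\cG_{\on{sc}})_{\ul x}\to \Sph(G_{\on{ad}})_{\ul x}$ (in the normalization of the present series), a symmetric monoidal functor compatible with factorization in $\ul x\in\Ran$; its construction uses only the affine Grassmannian $\Gr_{G_{\on{ad}},\ul x}$, the convolution structure on $\on{Hecke}^{\on{loc}}_{G_{\on{ad}},\ul x}$, and the Mirkovi\'c--Vilonen fiber functor (total cohomology along $\on{Hecke}^{\on{loc}}_{G_{\on{ad}},\ul x}\to \on{pt}$), so that, via Tannakian reconstruction, producing it amounts to exhibiting a $\cG_{\on{sc}}$‑torsor inside the spherical (unit) part of $\Sph(G_{\on{ad}})_{\ul x}$.

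Next I would analyze the two twists. On the spectral side, the central embedding $\pi_1(\cG)\hookrightarrow \cG_{\on{sc}}$ lets $\fG^{\on{loc}}_{\pi_1(\cG)}$ act on $\cG_{\on{sc}}$‑torsors, and the associated twist of $\Rep(\cG_{\on{sc}})_{\ul x}$ is, by definition, $\Rep(\cG_{\on{sc}})_{\ul x,\fG^{\otimes -1}_{\pi_1(\cG)}}=\QCoh(\LS^\reg_{\cG_{\on{sc}},\fG^{\otimes -1}_{\pi_1(\cG)},\ul x})$. On the automorphic side, the local Fourier--Mukai pairing \eqref{e:Verdier pairing Gamma loc} turns $\fG^{\on{loc}}_{\pi_1(\cG)}$ into the \emph{multiplicative} $\BG_m$‑gerbe $\CG_{\fG^{\on{loc}}_{\pi_1(\cG)}}$ on $\on{Ge}_{Z_G}(\cD_{\ul x})_{\ul x}$, and pulling this back along \eqref{e:loc Hecke to gerbes} gives the gerbe defining $\Sph(G_{\on{ad}})_{\ul x,\CG_{\fG^{\on{loc}}_{\pi_1(\cG)}}}$ — multiplicativity being exactly what makes convolution descend to the twisted category, as in \secref{sss:loc Hecke to gerbes}. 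The key structural observation is then that $\Sat^{\on{nv}}_{G_{\on{ad}}}$ is equivariant for the action of the Picard $2$‑groupoid of $\pi_1(\cG)$‑gerbes on $\cD_{\ul x}$ on source and target (central twisting on one side, local‑Fourier--Mukai‑plus‑pullback on the other); granting this, the sought functor is just the $\fG^{\on{loc}}_{\pi_1(\cG)}$‑twist of $\Sat^{\on{nv}}_{G_{\on{ad}}}$, symmetric monoidal and factorizable because the untwisted one is.

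To establish that equivariance I would argue componentwise. Restricted to the open $\fL^+(G_{\on{ad}})_{\ul x}$‑orbit the pulled‑back gerbe on $\on{Hecke}^{\on{loc}}_{G_{\on{ad}},\ul x}$ is canonically trivial, and on the connected component indexed by a coweight $\check\mu$ of $G_{\on{ad}}$ it is, by the construction of \eqref{e:loc Hecke to gerbes} and bilinearity of \eqref{e:Verdier pairing Gamma loc}, the constant line obtained by pairing the image of $\check\mu$ in $Z_G\simeq\pi_1(\cG)^\vee$ against $\fG^{\on{loc}}_{\pi_1(\cG)}$. On the spectral side the Mirkovi\'c--Vilonen grading of the fiber functor by $\pi_1(G_{\on{ad}})$ corresponds to the central‑character grading of $\Rep(\cG_{\on{sc}})$ by $Z(\cG_{\on{sc}})^\vee$, and the central $\pi_1(\cG)$‑gerbe twists the component of central character $\chi$ by the line obtained by pairing $\chi|_{\pi_1(\cG)}$ against $\fG^{\on{loc}}_{\pi_1(\cG)}$. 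Matching these two descriptions — checking that the two constant lines agree on matching components, up to the Tate twist in $\pi_1(\cG)\simeq (Z_G)^\vee(1)$ and the inversion involution coming from the normalization of geometric Satake flagged in the footnote to \thmref{t:FM L} — is the one genuinely non‑formal step; everything else is bookkeeping. Carrying this out in families over $\Ran$ then upgrades the result to the factorizable, symmetric monoidal functor asserted in the lemma.
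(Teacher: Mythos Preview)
Your proposal is correct and follows essentially the same strategy as the paper: both start from the untwisted naive Satake functor $\Sat^{\on{nv}}_{G_{\on{ad}}}$, observe that the central-character decomposition of $\Rep(\cG_{\on{sc}})_{\ul x}$ matches the decomposition of $\Sph(G_{\on{ad}})_{\ul x}$ by connected components of $\on{Hecke}^{\on{loc}}_{G_{\on{ad}},\ul x}$, deduce the relevant equivariance, and then twist. The paper packages this equivariance as an action of $B(Z_{(\cG_{\on{sc}})_{\ul x}})$ on both sides and twists by a $Z_{(\cG_{\on{sc}})_{\ul x}}$-gerbe, whereas you phrase it as equivariance for the Picard $2$-groupoid of $\pi_1(\cG)$-gerbes and unpack the matching of the constant twisting lines on components via the Mirkovi\'c--Vilonen grading; this is the same content, stated a bit more explicitly.
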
 

\begin{proof}

The pairing \eqref{e:Verdier pairing Gamma loc} induces a bijection between:

\begin{itemize}

\item The set of characters of the (finite) group $\Maps(\cD_{\ul{x}},\pi_1(\cG))$, which is a subgroup of 
$$\Maps(\cD_{\ul{x}},\pi_1(\cG_{\on{ad}}))\simeq \Maps(\cD_{\ul{x}},Z_{\cG_{\on{sc}}})\simeq Z_{(\cG_{\on{sc}})_{\ul{x}}};$$


\item The set $\pi_0(\on{Ge}_{Z_G}(\cD_{\ul{x}})_{\ul{x}})$, which is a quotient of the set
$$\pi_0(\on{Hecke}^{\on{loc}}_{G_{\on{ad}},\ul{x}})\simeq \pi_0(\on{Ge}_{Z_{G_{\on{sc}}}}(\cD_{\ul{x}})_{\ul{x}}).$$

\end{itemize}

The assertion of the lemma follows from the fact that under the usual (naive)
geometric Satake functor
$$\Sat^{\on{nv}}_{G_{\on{ad}}}:\Rep(\cG_{\on{sc}})_{\ul{x}}\to \Sph(G_{\on{ad}})_{\ul{x}},$$
the decomposition of $\Rep(\cG_{\on{sc}})_{\ul{x}}$ according to central characters corresponds to
the decomposition of $\Sph(G_{\on{ad}})_{\ul{x}}$ along the connected components of 
$\on{Hecke}^{\on{loc}}_{G_{\on{ad}},\ul{x}}$,
according to support. Indeed, this observation implies that 
$\Sat^{\on{nv}}_{G_{\on{ad}}}:\Rep(\cG_{\on{sc}})_{\ul{x}}\to \Sph(G_{\on{ad}})_{\ul{x}}$ is equivariant for the
action of $B(Z_{(\cG_{\on{sc}})_{\ul{x}}})$ on both sides, and therefore
we can twist $\Sat^{\on{nv}}_{G_{\on{ad}}}$ by 
$Z_{(\cG_{\on{sc}})_{\ul{x}}}$-gerbes.

\end{proof}

\sssec{}

The constructions in \secref{sss:loc Hecke to gerbes} have immediate counterparts for the global Hecke stack
$$\on{Hecke}^{\on{glob}}_{G_{\on{ad}},\ul{x}}:=
\Bun_{G_{\on{ad}}}\underset{\Bun_{G_{\on{ad}}}(X-\ul{x})}\times \Bun_{G_{\on{ad}}}.$$

In particular, a multiplicative $\BG_m$-gerbe $\CG^{\on{loc}}$ on $\on{Ge}_{Z_G}(X)_{\ul{x}}$ gives rise to a monoidal category
\begin{equation} \label{e:twisted glob Hecke}
\Dmod_{\frac{1}{2}}(\on{Hecke}^{\on{glob}}_{G_{\on{ad}},\ul{x}})_{\CG^{\on{loc}}}.
\end{equation}

Note that pullback defines a monoidal functor
$$\Sph(G_{\on{ad}})_{\ul{x},\CG^{\on{loc}}}\to \Dmod_{\frac{1}{2}}(\on{Hecke}^{\on{glob}}_{G_{\on{ad}},\ul{x}})_{\CG^{\on{loc}}}.$$

\medskip

Assume now that $\CG^{\on{loc}}$ is obtained by restriction along
$$\on{Ge}_{Z_G}(X)_{\ul{x}}\to \on{Ge}_{Z_G}(X)$$
of a multiplicative $\BG_m$-gerbe $\CG$ on $\on{Ge}_{Z_G}(X)$. 

\medskip

Then we have a natural monoidal action of \eqref{e:twisted glob Hecke} on $\Dmod_{\frac{1}{2}}(\Bun_{G_{\on{ad}}})_\CG$.

\sssec{}

Combining the above ingredients, we obtain that for a $\pi_1(\cG)$-gerbe $\fG_{\pi_1(\cG)}$ on $X$, we have a monoidal action of 
$\Rep(\cG_{\on{sc}})_{\ul{x},\fG^{\otimes -1}_{\pi_1(\cG)}}$ on $\Dmod_{\frac{1}{2}}(\Bun_{G_{\on{ad}}})_{\CG_{\fG_{\pi_1(\cG)}}}$.

\medskip

This construction makes sense in families as $\ul{x}$ moves over the Ran space, thereby giving rise to the sought-for action of 
$\Rep(\cG_{\on{sc}})_{\Ran,\fG^{\otimes -1}_{\pi_1(\cG)}}$ on $\Dmod_{\frac{1}{2}}(\Bun_{G_{\on{ad}}})_{\CG_{\fG_{\pi_1(\cG)}}}$.

\medskip

The compatibility in point (a) of \propref{p:key local} follows by construction.

\qed[\propref{p:key local}(a)]

\ssec{Proof of \propref{p:key local}(b)} \label{ss:proof key local b} 

We will show how to adapt the proof of \cite[Theorem 4.5.2]{Ga4} to apply in the current gerbe-twisted situation. 

\sssec{}

Choose a point $x\in X$. Since $H^2_{\on{et}}(X-x,\pi_1(\cG))=0$, we can choose a trivialization of $\fG_{\pi_1(\cG)}$
over $X-x$. I.e., we can assume that $\fG_{\pi_1(\cG)}$ comes from an object
$$\fG^{\on{loc}}_{\pi_1(\cG)}\in \on{Ge}_{\pi_1(\cG)}(X)_x.$$

Recall that the space $\on{Ge}_{\pi_1(\cG)}(X)_x$ is canonically the discrete set $\pi_1(\cG)(-1)$ (see \eqref{e:Gamma gr}), 
which is in bijection with the set of characters of $Z_G$. Denote the element corresponding to our gerbe by $\chi$. 

\sssec{}

We claim now that the proof of the spectral decomposition theorem in \cite[Sect. 11.1]{Ga4}\footnote{The proof in 
{\it loc.cit.} relies on two ingredients that were not written down at the time, but are available now. One is
what is stated as \cite[Theorem 10.3.4]{Ga4}, for which the proof has been supplied in \cite[Sects. 15-16]{GLC2}. Another
is \cite[Proposition 11.1.3]{Ga4}, for which the proof has been supplies in \cite{FH} (in fact, as was observed by J.~Faergeman, 
this second ingredient is not even necessary).} applies in the current context.
Recall that in {\it loc. cit.} the proof was based on considering the localization functor
$$\Loc_{G_{\on{ad}}}:\KL(G_{\on{ad}})_{\crit,\Ran}\to \Dmod_{\frac{1}{2}}(\Bun_{G_{\on{ad}}}).$$

In the present twisted situation we will need to make the following modifications.

\sssec{}

We replace $\Ran$ by its relative version $\Ran_x$ that classifies finite subsets of $X$
that contain the point $x$. 

\medskip

For further discussion, in order to simplify the notation, we will work with a fixed element 
$\ul{x}\in \Ran_x$. Write $\ul{x}=\ul{x}'\sqcup \{x\}$.
 
\sssec{}

We replace the category 
$$\KL(G_{\on{ad}})_{\crit,\ul{x}}\simeq \KL(G_{\on{ad}})_{\crit,\ul{x}'}\otimes \KL(G_{\on{ad}})_{\crit,x}$$
by 
$$\KL(G_{\on{ad}})_{\crit,\ul{x},\chi}:=\KL(G_{\on{ad}})_{\crit,\ul{x}'}\otimes \KL(G_{\on{ad}})_{\crit,x,\chi},$$
where $\KL(G_{\on{ad}})_{\crit,x,\chi}$ is the full subcategory of $\KL(G)_{\crit,x}$, consisting of objects,
on which $$Z_G\subset \fL^+(G)_x$$ acts by the character $\chi$. 

\sssec{}

Note that by \eqref{e:loc vs global pairings}, the $\BG_m$-gerbe $\CG_{\fG_{\pi_1(\cG)}}$ on $\on{Ge}_{Z_G}(X)$ 
is obtained by pullback via the map
$$\on{Ge}_{Z_G}(X)\to \on{Ge}_{Z_G}(\cD_x)\simeq \on{Ge}_{Z_G}(\on{pt})\simeq B^2(Z_G)_{\on{et}}$$
from the $\BG_m$-gerbe on $B^2(Z_G)_{\on{et}}$ corresponding to the character $\chi$. 

\medskip

The pullback of $\CG_{\fG_{\pi_1(\cG)}}$ to $\Bun_{G_{\on{ad}}}$ trivializes over the cover
$$\Bun_{G_{\on{ad}}}\underset{\on{pt}/\fL^+(G_{\on{ad}})_x}\times \on{pt}/\fL^+(G)_x,$$
and corresponds to the multiplicative line bundle on $\on{pt}/Z_G$ given by $\chi$. 

\medskip

From here we obtain that we have a well-defined localization functor
$$\Loc_{G_{\on{ad}},\ul{x},\chi}:\KL(G_{\on{ad}})_{\crit,\ul{x},\chi}\to \Dmod_{\frac{1}{2}}(\Bun_{G_{\on{ad}}})_{\CG_{\fG_{\pi_1(\cG)}}}.$$

\sssec{}

Recall that the category $\KL(G_{\on{ad}})_{\crit,\ul{x}}$ is acted on by
$$\QCoh(\Op^{\on{mon-free}}_{\cG_{\on{sc}},\ul{x}}).$$

The key point in the proof of \cite[Theorem 4.5.2]{Ga4} is the fact (going back to \cite{BD} and proved in \cite[Sects. 15-16]{GLC2}) that the functor 
$$\Loc_{G_{\on{ad}},\ul{x}}:\KL(G_{\on{ad}})_{\crit,\ul{x}}\to \Dmod_{\frac{1}{2}}(\Bun_{G_{\on{ad}}})$$ factors as
\begin{multline*} 
\KL(G_{\on{ad}})_{\crit,\ul{x}}\to
\KL(G_{\on{ad}})_{\crit,\ul{x}}\underset{\QCoh(\Op^{\on{mon-free}}_{\cG_{\on{sc}},\ul{x}})}\otimes
\QCoh(\Op^{\on{mon-free}}_{\cG_{\on{sc}}}(X-x))\overset{\Loc^{\on{glob}}_{G_{\on{ad}},\ul{x}}}\longrightarrow \\
\to \Dmod_{\frac{1}{2}}(\Bun_{G_{\on{ad}}}),
\end{multline*}
where the functor $\Loc_{G_{\on{ad}},\ul{x}}^{\on{glob}}$ is $\QCoh(\LS_{\cG_{\on{sc}}})$-linear with respect to the tautological projection
$\Op^{\on{mon-free}}_{\cG_{\on{sc}}}(X-x)\to \LS_{\cG_{\on{sc}}}$.

\medskip

We will now explain the modification of this construction. 

\sssec{}

Consider the space $\Op^{\on{mon-free}}_{\cG,x}$. We claim that there is a canonically defined map
\begin{equation} \label{e:mon-free opers to Ge}
\Op^{\on{mon-free}}_{\cG,x} \to \on{Ge}_{\pi_1(\cG)}(\cD_x)_x.
\end{equation} 

To construct it, it suffices to show that the composition
\begin{equation} \label{e:Op to Ge}
\Op^{\on{mer}}_{\cG,x} \to \LS^{\on{mer}}_{\cG,x} \to \on{Ge}_{\pi_1(\cG)}(\ocD_x)
\end{equation} 
factors through the point of $\on{Ge}_{\pi_1(\cG)}(\ocD_x)$, corresponding to the trivial $\pi_1(\cG)$-gerbe,

\medskip

Note that the map \eqref{e:Op to Ge} factors as 
$$\Op^{\on{mer}}_{\cG,x} \to \LS^{\on{mer}}_{\cG,x} \to \Bun_\cG(\cD^\times_x)\to  \on{Ge}_{\pi_1(\cG)}(\ocD_x),$$
while the map
$$\Op^{\on{mer}}_{\cG,x} \to \LS^{\on{mer}}_{\cG,x} \to \Bun_\cG(\cD^\times_x)$$
in turn factors as 
$$\Op^{\on{mer}}_{\cG,x} \to \Bun_\cB(\cD^\times_x) \to \Bun_\cG(\cD^\times_x).$$

Hence,  \eqref{e:Op to Ge} factors as 
\begin{equation} \label{e:via torus}
\Op^{\on{mer}}_{\cG,x} \to \Bun_\cB(\cD^\times_x) \to \Bun_\cT(\cD^\times_x) \to \on{Ge}_{\pi_1(\cG)}(\ocD_x),
\end{equation} 
where we think of $\pi_1(\cG)$ as $\on{ker}(\cT\to \cT_{\on{sc}})$. 

\medskip

However, the map
$$\Op^{\on{mer}}_{\cG,x} \to \Bun_\cB(\cD^\times_x) \to \Bun_\cT(\cD^\times_x)$$
corresponds to the point $2\rhoch(\omega^{\otimes \frac{1}{2}})\in \Bun_\cT(\cD^\times_x)$,
and that point lifts to a point of $\Bun_{\cT_{\on{sc}}}(\cD^\times_x)$. This implies that the map
\eqref{e:via torus} factors via the trivial gerbe. 

\begin{rem}

We claim that $\Op^{\on{mon-free}}_{\cG,x}$ maps in fact to a twisted form of the affine Grassmannian
of the group $\cG$, so that \eqref{e:mon-free opers to Ge} factors via this map\footnote{This remark is inessential for the sequel
and the reader may choose to skip it.}.

\medskip 

Indeed, recall that for a fixed curve $X$, we can think of $\cG$-opers as connections of the standard form on a fixed
$\cG$-bundle $\CP^{\on{Op}}_\cG$, induced from a particular Borel bundle for a principal $SL_2$-triple, see \cite[Sect. 3.1.4]{GLC2}. 

\medskip

Consider the twisted affine Grassmannian $\Gr_{\cG,\CP^{\on{Op}}_\cG,x}$, 
i.e., the moduli space of pairs
$$(\CP_\cG,\alpha),$$
where $\CP_\cG$ is a $\cG$-bundle on $\cD_x$, and $\alpha$ is an isomorphism $\CP_\cG\simeq \CP^{\on{Op}}_\cG$
over $\cD^\times_x$. 

\medskip

Note that we can think of a point of $\Op^{\on{mon-free}}_{\cG,x}$ as a triple
$$(A,\CP_\cG,\alpha),$$
where:

\begin{itemize}

\item  $A$ is a connection of the standard oper form on $\CP^{\on{Op}}_\cG$ over $\cD^\times_x$;

\item $\CP_\cG$ is a $\cG$-bundle on $\cD_x$;

\item $\alpha$ is an isomorphism $\CP_\cG\simeq \CP^{\on{Op}}_\cG$ over $\cD^\times_x$, so that
the \emph{a priori meromorphic} connection on $\CP_\cG$, induced by $A$ via $\alpha$ is \emph{regular}.

\end{itemize}

The assignment 
$$(A,\CP_\cG,\alpha)\mapsto (\CP_\cG,\alpha)$$
is the sought-for map
$$\Op^{\on{mon-free}}_{\cG,x}\to \Gr_{\cG,\CP^{\on{Op}}_\cG,x}.$$

\end{rem}

\sssec{}

In the twisted situation we replace
$$\Op^{\on{mon-free}}_{\cG_{\on{sc}},\ul{x}}\simeq \Op^{\on{mon-free}}_{\cG_{\on{sc}},\ul{x}'}\times 
\Op^{\on{mon-free}}_{\cG_{\on{sc}},x}$$
by
$$\Op^{\on{mon-free}}_{\cG_{\on{sc}},\ul{x},\chi}:=
\Op^{\on{mon-free}}_{\cG_{\on{sc}},\ul{x}'}\times \Op^{\on{mon-free}}_{\cG_{\on{sc}},x,\chi},$$
where $\Op^{\on{mon-free}}_{\cG_{\on{sc}},x,\chi}$ is the preimage of the point $\chi$ under the projection \eqref{e:mon-free opers to Ge}. 

\medskip

Note that with respect to the $\QCoh(\Op^{\on{mon-free}}_{\cG,x})$-action on $\KL(G)_{\crit,x}$, we can identify the subcategory $\KL(G_{\on{ad}})_{\crit,x,\chi}$
with the direct summand that is supported over
$$\Op^{\on{mon-free}}_{\cG_{\on{sc}},x,\chi}\subset \Op^{\on{mon-free}}_{\cG,x}.$$

\sssec{}

Consider the space
$$\Op^{\on{mon-free}}_{\cG_{\on{sc}}}(X-x)_\chi:=
\Op_{\cG_{\on{sc}}}(X-x)\underset{\LS_{\cG_{\on{sc}}}(X-x)}\times \LS_{\cG_{\on{sc}},\fG_{\pi_1(\cG)}}.$$

Note that we have a naturally defined map
$$\Op^{\on{mon-free}}_{\cG_{\on{sc}}}(X-x)_\chi\to \Op^{\on{mon-free}}_{\cG_{\on{sc}},\ul{x},\chi}.$$

Now, by the same principle as in \cite[Theorem 10.3.4]{Ga4} (see \cite[Sects. 15-16]{GLC2} for a full argument), 
we obtain that the functor $\Loc_{G_{\on{ad}},\ul{x},\chi}$ factors as
\begin{multline*} 
\KL(G_{\on{ad}})_{\crit,\ul{x},\chi}\to
\KL(G_{\on{ad}})_{\crit,\ul{x},\chi}\underset{\QCoh(\Op^{\on{mon-free}}_{\cG_{\on{sc}},\ul{x},\chi})}\otimes
\QCoh(\Op^{\on{mon-free}}_{\cG_{\on{sc}}}(X-x)_\chi)\overset{\Loc^{\on{glob}}_{G_{\on{ad}},\ul{x},\chi}}\longrightarrow \\
\to \Dmod_{\frac{1}{2}}(\Bun_{G_{\on{ad}}})_{\CG_{\fG_{\pi_1(\cG)}}},
\end{multline*}
where the functor $\Loc^{\on{glob}}_{G_{\on{ad}},\ul{x},\chi}$ is linear with respect to
$\QCoh(\LS_{\cG_{\on{sc}},\fG_{\pi_1(\cG)}})$. 

\medskip

Now the argument parallel to that in \cite[Sect. 11.1]{Ga4} establishes the factorization of the action stated in \propref{p:key local}(b). 

\qed[\propref{p:key local}(b)]

\ssec{Geometric Langlands for non-pure inner forms}

\sssec{}

Note that, in view of Remark \ref{r:twisted groups}, from \corref{c:FM L} we obtain an expression of the twisted categories 
$\Dmod_{\frac{1}{2}}(\Bun_{G,\fG_{Z_G}})$ in terms of the usual $\Dmod_{\frac{1}{2}}(\Bun_{G})$ and the spectral action. 

\medskip

Namely, using \eqref{e:fib glob sect 1} and the fact that $\LS_\cG$ is 1-affine, we obtain:

\begin{cor} \label{c:twisted BunG}
For a $Z_G$-gerbe $\fG_{Z_G}$ on $X$, we have a canonical equivalence: 
$$\Dmod_{\frac{1}{2}}(\Bun_{G,\fG^{-1}_{Z_G}}) \simeq
\Dmod_{\frac{1}{2}}(\Bun_{G})\underset{\QCoh(\LS_\cG)}\otimes \QCoh(\LS_\cG)_{\CG_{\fG_{Z_G}}},$$
where $\QCoh(\LS_\cG)_{\CG_{\fG_{Z_G}}}$ is the twist of $\QCoh(\LS_\cG)$ by the pullback of the gerbe $\CG_{\fG_{Z_G}}$
on $\on{Ge}_{\pi_1(\cG)}(X)$ along the map $\sfp_{\pi_1(\cG)}$ of \eqref{e:LS to gerbe}. 
\end{cor}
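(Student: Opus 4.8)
The plan is to read \corref{c:twisted BunG} off from \corref{c:FM L} via the fiber/global-sections dictionary of \secref{sss:fibers of 2-FM}, and then to use the $1$-affineness of $\LS_\cG$ to convert a gerbe-twist of a sheaf of categories into a base change of $\QCoh(\LS_\cG)$-module categories. No new geometric input beyond \secref{s:gerbes} is required.

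First I would realize the left-hand side as a fiber of a sheaf of categories on $\on{Ge}_{Z_G}(X)$: by \eqref{e:fiber BunG sheaf twisted},
$$\Dmod_{\frac{1}{2}}(\Bun_{G,\fG^{-1}_{Z_G}}) \simeq \bigl(\ul{\Dmod}^{Z_G}_{\frac{1}{2}}(\Bun_{G_{\on{ad}}})\bigr)\big|_{\fG^{-1}_{Z_G}}.$$
By \corref{c:FM L} the object $\ul{\Dmod}^{Z_G}_{\frac{1}{2}}(\Bun_{G_{\on{ad}}})$ is the $2$-Fourier--Mukai transform of $\ul{\Dmod}^{\pi_1(\cG)}_{\frac{1}{2}}(\Bun_G)$, so the fiber formula \eqref{e:fib glob sect 1} (applied with $\Gamma = \pi_1(\cG)$ and $\Gamma^\vee(1) = Z_G$, using $\pi_1(\cG)\simeq (Z_G)^\vee(1)$) identifies this fiber with global sections of a gerbe-twist of $\ul{\Dmod}^{\pi_1(\cG)}_{\frac{1}{2}}(\Bun_G)$:
$$\Dmod_{\frac{1}{2}}(\Bun_{G,\fG^{-1}_{Z_G}}) \simeq \bGamma\Bigl(\on{Ge}_{\pi_1(\cG)}(X),\,\bigl(\ul{\Dmod}^{\pi_1(\cG)}_{\frac{1}{2}}(\Bun_G)\bigr)_{\CG_{\fG_{Z_G}}}\Bigr),$$
where $\CG_{\fG_{Z_G}}$ is the $\BG_m$-gerbe on $\on{Ge}_{\pi_1(\cG)}(X)$ attached to $\fG_{Z_G}$ by the Verdier pairing \eqref{e:Verdier pairing Gamma}; the phrasing of the statement with $\fG^{-1}_{Z_G}$ on the left is arranged precisely so that the signs in \eqref{e:fib glob sect 1} come out this way.

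Next I would descend to $\LS_\cG$. By construction $\ul{\Dmod}^{\pi_1(\cG)}_{\frac{1}{2}}(\Bun_G) = (\sfp_{\pi_1(\cG)})_*\bigl(\ul{\Dmod}^{\cG}_{\frac{1}{2}}(\Bun_G)\bigr)$ for the map $\sfp_{\pi_1(\cG)}\colon \LS_\cG \to \on{Ge}_{\pi_1(\cG)}(X)$ of \eqref{e:LS to gerbe}, and twisting by a $\BG_m$-gerbe commutes with $(-)_*$ after pulling the gerbe back along $\sfp_{\pi_1(\cG)}$; hence the right-hand side above is $\bGamma\bigl(\LS_\cG,\,(\ul{\Dmod}^{\cG}_{\frac{1}{2}}(\Bun_G))_{\sfp_{\pi_1(\cG)}^*\CG_{\fG_{Z_G}}}\bigr)$. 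Finally, since $\LS_\cG$ is $1$-affine, the functor $\bGamma(\LS_\cG,-)\colon \on{ShvCat}(\LS_\cG)\to \QCoh(\LS_\cG)\mmod$ is an equivalence; it sends $\ul{\Dmod}^{\cG}_{\frac{1}{2}}(\Bun_G)$ (see \eqref{e:1-aff upgrade}) to $\Dmod_{\frac{1}{2}}(\Bun_G)$ with its spectral action, and it carries the operation ``twist by $\sfp_{\pi_1(\cG)}^*\CG_{\fG_{Z_G}}$'' to ``$(-)\underset{\QCoh(\LS_\cG)}\otimes \QCoh(\LS_\cG)_{\CG_{\fG_{Z_G}}}$'', since tensoring with the invertible module $\QCoh(\LS_\cG)_{\CG_{\fG_{Z_G}}}$ is exactly the module-level incarnation of a gerbe-twist of sheaves of categories over a $1$-affine base. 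Concatenating the three displays gives the asserted equivalence.

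The main obstacle will be bookkeeping rather than substance: tracking the Tate twist in $\pi_1(\cG)\simeq (Z_G)^\vee(1)$, the inversion involutions packaged into \thmref{t:Verdier FM} and \thmref{t:FM L}, and the $\fG_{Z_G}$ versus $\fG^{-1}_{Z_G}$ distinction, so as to be sure the gerbe that survives on the spectral side is really $\sfp_{\pi_1(\cG)}^*\CG_{\fG_{Z_G}}$ and not its inverse or a further twist. I would also want to make explicit the compatibility ``$\bGamma$ of a gerbe-twisted pushforward $=$ the gerbe-twisted pushforward of $\bGamma$'', which is the single point where the hypotheses guaranteeing that $(\sfp_{\pi_1(\cG)})_*$ is well-behaved and the $1$-affineness of $\LS_\cG$ are both genuinely used.
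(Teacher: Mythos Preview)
Your proposal is correct and follows exactly the route the paper indicates: the paper's entire proof is the sentence ``using \eqref{e:fib glob sect 1} and the fact that $\LS_\cG$ is 1-affine, we obtain'' preceding the corollary, and your three steps (identify the left side as a fiber of $\ul{\Dmod}^{Z_G}_{\frac{1}{2}}(\Bun_{G_{\on{ad}}})$ via \eqref{e:fiber BunG sheaf twisted}, apply \corref{c:FM L} together with \eqref{e:fib glob sect 1}, then use 1-affineness of $\LS_\cG$ to rewrite the gerbe-twist as a tensor product over $\QCoh(\LS_\cG)$) unpack precisely that. Your caveat about the inversion bookkeeping is well placed, as that is the only content the paper leaves implicit.
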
 

Combining with GLC for $G$, we obtain a form of GLC for non-pure inner twists: 

\begin{cor} \label{c:twisted GLC 1}
There is a canonical equivalence: 
$$\Dmod_{\frac{1}{2}}(\Bun_{G,\fG^{-1}_{Z_G}}) \simeq \IndCoh_\Nilp(\LS_\cG)
\underset{\QCoh(\LS_\cG)}\otimes \QCoh(\LS_\cG)_{\CG_{\fG_{Z_G}}}.$$
\end{cor}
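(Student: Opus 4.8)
The plan is to obtain this statement formally, by combining \corref{c:twisted BunG} with the main theorem of the paper.

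First I would invoke \corref{c:twisted BunG}, which provides a canonical equivalence
\[
\Dmod_{\frac{1}{2}}(\Bun_{G,\fG^{-1}_{Z_G}}) \simeq
\Dmod_{\frac{1}{2}}(\Bun_{G})\underset{\QCoh(\LS_\cG)}\otimes \QCoh(\LS_\cG)_{\CG_{\fG_{Z_G}}},
\]
where the relative tensor product is formed with respect to the spectral action of $\QCoh(\LS_\cG)$ on $\Dmod_{\frac{1}{2}}(\Bun_G)$, and $\QCoh(\LS_\cG)_{\CG_{\fG_{Z_G}}}$ is regarded as a $\QCoh(\LS_\cG)$-module category via its untwisted $\QCoh(\LS_\cG)$-action (the $\BG_m$-gerbe twist only affects the underlying DG category, not the module structure over the monoidal category).

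Next, I would apply \conjref{c:GLC} — which is now a theorem, proved in \secref{ss:proof of GLC} — to the effect that $\BL_G:\Dmod_{\frac{1}{2}}(\Bun_G)\to \IndCoh_\Nilp(\LS_\cG)$ is an equivalence; by the spectral properties recalled in \secref{s:review}, this equivalence is $\QCoh(\LS_\cG)$-linear, hence is an equivalence of $\QCoh(\LS_\cG)$-module categories. Since for a fixed $\QCoh(\LS_\cG)$-module category $\CM$ the functor $(-)\underset{\QCoh(\LS_\cG)}\otimes \CM$ carries equivalences of $\QCoh(\LS_\cG)$-modules to equivalences, applying it with $\CM=\QCoh(\LS_\cG)_{\CG_{\fG_{Z_G}}}$ gives
\[
\Dmod_{\frac{1}{2}}(\Bun_{G})\underset{\QCoh(\LS_\cG)}\otimes \QCoh(\LS_\cG)_{\CG_{\fG_{Z_G}}}
\simeq
\IndCoh_\Nilp(\LS_\cG)\underset{\QCoh(\LS_\cG)}\otimes \QCoh(\LS_\cG)_{\CG_{\fG_{Z_G}}}.
\]
Composing the two displayed equivalences yields the assertion.

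In this argument there is no genuine obstacle: all the mathematical content has been absorbed into \corref{c:twisted BunG} (which rests on \thmref{t:FM L} and the 2-Fourier-Mukai formalism) and into the now-proved GLC for $G$. The only points requiring care are bookkeeping ones — that the equivalence of \corref{c:twisted BunG} is compatible with the $\QCoh(\LS_\cG)$-module structures on both sides (which holds by construction, since \propref{p:key local} produces the twisted Hecke action intertwining the two), and that $\QCoh(\LS_\cG)_{\CG_{\fG_{Z_G}}}$ is indeed a $\QCoh(\LS_\cG)$-module category, which follows because gerbe-twisting of module categories along a map to $B^2\BG_m$ produces again a module category over the same monoidal category.
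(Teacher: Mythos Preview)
Your proposal is correct and matches the paper's own argument exactly: the paper simply states ``Combining with GLC for $G$, we obtain a form of GLC for non-pure inner twists'' immediately after \corref{c:twisted BunG}, which is precisely the two-step composition you spell out. Your additional remarks on $\QCoh(\LS_\cG)$-linearity are accurate elaborations of what the paper leaves implicit.
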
 

\sssec{}

Let $G_{\on{sc}}$ be the simply-connected cover of $G$; consider the short exact sequence
$$1\to \pi_1(G)\to G_{\on{sc}}\to G\to 1$$
and the resulting map
$$\sfp_{\pi_1(G)}:\Bun_G\to \on{Ge}_{\pi_1(G)}(X).$$

For a $\BG_m$-gerbe $\CG$ on $\on{Ge}_{\pi_1(G)}(X)$, let us denote by $\Dmod_{\frac{1}{2},\CG}(\Bun_G)$ 
the corresponding category of gerbe-twisted D-modules on $\Bun_G$. 

\sssec{}

On the dual side we have the short exact sequence
$$1\to Z_{\cG}\to \cG\to \cG_{\on{ad}}\to 1,$$
and a map
$$\sfp_{Z_\cG}:\LS_{\cG_{\on{ad}}}\to \on{Ge}_{Z_{\cG}}(X).$$

To a point $\fG_{Z_{\cG}}\in  \on{Ge}_{Z_\cG}(X)$ we can associate a 
gives rise to a (non-pure) inner twist of $\LS_\cG$: 
$$\LS_{\cG,\fG_{Z_{\cG}}}:=\LS_{\cG_{\on{ad}}}\underset{\on{Ge}_{Z_{\cG}}(X)}\times \{\fG_{Z_{\cG}}\}.$$

\sssec{}

Consider the short exact sequence
$$0\to \pi_1(G)\to Z_{G_{\on{sc}}}\to Z_G\to 0$$
and its dual
$$0\to \pi_1(\cG)\to \pi_1(\cG_{\on{ad}})\to Z_\cG\to 0.$$

Combining \thmref{t:FM L} and \lemref{l:sgrp} we obtain:

\begin{cor} \label{c:twisted LS}
For a $Z_\cG$-gerbe $\fG_{Z_{\cG}}$ on $X$, there is a canonical equivalence
$$\Dmod_{\frac{1}{2}}(\Bun_G)_{\CG_{\fG_{Z_{\cG}}}} \simeq \Dmod_{\frac{1}{2}}(\Bun_{G_{\on{sc}}}) \underset{\QCoh(\LS_{\cG_{\on{ad}}})}\otimes
\QCoh(\LS_{\cG,\fG_{Z_{\cG}}}).$$
\end{cor}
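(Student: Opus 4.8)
The plan is to mimic the derivation of \corref{c:twisted BunG} and \corref{c:twisted GLC 1}, but now starting from the ``other'' sheaf of categories $\ul{\Dmod}^{\pi_1(\cG)}_{\frac{1}{2}}(\Bun_G)$ rather than $\ul{\Dmod}^{Z_G}_{\frac{1}{2}}(\Bun_{G_{\on{ad}}})$, and applying \corref{c:FM L} together with the fiber-extraction formula \eqref{e:fib glob sect 1}. Concretely: first observe that $\Dmod_{\frac{1}{2}}(\Bun_G)_{\CG_{\fG_{Z_\cG}}}$ is a gerbe-twist of the global-sections category $\Dmod_{\frac{1}{2}}(\Bun_G) \simeq \bGamma(\on{Ge}_{\pi_1(\cG)}(X), \ul{\Dmod}^{\pi_1(\cG)}_{\frac{1}{2}}(\Bun_G))$. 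Here one must be slightly careful: the gerbe $\CG_{\fG_{Z_\cG}}$ is the $\BG_m$-gerbe on $\on{Ge}_{\pi_1(G)}(X)$ attached via the Verdier pairing \eqref{e:Verdier pairing Gamma} to the $Z_\cG$-gerbe $\fG_{Z_\cG}$ (here $Z_\cG \simeq (\pi_1(G))^\vee(1)$, dually to $\pi_1(\cG)\simeq (Z_G)^\vee(1)$), so the twist $\Dmod_{\frac{1}{2}}(\Bun_G)_{\CG_{\fG_{Z_\cG}}}$ is exactly the twist of the sheaf of categories $\ul{\Dmod}^{\pi_1(\cG)}_{\frac{1}{2}}(\Bun_G)$ by $\CG_{\fG_{Z_\cG}}$, whose global sections we want to compute.

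The key step is then to apply \eqref{e:fib glob sect 1} in the form that reads off the fiber of the 2-FM transform. By \corref{c:FM L}, $\on{2-FM}_{\on{Ge}_{\pi_1(\cG)}(X)\to \on{Ge}_{Z_G}(X)}(\ul{\Dmod}^{\pi_1(\cG)}_{\frac{1}{2}}(\Bun_G)) \simeq \ul{\Dmod}^{Z_G}_{\frac{1}{2}}(\Bun_{G_{\on{ad}}})$. Applying \eqref{e:fib glob sect 1} with $\Gamma = \pi_1(\cG)$ (so $\Gamma^\vee(1) = Z_G$) and with the $Z_G$-gerbe being a suitable $Z_\cG$-gerbe pushed along $\pi_1(\cG) \hookrightarrow \pi_1(\cG_{\on{ad}})$, we get that the $\CG_{\fG_{Z_\cG}}$-twisted global sections of $\ul{\Dmod}^{\pi_1(\cG)}_{\frac{1}{2}}(\Bun_G)$ are identified with a fiber of $\ul{\Dmod}^{Z_G}_{\frac{1}{2}}(\Bun_{G_{\on{ad}}})$ over $\on{Ge}_{Z_G}(X)$ — but not a fiber over a single point, rather over the substack $\on{Ge}_{Z_\cG}(X) \underset{\on{Ge}_{?}(X)}{\times}\{\fG_{Z_\cG}\}$; this is precisely where \lemref{l:sgrp} enters, via the short exact sequences $0\to \pi_1(G)\to Z_{G_{\on{sc}}}\to Z_G\to 0$ and the dual $0\to \pi_1(\cG)\to \pi_{\cG_{\on{ad}}}\to Z_\cG\to 0$. \lemref{l:sgrp} converts the twisted global sections of the restriction of $\ul{\Dmod}^{Z_{G_{\on{sc}}}}$-type sheaf to $\on{Ge}_{\pi_1(G)}(X)$ into global sections over a fiber product $\on{Ge}_{\pi_{\cG_{\on{ad}}}}(X)\underset{\on{Ge}_{Z_\cG}(X)}{\times}\{\fG_{Z_\cG}\}$ of the dual sheaf of categories.

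From here I would unwind as follows. The dual sheaf of categories is (a push-forward of) $\ul{\Dmod}^{\cG_{\on{ad}}}_{\frac{1}{2}}(\Bun_{G_{\on{sc}}})$, which upgrades to a sheaf of categories over $\LS_{\cG_{\on{ad}}}$ since that stack is 1-affine; taking global sections over the fiber product $\LS_{\cG_{\on{ad}}}\underset{\on{Ge}_{Z_\cG}(X)}{\times}\{\fG_{Z_\cG\}} = \LS_{\cG,\fG_{Z_\cG}}$ is, again by 1-affineness of $\LS_{\cG_{\on{ad}}}$, the same as $\Dmod_{\frac{1}{2}}(\Bun_{G_{\on{sc}}})\underset{\QCoh(\LS_{\cG_{\on{ad}}})}{\otimes}\QCoh(\LS_{\cG,\fG_{Z_\cG}})$. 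Assembling the chain of identifications produces the claimed equivalence
$$\Dmod_{\frac{1}{2}}(\Bun_G)_{\CG_{\fG_{Z_{\cG}}}} \simeq \Dmod_{\frac{1}{2}}(\Bun_{G_{\on{sc}}}) \underset{\QCoh(\LS_{\cG_{\on{ad}}})}\otimes \QCoh(\LS_{\cG,\fG_{Z_{\cG}}}).$$

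The main obstacle I anticipate is bookkeeping the various Cartier/Verdier dualities and Tate twists so that the gerbe $\CG_{\fG_{Z_\cG}}$ appearing on the automorphic side (a gerbe on $\on{Ge}_{\pi_1(G)}(X)$, pulled to $\Bun_G$ via $\sfp_{\pi_1(G)}$) is matched to the correct fiber locus on the spectral side (the $\fG_{Z_\cG}$-fiber of $\sfp_{Z_\cG}:\LS_{\cG_{\on{ad}}}\to \on{Ge}_{Z_\cG}(X)$), with the inversion involution on $Z_G$ from \thmref{t:FM L} and \thmref{t:Verdier FM} accounted for; getting \lemref{l:sgrp} to apply requires correctly identifying which short exact sequence of finite abelian groups is in play and checking the gerbe restrictions are compatible. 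The rest is a formal concatenation of the already-established 1-affineness statements ($\LS_{\cG_{\on{ad}}}$ is 1-affine) and the structural lemmas of \secref{ss:FM and Verdier}. Note this corollary is strictly parallel to \corref{c:twisted BunG}, just read ``in the other direction'' across the 2-Fourier-Mukai equivalence, so no genuinely new input beyond \thmref{t:FM L}, \thmref{t:Verdier FM}, and \lemref{l:sgrp} should be needed.
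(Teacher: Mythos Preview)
Your proposal eventually arrives at the paper's argument, but the opening two paragraphs are confused and should be discarded. You begin by invoking \corref{c:FM L} for $G$, i.e., the sheaf $\ul{\Dmod}^{\pi_1(\cG)}_{\frac{1}{2}}(\Bun_G)$ over $\on{Ge}_{\pi_1(\cG)}(X)$; but the gerbe $\CG_{\fG_{Z_\cG}}$ lives on $\on{Ge}_{\pi_1(G)}(X)$ (a different base), so it does not twist that sheaf, and the phrase ``a $Z_\cG$-gerbe pushed along $\pi_1(\cG) \hookrightarrow \pi_1(\cG_{\on{ad}})$'' does not parse since $Z_\cG$ is the cokernel of that inclusion.

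The correct move, which you in fact execute from paragraph three onward, is to apply \thmref{t:FM L} to $G_{\on{sc}}$ rather than to $G$: this relates $\ul{\Dmod}^{Z_{G_{\on{sc}}}}_{\frac{1}{2}}(\Bun_{G_{\on{ad}}})$ over $\on{Ge}_{Z_{G_{\on{sc}}}}(X)$ to $\ul{\Dmod}^{\pi_1(\cG_{\on{ad}})}_{\frac{1}{2}}(\Bun_{G_{\on{sc}}})$ over $\on{Ge}_{\pi_1(\cG_{\on{ad}})}(X)$. Now \lemref{l:sgrp} applies directly with $\Gamma = Z_{G_{\on{sc}}}$, $\Gamma_1 = \pi_1(G)$, $\Gamma_1^\vee(1) = Z_\cG$: the restriction of the first sheaf to $\on{Ge}_{\pi_1(G)}(X)$ is the push-forward of $\ul{\Dmod}_{\frac{1}{2}}(\Bun_G)$ along $\sfp_{\pi_1(G)}$, so its $\CG_{\fG_{Z_\cG}}$-twisted global sections give the left-hand side of the corollary; on the other side one gets global sections over $\on{Ge}_{\pi_1(\cG_{\on{ad}})}(X)\times_{\on{Ge}_{Z_\cG}(X)}\{\fG_{Z_\cG}\}$, and your use of 1-affineness of $\LS_{\cG_{\on{ad}}}$ to obtain the tensor product is correct. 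This is exactly the paper's (one-line) proof.
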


Combining with GLC for $G_{\on{sc}}$ we obtain:

\begin{cor}  \label{c:twisted GLC 2}
For $\fG_{Z_{\cG}}\in  \on{Ge}_{Z_\cG}(X)$ there is a canonical equivalence
$$\Dmod_{\frac{1}{2}}(\Bun_G)_{\CG_{\fG_{Z_{\cG}}}} \simeq \IndCoh_\Nilp(\LS_{\cG,\fG_{Z_{\cG}}}).$$
\end{cor}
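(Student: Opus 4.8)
The plan is to deduce this purely formally from \corref{c:twisted LS} together with GLC for $G_{\on{sc}}$. Since $G_{\on{sc}}$ is semi-simple and simply-connected, it is a product of almost simple simply-connected groups, so \conjref{c:GLC} holds for it by the main theorem of this paper (as is already used in the proof of \corref{c:twisted GLC 1}). Recall that $\cG_{\on{ad}}$ is the Langlands dual of $G_{\on{sc}}$, so GLC for $G_{\on{sc}}$ provides a $\QCoh(\LS_{\cG_{\on{ad}}})$-linear equivalence
$$\BL_{G_{\on{sc}}}:\Dmod_{\frac{1}{2}}(\Bun_{G_{\on{sc}}})\iso \IndCoh_\Nilp(\LS_{\cG_{\on{ad}}}).$$

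First I would substitute this equivalence into the right-hand side of \corref{c:twisted LS}. Because $\BL_{G_{\on{sc}}}$ is linear over $\QCoh(\LS_{\cG_{\on{ad}}})$, base change along it commutes with the relative tensor product, yielding a canonical equivalence
$$\Dmod_{\frac{1}{2}}(\Bun_G)_{\CG_{\fG_{Z_{\cG}}}}\simeq \IndCoh_\Nilp(\LS_{\cG_{\on{ad}}})\underset{\QCoh(\LS_{\cG_{\on{ad}}})}\otimes \QCoh(\LS_{\cG,\fG_{Z_{\cG}}}).$$
It then remains to identify the right-hand side with $\IndCoh_\Nilp(\LS_{\cG,\fG_{Z_{\cG}}})$. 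The structure map $\LS_{\cG,\fG_{Z_{\cG}}}\to \LS_{\cG_{\on{ad}}}$ is, after untwisting by a $G_{\on{ad}}$-torsor (equivalently, \'etale-locally on $\on{Ge}_{Z_\cG}(X)$), the map $\LS_\cG\to \LS_{\cG_{\on{ad}}}$ induced by the isogeny $\cG\to \cG_{\on{ad}}$. Hence the required statement is precisely the twisted-form analogue of \propref{p:change group 1}, i.e., of \cite[Corollary 7.6.2]{AG} extended to stacks: tensoring up along $\QCoh(\LS_{\cG_{\on{ad}}})\to \QCoh(\LS_{\cG,\fG_{Z_{\cG}}})$ carries $\IndCoh_\Nilp(\LS_{\cG_{\on{ad}}})$ to $\IndCoh_\Nilp(\LS_{\cG,\fG_{Z_{\cG}}})$. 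Here the global-nilpotent-cone conditions on the two sides match because the isogeny $\cG\to\cG_{\on{ad}}$ is an isomorphism on Lie algebras, so the corresponding (twisted) nilpotent cones are identified; I would either invoke the stacky extension of \cite[Corollary 7.6.2]{AG} as in \secref{s:sc} and observe that the $\BG_m$-gerbe twist affects neither the underlying quasi-smooth stack nor its theory of singular support, or argue \'etale-locally on $\on{Ge}_{Z_\cG}(X)$ and descend.

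The main obstacle is not conceptual but a matter of careful bookkeeping: one must check that the gerbe $\CG_{\fG_{Z_{\cG}}}$ is transported coherently through each of the three identifications above, and that the category $\IndCoh_\Nilp(\LS_{\cG,\fG_{Z_{\cG}}})$ appearing in the statement is defined using the singular-support condition that is the image, under base change, of the one on $\IndCoh_\Nilp(\LS_{\cG_{\on{ad}}})$. Since the twist is by a gerbe for the finite group $Z_\cG$ and $\cG\to\cG_{\on{ad}}$ is an isogeny, nothing essential changes relative to the untwisted computation already carried out for \corref{c:change group}, so this should go through without difficulty.
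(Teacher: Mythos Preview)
Your proposal is correct and follows exactly the approach the paper takes: the paper's proof is the single line ``Combining with GLC for $G_{\on{sc}}$ we obtain,'' placed immediately after \corref{c:twisted LS}, and you have correctly unpacked the implicit steps---substituting $\BL_{G_{\on{sc}}}$ into the tensor product and then invoking the twisted analogue of \propref{p:change group 1} to identify $\IndCoh_\Nilp(\LS_{\cG_{\on{ad}}})\underset{\QCoh(\LS_{\cG_{\on{ad}}})}\otimes \QCoh(\LS_{\cG,\fG_{Z_{\cG}}})$ with $\IndCoh_\Nilp(\LS_{\cG,\fG_{Z_{\cG}}})$. One small terminological slip: the untwisting in your third step is by a trivialization of the $Z_\cG$-gerbe (i.e., \'etale-locally on $\on{Ge}_{Z_\cG}(X)$, as you also say), not by a $G_{\on{ad}}$-torsor; the latter is the picture on the automorphic side from \remref{r:twisted groups}.
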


\begin{rem}

We expect that equivalences parallel to Corollaries \ref{c:twisted GLC 1} and \ref{c:twisted GLC 2} 
also take place in the framework of Fargues-Scholze theory of \cite{FS}.

\end{rem}

\ssec{An arithmetic variant}

The contents of this subsection are not used elsewhere in the paper, and and be considered
as a collection of informal remarks. Our goal here is to explain that Corollaries 
\ref{c:twisted GLC 1} and \ref{c:twisted GLC 2} are not just category-theoretic, but have 
meaningful implications for automorphic functions.

\medskip

In this subsection we will appeal to the notations introduced in \cite[Sect. 24]{AGKRRV}. We will work
over the ground field $k=\ol\BF_p$, and we will assume that $Z_G$ and $Z_\cG$ have orders prime to $p$. 

\sssec{}

In this subsection we will assume that GLC holds (for constant group-schemes) in the context of $\ell$-adic sheaves
over the ground field $\ol\BF_p$, see \cite[Conjecture 21.2.7]{AGKRRV}:
\begin{equation} \label{e:restricted GLC}
\Shv_{\Nilp,\frac{1}{2}}(\Bun_G)\simeq \IndCoh_\Nilp(\LS_\cG^{\on{restr}}).
\end{equation} 

Then by the same principle as in \corref{c:twisted GLC 1}, from \eqref{e:restricted GLC} one can derive a 
GLC-type equivalence for non-pure inner forms of $G$:

\medskip

For a $Z_G$-gerbe $\fG_{Z_G}$ on $X$, we have 
\begin{equation} \label{e:restricted GLC 1}
\Shv_{\Nilp,\frac{1}{2}}(\Bun_{G,\fG^{-1}_{Z_G}}) \simeq \IndCoh_\Nilp(\LS^{\on{restr}}_\cG)
\underset{\QCoh(\LS^{\on{restr}}_\cG)}\otimes \QCoh(\LS^{\on{restr}}_\cG)_{\CG_{\fG_{Z_G}}},
\end{equation}
where we view $\CG_{\fG_{Z_G}}$ as naturally a $\mu_\infty(\ol\BQ_\ell)$-gerbe on $\on{Ge}_{\pi_1(\cG)}$, and we turn it into a $\BG_m$-gerbe 
via
$$\mu_\infty(\ol\BQ_\ell)\subset \BG_m.$$

\sssec{}

Similarly, for a $Z_\cG$-gerbe $\fG_{Z_{\cG}}$ on $X$, combining \corref{c:twisted GLC 2} and \eqref{e:restricted GLC}, we obtain:
\begin{equation} \label{e:restricted GLC 2}
\Shv_{\Nilp,\frac{1}{2}}(\Bun_G)_{\CG_{\fG_{Z_{\cG}}}} \simeq \IndCoh_\Nilp(\LS^{\on{restr}}_{\cG,\fG_{Z_{\cG}}}),
\end{equation}
where we view $\CG_{\fG_{Z_{\cG}}}$ as a $\mu_\infty(\ol\BQ_\ell)$-gerbe on $\on{Ge}_{\pi_1(G)}$.

\sssec{}

Assume now that $X$ and $G$ are defined over $\BF_q$. Let $\fG_{Z_G}$ be a $Z_G$-gerbe that is also
defined over $\BF_q$, and hence so is the stack $\Bun_{G,\fG^{-1}_{Z_G}}$. Thus, we an consider the 
corresponding space of automorphic functions
$$\on{Funct}_c(\Bun_{G,\fG^{-1}_{Z_G}}(\BF_q),\ol\BQ_\ell).$$

\medskip

The Frobenius-equivariant structure on $\fG_{Z_G}$ gives rise to a Frobenius-equivariant structure
on the $\BG_m$-gerbe $\sfp_{\pi_1(\cG)}^*(\CG_{\fG_{Z_G}})$ over $\LS^{\on{restr}}_\cG$. 
Hence, the restriction of $\sfp_{\pi_1(\cG)}^*(\CG_{\fG_{Z_G}})$ to
$$\LS_\cG^{\on{arithm}}:=(\LS^{\on{restr}}_\cG)^{\on{Frob}}$$
gives rise to a line bundle on $\LS_\cG^{\on{arithm}}$, to be denote $\CL_{\CG_{\fG_{Z_G}}}$. 

\medskip

As in \cite[Conjecture 24.8.6]{AGKRRV}, applying the categorical trace of Frobenius to the two sides of \eqref{e:restricted GLC 1}, 
we obtain an isomorphism of vector spaces
\begin{equation} \label{e:arithm 1}
\on{Funct}_c(\Bun_{G,\fG^{-1}_{Z_G}}(\BF_q),\ol\BQ_\ell) \simeq
\Gamma(\LS_\cG^{\on{arithm}},\omega_{\LS_\cG^{\on{arithm}}}\otimes \CL_{\CG_{\fG_{Z_G}}}).
\end{equation} 

Thus, \eqref{e:arithm 1} is an expression for the (spherical) automorphic category 
for a (non-pure) inner form of $G$.

\sssec{}

Let now $\fG_{Z_\cG}$ be a $Z_\cG$-gerbe on $X$ defined over $\BF_q$. Then the stack 
$\LS^{\on{restr}}_{\cG,\fG_{Z_{\cG}}}$ acquires an action of the Frobenius automorphism.
Denote
$$\LS^{\on{arithm}}_{\cG,\fG_{Z_{\cG}}}:=\left(\LS^{\on{restr}}_{\cG,\fG_{Z_{\cG}}}\right)^{\on{Frob}}.$$

The $\ol\BQ_\ell^\times$-gerbe $\sfp^*_{\pi_1(G)}(\CG_{\fG_{Z_{\cG}}})$ on
$\Bun_G$ gives rise to a $\ol\BQ_\ell^\times$-torsor over $\Bun_G(\BF_q)$, to be denoted $\CP_{\fG_{Z_{\cG}}}$.
Given this torsor, consider the corresponding space of twisted compactly-supported functions
$$\on{Funct}_c(\Bun_G(\BF_q))_{\CP_{\fG_{Z_{\cG}}}}:=
\on{Sect}_c(\Bun_G(\BF_q),\CP_{\fG_{Z_{\cG}}}\overset{\ol\BQ_\ell^\times}\times \ol\BQ_\ell),$$

\medskip

As in \cite[Conjecture 24.8.6]{AGKRRV}, applying the categorical trace of Frobenius to the two sides of \eqref{e:restricted GLC 2}, 
we obtain an isomorphism of vector spaces
\begin{equation} \label{e:arithm 2}
\on{Funct}_c(\Bun_G(\BF_q))_{\CP_{\fG_{Z_{\cG}}}}\simeq 
\Gamma(\LS_{\cG,\fG_{Z_{\cG}}}^{\on{arithm}},\omega_{\LS_\cG^{\on{arithm}}}).
\end{equation} 

Thus, \eqref{e:arithm 2} is an expression for the \emph{metaplectic} (spherical) automorphic category 
of $G$, which is given in terms of the (non-pure) inner twist of $\cG$. 

\appendix

\section{Review of (semi-)stability for \texorpdfstring{$G$}{stb}-bundles}\label{s:stable}

We briefly review the
basic notions from the theory of (semi-)stable $G$-bundles
following the original source \cite{Ra}.

\subsection{Definition of (semi-)stability}

In what follows, we only consider parabolic subgroups $P$ 
containing our fixed Borel. 

\subsubsection{Notation related to root data}

Recall that $\Lambda$ denotes the coweight lattice of $G$ and 
$\check{\Lambda}$ denotes the weight lattice. 
As is standard, $2\rhoch_G \in \check{\Lambda}$ denotes the sum of the positive
roots. 

\medskip 

Let $P$ be a parabolic subgroup of $G$ with Levi quotient $M$.
As the weight lattices of $M$ and $G$ coincide, 
we also have the weight $2\rhoch_M \in \check{\Lambda}$.
We set $2\rhoch_P := 2\rhoch_G -2\rhoch_M$, which is the sum of the roots
occurring in $\fn(P)$. 

\medskip 

We remark that $\langle 2\rhoch_P,\alpha_i\rangle = 0$ for 
every vertex $i$ in the Dynkin diagram $I_M$ of $M$, i.e., for each
simple coroot $\alpha_i$ whose $\mathfrak{sl}_2$ maps into $\fm$. Indeed,
we have $\langle 2\rhoch_G,\alpha_i\rangle = 
\langle 2\rhoch_M,\alpha_i\rangle = 2$ for such $\alpha_i$.

\medskip 

It follows that for a coweight $\lambda \in \Lambda$, the value of 
$\langle 2\rhoch_P,\lambda\rangle$ only depends on the class of 
$\lambda$ in $\Lambda/\on{Span}\{\alpha_i\}_{i \in I_M} =: 
\pi_{1,\on{alg}}(M)$.

\subsubsection{}

We have the following definition (cf. \cite{Ra}):

\begin{defin}

A $G$-bundle $\CP_G$ on $X$ is \emph{semi-stable} (resp. \emph{stable})
if for every maximal (proper) parabolic subgroup 
$P \subsetneq G$ and every reduction
$\CP_P$ of $\CP_G$ to $P$, we have
\[
\langle 2\rhoch_P,\deg(\CP_P)\rangle \leq 0 \,\,\,\, \text{(resp. $< 0$)}. 
\]

\noindent Here we remind that $\deg(\CP_P)$ is an element of 
$\pi_{1,\on{alg}}(M)$.

\end{defin}

We remark that the integer $\langle 2\rhoch_P,\deg(\CP_P)\rangle$ appearing
above is the degree of the vector bundle $\fn(P)_{\CP_P}$ on $X$.

\begin{example}

This definition is rigged to recover the usual one for 
$G = GL_n$. 

\medskip

Indeed, suppose $\CE$ has rank $n$ and $P$ is the maximal parabolic
whose reductions correspond to subbundles $\CE_0 \subset \CE$ of rank $m$.
Then a straightforward calculation yields
\[
\langle 2\rhoch_P,\deg(\CP_P)\rangle = 
\on{rank}(\CE)\cdot\deg(\CE_0) - \on{rank}(\CE_0)\cdot\deg(\CE).
\]

\end{example}

\subsection{A characterization of (semi-)stability}

\subsubsection{}

We have the following basic result.

\begin{prop} \label{p:semistab}

For a $G$-bundle $\CP_G$ on $X$, the following conditions are equivalent.

\medskip 

\noindent{\em(a)} $\CP_G$ is semi-stable (resp. stable).

\noindent{\em(b)} For every proper parabolic subgroup $P \subsetneq G$ (possibly
not of corank 1) and every
reduction $\CP_P$ of $\CP_G$ to $P$, we have
\[
\langle 2\rhoch_P,\deg(\CP_P)\rangle \leq 0 \,\,\,\, \text{(resp. $< 0$)}
\]

\noindent{\em(c)} For every reduction $\CP_B$ of $\CP_G$ to the
Borel, we have:
\begin{equation}\label{eq:ss borel}
\deg(\CP_B) = \sum_{i \in I_G} n_i\alpha_i + \varepsilon, 
\,\,\, n_i \in \BQ^{\leq 0}, 
\,\, \varepsilon \in \BQ \cdot \Lambda_{Z_G}
\text{(resp. $n_i \in \BQ^{<0}$)}.
\end{equation}
Here $\Lambda_{Z_G}$ is the set of coweights mapping
into the center of $G$.

\end{prop}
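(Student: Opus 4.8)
The statement to prove is \propref{p:semistab}, characterizing (semi-)stability of $G$-bundles in three equivalent ways.

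\medskip

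The plan is to prove the cycle of implications (c) $\Rightarrow$ (b) $\Rightarrow$ (a) $\Rightarrow$ (c), since (a) $\Rightarrow$ (b) is not literally a special case of anything (one direction is trivial: (b) restricted to maximal parabolics is exactly (a)) and the substantive content lies in deducing the stronger-looking statements from the weaker one. First I would set up the dictionary between reductions to a parabolic $P$ and its degree: for a reduction $\CP_P$, the pairing $\langle 2\rhoch_P,\deg(\CP_P)\rangle$ is the degree of the vector bundle $\fn(P)_{\CP_P}$, and for $P = B$ the degree $\deg(\CP_B)\in\Lambda$ pairs against each $\check\omega_i$ to give the degree of the associated line bundle. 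The key linear-algebra fact I would record up front is that the fundamental coweights (dual to the simple roots, modulo $\Lambda_{Z_G}\otimes\BQ$) are nonnegative rational combinations of the simple coroots; equivalently, writing a coweight in the basis of simple coroots versus pairing it against fundamental weights is governed by the inverse Cartan matrix, which has nonnegative entries on each almost-simple factor.

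\medskip

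For (a) $\Rightarrow$ (c): given semi-stability, take any Borel reduction $\CP_B$. For each vertex $i$, the parabolic $P_i \supsetneq B$ with $\fn(P_i)$ one-dimensional in the $\alpha_i$-direction at the top — more precisely, the maximal parabolic $P_i$ omitting the $i$-th node from the Levi — receives an induced reduction $\CP_{P_i}$, and semi-stability gives $\langle 2\rhoch_{P_i},\deg(\CP_{P_i})\rangle \le 0$ (resp. $<0$). Now I would express $\deg(\CP_B) = \sum_j c_j \alpha_j + \varepsilon$ with $\varepsilon \in \BQ\cdot\Lambda_{Z_G}$ and show $c_j = n_j \le 0$: the point is that $\langle 2\rhoch_{P_i},-\rangle$ picks out (up to a positive scalar) the coefficient of the $i$-th fundamental coweight in the expansion of $\deg(\CP_B)$ modulo coroots of the Levi, and then I invert the relation between the $\{\alpha_j\}$-basis and the fundamental-coweight-basis using nonnegativity of the inverse Cartan matrix. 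Conversely, for (c) $\Rightarrow$ (b), given an arbitrary proper parabolic $P$ and a reduction $\CP_P$, I would refine it to a Borel reduction $\CP_B$ (always possible, since $B\subset P$), note $\deg(\CP_P)$ is the image of $\deg(\CP_B)$, and compute $\langle 2\rhoch_P,\deg(\CP_P)\rangle = \langle 2\rhoch_P,\deg(\CP_B)\rangle$; since $2\rhoch_P = \sum_{\text{roots in }\fn(P)}\check\alpha$ is a sum of positive roots, pairing it with $\sum_j n_j\alpha_j + \varepsilon$ and using $n_j \le 0$ (resp. $<0$) together with $\langle 2\rhoch_P,\alpha_j\rangle \ge 0$ — and $\langle 2\rhoch_P,\varepsilon\rangle = 0$ because $\varepsilon$ is central — yields the required inequality. (In the stable case one needs $2\rhoch_P$ to have strictly positive pairing with at least one $\alpha_j$ appearing with a strictly negative coefficient, which holds since $P\ne G$ means some simple root is not in the Levi.)

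\medskip

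The implication (b) $\Rightarrow$ (a) is immediate by restricting to maximal parabolics. I expect the main obstacle to be the bookkeeping in (a) $\Rightarrow$ (c): one must carefully handle the central part $\varepsilon$, and make the inversion argument between simple coroots and fundamental coweights precise — in particular, checking that the strict inequalities propagate correctly through the nonnegative (and, on each irreducible factor, strictly positive) entries of the inverse Cartan matrix, so that "all $n_i < 0$" really does follow from "$\langle 2\rhoch_{P_i},\deg\rangle < 0$ for all $i$" rather than merely "some $n_i < 0$". This is a finite, explicit check, but it is where the real content sits; everything else is formal manipulation of degrees of induced bundles. I would cite \cite{Ra} for the original treatment and for the equivalence being standard, and present the argument compactly.
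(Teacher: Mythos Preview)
Your cycle of implications and the arguments for (c) $\Rightarrow$ (b) and (b) $\Rightarrow$ (a) match the paper's proof exactly. The one place where you diverge is (a) $\Rightarrow$ (c), and there your detour through the inverse Cartan matrix is unnecessary --- in fact, your description of what $\langle 2\rhoch_{P_i},-\rangle$ computes is reversed. Since $\langle 2\rhoch_{P_i},\alpha_j\rangle = 0$ for every $j \neq i$ (all such $j$ lie in $I_{M_i}$) and $\langle 2\rhoch_{P_i},\alpha_i\rangle \geq 2$, pairing $2\rhoch_{P_i}$ against $\deg(\CP_B) = \sum_j n_j\alpha_j + \varepsilon$ directly yields $n_i \cdot \langle 2\rhoch_{P_i},\alpha_i\rangle$, a positive integer multiple of $n_i$. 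So semi-stability gives $n_i \leq 0$ (resp. $<0$) immediately, with no basis change and no appeal to positivity of inverse Cartan entries. This is exactly what the paper does, and it makes your worry about propagating strict inequalities disappear: the relation between $\langle 2\rhoch_{P_i},\deg(\CP_B)\rangle$ and $n_i$ is already diagonal.
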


\sssec{Proof of \propref{p:semistab}}

The key point is to observe
\begin{equation}\label{eq:rho_p}
\begin{cases}
\langle 2\rhoch_P,\varepsilon\rangle = 0, & \\
\langle 2\rhoch_P,\alpha_i\rangle = 0 & \text{ if $i \in I_M$} \\
\langle 2\rhoch_P,\alpha_i\rangle \geq 2 & \text{ if $i \not\in I_M$} \\
\end{cases}
\end{equation}

\noindent where the last expression follows as
$\langle 2\rhoch_G,\alpha_i\rangle = 2$ and 
$2\rhoch_M$ is a sum of roots $\check{\alpha}_j$ with 
$j \in I_M$ (so $j \neq i$). Then (b) tautologically implies
(a), (c) implies (b) by \eqref{eq:rho_p}, and
(a) implies (c) again by noting that for $P_i$ the 
maximal parabolic corresponding to $i \in I_G$, 
we have $\langle 2\rhoch_{P_i},\deg(\CP_B)\rangle = 
n_i \langle 2\rhoch_P,\alpha_i\rangle \in n_i \cdot \BZ^{>0}$
by \eqref{eq:rho_p}.

\qed[\propref{p:semistab}]

\sssec{}

In the above, the condition (c) immediately matches the notion of
semi-stability used in \cite{DG} (see \emph{loc. cit}. Lemma 7.3.2),
and it matches the notion of stability implicitly suggested in \emph{loc. cit}.

\end{document}